\newtheorem{theorem}{Theorem}[section]
\newtheorem{lemma}[theorem]{Lemma}
\newtheorem{proposition}[theorem]{Proposition}
\newtheorem{corollary}[theorem]{Corollary}
\newtheorem{definition}[theorem]{Definition}
\newtheorem{remark}[theorem]{Remark}
\DeclareMathOperator*{\argmax}{arg\,max}
\DeclareMathOperator*{\argmin}{arg\,min}
\newcommand{\per}{\mbox{Per}}
\begin{document}

\title{Consistency of modularity clustering on random geometric graphs}
\author{Erik Davis and
Sunder Sethuraman}
\date{}

\address{\noindent Department of Mathematics, University of Arizona,
  Tucson, AZ  85721
\newline
e-mail:  \rm \texttt{edavis@math.arizona.edu}
}

\address{\noindent Department of Mathematics, University of Arizona,
  Tucson, AZ  85721
\newline
e-mail:  \rm \texttt{sethuram@math.arizona.edu}
}

\begin{abstract}
We consider a large class of random geometric graphs constructed from samples $\mathcal{X}_n = \{X_1,X_2,\ldots,X_n\}$ of
independent, identically distributed observations of an underlying
probability measure $\nu$ on a bounded domain $D\subset \mathbb{R}^d$.  The popular `modularity' clustering method
specifies a partition $\mathcal{U}_n$ of the set $\mathcal{X}_n$ as the solution of an optimization problem. 
In this paper, under conditions on $\nu$ and $D$, we derive scaling limits of the modularity
clustering on random geometric graphs. Among other results, we show  a
geometric form of consistency: When the number of clusters is a priori
bounded above, the discrete optimal partitions $\mathcal{U}_n$ converge in a certain sense to a continuum partition $\mathcal{U}$ of the underlying domain $D$, characterized as the solution of a type of Kelvin's shape optimization problem.
\end{abstract}

\subjclass[2010]{60D05,62G20,05C82,49J55,49J45,68R10}

\keywords{modularity, community detection, consistency, random
  geometric graph, gamma convergence, Kelvin's problem, scaling limit, shape optimization, optimal transport,
  total variation,  perimeter}

\maketitle

\section{Introduction}

One of the basic tasks in understanding the structure and function of complex networks
is the identification of community structure or modular
organization, where by a community we mean a subset of densely
interconnected nodes, only sparsely connected to outsiders (cf. \cite{Fortunato2010}, \cite{Porter_Onnela_Mucha}). 
A widely popular approach to community detection is the method of modularity clustering, introduced by Newman and Girvan
(cf. \cite{newman2004finding}, \cite{newman2006modularity}), which specifies an optimal clustering -- that is, a partition of the network -- as the solution of a certain optimization problem (see Section \ref{model} for precise definitions). In particular, the method is used for 
a variety of networks arising in scientific contexts,
including metabolic networks
\cite{guimera2005functional}, epigenetic networks
\cite{mill2008epigenomic}, brain networks \cite{hagmann2008mapping}, and networks encoding ecological
\cite{fortuna2010nestedness} and political interactions \cite{porter2005network}.

On the other hand, a popular model of complex networks with geometric structure is the random geometric graph, where vertices are sampled from a geometric
domain and edge weights are determined by a function of the distance between
vertices \cite{franceschetti_meester}, \cite{meester_roy}, \cite{penrose2003random}.  We note, a well-studied case is the unweighted version, when the connectivity function is a threshold function of distance.  These graphs are well-established mathematical models of various physical phenomena, such as continuum percolation.  They have also found use in a number of applied
settings, including the modeling of ad-hoc
wireless networks \cite{gupta2000capacity}, \cite{bettstetter2002minimum},
\cite{gamal2004throughput}, protein-protein interaction networks
\cite{prvzulj2004modeling}, as well as the study of combinatorial
optimization problems \cite{diaz2001approximating},
\cite{diaz2002survey}.  In clustering studies of these graphs and their variants, the modularity functional is often used to assess the quality of the clustering obtained \cite{antonioni}, \cite{dhara}.

 In these contexts, it is a natural question to ask about the consistency of modularity clustering with respect to random geometric graphs.  
That is, one would like to know how the modularity clusterings converge or stabilize as the number of sampled data points or vertices grow, and how to characterize geometrically any limit clusterings.  From the point of view of applications, establishing consistency is relevant to benchmarking performance.   We will focus on a class of random geometric graphs, where a length scale is introduced in the connectivity function, so that distances between adjacent vertices shrink as the number of data points grow, and spatial scaling limits can be taken.  

In this article, we study
two questions:  The first asks about the large limit
behavior of the modularity functional on these random geometric graphs, evaluated on a
fixed partition of the underlying geometric domain.  The second question asks whether the
optimal modularity clusterings for the discrete graphs converge, and if so to what geometric limit clustering.

With respect to the first question, when the fixed partition of the domain involves at most $K\geq 1$ sets, we show that the limit of the modularity functional, known to be a priori bounded between $-1$ and $1$, as the sample size grows, is of the form $1-1/K$ plus a term involving the partition, which vanishes when the partition is `balanced', that is when each set in the partition has the same volume with respect to an underlying measure (Theorem \ref{thm:asymptotics}). As a corollary, we obtain, for these random geometric graphs, that the maximum of the modularity functional taken over all partitions, as the sample size grows, tends to $1$ (Corollary \ref{cor:main}).  These limits prove, in the context of random geometric graphs, some heuristics given in the literature (cf. Subsection \ref{discussion}).

With respect to the second question, we show that, given a fixed upper
bound $K$ on the number of clusters, the optimal modularity clusterings of the discrete random graphs converge in a distributional sense to an optimal clustering, satisying a certain shape optimization problem (Theorem \ref{thm:mainthm}).  This geometric continuum optimization problem, of interest in itself, is a form of
Kelvin's problem: Informally, find a partition of the domain, where each set has the same volume, but where the perimeters between sets is minimized.  Noting the first result above, it seems natural that a constraint specifying equal volumes would emerge in the limiting shape problem.  Nonetheless, it seems intriguing that a form of Kelvin's shape optimization problem (cf. Subsection \ref{continuum problem}) would appear.

Previously, in the literature, a type of statistical consistency of
modularity clustering for stochastic block models has been considered.
In the stochastic block model, each data point is assigned a group
from $K$ groups according to a probability $\pi$.  Then, if the two
points belong to groups $a$ and $b$ respectively, one puts an edge between them with probability $F_{a,b}$.  Modularity clustering now gives an empirical group assignment to each data point.  One of the main results shown is that the proportion of misclassification of empirical group assignments, with respect to the a priori assignment, vanishes in probability, 
Bickel and Chen \cite{Bickel-Chen} for the model above, Zhao, Levina and Zhu \cite{Zhao-Levina-Zhu} for degree-corrected models, Rohe, Chatterjee and Yu \cite{Rohe-Chatterjee-Yu} for high-dimensional models, and Le, Levina and Vershynin \cite{Le-Levina-Vershynin} via low rank approximation.

In this context, the main focus of the article is to understand the geometry of optimal partitions, and other asymptotics with respect to the modularity functional.  
Our results represent limit results on a `geometric' form of consistency of the discrete modularity clusterings, which seem not to have been considered before.  Moreover, as a corollary of this type of consistency, we show that the proportion of misclassification of the data points into sets given by the modularity functional, with respect to the limit optimization problem, vanishes a.s. (Corollary \ref{cor:proportion}).

We use 
the recent framework of optimal transport introduced
by Garc{\'\i}a Trillos and Slep{\v{c}}ev in \cite{trillos2014continuum}, in the context of continuum limits of graph variational problems, to help relate point cloud empirical measures to absolutely continuous ones.  We rewrite, after some manipulation, the modularity functional in terms of a `graph total variation' term and a `balance' term (Section \ref{reformulation_sect}).  The proofs, with respect to the first question on asymptotics of the modularity functional of a particular clustering, make use of concentration estimates for these terms through U-statistics bounds.  

On the other hand, with respect to the second question, we observe that maximizers of the modularity functional are the same as minimizers of an energy functional built as the sum of the `graph total variation' and `balance' terms (cf. Subsection \ref{reformulation_min_sect}).  In this energy functional, the coefficient in front of the `balance' term diverges as the reciprocal of the length scale when the number of data points grows.  In the limit, the soft penalty `balance' term becomes a hard constraint. We show convergence of a subsequence of minimizers to an optimizer of the limit problem by use a modified notion of Gamma convergence that we formulate (cf. Subsection \ref{gamma_conv}), together with a compactness principle. For the `liminf' part of Gamma convergence, although we have to treat the balance constraint, the analysis of the graph total variation term follows from work in \cite{trillos2014continuum}, which handles a similar expression.  

However, dealing with the `balance' constraint represents a serious
difficulty with respect to the `limsup' or `recovery sequence' part of the
Gamma convergence setup.  Without the constraint, one can form the
recovery sequence by approximating with piecewise smooth partitions. However, such
approximations become more complicated when also enforcing the
`balance' constraint.  But, the probabilistic result for the first
question (Theorem \ref{thm:asymptotics}), given for general
measurable clusterings, already can be seen to yield a recovery sequence, with
respect to the modified notion of Gamma convergence for random functionals.
Interestingly, this notion of Gamma convergence has the same strength,
in terms of yielding subsequential convergence, as the more usual
stringent formulation (cf. Remark \ref{rmk:compactness}). Perhaps of use in other problems, we observe that this more probabilistic approach offers a diferent perspective.

With respect to previous work on statistical clustering methods, consistency of $K$-means methods have been considered by Pollard in \cite{pollard1981strong}, and more recently, via Gamma convergence, by Thorpe, Theil, Johansen, and Cade in \cite{thorpe_theil_joh_cade}.   On single linkage hierarchical clustering, consistency has been shown by Hartigan in \cite{Hartigan}. 
On Fuzzy C-Means clustering, consistency has been considered by Sabin in \cite{Sabin}.  With respect to spectral clustering, consistency has been considered by Belkin and Niyogi in \cite{Belkin-Niyogi1}, Von Luxburg, Belkin and Bousquet in \cite{von2008consistency}, and Garc{\'\i}a Trillos and Slep{\v{c}}ev in \cite{Trillos_consistency}, the last article, employing the framework of \cite{trillos2014continuum}, as in this paper.  
We also mention, using Gamma convergence (or `epi-convergence') techniques, consistency of maximum likelihood and other estimators was studied by Wets in \cite{Wets} and references therein.

Also, related, Arias-Castro and Pelletier \cite{Arias_Pelletier} considered the consistency of the dimension reduction algorithm `maximum variance unfolding', and Arias-Castro, Pelletier and Pudlo \cite{APP} and  Garc{\'\i}a Trillos, Slep{\v{c}}ev, von Brecht, Laurent and Bresson \cite{trillos2014consistency} studied the consistency of Cheeger and ratio cuts.   Pointwise estimates between graph Laplacians and their continuum counterparts
were considered by Belkin and Niyogi \cite{Belkin-Niyogi}, Coifman and Lafon \cite{Coiffman_Lafon}, Gin\'e and Koltchinskii \cite{Gine_Koltch}, Hein, Audibert and Von Luxburg \cite{Hein_et_al}, and Singer \cite{Singer}.  
In addition,
spectral convergence in more general contexts was considered by Ting, Huang and Jordan \cite{Ting_Huang_Jordan} and Singer and Wu \cite{Singer_Wu}.

In addition, we mention there is a large literature on Gamma convergence of discrete lattice based variational expressions to continuum optimization problems (cf. Braides \cite{braides2002gamma}, Braides and Gelli \cite{braides_gelli} and references therein).  More recently, see van Gennip and Bertozzi \cite{vanGennip_Bertozzi} which studies Gamma convergence of Ginzburg-Landau graph based functionals to continuum limits.

The plan of the paper is the following.  In Section \ref{model}, we define the random geometric graph model and state and discuss the two main results (Theorems \ref{thm:asymptotics} and \ref{thm:mainthm}) on modularity clustering.  In Section \ref{preliminaries}, we discuss preliminaries with respect to optimal transport distances and Gamma convergence--we remark that the proof of Theorem \ref{thm:mainthm} relies on this section, but the proof of Theorem \ref{thm:asymptotics} does not.  In Section \ref{reformulation_sect}, we develop the modularity functional into a convenient form amenable to later analysis.  In Section \ref{proof_asymptotics} and \ref{proof_mainthm}, the proofs of the two main theorems are given respectively.  Finally, in Section \ref{appendix}, some technical calculations and proofs, referenced in previous sections, are collected.

\section{Model and Results}
\label{model}

We first introduce in Subsection \ref{graph partitioning}
the modularity functional on graphs with weighted
edges.  In Subsection \ref{continuum problem}, we
discuss a form of Kelvin's continuum shape optimization problem.  In Subsection \ref{results}, we state our main theorems, and make some remarks in Subsection \ref{discussion}.

\subsection{Graph Partitioning by Modularity Maximization}
\label{graph partitioning}

Let $\mathcal{G} = (X,W)$ be a weighted graph with vertex set $X \coloneqq
\{x_1,x_2,\ldots,x_n\}$ and weight matrix
$W$, where the entry $W_{ij} \geq 0$ denotes the weight of the (undirected) edge between
vertex $x_i$ and $x_j$. The degree of vertex $x_i$ is $d_i \coloneqq \sum_{j}
W_{ij}$, and the total weight of the graph 
is $m
\coloneqq \frac{1}{2}\sum_i d_i = \frac{1}{2}\sum_{i,j} W_{ij}$. When
convenient, we will refer to the vertex $x_i$ simply as 'vertex
$i$'.

Let $\mathcal{U} = \{U_k\}_{k=1}^K$ be a partition of the vertex set $X$. We shall sometimes refer to the nontrivial sets $U_k$ of $\mathcal{U}$ as `clusters', and we denote by $|\mathcal{U}|$ the number of clusters. We may have $|\mathcal{U}| < K$, if one of the sets $U_k$ is empty. The partition $\mathcal{U}$ is therefore equivalent to an assignment $\{c_i\}_{i=1}^n$ of labels $c_i \in \{1,\ldots,K\}$ to vertices, where $U_k = \{ x_i : c_i = k \}$ for $1 \leq k \leq K$.

Modularity was originally introduced as a quantitative measure of the bias towards of vertices in a given cluster to be connected to other vertices in the same cluster \cite{newman2004finding}.
Informally, the idea is to
measure the proportion of edge weight that runs between vertices in the same cluster, and compare it to the expected proportion if the
edge weight was redistributed at random, according to a `null' or `ground truth' model.

The total proportion of edge weight 
between vertices in the same cluster is
\begin{equation*}
\frac{1}{2m}\sum_{i,j} W_{ij}\delta(c_i,c_j),
\end{equation*}
where $\delta(c_i,c_j)$ is the indicator that $c_i = c_j$, and the factor of $1/2$ arises because distinct vertex
pairs appear twice in the sum.

Let $E_{ij}$ denote the expected edge weight
between vertex $i$ and vertex $j$ under a random redistribution model, which we
specify below in different forms. Then,
the expected proportion of edge weight between vertices in the same cluster is
\begin{equation*}
\frac{1}{2m} \sum_{i,j} E_{ij}\delta(c_i,c_j).
\end{equation*}
Then the modularity $Q$ is defined to be
\begin{equation*}
Q(\mathcal{U}) = \frac{1}{2m}\sum_{i,j} \Big( W_{ij} - E_{ij}\Big) \delta(c_i,c_j),
\end{equation*}
and one has $-1\leq Q\leq 1$. The guiding thought is that a partition $\mathcal{U}$ with large modularity $Q(\mathcal{U})$ would represent a good clustering of the graph.

The most popular choice of null model, and the one originally introduced in \cite{newman2004finding}, specifies $E_{ij} = \frac{d_i
  d_j}{2m}$.  For unweighted graphs, where $W$ is the adjacency matrix, this choice corresponds to the \textit{configuration
  model}.  Namely, with respect to the vertex degrees
$d_1,\ldots,d_n$, consider the following distribution over graphs with $n$ vertices and $m$ edges.
At each vertex $i$ place $d_i$ half-edges.  Then, successively choose a pair of half-edges at random without replacement and connect them to form an edge with unit weight.
Then, to dominant order, the
expected number of edges between vertex $i$ and vertex $j$ behaves as $\frac{d_i d_j}{2m}$.

Another natural choice corresponds
to the \textit{Erd\H{o}s-R\'{e}nyi model}, when $E_{ij}
= \frac{2m}{n^2}$.  Namely, for unweighted graphs, when $W$ again is the adjacency matrix, if the
endpoints of each edge is chosen uniformly from the vertex set, with $m$
edges total, the
expected number of edges between vertex $i$ and vertex $j$ is $E_{ij}
= \frac{2m}{n^2}$ to dominant order.

More generally, we may define $E_{ij}$ which interpolates in and among these two possibilities.
For $\alpha \in \mathbb{R}$, let $S = \sum_{i=1}^n d_i^{\alpha}$,
and $E_{ij} = 2m \frac{d_i^{\alpha} d_j^{\alpha}}{S^2}$. Define the `$\alpha$'-modularity $Q$ to be
\begin{equation} \label{eq:generalmodularity}
Q(\mathcal{U}) \coloneqq \frac{1}{2m} \sum_{i,j}\Big(W_{ij} - 2m\frac{d_{i}^{\alpha}
  d_j^{\alpha}}{S^2}\Big) \delta(c_i,c_j).
\end{equation}
When
$\alpha = 0$ or $1$, this reduces to the modularity corresponding to the Erd\H{o}s-R\'{e}nyi model or the configuration model respectively.

The {\it modularity maximization} problem is the following: Given a graph $\mathcal{G}$, find the partition for which the modularity $Q$ is maximized. In other words one solves the following optimization problem,
\begin{equation} \label{prob:modoriginal}
\begin{aligned}
& \underset{\mathcal{U}}{\text{maximize}}
& & Q(\mathcal{U}),
\end{aligned}
\end{equation}
where the maximization occurs over all partitions $\mathcal{U}$ of the vertex set. One is particularly interested in optimal partitions $\mathcal{U}^* \in \argmax_{\mathcal{U}} Q(\mathcal{U})$, representing a division of the graph into natural communities. 

We also consider the following variant of problem \eqref{prob:modoriginal}, in which we restrict the partitions to have at most $K$ classes: 
\begin{equation} \label{prob:kmodoriginal}
\begin{aligned}
& \underset{\mathcal{U}}{\text{maximize}}
& & Q(\mathcal{U}), \\
& \text{subject to} & & |\mathcal{U}| \leq K.
\end{aligned}
\end{equation}

We remark that finding a global maximum of the modularity is known to
be NP-hard (\cite{brandes2008modularity}).  Nonetheless, there are various algorithms to compute approximate maximizers, among them
greedy algorithms
(cf. \cite{clauset2004finding}, \cite{blondel2008fast}), and relaxation methods (cf. \cite{newman2006finding},
\cite{newman2013spectral}, \cite{hu_et_al}).
There is also a Potts model interpretation of modularity which offers another computational perspective (cf. \cite{reichardt2006statistical}, \cite{guimera2004modularity}).

\subsection{Geometric Partitioning} \label{continuum problem}

We now discuss a continuum partitioning problem, which will emerge as a scaled limit of the discrete graph partitioning optimization.
Consider a domain $D \subset{\mathbb{R}}^d$, and a partition $\mathcal{U} = \{ U_k\}_{k=1}^K$ of $D$. 
For what follows, we take $K\geq 1$ to be fixed.

Suppose that we have a probability measure $\mu$ on $D$. 
We say that a
partition $\mathcal{U}$ is balanced with respect to
the measure $\mu$ if each $U_k$ has equal $\mu$-measure, that is
\begin{equation*} \label{eq:balanced}
 \mu(U_k) = \frac{1}{K} \hspace{0.5cm} k=1,\ldots,K. 
\end{equation*}

Because $\mathcal{U}$ is a partition, any two distinct sets $U_k$ and $U_l$
are disjoint. However, if they are adjacent their boundaries will
intersect nontrivially as a $d-1$ dimensional surface.  When the boundaries of $\{U_k\}_{k=1}^K$ are piecewise-smooth, we may measure the size of the interface
between $U_k$ and $U_l$, with respect to a density $\phi$ on $D$, by
\begin{equation*} 
\int_{\partial U_k \cap \partial U_l \cap D} \phi(x) d\mathcal{H}^{(d-1)}(x),
\end{equation*}
where $d\mathcal{H}^{(d-1)}$ denotes the Euclidean $(d-1)$-dimensional surface measure.
The measure of the total interface or perimeter between the sets of
$\mathcal{U}$ is therefore
\begin{equation} \label{eq:totalinterface}
\frac{1}{2} \sum_{1 \leq k \neq l \leq K} \int_{\partial U_k \cap \partial U_l \cap D}
\phi(x)d\mathcal{H}^{(d-1)}(x),
\end{equation}
noting the factor $1/2$ accounts for the fact that each distinct
pair $k,l$ contributes twice to the sum.

Because $\partial U_k \cap D = \cup_{l \neq
  k} \partial U_k \cap \partial U_l \cap D$, the quantity
(\ref{eq:totalinterface}) is equal to
\begin{equation} \label{eq:totalperimeter}
\frac{1}{2} \sum_{1 \leq k \leq K} \int_{\partial U_k \cap D} \phi(x)d\mathcal{H}^{(d-1)}(x).
\end{equation}

More generally, for partitions $\mathcal{U}$ consisting of measurable sets $\{U_k\}_{k=1}^K$,
we may define the weighted perimeter of
$U_k$ as follows:
\begin{equation*}
\per(U_k;\phi) \coloneqq TV(\mathds{1}_{U_k}; \phi),
\end{equation*}
where $TV(\mathds{1}_{U_k}; \phi)$, defined below, is the weighted total variation of the indicator function
$\mathds{1}_{U_k}$.  For sufficiently regular sets, this definition agrees
with the informal notion of perimeter \eqref{eq:totalperimeter}.

Let $L^\infty(D,\theta)$ and $L^p(D,\theta)$ be the usual spaces of functions $u$ where $\inf\{C : |u(x)|\leq C \ {\rm for} \ \theta-{\rm a.e.}\  x\} < \infty$ and $\int_D |u(x)|^p d\theta(x)<\infty$ for $1\leq p<\infty$ respectively. When $\theta$ is Lebesgue measure on $D$ and the underlying domain $D$ is understood, we will abbreviate $L^p:= L^p(D,\theta)$.  The weighted total variation of a function $u \in L^1$ is given by
\begin{equation*}
TV(u;\phi) \coloneqq \sup_{\substack{\Phi \in C^1_c(D;\mathbb{R}^d) \\
    |\Phi(x)| \leq \phi(x)}} \int_D u(x) \mbox{div } \Phi(x)\, dx,
\end{equation*}
where $C^1_c(D;\mathbb{R}^d)$ is the space of continuously
differentiable, compactly supported vector fields on $D$, and
$\mbox{div } \Phi(x) = \sum_{i=1}^d \frac{\partial \Phi_i}{\partial
  x_i}$.  In our later applications, the density $\phi$ will be
bounded above and below on $D$ by positive constants.  In this case,
the weighted total variation has many of the same properties as the total variation with respect to the uniform density $\mathds{1}_D$, discussed in detail in Chapter 3 of \cite{ambrosio2000functions}. 

Consider now the following geometric partitioning problem.  Among all balanced
$K$-partitions of $D$, choose that which minimizes the
total perimeter of its sets:
\begin{equation} \label{prob:continuumk}
\begin{aligned}
& \underset{\mathcal{U}}{\text{minimize}}
& & \frac{1}{2} \sum_{k=1}^K \per(U_k;\phi) \\
& \text{subject to} & & \mu(U_k) = \frac{1}{K}, \hspace{0.5cm} k = 1,\ldots,K.
\end{aligned}
\end{equation}
See Figure \ref{fig:continuumk} for the behavior when $D$ is a square.

\begin{figure}[H]
    \centering
    \begin{subfigure}[b]{0.3\textwidth}
        \includegraphics[width=\textwidth]{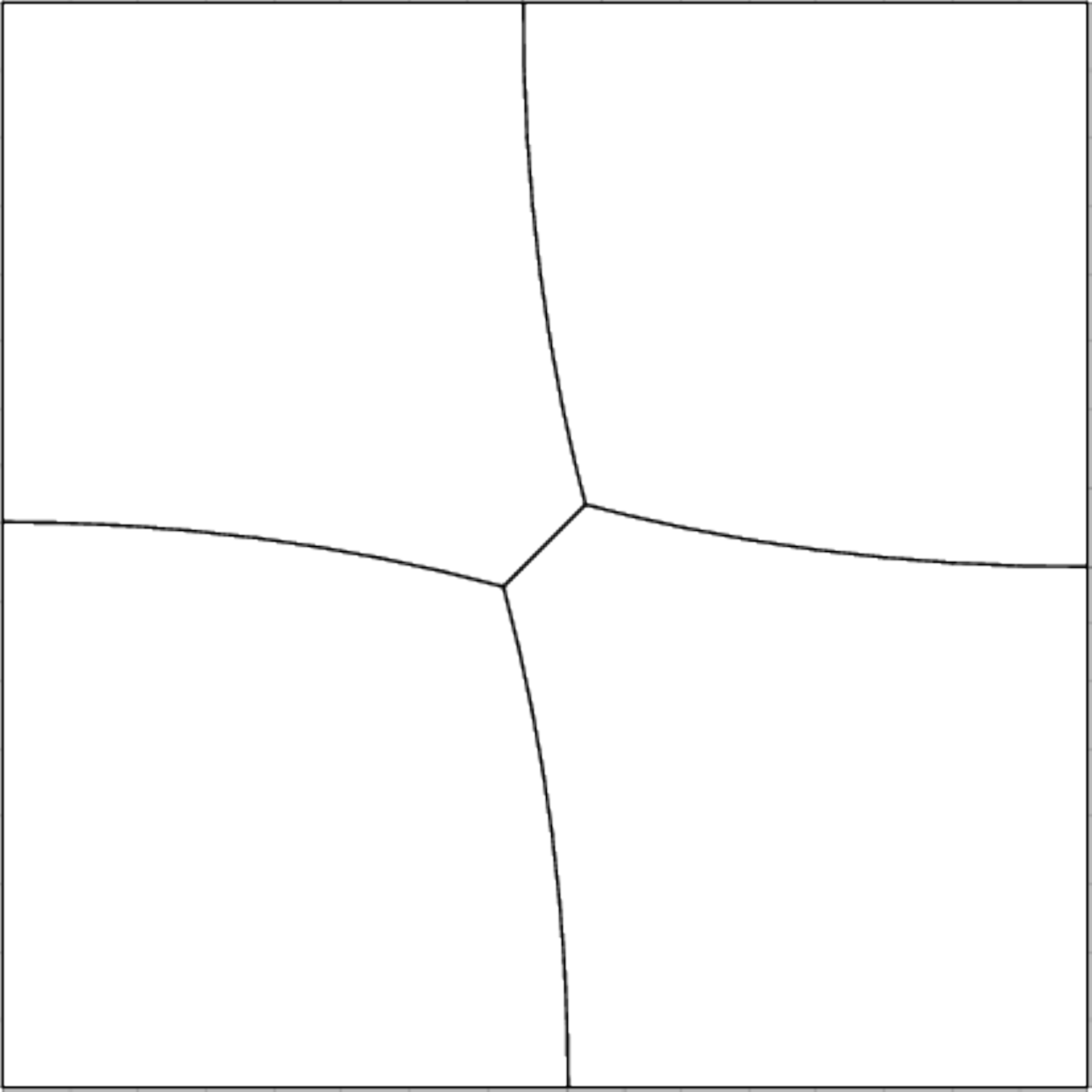}
        \caption{$K=4$}
        \label{fig:min4}
    \end{subfigure}
    ~ 
    \begin{subfigure}[b]{0.3\textwidth}
        \includegraphics[width=\textwidth]{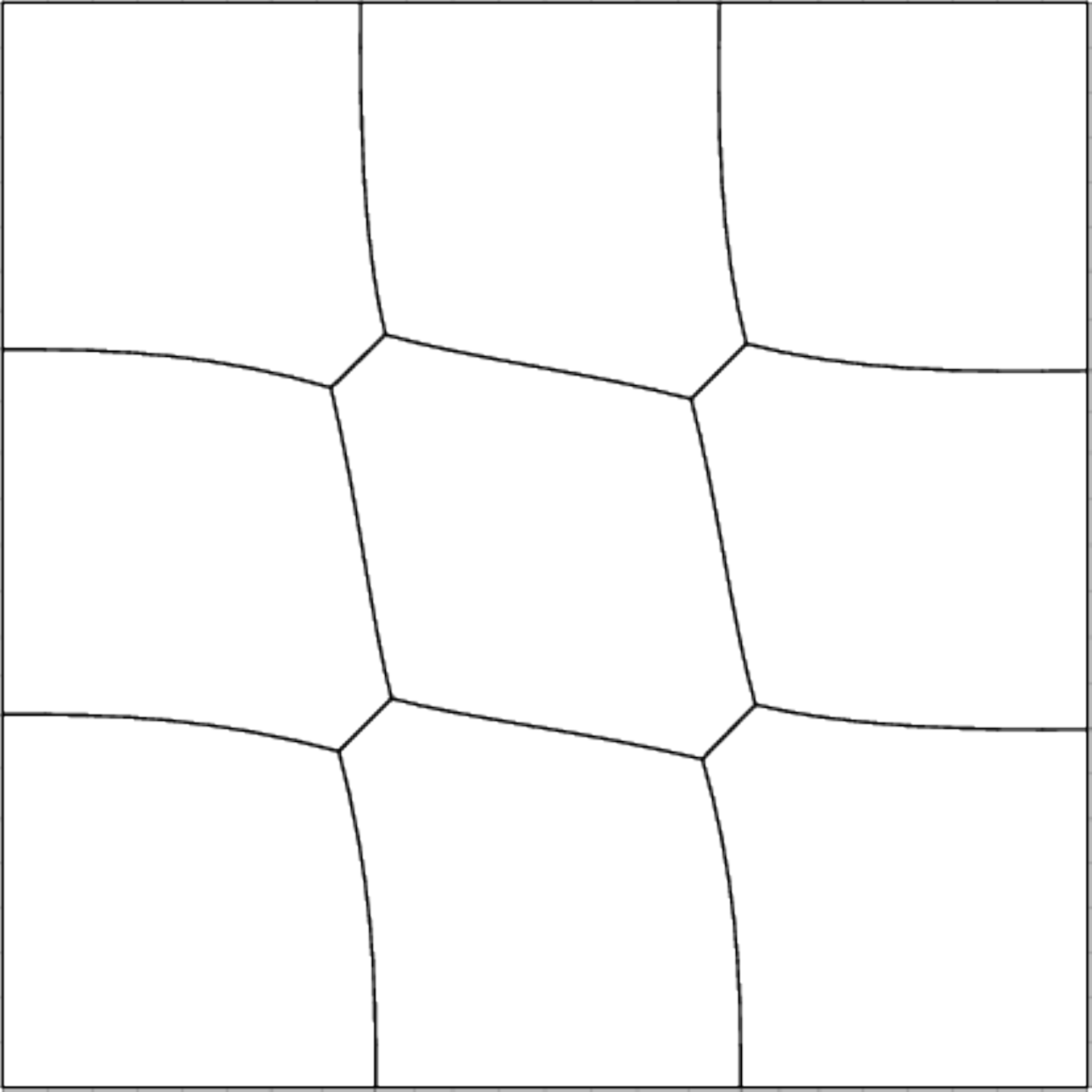}
        \caption{$K=9$.}
        \label{fig:min9}
    \end{subfigure}
\begin{subfigure}[b]{0.3\textwidth}
        \includegraphics[width=\textwidth]{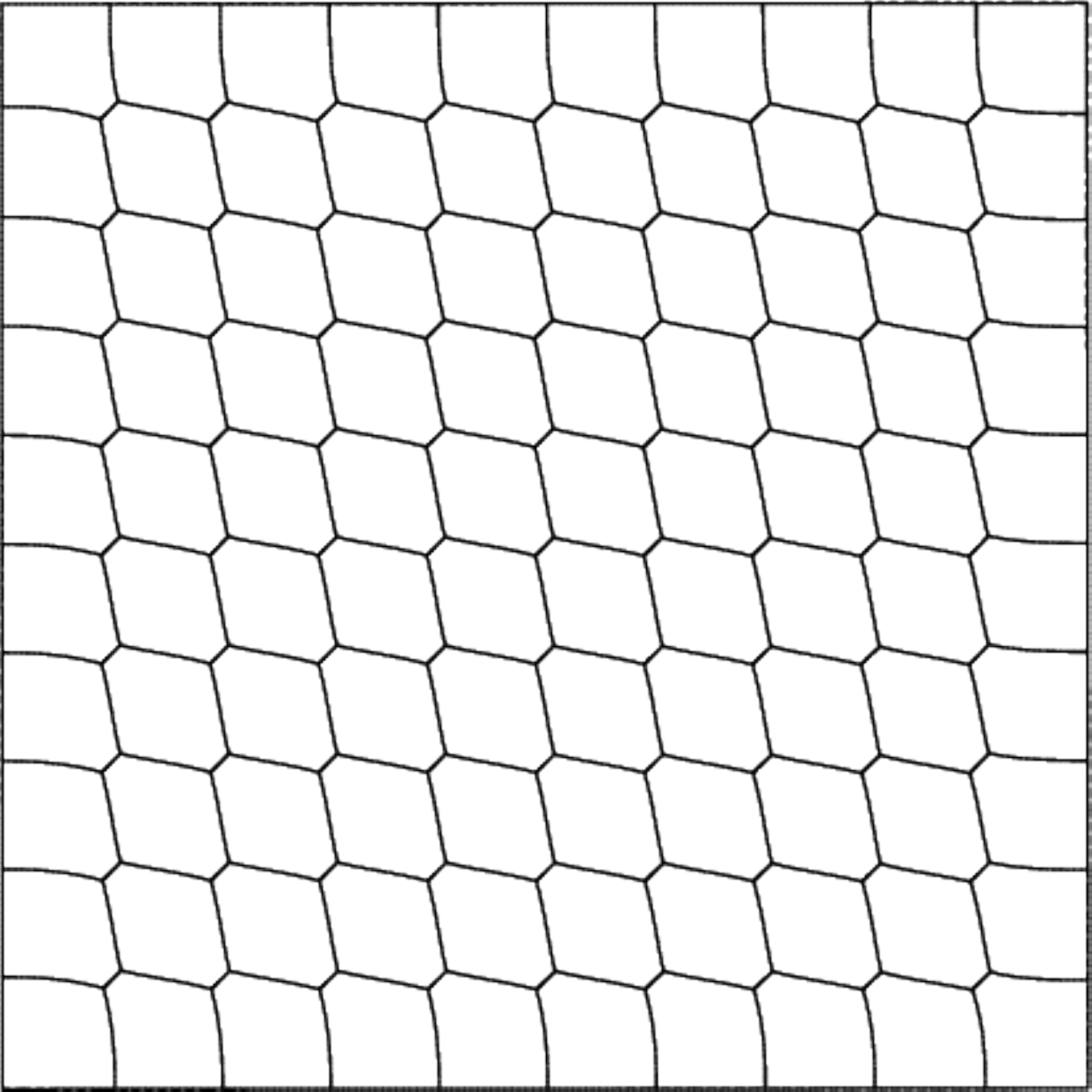}
        \caption{$K=100$.}
        \label{fig:min100}
    \end{subfigure}
    ~ 
    \caption{Local minimizers of Problem \ref{prob:continuumk} on $D =
      (0,1)^2$, with $\phi = 1$, $d\mu = dx$, produced using \cite{brakke1992surface}. \label{fig:continuumk}}
\end{figure}
 It is clear, by dividing $D$ in terms of a moving hyperplane, that
balanced $K$-partitions with finite perimeter exist.  Also, depending
on $D$, the problem may or may not have a unique solution (modulo
relabeling of sets):  Suppose $\phi = 1$ and $d\mu = dx$.  When $D$ is a long,
thin rectangle in $\mathbb{R}^2$, there is only one perimeter
minimizing division into two sets of equal area.  However, when $D$ is
a disc in $\mathbb{R}^2$, there are infinitely many such divisions, given by cuts of the disc along a diameter. 

The general problem \eqref{prob:continuumk} can be seen as a type of `soap bubble' problem.  It can also be seen as a bounded domain form of Kelvin's problem:  Find a tiling of $\mathbb{R}^d$ where each tile has unit $\mu$-volume so that the $\phi$-perimeter between tiles is minimized.  We note in passing, when $\phi = 1$ and $d\mu = dx$, in $d=2$, this problem has been solved in terms of hexagonal tiles, and is the subject of much research in $d\geq 3$. See Morgan \cite{Mor08}, which discusses these and other related optimizations.

We remark, when $\phi=1$ and $d\mu = dx$, the problem \eqref{prob:continuumk} has been considered in the literature. Ca\~{n}ete and Ritore \cite{CR03} have studied minimal partitions of the disc into three regions, and in this context proved that the regions are bounded by circular arcs making perpendicular contact with the boundary of the disc and meeting at a 120 degree triple point within. Some conjectures about minimizers for larger values of $K$ and other domains are presented in Cox and Flikkema \cite{cox2010minimal}. Oudet \cite{Oudet} derives a numerical algorithm, via Gamma convergence, for computing approximate solutions.

\subsection{Results}
 \label{results}

Before stating the two main theorems, we first define the random geometric graphs under consideration. We make the following standing assumptions throughout on the domain $D\subset \mathbb{R}^d$, ground measure $\nu$ on $D$, and underlying edge weight structure.
 
\begin{itemize}
\item[(D)] $D$ is a bounded, open, connected subset of $\mathbb{R}^d$ with Lipschitz
boundary. 
\item [(M)] In $d\geq 1$, $\nu$ is a probability measure on $D$ with distribution function $F_\nu$ and density
$\rho$ such that $\rho$ is Lipschitz and bounded above
and below by positive constants.  

Further, in $d=1$, $\rho$ satisfies additional conditions: 
(a)  $\rho$ is differentiable on $D:=(c,d)$ and (b) $\rho$ is increasing in some interval with left endpoint $c$ and decreasing in some interval with right endpoint $d$.
\end{itemize}

Let $\{X_i\}_{i \in \mathbb{N}}$ be a
collection of i.i.d. samples from $\nu$, and define $\mathcal{X}_n =
\{X_i\}_{i=1}^n$.  We will denote by $\mathbb P$ and $\mathbb E$ the probability and expectation with respect to the underlying probability space.  

The random geometric graph is constructed from the points $\mathcal{X}_n$ through a
kernel function $\eta : \mathbb{R}^d \to \mathbb{R}$, where $\eta$ satisfies:
\begin{enumerate}
\item[(K1)] $\int_{\mathbb{R}^d} \eta(x)\, dx = 1$.
\item[(K2)] $\eta$ is radial and non-increasing, i.e. $\eta(z) \leq \eta(z_0)$ if $\|z\| \geq \|z_0\|$.
\item[(K3)]  $\eta(0) > 0$ and $\eta$ is continuous at $0$.
\item[(K4)] $\eta$ is compactly supported.
\end{enumerate}
There is a large class of kernels admitted under assumptions
$(K1)-(K4)$, including the kernel associated with the well-known random
geometric graph, where $\eta$ is the indicator function of a
ball. 

Between vertex $X_i$ and $X_j$, in terms of a parameter $\epsilon_n$, we attach an edge
with weight
\begin{equation}  \label{eq:weightdef}
W_{ij}  = \begin{cases} \frac{1}{\epsilon_n^d}\eta\Big(\frac{X_i -
    X_j}{\epsilon_n}\Big) =: \eta_{\epsilon_n}(X_i-X_j) & \text{if } i \neq j, \\
0 & \text{otherwise.}
\end{cases}
\end{equation}
The parameter $\epsilon_n$ serves as a length scale. For example,
if the support of $\eta$ is contained in a ball of radius one, then two
vertices $X_i$ and $X_j$ are connected by an edge only if they
are separated by a distance no more than $\epsilon_n$ (cf. Figure \ref{fig:random_geometric_graph}).  Since the
modularity functional $Q$ is the same under weights $W$ and $cW$, for
$c > 0$, the normalization factor $\epsilon_n^{-d}$ in \eqref{eq:weightdef} was chosen so that the expected value of $W_{ij}$ is of order $1$.

\begin{figure}[h] 
  \centering
    \includegraphics[trim={4.2cm 0cm 4cm 0cm},width=\textwidth]{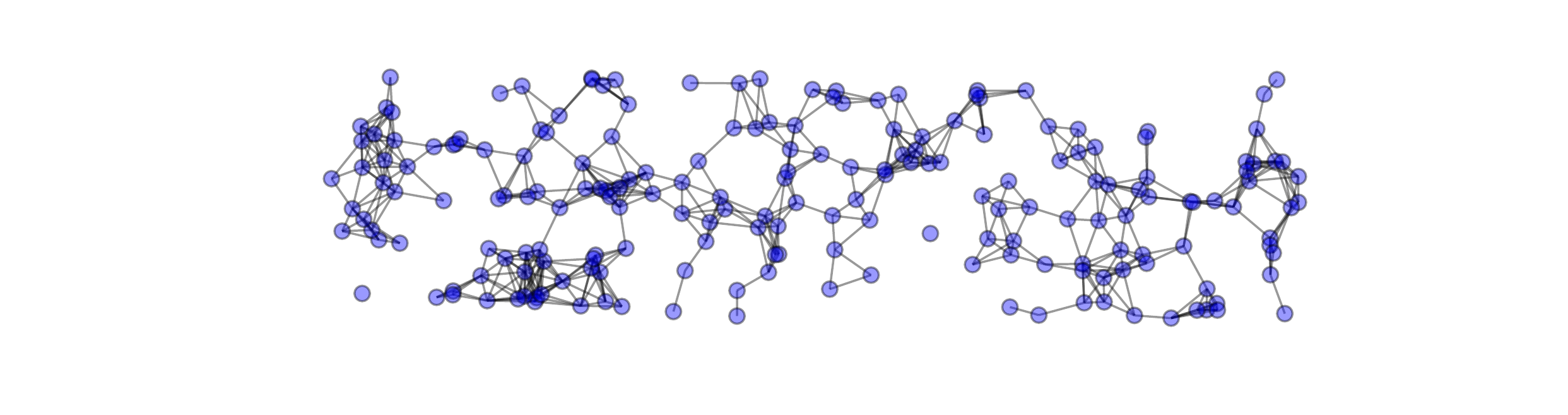}
    \caption{Random geometric graph, constructed from $n = 200$ uniformly
      distributed points on the strip $D = (0,4)\times(0,1)$ with
      connectivity function $\eta = \mathds{1}_{|x| < 1}$ and
      $\epsilon = n^{-0.3}$. \label{fig:random_geometric_graph}}
\end{figure}

Under all circumstances, we have $\epsilon_n \to 0$, although the
specific rate at which $\epsilon_n$ vanishes will depend on the dimension
$d$, as well as the parameter $\alpha$.  There are in fact two different
sets of assumptions, (I1) being more restrictive than (I2), corresponding to the two main
results given below.  We first mention typical examples, satisfying the assumptions.

Taking $\epsilon_n = n^{-\beta}$, condition (I1) will hold if 
 $$\beta<\left\{\begin{array}{rl}
 2/(d+1) & \ {\rm if \ }\alpha = 0, 1\\
 1/(d+1) & \ {\rm if \ }\alpha \neq 0,1.
 \end{array}\right.
 $$
 However, condition (I2) would be satisfied if
 \begin{equation*} \label{eq:epsrateconverge}
 \beta < \begin{cases} \min(1/d,1/2), & \text{if }
   \alpha = 0,1 \\
 {1}/{d+1}, & \text{if }\alpha\neq 0,1.
 \end{cases}
 \end{equation*}

More precisely, (I1) and (I2) are the following:
\begin{itemize}
\item[(I1)] The sequence $\{\epsilon_n\}_{n\in \mathbb{N}}$ is such that $\epsilon_n>0$ and $\epsilon_n\rightarrow 0$.  When $\alpha =0$ or $\alpha =1$, we also suppose that
\begin{equation*}
\sum_{n=1}^\infty \exp(- n \epsilon_n^{(d+1)/2}) < \infty.
\end{equation*}
However, when $\alpha\neq 0, 1$, we suppose that
\begin{equation*}
\sum_{n=1}^\infty n \exp(- n \epsilon_n^{d+1}) < \infty.
\end{equation*}
\end{itemize}

\begin{itemize}
\item[(I2)] The sequence $\{\epsilon_n\}_{n\in \mathbb{N}}$ is such that $\epsilon_n>0$ and $\epsilon_n\rightarrow 0$.  When $\alpha = 0$ or $\alpha = 1$, we also suppose that
\begin{equation*}
\begin{aligned}
&\lim_{n \to \infty} \frac{\sqrt{2 \log \log n}}{\sqrt{n}}
\frac{1}{\epsilon_n} = 0, \hspace{0.3cm} & {\rm if \ }d=1\\
&\lim_{n \to \infty} \frac{(\log n)^{3/4}}{n^{1/2}}
\frac{1}{\epsilon_n} = 0 \hspace{0.3cm} &{\rm if \ }d=2 \\ 
&\lim_{n \to \infty} \frac{(\log n)^{1/d}}{n^{1/d}} \frac{1}{\epsilon_n} = 0
\hspace{0.3cm} &{\rm \ if \ }d \geq 3.
\end{aligned}
\end{equation*}
However, when $\alpha\neq 0,1$, we suppose that
\begin{equation*} \label{eq:epsratealpha}
 \sum_{n=1}^\infty n \exp(-n\epsilon_n^{d+1})<\infty.
\end{equation*}
\end{itemize}
In Subsection \ref{discussion} we discuss the motivation behind
these assumptions in more detail.

Now, given the set $\mathcal{X}_n$ and a sequence $\{ \epsilon_n \}_{n \in \mathbb{N}}$, denote by $W_n$ the weight matrix with entries given by (\ref{eq:weightdef}), and denote by $\mathcal{G}_n =
(\mathcal{X}_n,W_n)$ the corresponding weighted random geometric graph.
 We let
\begin{center}
$Q_n$ denote the modularity functional (\ref{eq:generalmodularity}) 
corresponding to $\mathcal{G}_n$. 
\end{center}

Consider a partition $\mathcal{U} = \{U_k\}_{k=1}^K$ of the domain $D$. For any $n \geq 1$, this induces a partition $\mathcal{U}_n = \{U_{n,k}\}_{k=1}^K$ of the sample $\mathcal{X}_n$, where
\begin{equation*}
U_{n,k} = U_k \cap
\mathcal{X}_n, 
\end{equation*}
for $1 \leq k \leq K$.

\begin{theorem}[Asymptotics] \label{thm:asymptotics}
Suppose $\{\epsilon_n\}_{n\in\mathbb{N}}$ satisfies condition (I1).  Fix $K\geq 1$ and let $\mathcal{U}=\{U_k\}_{k=1}^K$ be a partition of $D$ where each $U_k$ is a subset of finite perimeter, ${\rm Per}(U_k;\rho^2)<\infty$.  Let $\mathcal{U}_n$ be the induced partition of $\mathcal{X}_n$ for $n\geq 1$.
Then, the modularity $Q_n(\mathcal{U}_n)$ satisfies
\begin{equation} \label{eq:asympresult1}
1 - 1/K - Q_n(\mathcal{U}_n) \xrightarrow{a.s.} \sum_{k=1}^K
\Big(\mu(U_k) - 1/K\Big)^2,
\end{equation}
as $n\rightarrow \infty$, where $d\mu(x) = \frac{\rho^{1+\alpha}(x)\, dx}{\int_D
  \rho^{1+\alpha}(x)\, dx}$. 

Further, if $\sum_{k=1}^K (\mu(U_k) - 1/K)^2 = 0$, we
have
\begin{equation} \label{eq:asympresult2}
\frac{1 - 1/K - Q_n(\mathcal{U}_n)}{\epsilon_n} \xrightarrow{a.s.}
 C_{\eta,\rho} \sum_{k=1}^K\per(U_k; \rho^2),
\end{equation}
as $n\rightarrow\infty$, where 
\begin{equation*}
C_{\eta,\rho} = \frac{\int_{\mathbb{R}^n} \eta(x)|x_1|\,dx }{2\int_D
\rho^2(x)\, dx},
\end{equation*}
and $x_1$ denotes the first coordinate of $x$.
\end{theorem}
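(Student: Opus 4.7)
The plan is to first re-express $1-1/K - Q_n(\mathcal{U}_n)$ in a more tractable way, splitting it into a graph total variation term and a balance term, and then to analyze each piece via a combination of an expectation computation and a concentration estimate. Using the reformulation laid out in Section~\ref{reformulation_sect}, one obtains the clean identity
\begin{equation*}
1 - 1/K - Q_n(\mathcal{U}_n) \;=\; \mathrm{TV}_n + \mathrm{Bal}_n,
\end{equation*}
where $\mathrm{TV}_n := \frac{1}{2m_n}\sum_{i,j}W_{ij}\mathds{1}[c_i\neq c_j]$ and $\mathrm{Bal}_n := \sum_{k=1}^K (p_{n,k} - 1/K)^2$ with $p_{n,k} := S^{-1}\sum_{i\in U_{n,k}} d_i^\alpha$ (so that $\sum_k p_{n,k} = 1$). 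Both terms are non-negative, so the theorem reduces to two claims: (i) $\mathrm{Bal}_n \xrightarrow{a.s.} \sum_k (\mu(U_k) - 1/K)^2$, with the refined bound $\mathrm{Bal}_n/\epsilon_n \xrightarrow{a.s.} 0$ in the balanced case; and (ii) $\mathrm{TV}_n/\epsilon_n \xrightarrow{a.s.} C_{\eta,\rho}\sum_k \per(U_k;\rho^2)$ (which in particular yields $\mathrm{TV}_n\to 0$ unscaled).

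For the balance term, I would first note that $d_i/(n-1)$ concentrates around $(\eta_{\epsilon_n}\!*\rho)(X_i)\to \rho(X_i)$, so $\tfrac{1}{n(n-1)^\alpha}\sum_{i\in U_{n,k}} d_i^\alpha$ is close to $\tfrac{1}{n}\sum_{i\in U_{n,k}}\rho(X_i)^\alpha$. The strong law of large numbers then identifies the limit as $\int_{U_k}\rho^{1+\alpha}\,dx$, and forming the ratio gives $p_{n,k}\to \mu(U_k)$ a.s., yielding claim~(i). The improvement $\mathrm{Bal}_n = o(\epsilon_n)$ when $\mu(U_k)=1/K$ requires Bernstein-type deviation bounds on $T_{n,k} := \sum_{i\in U_{n,k}}d_i^\alpha$ and $S$: for $\alpha=0$ these sums are linear, for $\alpha=1$ they are standard second-order $U$-statistics, and for $\alpha\notin\{0,1\}$ they require first linearizing $d_i^\alpha$ around its conditional mean. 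The summability conditions in~(I1) are chosen precisely so that the resulting deviation probabilities are summable in $n$, and Borel--Cantelli yields the required a.s.\ rate.

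For the graph total variation term, I would begin by computing
\begin{equation*}
\mathbb{E}\Bigl[\sum_{i\neq j} W_{ij}\mathds{1}[c_i\neq c_j]\Bigr] = n(n-1)\sum_{k\neq l}\int\!\!\int \eta_{\epsilon_n}(x-y)\mathds{1}_{U_k}(x)\mathds{1}_{U_l}(y)\rho(x)\rho(y)\,dx\,dy,
\end{equation*}
and noting $\mathbb{E}[2m_n]/[n(n-1)]\to\int_D\rho^2\,dx$. Using $\mathds{1}[c_i\neq c_j] = \tfrac{1}{2}\sum_k|\mathds{1}_{U_k}(X_i) - \mathds{1}_{U_k}(X_j)|$ together with the non-local approximation of weighted perimeter (cf.\ \cite{trillos2014continuum}),
\begin{equation*}
\frac{1}{\epsilon}\int\!\!\int \eta_\epsilon(x-y)|\mathds{1}_U(x) - \mathds{1}_U(y)|\rho(x)\rho(y)\,dx\,dy \;\to\; \Bigl(\int \eta(z)|z_1|\,dz\Bigr)\per(U;\rho^2),
\end{equation*}
applied for each $U = U_k$, I obtain $\mathbb{E}[\mathrm{TV}_n]/\epsilon_n \to C_{\eta,\rho}\sum_k\per(U_k;\rho^2)$. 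Upgrading to almost-sure convergence calls for a Bernstein-type tail bound for the $U$-statistic $\sum_{i\neq j}\eta_{\epsilon_n}(X_i-X_j)\mathds{1}[c_i\neq c_j]$, whose kernel has $L^\infty$ norm of order $\epsilon_n^{-d}$ and variance of order $\epsilon_n^{-d+1}$; this gives deviations of order $\epsilon_n$ with failure probability $\exp(-cn\epsilon_n^{d+1})$ (or the sharper $\exp(-cn\epsilon_n^{(d+1)/2})$ when $S$ and $m_n$ are linear in the sample, i.e., $\alpha\in\{0,1\}$). These are summable under~(I1), and Borel--Cantelli closes the argument.

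The main obstacle I expect is the concentration analysis when $\alpha\notin\{0,1\}$: the $p_{n,k}$ are ratios of sums of non-integer powers of empirical degrees, the degrees themselves depending on the entire sample, so they are neither linear statistics nor standard $U$-statistics. One must linearize $d_i^\alpha$ around its mean and carefully track the propagated error through both numerator and denominator, and this is precisely what forces the more restrictive summability requirement in~(I1) in that regime. The other analytic input, the non-local-to-local approximation of weighted perimeter, is classical and applied uniformly across the finitely many cluster pairs $(k,l)$.
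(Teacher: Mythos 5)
Your proposal is correct and follows essentially the same route as the paper: the decomposition into $\mathrm{TV}_n+\mathrm{Bal}_n$ is exactly the paper's identity \eqref{eq:oneminusmod} (your $p_{n,k}$ is $G\Lambda_n(u_{n,k})/G\Lambda_n(1)$), and the analysis of each piece — non-local-to-local perimeter convergence for the expectation, Hoeffding/decoupling/U-statistic concentration plus Borel--Cantelli for the a.s.\ statement, and linearization of $d_i^\alpha$ via degree concentration for general $\alpha$ — matches Lemmas \ref{lemma:tvlim}--\ref{lemma:quadasconvergea}. The only small imprecision is in attributing the two regimes of (I1): the GTV U-statistic bound yields the $\exp(-cn\epsilon_n^{(d+1)/2})$ rate for all $\alpha$, and it is the union bound over the $n$ degrees in the linearization step (not the U-statistic itself) that forces the weaker $\sum n\exp(-n\epsilon_n^{d+1})<\infty$ condition when $\alpha\notin\{0,1\}$, as you correctly note in your closing paragraph.
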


One way to interpret these law of large numbers limits is that the nonnegative quantity, a `balance' term, 
\begin{equation*}
\sum_{k=1}^K
\Big(\mu(U_k) - \frac{1}{K}\Big)^2,
\end{equation*}
 is a measure of how
balanced the partition $\mathcal{U}$ is with respect to the
measure $\mu$. In our model, the limiting modularity of a partition is predominantly determined by the number of clusters and
the extent to which they are balanced.

We state a corollary of Theorem \ref{thm:asymptotics}, which follows by considering balanced partitions where $K$ is not restricted.
\begin{corollary}
\label{cor:main}
Suppose the assumptions of Theorem \ref{thm:asymptotics} are satisfied.  Let $Q_{n}^* = \max_{\mathcal{U}_n} Q_{n}(\mathcal{U}_n)$ denote the maximum modularity
associated to
$\mathcal{G}_n$, where the maximum is over all partitions $\mathcal{U}_n$ of $\mathcal{X}_n$. 
Then, we have 
\begin{equation*}
Q_n^* \xrightarrow{a.s.} 1,
\end{equation*}
as $n \to \infty$.
\end{corollary}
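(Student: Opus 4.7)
The strategy is to sandwich $Q_n^*$ from above by the a priori bound $Q \leq 1$, and from below by producing, for each $K$, a concrete sequence of partitions whose modularity converges to $1 - 1/K$ via Theorem \ref{thm:asymptotics}. Since $K$ can be taken arbitrarily large, the lower bound will match the upper bound in the limit.

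First I would record the trivial upper bound: the modularity functional always satisfies $Q \leq 1$, so $Q_n^* \leq 1$ deterministically, hence $\limsup_n Q_n^* \leq 1$ almost surely. For the matching liminf, I would, for each fixed $K \geq 1$, exhibit a partition $\mathcal{U}^{(K)} = \{U_k^{(K)}\}_{k=1}^K$ of $D$ which is balanced with respect to $\mu$, i.e. $\mu(U_k^{(K)}) = 1/K$ for every $k$, and such that each $U_k^{(K)}$ has finite weighted perimeter $\mathrm{Per}(U_k^{(K)}; \rho^2) < \infty$. A clean construction is to slice $D$ by $K-1$ parallel hyperplanes perpendicular to a fixed coordinate direction, choosing the hyperplane positions so that each slab has $\mu$-mass $1/K$; this is possible because $\mu$ has a bounded density and $D$ is bounded, open, connected. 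The resulting pieces are intersections of $D$ (which has Lipschitz boundary) with slabs, so their perimeters are bounded by $\mathcal{H}^{d-1}(\partial D) + (K-1)\,\mathrm{diam}(D)^{d-1}$ times $\|\rho\|_\infty^2$, which is finite.

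With such $\mathcal{U}^{(K)}$ in hand, Theorem \ref{thm:asymptotics} applies to the induced discrete partitions $\mathcal{U}_n^{(K)}$, and since the balance term vanishes by construction, we get
\begin{equation*}
Q_n(\mathcal{U}_n^{(K)}) \xrightarrow{a.s.} 1 - \frac{1}{K}
\end{equation*}
as $n \to \infty$. In particular, for each fixed $K$, there is an event $\Omega_K$ of probability one on which $\liminf_n Q_n^* \geq \liminf_n Q_n(\mathcal{U}_n^{(K)}) = 1 - 1/K$. Intersecting the countable family $\{\Omega_K\}_{K \geq 1}$ yields an event of probability one on which this inequality holds for every $K$ simultaneously, so $\liminf_n Q_n^* \geq \sup_{K \geq 1}(1 - 1/K) = 1$ almost surely. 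Combined with the upper bound, this gives $Q_n^* \to 1$ a.s.

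I do not expect any serious obstacle here: the only substantive point is to verify that balanced partitions of finite $\rho^2$-perimeter exist for every $K$, and the hyperplane-slab construction handles this immediately given assumptions (D) and (M). Everything else is a direct application of Theorem \ref{thm:asymptotics} together with the elementary bound $Q \leq 1$ and a countable-intersection argument to swap "for each $K$, a.s." into "a.s., for all $K$".
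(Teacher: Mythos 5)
Your proof is correct and follows essentially the same route as the paper, which derives the corollary by applying Theorem \ref{thm:asymptotics} to balanced $K$-partitions (the paper notes in Subsection \ref{continuum problem} that such partitions of finite perimeter exist via hyperplane slicing, exactly your construction) and letting $K\to\infty$. The countable-intersection step to upgrade ``for each $K$, a.s.'' to ``a.s., for all $K$'' is the right way to make the limit in $K$ rigorous.
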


Our second main result is a characterization of the
behavior of optimal clusterings $U_n^* \in \argmax_{ |\mathcal{U}_n| \leq
    K} Q_n(\mathcal{U}_n)$,
as $n \to \infty$. To this end, we introduce a suitable notion of convergence.

To a sequence of sets $\{U_n\}_{n\in \mathbb{N}}$ with $U_n \subset \mathcal{X}_n$,
we associate the `graph measures' $\{\gamma_n\}_{n\in\mathbb{N}}$, where 
$\gamma_n =   \frac{1}{n}\sum_{i=1}^n \upsilon_{(X_i,\mathds{1}_{U_{n}}(X_i))},$
and $\upsilon_\cdot$ denotes a point mass.  In words, the measure
$\gamma_{n}$ is the distribution of the
graph of $\mathds{1}_{U_{n}}$ under the empirical measure $\nu_n$ on $\mathcal{X}_n$.  Let also $U\subset D$ and define $\gamma$ as the distribution of the graph of $\mathds{1}_U$ under $\nu$.
   We will write, with respect to a realization of $\{X_i\}_{i \in
     \mathbb{N}}$, that
\begin{equation}
\label{single_set_conv_defn}
U_n {\rm \ converges \ {\it weakly} \ }({\rm denoted \ }\xrightarrow{w}) {\rm \ to \ } U \  {\rm \ if \ }\gamma_n \xrightarrow{w} \gamma.
\end{equation}

Correspondingly, consider a sequence of partitions $\mathcal{U}_n = \{ U_{n,k} \}_{k=1}^K$ of $\mathcal{X}_n$, and a partition $\mathcal{U} = \{U_k\}_{k=1}^K$ of $D$. Since the sets in the collections $\mathcal{U}_n$ and $\mathcal{U}$ are unordered, we say that
\begin{center}
 $\mathcal{U}_n$ converges \textit{weakly} (denoted $\xrightarrow{w}$)
to $\mathcal{U}$
\end{center}
 if there exists a sequence $\{\pi_n
\}_{n \in \mathbb{N}}$ of
permutations $\pi_n : \{1,\ldots,K\} \to \{1,\ldots,K\}$ such that 
\begin{equation} \label{eq:convergeeq}
\gamma_{n,\pi_n k} \xrightarrow{w}\gamma_k, \hspace{0.5cm}
\text{ for $k = 1,\ldots,K.$}
\end{equation}

\begin{theorem}[Optimal Clusterings] \label{thm:mainthm}
Suppose $\{\epsilon_n\}_{n\in\mathbb{N}}$ satisfies condition (I2). 

\noindent 
Let $\mathcal{U}_n^* \in \argmax_{ |\mathcal{U}_n| \leq
    K} Q_n(\mathcal{U}_n)$ be an optimal partition of $\mathcal{X}_n$, for $n\geq 1$.
    Let also, with respect to problem \eqref{prob:continuumk}, $\phi  = \rho^2$ and $d\mu= \rho^{1+\alpha}dx/\int_D\rho^{1+\alpha}(x)dx$.
    
If
$\mathcal{U}^*$ is the unique solution (modulo reordering of its
constituent sets) of
problem (\ref{prob:continuumk}),
then
a.s.
$\mathcal{U}_n^*$ converges weakly to
$\mathcal{U}^*$, in the sense of (\ref{eq:convergeeq}). 

If there is more than one solution
to the problem (\ref{prob:continuumk}), then a.s. $\mathcal{U}_n^*$ converges weakly along a subsequence, in the sense of (\ref{eq:convergeeq}), to a solution $\mathcal{U}^*$ of (\ref{prob:continuumk}).
\end{theorem}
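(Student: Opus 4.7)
The plan is to recast modularity maximization as minimization of a rescaled energy whose $\Gamma$-limit is the perimeter functional in \eqref{prob:continuumk}, and then invoke compactness together with the standard $\Gamma$-convergence implication to conclude that a.s.\ any subsequential limit of $\mathcal{U}_n^*$ solves \eqref{prob:continuumk}. Specifically, using the decomposition anticipated in Section \ref{reformulation_sect}, I would write
\[ \mathcal{E}_n(\mathcal{U}_n) \;\coloneqq\; \frac{1 - 1/K - Q_n(\mathcal{U}_n)}{\epsilon_n} \;=\; A_n(\mathcal{U}_n) \;+\; \frac{1}{\epsilon_n}\, B_n(\mathcal{U}_n), \]
where $A_n$ is a graph-total-variation term comparable to $\sum_k \per(U_{n,k};\rho^2)$ and $B_n$ a nonnegative balance penalty vanishing exactly when $\mu_n(U_{n,k})=1/K$ for all $k$. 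Maximizing $Q_n$ subject to $|\mathcal{U}_n|\leq K$ is equivalent to minimizing $\mathcal{E}_n$, and the diverging prefactor $1/\epsilon_n$ converts the soft balance penalty into the hard equal-volume constraint of \eqref{prob:continuumk} in the limit.

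For the $\Gamma$-liminf, suppose $\mathcal{U}_n \xrightarrow{w} \mathcal{U}$ in the sense of \eqref{eq:convergeeq}. Weak convergence of partitions, combined with the a.s.\ optimal-transport control between $\nu_n$ and $\nu$ furnished by hypothesis (I2) in the framework of \cite{trillos2014continuum}, gives $TL^1$-convergence of the indicator functions. The liminf for the $A_n$-part is then the graph-TV-to-continuum-TV inequality of \cite{trillos2014continuum}, producing $C_{\eta,\rho}\sum_k \per(U_k;\rho^2)$ with the constant identified in Theorem \ref{thm:asymptotics}. For the balance part, weak convergence implies $\mu_n(U_{n,\pi_n k}) \to \mu(U_k)$, so if $\mu(U_k)\neq 1/K$ for some $k$ then $B_n(\mathcal{U}_n)$ stays bounded away from $0$ while $1/\epsilon_n\to\infty$, forcing the liminf to be $+\infty$. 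The $\Gamma$-limsup step — where, as the authors emphasize, direct construction of smooth approximations honoring the balance constraint is genuinely delicate — is supplied gratis by Theorem \ref{thm:asymptotics}: for any balanced measurable $\mathcal{U}$ with finite $\rho^2$-perimeter, the tautological pull-back $\mathcal{U}_n = \{U_k\cap \mathcal{X}_n\}_{k=1}^K$ satisfies $\mathcal{U}_n \xrightarrow{w}\mathcal{U}$ by the law of large numbers, and \eqref{eq:asympresult2} gives $\mathcal{E}_n(\mathcal{U}_n)\to C_{\eta,\rho}\sum_k \per(U_k;\rho^2)$ a.s., matching the liminf exactly.

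For compactness, I would test $\mathcal{E}_n$ against a deterministic balanced reference partition (e.g.\ slabs of $D$ cut by $\mu$-equipartitioning hyperplanes) with finite $\rho^2$-perimeters; Theorem \ref{thm:asymptotics} shows that on this reference $\mathcal{E}_n$ converges to a finite limit, so $\mathcal{E}_n(\mathcal{U}_n^*)$ is a.s.\ uniformly bounded. Hence each $A_n(\mathds{1}_{U_{n,k}^*})$ is bounded, and the discrete BV-compactness of \cite{trillos2014continuum} (as employed in \cite{Trillos_consistency}) yields, on a common subsequence obtained by a diagonal argument across $k$ together with a selection of permutations $\pi_n$ (e.g.\ by maximizing overlap with candidate limits), a weak limit $\mathcal{U}^*$ of finite $\rho^2$-perimeter. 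The uniform bound on $\mathcal{E}_n(\mathcal{U}_n^*)$ combined with the liminf analysis forces $\mu(U_k^*)=1/K$, so $\mathcal{U}^*$ is balanced, and the modified $\Gamma$-convergence conclusion of Subsection \ref{gamma_conv} identifies $\mathcal{U}^*$ as a minimizer of \eqref{prob:continuumk}. If \eqref{prob:continuumk} has a unique minimizer modulo reordering, every subsequential limit coincides with it and the whole sequence converges; otherwise only subsequential convergence is obtained, as stated.

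The main obstacle will be the combined compactness plus simultaneous permutation selection: one must extract a single almost-sure event and a single subsequence along which all $K$ cluster indicators converge, with permutations $\pi_n$ chosen compatibly for all $k$ at once. A secondary subtlety is showing that the $\epsilon_n^{-1} B_n$ term genuinely \emph{coerces} balancedness in the limit — not merely vanishes along a subsequence — which rests on continuity of $U\mapsto \mu(U)$ under the topology \eqref{eq:convergeeq} together with the explicit rate in (I2). The authors' framing of the argument as a modified $\Gamma$-convergence for random functionals is tailored precisely to let the recovery step bypass smooth-approximation arguments and cite Theorem \ref{thm:asymptotics} directly.
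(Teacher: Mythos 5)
Your proposal follows essentially the same route as the paper: the identity \eqref{eq:oneminusmod} is used to recast modularity maximization as minimization of the energy $E_n = F_n + TV_n$ on $(TL^1)^K$, the $\Gamma$-liminf is obtained from the graph-TV liminf of \cite{trillos2014continuum} plus the divergence of the balance term, the recovery sequence is exactly the induced partition $\{U_k\cap\mathcal{X}_n\}$ handled by Theorem \ref{thm:asymptotics} within the weakened (random) $\Gamma$-convergence framework, and compactness follows from discrete BV-compactness after bounding $E_n(\mathcal{U}_n^*)$ against a balanced reference partition. This matches the paper's argument in Sections \ref{reformulation_sect}--\ref{proof_mainthm}, including its key structural novelty.
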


Figure \ref{fig:binary_convergence} illustrates this result. We remark
that, in the proof of Theorem \ref{thm:mainthm}, we will in fact show a
stronger form of convergence, in terms of Wasserstein,
Kantorovich-Rubenstein type $(TL^1)^K$ distances, via a
Gamma convergence statement (Theorem \ref{thm:maingamma}) for certain
energy functionals.

\begin{figure}[h]
  \centering
    \includegraphics[trim={0cm 4cm 0cm 4cm},width=\textwidth]{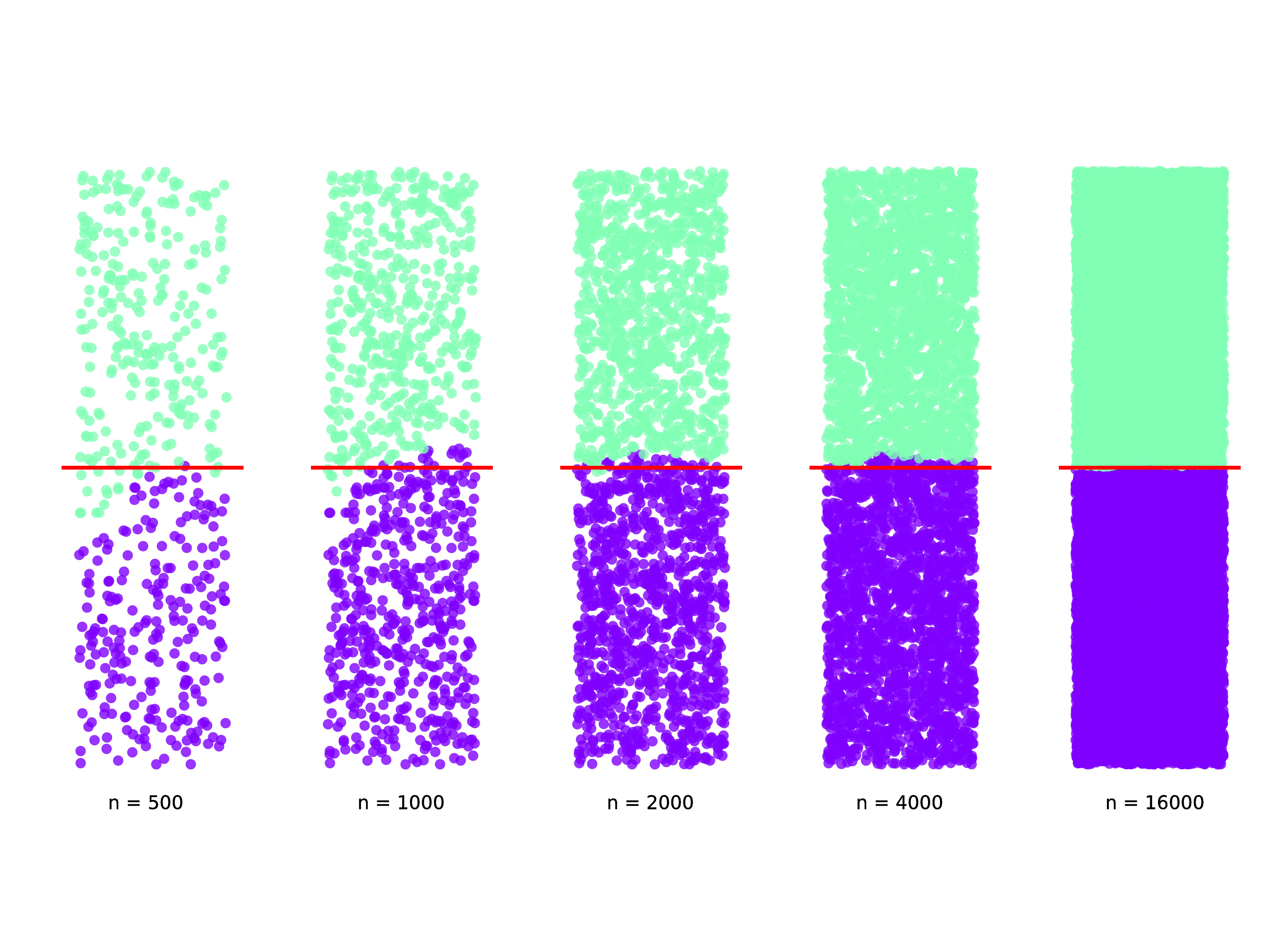}
    \caption{Binary partitions produced by modularity clustering of
      random geometric graphs with various values of $n$ on the domain $D = (0,1) \times (0,4)$, using the algorithm of \cite{newman2006finding}.
      Here $\alpha = 1$, the density $\rho$ is uniform, $\eta = \mathds{1}_{|x| < 1}$, and
      $\epsilon_n = n^{-0.3}$. The red lines indicate the optimal cut
      associated with the continuum partitioning problem for $K=2$.\label{fig:binary_convergence}}
\end{figure}

We also note that the parameter $\alpha$, which parametrizes a family of null models in the modularity functional \eqref{eq:generalmodularity}, appears in the balancing
measure $\mu$ with respect to the continuum problem. As $\alpha$ increases, the measure $\mu$ puts more mass
near the modes of the density $\rho$, of course altering the optimal clusterings, as illustrated in Figure \ref{fig:alpha_varying}.

\begin{figure}[h]
    \centering
    \begin{subfigure}[b]{0.23\textwidth}
        \includegraphics[width=\textwidth]{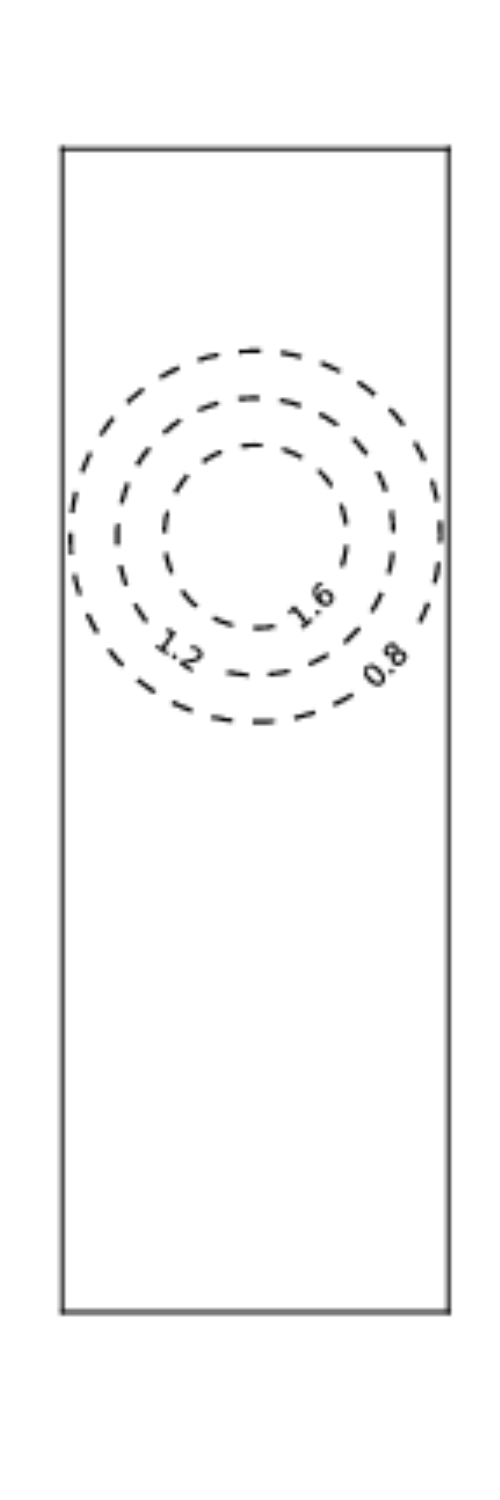}
        \caption{Level sets of $\rho$}
    \end{subfigure}
    ~ 
    \begin{subfigure}[b]{0.23\textwidth}
        \includegraphics[width=\textwidth]{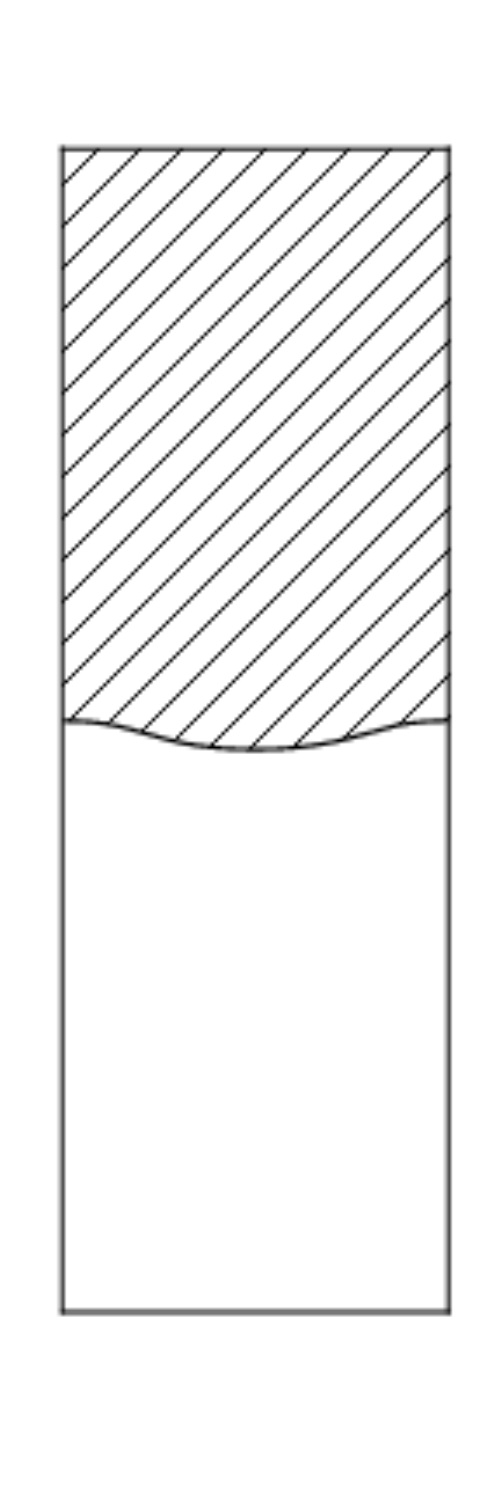}
        \caption{$\alpha = -1$}
    \end{subfigure}
    ~ 
    \begin{subfigure}[b]{0.23\textwidth}
        \includegraphics[width=\textwidth]{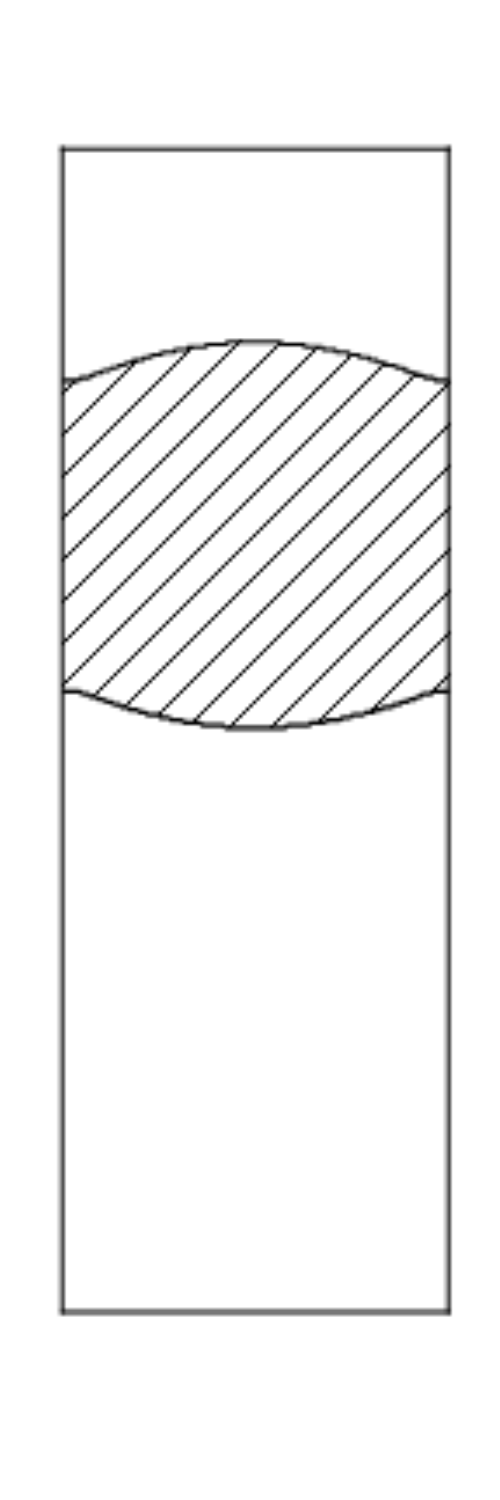}
        \caption{$\alpha = 0$}
    \end{subfigure}
    \begin{subfigure}[b]{0.23\textwidth}
        \includegraphics[width=\textwidth]{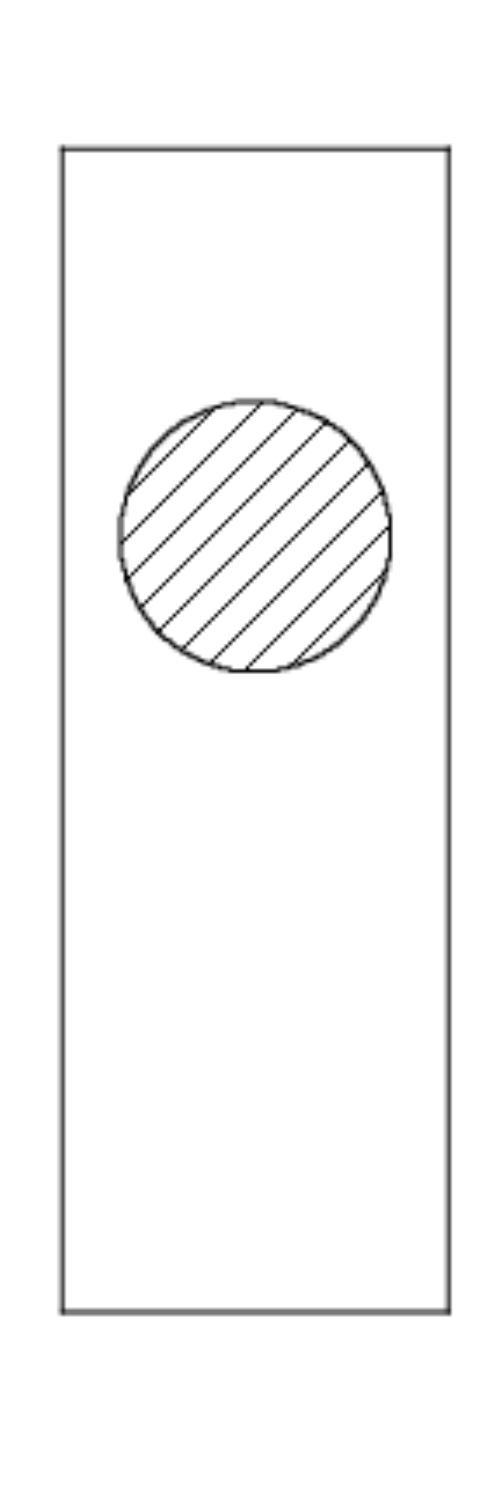}
        \caption{$\alpha = 1$}
    \end{subfigure}
    \caption{Behavior of Problem \eqref{prob:continuumk} for $K = 2$
      and various $\alpha$ on $D = (0,1) \times
      (0,3)$, computed using a modified version of the algorithm in \cite{Oudet}. Here $\phi = \rho^2$, $d\mu = \rho ^{1+\alpha} (x)\,dx /
      \int_D \rho ^{1+\alpha} (x)\, dx$, for $\rho(x) \propto \min(2 \exp({ - 4 ||x-z||^2}), 1/2)$, with $z
      = (1/2,2)$. \label{fig:alpha_varying}}
\end{figure}

We also observe that Theorem \ref{thm:mainthm} implies that the proportion of
  misclassified points vanishes almost surely.
  
  \begin{corollary} 
  \label{cor:proportion}
  Consider the setting of Theorem \ref{thm:mainthm}.  Let $\{\mathcal{U}^*_{n_m}\}_{m\in \mathbb{N}}$ be a subsequence of optimal $Q_{n_m}$-modularity partitions of the sample space which a.s. converges weakly, in the sense of \eqref{eq:convergeeq}, to a solution $\mathcal{U}^*$ of \eqref{prob:continuumk}.  Then, the proportion of correctly classified points converges to $1$. That is, a.s. as $m\rightarrow\infty$, 
  $$p_{n_m}:=   \max_{1\leq k\leq K} \frac{ | \{ x \in \mathcal{X}_{n_m} : x \in U^*_k \cap U^*_{n_m, \pi_{n_m}(k)} \} |}{|\{ x \in
  \mathcal{X}_{n_m} : x \in U^*_k \}|} \rightarrow 1.$$
    
  \end{corollary}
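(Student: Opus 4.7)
The plan is to show, for each $k = 1,\ldots,K$, that both the numerator $N_{k,n} := |\mathcal{X}_n \cap U_k^* \cap U^*_{n,\pi_n(k)}|$ and the denominator $D_{k,n} := |\mathcal{X}_n \cap U_k^*|$, normalized by $n$, converge almost surely to the same positive limit $\nu(U_k^*)$. Because $\mu(U_k^*)=1/K$ and $\rho$ is bounded below by a positive constant by assumption (M), one has $\nu(U_k^*)>0$, so the ratio $N_{k,n_m}/D_{k,n_m}$ is eventually well defined and tends to $1$ almost surely for each $k$; taking the maximum over $k$ then yields $p_{n_m}\to 1$ a.s.

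Convergence of the denominator $D_{k,n}/n \xrightarrow{a.s.} \nu(U_k^*)$ is immediate from the strong law of large numbers applied to the bounded i.i.d.\ variables $\mathds{1}_{U_k^*}(X_i)$, which are well defined $\nu$-almost surely since $\nu$ is absolutely continuous with respect to Lebesgue measure.

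For the numerator, I would exploit the a.s.\ weak convergence $\gamma_{n_m,\pi_{n_m}(k)}\xrightarrow{w}\gamma_k$ guaranteed by Theorem \ref{thm:mainthm}: applied to the continuous bounded test function $f(x,s)=g(x)\,s$ on $D\times\{0,1\}$ for any $g\in C_b(D)$, it yields
$$\frac{1}{n_m}\sum_{i=1}^{n_m} g(X_i)\,\mathds{1}_{U^*_{n_m,\pi_{n_m}(k)}}(X_i)\;\longrightarrow\;\int_D g(x)\,\mathds{1}_{U_k^*}(x)\,\rho(x)\,dx.$$
The main obstacle is that the natural choice $g=\mathds{1}_{U_k^*}$ is not continuous, which I would resolve by $L^1$-approximation: for any $\epsilon>0$, pick $g_\epsilon\in C(D;[0,1])$ with $\int |g_\epsilon - \mathds{1}_{U_k^*}|\,\rho\,dx<\epsilon$ (possible by density of continuous functions in $L^1(\nu)$). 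Decomposing $N_{k,n_m}/n_m$ accordingly, the continuous piece converges a.s.\ by the display above to a number within $\epsilon$ of $\nu(U_k^*)$, while the error piece is dominated in absolute value by $\frac{1}{n_m}\sum_i |g_\epsilon - \mathds{1}_{U_k^*}|(X_i)$, which by the strong law tends a.s.\ to a quantity below $\epsilon$. Since $\epsilon>0$ is arbitrary, $N_{k,n_m}/n_m\xrightarrow{a.s.}\nu(U_k^*)$, which together with the denominator estimate closes the argument.
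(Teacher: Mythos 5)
Your argument is correct and follows essentially the same route as the paper: both proofs reduce the claim to showing that the numerator and denominator (suitably normalized) converge a.s.\ to $\nu(U_k^*)$ by testing the weak convergence of the graph measures $\gamma_{n_m,\pi_{n_m}(k)}$ against $(x,y)\mapsto \mathds{1}_{U_k^*}(x)\,y$ and handling the discontinuity of $\mathds{1}_{U_k^*}$ via approximation by continuous functions. Your explicit $L^1(\nu)$-approximation with an SLLN bound on the error term, and your remark that $\nu(U_k^*)>0$ so the ratio is eventually well defined, simply spell out details the paper leaves implicit.
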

  
  \begin{proof}  It is enough to show, with respect to a sequence of
    sets $V_m\subset \mathcal{X}_{n_m}$ and a set $V\subset D$ where
    a.s. $V_m \xrightarrow{w} V$, in the sense of \eqref{single_set_conv_defn}, that
  $$  \frac{ | \{ x \in \mathcal{X}_{n_m} : x \in V \cap V_{m} \} |}{|\{ x \in
  \mathcal{X}_{n_m} : x \in V \}|} \rightarrow 1 \ \ \ {\rm a.s.}$$
  as $m\rightarrow\infty$. Then, the statement in the corollary would follow by application of this limit with $V_m = U^*_{n_m,\pi_{n_m}(k)}$ and $V = U^*_k$ for $1\leq k\leq K$.
  
  In terms of the measures $\gamma_m$ and $\gamma$, which govern the graphs of $\mathds{1}_{V_m}$ and $\mathds{1}_{V}$ under $\nu_n$ and $\nu$ respectively, we can write
  $$\frac{ | \{ x \in \mathcal{X}_{n_m} : x \in V \cap V_{m} \} |}{|\{ x \in
  \mathcal{X}_{n_m} : x \in V \}|} = \frac{ \int_{D \times \mathbb{R}}
  \mathds{1}_V(x)y\,d\gamma_m(x,y) }{ \int_{D \times \mathbb{R}} \mathds{1}_V(x)\, d\gamma_m(x,y)}.$$
  Since a.s. $\gamma_m\xrightarrow{w}\gamma$ as $m\rightarrow\infty$, by approximating $(x,y)\mapsto \mathds{1}_V(x)y$ and  $(x,y) \mapsto \mathds{1}_V(x)$ by bounded, continuous functions, we have
  $$ \frac{ \int_{D \times \mathbb{R}}
  \mathds{1}_V(x)y\,d\gamma_m(x,y) }{ \int_{D \times \mathbb{R}} \mathds{1}_V(x)\, d\gamma_m(x,y)} \xrightarrow{a.s.} 
  \frac{ \int_{D \times \mathbb{R}}
  \mathds{1}_V(x)y\,d\gamma(x,y) }{ \int_{D \times \mathbb{R}}
  \mathds{1}_V(x)\, d\gamma(x,y)} = 1,$$
  as $m\rightarrow\infty$, concluding the argument.
  \end{proof}  
  
\subsection{Discussion}
\label{discussion}
\mbox{}

{\bf 1.}  As alluded to in the introduction, the phenomena shown in Corollary \ref{cor:main} for random geometric graphs has been considered before in other models. Indeed, in
\cite{guimera2004modularity} the authors provide heuristic arguments
for the limiting behavior $Q_n^* \to 1$ under two regimes: i) when the graphs are
regular lattices, and ii) when the graphs are Erd\H{o}s-R\'{e}nyi graphs
with edge probability $p = 2/n$. In \cite{good2010performance}, the
authors derive the limiting behavior $Q_n^* \to 1$ under a sparse
graph model, in which modules of some characteristic size are adjoined
to the graph. Further, these asymptotics are consistent with the empirical
results associated with large
real-world graphs \cite{blondel2008fast}.

\vskip .1cm
{\bf 2.} It has been observed in the literature that modularity optimization may fail to identify clusters smaller than a certain level, depending on the total size and interconnectedness of the graph. In other words, modularity possesses a `resolution limit' in terms of its clustering (cf. \cite{fortunato2007resolution}, \cite{good2010performance}).
An extreme example is when the graph contains a pair
of cliques (complete subgraphs) connected by a single edge,
but modularity would lump them into a
common cluster (cf. Figure 3 of
\cite{fortunato2007resolution}).

In \cite{reichardt2006statistical}, the authors consider a variant of the modularity given by
\begin{equation}
Q^{\lambda} = \frac{1}{2m} \sum_{i,j} \Big( W_{ij} - \lambda \frac{d_i d_j}{2m}\Big) \delta(c_i,c_j)
\end{equation}
where $\lambda$ is a parameter. In \cite{lancichinetti2011limits}, the parameter $\lambda$
is related to the resolution limit phenomena: Namely, higher values of $\lambda$ allow for smaller cluster sizes.

The methods used to prove Theorem \ref{thm:mainthm} give the following asymptotic behavior of optimal $Q^\lambda$ modularity clusterings:
When $\lambda=\lambda_n := \kappa \epsilon_n^{\beta}$ is scaled with $n$, for $\beta\geq 0$ and $\epsilon_n$ satisfying (I2), 
three distinct possibilities
arise for the limiting problem.  When $0\leq \beta < 1$, the continuum partitioning
problem remains as it is in (\ref{prob:continuumk}).  When $\beta = 1$,
the hard
constraints $\mu(U_k) = 1/K$ for $1\leq k\leq K$ of the limiting problem get replaced by a soft balancing condition, resulting
in  \begin{equation}
\underset{\mathcal{U}}{\text{minimize}}\, \frac{1}{2} \sum_{k=1}^K
  \per(U_k;\phi) + \kappa \sum_{k=1}^K (\mu(U_k) - 1/K)^2.
\end{equation}
When $\beta > 1$, the continuum problem degenerates to a perimeter
minimization problem with no balancing condition, which has as its
solution a single global cluster $D$ (and the other $K-1$ sets being empty).

\vskip .1cm
{\bf 3.} One can ask about the reasons behind the assumptions (I1) and (I2).  With respect to Theorem \ref{thm:asymptotics}, a lower bound for $\epsilon_n$ should be informed by the fluctuations of the functional.  In fact, the variance of $Q_n(\mathcal{U}_n)$ can be seen to be of order $(n^2\epsilon_n^{d+1})^{-1}$ when $\alpha = 0,1$ (by a computation with formula \eqref{eq:oneminusmod}), so that condition (I1) makes sense.   However, when $\alpha\neq 0,1$, a worse bound is useful to control the nonlinearity of the functional.

On the other hand, assumption (I2) in Theorem \ref{thm:mainthm} is
informed by the connectivity radius of the random geometric
graphs. For instance, if $\epsilon_n$ were to vanish too quickly, the
underlying graphs would contain $O(n)$ disconnected components
(cf. Theorem 13.25 in \cite{penrose2003random}). Then, presumably, one
would be able to find a $\mathcal{U}_n^*$ such that $(1/2 -
Q_n(\mathcal{U}_n^*)) / \epsilon_n \xrightarrow{a.s.} 0$ and
consequently obtain a continuum cluster point $\mathcal{U}^*$. This would
be a contradiction, as the resulting partition $\mathcal{U}^*$ would
have zero perimeter --  in other words, one of the sets in
$\mathcal{U}^*$ would be $D$ itself -- and so could not satisfy the balance
conditions. This is a version of the argument in Remark 1.6 of \cite{trillos2014continuum}.

The threshhold that $\epsilon_n$ should be larger than for the graphs to be connected is known:  In $d\geq 1$, it is of order $(\log(n))^{1/d}/n^{1/d}$ (cf. \cite{penrose2003random}).  Viewed from this lens, when $\alpha=0,1$, condition (I2) is more optimal than when $\alpha\neq 0,1$, again due to the nonlinearity of the functional in this case.

{\bf 4.} We briefly discuss the assumptions on $\rho$, $D$, and
$\eta$. The proof of Theorem \ref{thm:mainthm} makes use of certain
`transport maps' (cf. Proposition \ref{thm:tmaps}). For $d \geq 2$, we use the optimal transport results of
\cite{trillos2014rate}, which require that $\rho$ be bounded above and below by positive constants.
 Likewise, the assumptions on $D$ are made there so that comparisons to results on cubes may be made. On the other hand, for $d=1$, it is not necessary that $\rho$ be bounded
below to define a suitable transport map--here, the technical condition required is \eqref{eq:shorackassumption}. However, in
all dimensions, we require a lower bound on $\rho$, as this enables us
to handle the general $\alpha\neq 0,1$ case via a
Lipschitz inequality for the map $x \mapsto x^{\alpha}$ (cf. Lemma
\ref{lemma:quadasconvergea}). The boundedness of $D$ is also used
in several
intermediate technical results. The Lipschitz continuity of $\rho$ is used for handling the `balance' term in the proof of Theorem \ref{thm:mainthm} (principally in Lemma \ref{lemma:lambdaepslim}, by way of Lemma \ref{lemma:mollified}). We remark that this condition could be weakened to H\"{older} continuity, with exponent greater than $1/2$.

With respect to $\eta$, the radial and monotone assumptions are convenient in relating certain graph functionals to their nonlocal
analogues (cf. Lemma \ref{lemma:glambdalimit}). The continuity at zero
is used in the proof of the compactness property, Lemma
\ref{lemma:gtvcompact}. Finally, we remark that the compact support of
$\eta$ allows to analyze behavior near the boundary of $D$, although this assumption could be weakened to a
suitable condition on the decay of $\eta$ at infinity.

\section{Preliminaries}
\label{preliminaries}
Before entering into the main derivations, we first discuss in Subsection \ref{framework} the $TL^1$ topology and framework, introduced by Garc\'{\i}a Trillos and Slep\v{c}ev in \cite{trillos2014continuum}, and connections to weak convergence of graph measures.  Then,
in Subsection \ref{gamma_conv}, we define a variant of Gamma convergence for random energy functionals that we will use to prove Theorem \ref{thm:mainthm}.
 
\subsection{$TL^1$ topology and framework}
\label{framework}

Given a measurable space $S\subset \mathbb{R}^d$, we let $\mathcal{B}(S)$
denote the Borel $\sigma$-algebra on $S$, and similarly
let $\mathcal{P}(S)$ denote the set of Borel probability measures on
$S$. 
Also, given two spaces, $S_1$ and $S_2$, a measurable
map $T : S_1 \to S_2$, and a measure $\mu \in
\mathcal{P}(S_1)$, we define the \textit{push-foward} $T_{\sharp} \mu
\in \mathcal{P}(S_2)$ by
\[ T_{\sharp} \mu (A) = \mu(T^{-1}(A)), \hspace{0.5cm} \text{for } A
\in \mathcal{B}(S_2). \]
In particular, $T_{\sharp}\mu$ is the distribution of $TX$ where $X$ has distribution $\mu$.

Given
measures $\mu,\theta \in \mathcal{P}(S)$, recall that a coupling between $\mu$ and $\theta$ is a probability measure $\pi$ on $S \times S$ such that the marginal with respect to the first variable is $\mu$, and the marginal with respect to the second variable is $\theta$. Consider the set of couplings 
\begin{align*}
\Gamma(\mu,\theta) \coloneqq \{ \pi \in \mathcal{P}(S \times S) : (\forall U \in \mathcal{B}(S) \ \pi(U\times S) = \mu(U) \ \mbox{and} \ \pi(S \times U) = \theta(U) \}.
\end{align*}
Define the distance on $\mathcal{P}(S)$ by
\begin{equation*}
d_{1,S}(\mu,\theta) \coloneqq \inf_{\pi\in \Gamma(\mu,\theta)}\int_S |x-y| \ d\pi(x,y).
\end{equation*}
This is a metric on the subset of probability measures in $\mathcal{P}(S)$ with finite first moment.

When $S$ is complete, a case of a more general result (see Theorem 6.9 \cite{Villani_old_new}) is the following:  Let $\{\mu_n\}_{n\in \mathbb{N}}$ and $\mu$ be measures in $\mathcal{P(S)}$.  Then, as $n\rightarrow\infty$,
\begin{equation}\label{one}
\mu_n \xrightarrow{w}\mu \ \ {\rm if \ \ } d_{1,S}(\mu_n,\mu)\rightarrow 0.
\end{equation}

Now, as in \cite{trillos2014continuum}, to understand weak convergence of `graph measures', define the space $TL^1(S)$ by
\begin{align*}
TL^1(S) \coloneqq \{ (\mu,f) : \mu \in \mathcal{P}(S), \|f\|_{L^1(S,\mu)}<\infty\},
\end{align*}
and, for $(\mu,f)$ and $(\theta,g)$ in $TL^1(S)$, define the distance
\begin{equation*}
d_{TL^1,S}((\mu,f), (\theta,g)) \coloneqq \inf_{\pi \in \Gamma(\mu,\theta)} \int \int _{S \times S} |x-y| + |f(x)-g(y)| \ d\pi(x,y).
\end{equation*}

One may identify an element $(\mu,f)\in TL^1(S)$ with a graph measure $(Id\times f)_{\sharp}\mu\in \mathcal{P}(S\times \mathbb{R})$, whose support is contained in the graph of $f$.  
Consider now, with respect to $(\mu,f), (\theta,g)\in TL^1(S)$, the graph measures $\gamma = (Id\times f)_{\sharp}\mu, \widetilde\gamma = (Id\times g)_{\sharp}\theta \in \mathcal{P}(S\times \mathbb{R})$.  
It may be seen (cf. Proposition 3.3 of \cite{trillos2014continuum}) that 
\begin{equation}
\label{two}
d_{1,S\times \mathbb{R}}(\gamma,\widetilde\gamma) =d_{TL^1, S}((\mu,f), (\theta,g)).
\end{equation}

We now restrict $S$ to be the bounded domain $D\subset\mathbb{R}^d$ introduced in Subsection \ref{results}.  We will abbreviate $TL^1:= TL^1(D)$.  Then, for $(\mu,f)\in TL^1$, the graph measure $\gamma = (Id\times f)_\sharp\mu$ has a finite first moment in that $\int_D |x| + |f(x)|d\mu(x)<\infty$.  Hence, by \eqref{two},
$TL^1$ can be viewed as a metric space with metric $d_{TL^1, D}$. 

With respect to graph measures $\gamma'$ and $\gamma''$ on $D\times \mathbb{R}$, consider their extensions $\bar\gamma'$ and $\bar\gamma''$ to $\overline{D\times \mathbb{R}}$ by setting $\bar\gamma'(\partial D\times \mathbb{R}) = \bar\gamma''(\partial D\times \mathbb{R})=0$.  Then, the distance 
\begin{equation}
\label{three}
d_{1,\overline{D\times \mathbb{R}}}(\bar\gamma', \bar\gamma'')\leq d_{1,D\times \mathbb{R}}(\gamma',\gamma'').
\end{equation}
  Suppose now $(\mu_n, f_n)\xrightarrow{TL^1} (\theta, g)$, and $\gamma_n$ and $\gamma$ are the associated graph measures for $n\geq 1$.  Then, by \eqref{two} and \eqref{three}, $d_{1,\overline{D\times \mathbb{R}}}(\bar\gamma_n,\bar\gamma) \leq d_{1,D\times \mathbb{R}}(\gamma_n,\gamma)\rightarrow 0$ as $n\rightarrow\infty$.  In particular, as $\overline{D\times \mathbb{R}}$ is complete, by \eqref{one}, $\bar\gamma_n \xrightarrow{w}\bar\gamma$ in $\mathcal{P}(\overline{D\times \mathbb{R}})$, and so $\gamma_n\xrightarrow{w}\gamma$ in $\mathcal{P}(D\times \mathbb{R})$, as $n\rightarrow\infty$.

We now make a remark on definition \eqref{eq:convergeeq} in connection with the product space $(TL^1)^K$.  Fix a realization $\{X_i\}_{i\in \mathbb{N}}$.  Recall the empirical measures $\nu_n$ and probability measure $\nu$ on $D$ from the beginning of Subsection \ref{results}.
Let $\mathcal{U}_n = \{U_{n,k}\}_{k=1}^K$ be a
partition of $\mathcal{X}_n$ for $n\geq 1$, and $\mathcal{U} =
\{U_k\}_{k=1}^K$ be a partition of $D$.  We say the sequence $(\nu_n,
\mathcal{U}_n):=\big((\nu_n, \mathds{1}_{U_{n,k}})\big)_{k=1}^K$ converges in $(TL^1)^K$ to
$(\nu, \mathcal{U}):=\big((\nu,\mathds{1}_{U_k})\big)_{k=1}^K$ if $(\nu_n, \mathds{1}_{U_{n,k}})\xrightarrow{TL^1} (\nu, \mathds{1}_{U_k})$ for $1\leq k\leq K$.
Hence, by the comment below \eqref{three},
as convergence in the metric
$d_{TL^1,D}$ implies weak convergence in $\mathcal{P}(D\times
\mathbb{R})$, we obtain
\begin{equation}
\label{equivalent_convergence}
\mathcal{U}_n \xrightarrow{w} \mathcal{U} \ {\rm if } \ \ (\nu_n,\mathcal{U}_n) \xrightarrow{(TL^1)^K} (\nu, \mathcal{U}),
\end{equation}
in the sense of definition \eqref{eq:convergeeq}, by choosing the
identity permutations

We now discuss when this convergence may be formulated in terms of transportation
maps.
We say that a measurable function $T : D \to D$  is a {\it transportation map} between the measures $\mu \in
\mathcal{P}(D)$ and $\theta \in \mathcal{P}(D)$ if $\theta = T_{\sharp} \mu$. 
In this context, for $f \in
L^1(\theta)$, the change of variables formula holds
\begin{equation*}
\int_D f(y)\,d\theta(y) = \int_D f(T(x))\,d\mu(x).
\end{equation*}

A transportation map $T$ yields a coupling $\pi_T \in \Gamma(\mu,\theta)$ defined by $\pi_T \coloneqq (Id \times T)_{\sharp} \mu$ where $(Id \times T)(x) = (x,T(x))$.  It is well known, when $\theta$ is absolutely continuous with respect to Lebesgue measure on $D$, that the infimum $d_{1,D}(\mu,\theta)$ can be achieved by a coupling $\pi_T$ induced by a transportation map $T$ between $\mu$ and $\theta$. 
Indeed, we note briefly that this is only one result among many others which relate various `Monge' and `Kantorovich' distances via optimal transport theory. See \cite{Villani_old_new} and references therein; see also \cite{Ambrosio_Gigli_Savare}, \cite{Villani_intro_transportation}.

We will say that a sequence $\{T_n\}_{n\in \mathbb{N}}$ of transportation maps, with $T_{n\sharp} \theta = \theta_n$, with respect to a sequence of measures $\{\theta_n\}_{n\in \mathbb{N}}\subset \mathcal{P}(D)$, is \textit{stagnating} if
\begin{equation*}
\lim_{n \to \infty} \int_D |x-T_n(x)|\, d\theta(x) = 0.
\end{equation*}

The following is Proposition 3.12 in \cite{trillos2014continuum}.
\begin{lemma}\label{lemma:tl1convergence}
Consider a measure $\theta \in \mathcal{P}(D)$ which is absolutely continuous with respect to the Lebesgue measure. Let $(\theta,f) \in TL^1(D)$ and let $\{(\theta_n,f_n)\}_{n \in \mathbb{N}}$ be a sequence in $TL^1(D)$. The following statements are equivalent:
\begin{enumerate}
\item[(i)] $(\theta_n,f_n) \xrightarrow{TL^1} (\theta,f)$.
\item[(ii)] $\theta_n \xrightarrow{w} \theta$ and there exists a stagnating sequence of transportation maps $T_{n\sharp}\theta = \theta_n$ such that
\begin{equation}\label{eq:transportlim}
\lim_{n \to \infty} \int_D |f(x)-f_n(T_n(x))|\, d\theta(x) = 0.
\end{equation}
\item[(iii)] $\theta_n \xrightarrow{w} \theta$ and for any stagnating sequence of transportation maps $T_{n\sharp} \theta= \theta_n$, the convergence (\ref{eq:transportlim}) holds.

\end{enumerate}
\end{lemma}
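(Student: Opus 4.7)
The plan is to establish the circular chain (ii) $\Rightarrow$ (i) $\Rightarrow$ (iii) $\Rightarrow$ (ii). First I would record that $TL^1$ convergence immediately forces weak convergence of the first marginal: the trivial inequality $|x-y|\leq|x-y|+|f(x)-f_n(y)|$ gives
\begin{equation*}
d_{1,D}(\theta_n,\theta)\ \leq\ d_{TL^1,D}((\theta_n,f_n),(\theta,f)),
\end{equation*}
and on the bounded domain $D$ the $1$-Wasserstein distance metrizes weak convergence of probability measures. Thus under any of the three conditions, $\theta_n\xrightarrow{w}\theta$ holds automatically.

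For (ii) $\Rightarrow$ (i), given a stagnating sequence $\{T_n\}$ with $T_{n\sharp}\theta=\theta_n$ satisfying \eqref{eq:transportlim}, I would insert the transport-induced coupling $\pi_{T_n}\coloneqq(Id\times T_n)_\sharp\theta\in\Gamma(\theta,\theta_n)$ into the definition of $d_{TL^1,D}$:
\begin{equation*}
d_{TL^1,D}((\theta_n,f_n),(\theta,f))\ \leq\ \int_D\bigl(|x-T_n(x)|+|f(x)-f_n(T_n(x))|\bigr)\,d\theta(x)\longrightarrow 0.
\end{equation*}
For (iii) $\Rightarrow$ (ii), the only task is to exhibit at least one stagnating map, whereupon the for-all quantifier delivers (ii). Absolute continuity of $\theta$ together with $\theta_n\xrightarrow{w}\theta$ on the bounded set $D$ lets one invoke Brenier's theorem to produce optimal $L^2$ transport maps $T_n$ with $T_{n\sharp}\theta=\theta_n$, for which
\begin{equation*}
\int_D|x-T_n(x)|\,d\theta\ \leq\ \Bigl(\int_D|x-T_n(x)|^2\,d\theta\Bigr)^{1/2}\ =\ W_2(\theta,\theta_n)\to 0,
\end{equation*}
since $W_2$-convergence is equivalent to weak convergence on bounded $D$.

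The main work is (i) $\Rightarrow$ (iii). Given $d_{TL^1,D}\to 0$ and any stagnating sequence $\{\widetilde T_n\}$ with $\widetilde T_{n\sharp}\theta=\theta_n$, one must show $\int_D|f(x)-f_n(\widetilde T_n(x))|\,d\theta(x)\to 0$. I would use a three-term approximation: fix $\epsilon>0$ and approximate $f$ in $L^1(D,\theta)$ by a uniformly continuous function $g$ on $\overline D$ with $\int_D|f-g|\,d\theta<\epsilon$, then decompose
\begin{equation*}
\int_D|f-f_n\circ\widetilde T_n|\,d\theta\ \leq\ \int_D|f-g|\,d\theta+\int_D|g-g\circ\widetilde T_n|\,d\theta+\int_D|(g-f_n)\circ\widetilde T_n|\,d\theta.
\end{equation*}
The first term is below $\epsilon$. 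The second vanishes because stagnation gives $\widetilde T_n\to\mathrm{id}$ in $\theta$-measure, and $g$ is bounded and uniformly continuous (apply dominated convergence). For the third, change variables via $\widetilde T_{n\sharp}\theta=\theta_n$ to get $\int_D|g-f_n|\,d\theta_n$, then select a near-optimal coupling $\pi_n\in\Gamma(\theta,\theta_n)$ for $d_{TL^1,D}((\theta_n,f_n),(\theta,f))$ and estimate
\begin{equation*}
\int_D|g-f_n|\,d\theta_n\ \leq\ \iint|g(y)-g(x)|\,d\pi_n+\int_D|g-f|\,d\theta+\iint|f(x)-f_n(y)|\,d\pi_n.
\end{equation*}
The first piece tends to $0$ via $\int|x-y|\,d\pi_n\to 0$ combined with uniform continuity of $g$, the second is at most $\epsilon$, and the third vanishes directly from $TL^1$ convergence. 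Letting $\epsilon\downarrow 0$ closes the argument.

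I expect the main obstacle to be precisely the (i) $\Rightarrow$ (iii) step: optimal $TL^1$ couplings need not be induced by the prescribed maps $\widetilde T_n$, and there is no direct mechanism transferring an estimate along one coupling to another. The approximation by uniformly continuous $g$ is the device bridging the two couplings, exploiting the fact that both push mass along paths of vanishing length in the first coordinate. A secondary subtlety is that absolute continuity of $\theta$ is essential in (iii) $\Rightarrow$ (ii) to secure the existence of even one stagnating transportation map.
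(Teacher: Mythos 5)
Your proof is correct. Note that the paper does not actually prove this lemma --- it is quoted verbatim as Proposition 3.12 of Garc\'ia Trillos--Slep\v{c}ev \cite{trillos2014continuum} --- so there is no in-paper argument to compare against; your write-up is a sound self-contained proof along the standard lines of that reference. The cyclic chain (ii)$\Rightarrow$(i)$\Rightarrow$(iii)$\Rightarrow$(ii) is complete: inserting the coupling $(Id\times T_n)_{\sharp}\theta$ handles (ii)$\Rightarrow$(i); absolute continuity of $\theta$ plus boundedness of $D$ legitimately yields Brenier maps with $\int_D|x-T_n(x)|\,d\theta\leq W_2(\theta,\theta_n)\to 0$ for (iii)$\Rightarrow$(ii); and the uniformly continuous approximation $g$ is exactly the right device to pass an estimate from a near-optimal $TL^1$ coupling to an arbitrary prescribed stagnating map in (i)$\Rightarrow$(iii), since both couplings transport mass over distances tending to zero in the first coordinate. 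The only point worth making explicit in a final write-up is the density of bounded uniformly continuous functions in $L^1(D,\theta)$ and that the $\limsup$ bound you obtain is $2\epsilon$ before sending $\epsilon\downarrow 0$; neither affects correctness.
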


 In order to make use of the above result on $TL^1$ convergence, we will
need to find a suitable stagnating sequence $\{ T_n \}_{n \in \mathbb{N}}$ of
transportation maps. 

\begin{proposition}\label{thm:tmaps} Recall, from the beginning of Subsection \ref{results}, the assumptions on the probability measure $\nu$ on $D$, and that $\nu_n$ denotes the empirical measure corresponding to i.i.d. samples drawn from $\nu$.

Then, 
there is a constant $C > 0$ such that, with respect to realizations of $\{X_i\}_{i\in \mathbb{N}}$ in a probability $1$ set $\Omega_0$, a sequence of transportation maps $\{T_n\}_{n \in \mathbb{N}}$ exists where $T_{n\sharp} \nu = \nu_n$ and
\begin{equation*}
\limsup_{n \to \infty} \frac{n^{1/2} \|Id - T_n\|_{L^\infty}}{(2 \log
  \log n)^{1/2}} \leq C \hspace{0.4cm}  {\rm if \ }d=1,
\end{equation*}
\begin{equation*}
\limsup_{n \to \infty} \frac{n^{1/d} \|Id - T_n\|_{L^\infty}}{(\log
  n)^{3/4}} \leq C \hspace{0.4cm}  {\rm if \ }d=2,
\end{equation*}
\begin{equation*}
 \limsup_{n \to \infty} \frac{n^{1/d}\|Id-T_n\|_{L^\infty}}{(\log n)^{1/d}} \leq C \hspace{0.4cm}  {\rm if \ }d \geq 3.
\end{equation*}
\end{proposition}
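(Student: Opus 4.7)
The plan is to treat the cases $d=1$ and $d \geq 2$ separately, since they rely on quite different technology; in both cases, the proof constructs an almost sure sequence of transportation maps $T_n$ with $T_{n\sharp}\nu=\nu_n$ and controls $\|Id - T_n\|_{L^\infty}$ by a concentration/law-of-the-iterated-logarithm estimate applied to the empirical measure.

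For $d \geq 2$, I would directly invoke the rate-of-convergence results of Garc\'{i}a Trillos and Slep\v{c}ev \cite{trillos2014rate} for the $\infty$-Wasserstein distance between $\nu$ and $\nu_n$. Those results, ultimately founded on the Ajtai--Koml\'os--Tusn\'ady matching theorem, first handle the uniform measure on the unit cube and then extend to absolutely continuous measures on bounded Lipschitz domains whose density is bounded above and below, by means of bi-Lipschitz changes of coordinates to reduce to the cube and by stitching together local maps across a finite Lipschitz atlas. Under assumptions (D) on $D$ and (M) on $\rho$, all hypotheses are satisfied, and the theorem produces, on an event of probability one, maps $T_n$ with the stated rates: $(\log n)^{3/4}/n^{1/2}$ when $d=2$ (with the extra $(\log n)^{1/4}$ factor compared to the Wasserstein-$1$ AKT rate reflecting the strength of measuring the error in $L^\infty$) and $(\log n)^{1/d}/n^{1/d}$ when $d\geq 3$. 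So for this case the work reduces to verifying the hypotheses and quoting the result.

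For $d=1$, I would construct $T_n$ explicitly. Write $D=(c,d)$, let $F_\nu$ be the distribution function of $\nu$, let $X_{(1)}\leq\cdots\leq X_{(n)}$ be the order statistics, and set $y_k=F_\nu^{-1}(k/n)$ for $0\leq k\leq n$ (with $y_0=c$, $y_n=d$). Define $T_n(x)=X_{(k)}$ for $x\in[y_{k-1},y_k)$. A direct check shows $T_{n\sharp}\nu=\nu_n$. I would then decompose
\begin{equation*}
\|Id-T_n\|_{L^\infty}\;\leq\;\max_{1\leq k\leq n}|X_{(k)}-F_\nu^{-1}(k/n)|\;+\;\max_{1\leq k\leq n}|y_k-y_{k-1}|.
\end{equation*}
The second term is bounded by $1/(n\rho_{\min})$ using the lower bound on $\rho$ from (M). For the first term, I would transfer the classical Chung--Smirnov LIL for uniform order statistics,
\begin{equation*}
\limsup_{n\to\infty}\frac{\sqrt{n}}{\sqrt{2\log\log n}}\max_{1\leq k\leq n}\bigl|U_{(k)}-k/n\bigr|\;\leq\;\tfrac12\quad\text{a.s.},
\end{equation*}
to the $F_\nu$-quantile process via $X_{(k)}=F_\nu^{-1}(U_{(k)})$ and the Lipschitz bound $|F_\nu^{-1}(u)-F_\nu^{-1}(v)|\leq|u-v|/\rho_{\min}$, yielding the desired $\sqrt{2\log\log n/n}$ rate. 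Conditions (a), (b) of (M) on monotonicity and differentiability of $\rho$ near the endpoints are the classical regularity hypotheses (Shorack--Wellner) that would be needed if the lower bound on $\rho$ were dropped and a more delicate weighted quantile LIL were invoked, but with the bound in hand they are not strictly necessary here.

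The main obstacle is in the $d\geq 2$ case, where the nontrivial content is packaged inside \cite{trillos2014rate}: the transition from the AKT matching theorem on the cube to a genuine transportation map with a uniform bound on Lipschitz domains with general density is rather delicate, and the $(\log n)^{3/4}$ exponent in dimension two is sharp up to constants and cannot be improved by soft arguments. By contrast, the $d=1$ case is essentially a self-contained calculation once the right LIL is cited.
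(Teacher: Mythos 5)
Your proposal is correct, and for $d\geq 2$ it coincides with the paper's proof: both simply verify the hypotheses of and quote the $\infty$-transportation results of Garc\'{\i}a Trillos and Slep\v{c}ev \cite{trillos2014rate}, which is where all the work (AKT matching on the cube, extension to Lipschitz domains and nonuniform densities) lives. For $d=1$ your route differs from the paper's in an interesting way. The map you build (send $[F_\nu^{-1}((k-1)/n),F_\nu^{-1}(k/n))$ to $X_{(k)}$) is the same quantile-transform map $T_n=F_n^{-1}\circ F_\nu$ used in the paper's Appendix, but the estimate is obtained differently: the paper invokes the Shorack--Wellner LIL for the \emph{standardized} quantile process $\mathcal{Q}_n(t)=\rho(F^{-1}(t))\sqrt{n}\,[F_n^{-1}(t)-F^{-1}(t)]$, whose validity is exactly what forces the extra $d=1$ hypotheses in assumption (M) (differentiability of $\rho$, the bound $\sup F_\nu(1-F_\nu)|\rho'|/\rho^2<\infty$, and monotonicity of $\rho$ near the endpoints); you instead use the Chung--Smirnov LIL for the uniform empirical process together with the Lipschitz bound $|F_\nu^{-1}(u)-F_\nu^{-1}(v)|\leq |u-v|/\rho_{\min}$, plus the trivial $O(1/n)$ mesh term. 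Your argument is sound and, as you correctly observe, it only uses the positive lower bound on $\rho$, so it actually renders the paper's additional $d=1$ regularity assumptions superfluous for this proposition; the trade-off is that the paper's weighted-quantile approach is the one that would survive if the lower bound on $\rho$ were relaxed. Either way the conclusion and the rate $\sqrt{2\log\log n}/\sqrt{n}$ are the same.
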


\begin{proof}
We prove the $d = 1$ case in the appendix (Proposition \ref{lemma:transportd1}), as a consequence of quantile transform results for the empirical measure, making use of the technical conditions assumed on $\rho$.  In Garc{\'\i}a Trillos and Slep{\v{c}}ev \cite{trillos2014rate}, the $d = 2$ and $d \geq 3$ cases are first discussed, in the context of concentration estimates in the literature when $D$ is a cube and $\nu$ is the uniform measure,
and then proved for general $D$ and nonuniform $\nu$. 
\end{proof}

Although a result of Varadarajan (cf. Theorem 11.4.1 in \cite{dudley}) implies that a.s. $\nu_n \xrightarrow{w}\nu$, Proposition \ref{thm:tmaps} gives a way to specify the probability $1$ set on which the weak convergence holds.
\begin{corollary} \label{empconverge}
On the probability $1$ set $\Omega_0$ of Proposition \ref{thm:tmaps}, the empirical measures $\nu_n$ converge weakly to $\nu$ as $n \to \infty$. 
\end{corollary}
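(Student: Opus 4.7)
The plan is to derive weak convergence of $\nu_n$ to $\nu$ by first establishing convergence in the $1$-Wasserstein distance $d_{1,D}$, and then invoking the equivalence between Wasserstein convergence and weak convergence on a bounded (hence precompact) domain, as recorded in \eqref{one}.

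First, I would fix a realization $\{X_i\}_{i\in\mathbb{N}}$ in the probability-one set $\Omega_0$ furnished by Proposition \ref{thm:tmaps}, and consider the associated transportation maps $T_n$ with $T_{n\sharp}\nu = \nu_n$. The map $T_n$ induces a canonical coupling $\pi_n = (Id \times T_n)_{\sharp}\nu \in \Gamma(\nu,\nu_n)$, and by the change of variables formula,
\begin{equation*}
d_{1,D}(\nu,\nu_n) \;\leq\; \int_{D\times D} |x-y|\,d\pi_n(x,y) \;=\; \int_D |x - T_n(x)|\,d\nu(x) \;\leq\; \|Id - T_n\|_{L^\infty}.
\end{equation*}
By Proposition \ref{thm:tmaps}, the right-hand side tends to zero in every dimension $d\geq 1$, since the scaling rates $(2\log\log n)^{1/2}/n^{1/2}$, $(\log n)^{3/4}/n^{1/d}$, and $(\log n)^{1/d}/n^{1/d}$ all vanish as $n\to\infty$. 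Hence $d_{1,D}(\nu,\nu_n)\to 0$ for every $\omega\in\Omega_0$.

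To pass from Wasserstein convergence to weak convergence, I would note that although $D$ itself may not be complete, the fact $D$ is bounded (assumption (D)) means that both $\nu$ and $\nu_n$ have uniformly bounded support in $\overline D$. Extending each measure by zero to $\overline D$ (which is complete), the inequality in \eqref{three} gives $d_{1,\overline D}(\bar\nu,\bar\nu_n) \leq d_{1,D}(\nu,\nu_n) \to 0$, and then \eqref{one} applied to the complete space $\overline D$ yields weak convergence $\bar\nu_n \xrightarrow{w}\bar\nu$ in $\mathcal{P}(\overline D)$, which translates to $\nu_n\xrightarrow{w}\nu$ in $\mathcal{P}(D)$.

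The argument is essentially immediate from Proposition \ref{thm:tmaps}; the only minor subtlety is the need to pass to the closure $\overline D$ before applying the Villani-type implication \eqref{one}, but this is handled cleanly by the extension-and-inequality in \eqref{three} already set up in Subsection \ref{framework}. No step here is genuinely difficult, since all the heavy lifting — producing transportation maps with quantitative $L^\infty$ control — has been absorbed into Proposition \ref{thm:tmaps}.
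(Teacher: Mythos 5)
Your proof is correct, but it routes through different machinery than the paper's own argument. The paper's proof is the more elementary one: it fixes a bounded Lipschitz test function $f$, writes $\frac{1}{n}\sum_i f(X_i) = \int_D f(T_n x)\,d\nu(x)$, bounds the discrepancy by $C\|Id - T_n\|_{L^\infty}$ using the Lipschitz constant, and concludes by the Portmanteau theorem. You instead establish $d_{1,D}(\nu,\nu_n)\to 0$ via the coupling $(Id\times T_n)_\sharp\nu$, pass to the complete space $\overline D$ by the analogue of \eqref{three}, and invoke the Villani-type implication \eqref{one}. The two arguments hinge on the identical estimate $\int_D|x-T_n(x)|\,d\nu(x)\leq\|Id-T_n\|_{L^\infty}\to 0$, and by Kantorovich--Rubinstein duality they are morally equivalent; what your version buys is consistency with the $TL^1$/optimal-transport framework of Section \ref{framework} (and it yields the quantitatively stronger conclusion $d_{1,D}(\nu,\nu_n)\to 0$, not just weak convergence), while the paper's version is self-contained and avoids the small detour through $\overline D$. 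Two minor points you gloss over, neither fatal: \eqref{three} as stated concerns graph measures on $D\times\mathbb{R}$, so you are really re-proving its one-line analogue for $\mathcal{P}(D)$ versus $\mathcal{P}(\overline D)$ (any coupling on $D\times D$ is a coupling on $\overline D\times\overline D$, so the infimum can only decrease); and the final restriction from weak convergence in $\mathcal{P}(\overline D)$ back to $\mathcal{P}(D)$ deserves the same one-sentence justification the paper gives after \eqref{three} (e.g., via the Portmanteau inequality for sets open in $D$, which are open in $\overline D$, together with the fact that all measures involved assign zero mass to $\partial D$).
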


\begin{proof}
Let $f : D \to \mathbb{R}$ be a bounded, Lipschitz continuous function. Since 
$$\frac{1}{n}\sum_{i=1}^n f(X_i) = \int_D f(T_n x)\, d\nu(x),$$
 we may write
\begin{align*}
\Big|\frac{1}{n}\sum_{i=1}^n f(X_i) - \int_D f(x)\, d\nu(x)\Big| &\leq \int_D |f(T_nx) - f(x)|\, d\nu(x) \\
& \leq C \int_D |x - T_n x|\, d\nu(x) \\
& \leq C ||Id - T_n ||_{L^{\infty}},
\end{align*}
where $C$ is a Lipschitz constant for $f$. By Proposition \ref{thm:tmaps}, for each realization of $\{X_i\}_{i\in \mathbb{N}}$ in $\Omega_0$, we have $\frac{1}{n}\sum_{i=1}^n f(X_i) \to \int_D f(x)\, d\nu(x)$ as $n\rightarrow\infty$. Hence, by the Portmanteau theorem (Theorem 3.9.1 in \cite{durrett}), we have the weak convergence $\nu_n \xrightarrow{w} \nu$ as $n\rightarrow\infty$. 
\end{proof}


\subsection{On Gamma convergence of random functionals}
\label{gamma_conv}

Here, we introduce a type of $\Gamma$-convergence, with respect to random functionals, which will be an important tool in the proof of Theorem \ref{thm:mainthm} in Section \ref{proof_mainthm}, and may be of interest in its own right.  
 For what follows, let $X$ denote a metric space with metric $d$ and let $F_n : X \to [0,\infty]$ be functionals on this space. 

We first state the definition with respect to deterministic functionals.
\begin{definition}
\label{defn_det}
The sequence $\{F_n\}_{n \in \mathbb{N}}$ $\Gamma$-converges with respect to the topology on $X$ if the following conditions hold:
\begin{enumerate}
\item \textbf{Liminf inequality}: For every $x \in X$ and every sequence $\{x_n\}_{n \in \mathbb{N}}$ converging to $x$,
\begin{align*}
F(x) \leq \liminf_{n \to \infty} F_n(x_n),
\end{align*}
\item \textbf{Limsup inequality}: For every $x \in X$, there exists a sequence $\{x_n\}_{n\in \mathbb{N}}$ converging to $x$ satisfying 
\begin{align*}
\limsup_{n \to \infty} F_n(x_n) \leq F(x).
\end{align*}
\end{enumerate}
The function $F$ is called the $\Gamma$-limit of $\{F_n\}_{n\in \mathbb{N}}$, and we write $F_n \xrightarrow{\Gamma} F$. 
\end{definition}
When we wish to make the dependence on the metric $d$ explicit, we say
that $\{F_n\}_{n \in \mathbb{N}}$ $\Gamma(d)$-converges to $F$, or $F$ is
the $\Gamma(d)$-limit of $\{F_n\}_{n \in \mathbb{N}}$, etc.

\begin{remark} \rm
If the liminf inequality holds, the limsup inequality is equivalent to the following condition:  For
every $x \in X$, there exists a sequence $\{x_n\}_{n
  \in \mathbb{N}}$ with $x_n \to x$ and  $\lim_{n \to \infty} F_n(x_n)
= F(x)$. The sequence $\{x_n\}_{n \in \mathbb{N}}$ is referred to as a
\textit{recovery sequence} for $x$.
\end{remark}

A basic consequence of Definition \ref{defn_det} is the following (cf. \cite{braides2002gamma}, Theorem 1.21).
\begin{theorem}
Let $F_n : X \to [0,\infty]$ be a sequence of functionals
$\Gamma$-converging to $F$.  Suppose $\{x_n\}_{n \in \mathbb{N}}$ is a
relatively compact sequence in $X$ with 
\begin{equation} \label{eq:approxminimizers}
\lim_{n \to \infty} \Big(F_n(x_n) - \inf_{x \in X} F_n(x)\Big) = 0.
\end{equation}
Then,
\begin{enumerate}
\item $F$ attains its minimum value and
\begin{align*}
\min_{x\in X} F(x) = \lim_{n \to \infty} \inf_{x \in X} F_n(x).
\end{align*}
\item The sequence $\{x_n\}_{n \in \mathbb{N}}$ has a cluster point,
  and every cluster point of the sequence
  is a minimizer of $F$.
\end{enumerate}
\end{theorem}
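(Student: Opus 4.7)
The plan is to exploit the two halves of the $\Gamma$-convergence definition in tandem with the relative compactness hypothesis. The limsup inequality will give an upper bound of the form $\limsup_n \inf_X F_n \leq F(y)$ for any $y \in X$, while the liminf inequality, applied along a convergent subsequence of $\{x_n\}$, will give a matching lower bound $F(x^*) \leq \liminf_n F_n(x_n) = \liminf_n \inf_X F_n$ for a cluster point $x^*$. Sandwiching these establishes both that $x^*$ minimizes $F$ and that $\min_X F = \lim_n \inf_X F_n$.

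In more detail, I would first fix an arbitrary $y \in X$ and invoke the limsup inequality to produce a recovery sequence $y_n \to y$ with $\limsup_n F_n(y_n) \leq F(y)$. Since $\inf_X F_n \leq F_n(y_n)$ for each $n$, it follows that
\begin{equation*}
\limsup_{n\to\infty} \inf_{x\in X} F_n(x) \;\leq\; F(y).
\end{equation*}
Next, by relative compactness, extract a subsequence $x_{n_k} \to x^*$. The liminf inequality yields
\begin{equation*}
F(x^*) \;\leq\; \liminf_{k\to\infty} F_{n_k}(x_{n_k}),
\end{equation*}
and the approximate minimizer assumption $F_n(x_n) - \inf_X F_n \to 0$ shows that $\liminf_k F_{n_k}(x_{n_k}) = \liminf_k \inf_X F_{n_k}$. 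Combining the two chains of inequalities gives $F(x^*) \leq F(y)$ for every $y \in X$, so $x^*$ minimizes $F$, proving existence of a minimizer and that $x^*$ is a cluster point which is a minimizer.

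For the equality $\min_X F = \lim_n \inf_X F_n$, note that taking $y = x^*$ in the limsup chain yields $\limsup_n \inf_X F_n \leq F(x^*) = \min_X F$, and the liminf chain together with $F_n(x_n) - \inf_X F_n \to 0$ yields $\min_X F = F(x^*) \leq \liminf_n \inf_X F_n$. The squeeze then upgrades the $\liminf$ and $\limsup$ to a genuine limit equal to $\min_X F$. Finally, to see that \emph{every} cluster point is a minimizer (not merely the one extracted above), I would observe that the argument applied to any convergent subsequence $x_{n_k} \to \widetilde{x}$ produces $F(\widetilde{x}) \leq \liminf_k F_{n_k}(x_{n_k}) = \lim_n \inf_X F_n = \min_X F$.

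The proof is essentially routine once one has both halves of the $\Gamma$-convergence definition in hand; the only subtle point is the role of the approximate minimizer condition, which lets one pass from $F_n(x_n)$ to $\inf_X F_n$ and thereby tie the liminf applied to $\{x_n\}$ to the liminf of the infima. This is what I expect to be the one place a reader might pause, but it amounts to adding a vanishing sequence and presents no real obstacle.
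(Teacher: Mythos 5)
Your strategy is the standard one, and it is essentially the same argument the paper gives for its random analogue, Theorem \ref{thm:nondeterministicgamma}: recovery sequences give $\limsup_{n} \inf_X F_n \leq F(y)$ for every $y$, compactness plus the liminf inequality gives a matching lower bound along a convergent subsequence, and the approximate-minimizer hypothesis lets you trade $F_n(x_n)$ for $\inf_X F_n$. The first half of your write-up is sound: for your extracted cluster point $x^*$ you get $F(x^*) \leq \liminf_k \inf_X F_{n_k} \leq \limsup_n \inf_X F_n \leq F(y)$ for all $y$, so $F$ attains its minimum at $x^*$.

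There is, however, a gap in the step ``$\min_X F = F(x^*) \leq \liminf_n \inf_X F_n$.'' Your liminf chain only yields $F(x^*) \leq \liminf_k \inf_X F_{n_k}$ along the particular subsequence you happened to extract, and for a subsequence one has $\liminf_k a_{n_k} \geq \liminf_n a_n$ --- the inequality points the wrong way to control the liminf over the full sequence. Relatedly, your last paragraph (every cluster point is a minimizer) invokes $\lim_n \inf_X F_n = \min_X F$ before that identity has been established; this part is actually salvageable without it, since for any cluster point $\widetilde{x}$ one has $F(\widetilde{x}) \leq \liminf_k \inf_X F_{n_k} \leq \limsup_n \inf_X F_n \leq \min_X F$ using only the limsup bound. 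To close the remaining inequality you must choose the subsequence more carefully: first pass to indices $\{n_m\}$ along which $\inf_X F_{n_m}$ (equivalently, $F_{n_m}(x_{n_m})$) converges to $\liminf_n \inf_X F_n$, then use relative compactness to extract a further convergent subsequence $x_{n_{m_j}} \to x^{**}$, and apply the liminf inequality there to obtain $\min_X F \leq F(x^{**}) \leq \liminf_n \inf_X F_n$. This is exactly the extra subsequence extraction the paper performs at the end of the proof of Theorem \ref{thm:nondeterministicgamma}; with that reordering your squeeze closes and both conclusions follow.
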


For this theorem to be applicable, it is standard to put some
condition on $\{F_n\}_{n \in \mathbb{N}}$ so that
(\ref{eq:approxminimizers}) implies that the sequence $\{x_n\}_{n \in
  \mathbb{N}}$ is relatively compact in $X$. 

\begin{definition} \label{def:compactness}
We say that the sequence of nonnegative functionals $\{F_n\}_{n \in \mathbb{N}}$ has the compactness property if for any sequence $\{x_n\}_{n \in \mathbb{N}}$, the following two conditions,
\begin{enumerate}
\item[(i)] $\{x_n\}_{n \in \mathbb{N}}$ is bounded in $X$
\item[(ii)] The energies $\{F_n(x_n)\}_{n \in \mathbb{N}}$ are bounded,
\end{enumerate}
imply that $\{x_n\}_{n \in \mathbb{N}}$ is relatively compact in $X$.
\end{definition}

We now extend
the above notions to the random setting.  Here, we have a
probability space $(\Omega, \mathcal{F}, \mathbb{P})$ and a sequence
of functionals $F_n : X \times \Omega \to [0,\infty]$. 

\begin{definition}  \label{def:nondeterministicgamma}
  We say the (random) sequence $\{F_n\}_{n \in \mathbb{N}}$ $\Gamma$-converges to the
deterministic functional $F : X \to [0,\infty]$ if 
\begin{enumerate}
\item \textbf{Liminf inequality} With probability $1$, the following
  statement holds:  For any $x \in X$ and any sequence $\{x_n\}_{n \in \mathbb{N}}$ with $x_n \to x$,
\begin{equation*}
F(x) \leq \liminf_{n \to \infty} F_n(x_n).
\end{equation*}
\item \textbf{Recovery sequence} For any $x \in X$,  there exists a
  (random) sequence $\{x_n\}_{n \in \mathbb{N}}$ with
  $x_n \xrightarrow{a.s.} x$ and $F_n(x_n) \xrightarrow{a.s} F(x)$.
\end{enumerate}
\end{definition}

\begin{definition}
\label{def:random_compactness}
We say the (random) sequence $\{F_n\}_{n \in \mathbb{N}}$ has the compactness property if with probability $1$, the sequence $\{F_n(\cdot, \omega)\}_{n\in \mathbb{N}}$ has the compactness property in Definition \ref{def:compactness}.
\end{definition} 

\begin{remark}
\label{rmk:compactness}\rm
The definition for $\Gamma$-convergence of random functionals,
Definition \ref{def:nondeterministicgamma}, is weaker than the one in
\cite{trillos2014continuum}, which prescribes that Definition \ref{defn_det} holds with probability $1$.  However, in our Definition \ref{def:nondeterministicgamma}, with respect to the recovery sequence, the probability $1$ set may depend on the sequence, and therefore is an easier condition to verify, say with probabilistic arguments.  Interestingly, this weaker definition has the same strength in terms of its application in the following Gamma convergence statement, Theorem \ref{thm:nondeterministicgamma}, a main vehicle in the proof of Theorem \ref{thm:mainthm}.

In passing, we also note that the compactness criterion of random functionals, Definition \ref{def:random_compactness}, can also be weakened, in that the probability $1$ set may depend on the particular bounded sequence in Definition \ref{def:compactness}, without altering the statement of the Gamma convergence Theorem \ref{thm:nondeterministicgamma}, and with virtually the same proof.

\end{remark}
 
\begin{theorem} \label{thm:nondeterministicgamma}
Let $F_n : X \times \Omega \to [0,\infty]$ be a sequence of
random functionals $\Gamma$-converging to a limit $F: X \to
[0,\infty]$, in the sense of Definition \ref{def:nondeterministicgamma}, which is not identically equal to $\infty$. Suppose that
$\{F_n\}_{n \in \mathbb{N}}$ has the compactness property, in the sense of Definition \ref{def:random_compactness}, and also the
following condition holds:  For $\omega$ in a probability $1$ set,
there exists a bounded sequence, $x_n= x_n(\omega)$, whose bound may depend on $\omega$, such that
\begin{align*}
\lim_{n \to \infty} \Big(F_n(x_n) - \inf_{x \in X} F_n(x)\Big) = 0.
\end{align*}
Then, with probability $1$,
\begin{enumerate}
\item $F$ attains its minimum value and
\begin{align*}
\min_{x\in X} F(x) = \lim_{n \to \infty} \inf_{x \in X} F_n(x).
\end{align*}
\item The sequence $\{x_n\}_{n \in \mathbb{N}}$ has a cluster point,
  and every cluster point of the sequence
  is a minimizer of $F$.
\end{enumerate}
\end{theorem}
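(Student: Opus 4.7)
The plan is to adapt the classical deterministic $\Gamma$-convergence argument, taking care of the subtlety flagged in Remark \ref{rmk:compactness}: in Definition \ref{def:nondeterministicgamma} each point $x\in X$ is served by its own (a priori $x$-dependent) probability-one event on which a recovery sequence exists, rather than a single global one. Set $m := \inf_{x\in X} F(x)$; since $F\not\equiv \infty$ and $F\geq 0$, one has $0\leq m < \infty$.

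First, I would handle the $\limsup$ estimate for the infima. Choose a countable sequence $\{y_k\}_{k\in\mathbb{N}}$ in $X$ with $F(y_k)\to m$. For each $k$, the recovery sequence condition produces a probability-one event $A_k$ on which there is a sequence $y_{k,n}\to y_k$ with $F_n(y_{k,n})\to F(y_k)$. On the countable intersection $A:=\bigcap_k A_k$, still of probability one, the bound $\inf_{x\in X} F_n(x)\leq F_n(y_{k,n})$ yields $\limsup_n \inf_x F_n(x)\leq F(y_k)$ for every $k$, and hence
\[
\limsup_{n\to\infty} \inf_{x\in X} F_n(x) \;\leq\; m.
\]

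Next I would combine this with compactness. Let $B$ be the probability-one event on which the liminf inequality holds for all $x$ and all converging $x_n\to x$, let $C$ be the probability-one event on which the compactness property holds, and let $E$ be the probability-one event on which there exists a bounded sequence $\{x_n\}$ with $F_n(x_n) - \inf_x F_n(x) \to 0$. Work on $\Omega_0:=A\cap B\cap C\cap E$, which has probability one. Fix $\omega\in\Omega_0$ and let $\{x_n\}$ be the corresponding sequence. From the previous step, $\inf_x F_n(x) \leq m+1$ eventually, so $F_n(x_n)$ is bounded; together with boundedness of $\{x_n\}$, the compactness property produces a subsequence $x_{n_j}\to x^\ast$. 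The liminf inequality then gives
\[
F(x^\ast) \;\leq\; \liminf_{j\to\infty} F_{n_j}(x_{n_j}) \;=\; \liminf_{j\to\infty} \inf_{x\in X} F_{n_j}(x) \;\leq\; m,
\]
which, together with $F(x^\ast)\geq m$, forces $F(x^\ast)=m$. This shows simultaneously that $F$ attains its minimum, that $\inf_x F_n(x)\to m$ along the full sequence (since both $\liminf$ and $\limsup$ equal $m$), and, by repeating the argument along any other convergent subsequence of $\{x_n\}$, that every cluster point of $\{x_n\}$ is a minimizer of $F$.

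The main obstacle is exactly the $x$-dependence of the probability-one events in the recovery sequence condition: one cannot fix $\omega$ first and then quote a recovery sequence at a cluster point $x^\ast(\omega)$, since $x^\ast$ itself depends on $\omega$. The key observation that resolves this is that for the theorem one only needs the one-sided estimate $\limsup_n \inf_x F_n(x)\leq m$, and $m$ is approachable by \emph{countably many deterministic} points $y_k$; countable intersection of their associated probability-one events is still of probability one, after which the usual deterministic chain $F(x^\ast) \leq \liminf \inf F_n \leq \limsup \inf F_n \leq m \leq F(x^\ast)$ closes on the common event $\Omega_0$. This is what yields the same conclusion as in the stronger formulation of $\Gamma$-convergence used in \cite{trillos2014continuum}, while only requiring the weaker, probabilistically easier, recovery sequence condition of Definition \ref{def:nondeterministicgamma}.
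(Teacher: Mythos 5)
Your proposal is correct and follows essentially the same route as the paper: a countable minimizing family $\{y_k\}$ whose recovery-sequence events are intersected to get $\limsup_n \inf_x F_n(x)\leq \inf_X F$ on a single probability-one set, followed by compactness and the liminf inequality at a cluster point. The only step you gloss over is that $\liminf_n\inf_x F_n(x)\geq m$ for the \emph{full} sequence requires first passing to a subsequence realizing that liminf and then extracting a convergent further subsequence (as the paper does in its final paragraph); this is routine given your setup, since the whole sequence $\{x_n\}$ has bounded energies.
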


\begin{proof}
Pick $\tilde{x} \in X$, along with a recovery sequence
$\{\tilde{x}_n\}_{k \in \mathbb{N}}$, so that on a probability $1$ set $\Omega_1$ we have $\lim_{n\rightarrow\infty}\tilde{x}_n = \tilde{x}$ and $\lim_{n\rightarrow\infty}
F_n(\tilde{x}_n) = F(\tilde{x})$. 
Let $\Omega_2$ be a probability $1$ set on which $x_n=x_n(\omega)$ is a bounded sequence where $\lim_{n \to \infty} (F_n(x_n) -
\inf_{x \in X} F_n(x)) = 0$.   Hence, on the probability $1$ set $\Omega_1\cap \Omega_2$, we obtain
\begin{equation} \label{eq:limsupas1}
\limsup_{n \to \infty} F_{n}(x_{n}) \leq F(\tilde{x}).
\end{equation}

Applying the argument for (\ref{eq:limsupas1}) with respect to a countable collection
$\{\tilde{x}^{(m)}\}_{m \in \mathbb{N}}$ with $\lim_{m \to \infty}
F(\tilde{x}^{(m)}) = \inf_{x \in
  X} F(x) $, we obtain on a probability $1$ set $\Omega_3\subset \Omega_2$ that
\begin{equation} \label{eq:limbound1}
\limsup_{n \to \infty} F_{n}(x_{n}) \leq \inf_{x
  \in X} F(x). 
\end{equation}

Now, because $F$ is not identically equal to $\infty$, the right hand
side of the above inequality is finite.  Then, on the probability $1$
set $\Omega_3$, the sequences $\{x_n\}_{n\in \mathbb{N}}$
and $\{F_n(x_n)\}_{n\in\mathbb{N}}$ are bounded.  Let $\Omega_4$ be
the probability $1$ set on which the compactness property for
$\{F_n\}_{n\in \mathbb{N}}$ holds.  In particular, on $\Omega_5 =
\Omega_3\cap\Omega_4$, the bounded sequence $\{x_n\}_{n\in
  \mathbb{N}}$ is relatively compact.  With respect to the set $\Omega_5$, let $\{x_{n_k}\}_{k\in \mathbb{N}}$ be a subsequence converging to a cluster point $x^*$, that is  $\lim_{k\rightarrow\infty}x_{n_k}=x^*$. 
  
Let $\Omega_6$ be a probability $1$ set on which the liminf
inequality holds.  Then, on $\Omega_7=\Omega_5\cap\Omega_6$, we have
\begin{equation} \label{eq:limbound2}
\inf_{x \in X} F(x) \leq F(x^*) \leq \liminf_{k \to \infty}
F_{n_k}(x_{n_k}).
\end{equation}

Combining (\ref{eq:limbound1}) and (\ref{eq:limbound2}) shows, since $\Omega_7\subset \Omega_3$, that on the set $\Omega_7$ we have
\begin{equation}
\label{eq:limbound3}
\limsup_{k\rightarrow\infty}F_{n_k}(x_{n_k}) \leq \limsup_{n\rightarrow\infty}F_n(x_n) \leq \inf_{x \in X} F(x) \leq F(x^*) \leq \liminf_{k\rightarrow\infty} F_{n_k}(x_{n_k}).
\end{equation}
Hence, we conclude that $F$ attains its minimum value, $F(x^*) = \inf_{x\in X}F(x)$ and $x^*$ is a minimizer of $F$, proving part of the first statement.  In fact, the second statement also follows:  With respect to the probability $1$ set $\Omega_7$, every cluster point of $\{x_n\}_{n\in \mathbb{N}}$ is a minimizer of $F$.

We now show the remaining part of the first statement.  With respect to the set $\Omega_7$, let $\{x_{m_k}\}_{k\in \mathbb{N}}$ be a subsequence of $\{x_n\}_{n\in \mathbb{N}}$ where, for some cluster point $x^{**}$,
$$\lim_{k\rightarrow\infty}F_{m_k}(x_{m_k}) = \liminf_{n\rightarrow\infty}F_n(x_n) {\rm \ \ and \ \ } \lim_{k\rightarrow\infty}x_{m_k} = x^{**}.$$
  Then, by \eqref{eq:limbound3}, we conclude on $\Omega_7$ that $\lim_{n\rightarrow\infty}F_n(x_n) = \inf_{x\in X}F(x)$.  Since $\Omega_7\subset \Omega_2$, we have on $\Omega_7$ that $\lim_{n\rightarrow\infty}(F_n(x_n) - \inf_{x\in X}F_n(x)) = 0$.  Hence, we conclude on $\Omega_7$ that $\lim_{n\rightarrow\infty}\inf_{x\in X}F_n(x_n) = \inf_{x\in X}F(x)$.        
\end{proof}

\section{Reformulation of the modularity functional}
\label{reformulation_sect}
In this section, we write the modularity functional as a sum of a `graph total variation' term and a `balance' or `quadratic' term, which will aid in its subsequent analysis.  A similar, but different decomposition was used in \cite{hu_et_al}.

Recall the modularity functional in Subsection \ref{results} acting on a partition $\mathcal{U}_n = \{U_{n,k}\}_{k=1}^K$ of the data points $\mathcal{X}_n$ into $K\geq 1$ sets:
\begin{equation} \label{eq:modagain}
Q_n(\mathcal{U}_n) = \frac{1}{2m}\sum_{i,j} \Big(W_{ij} -
2m \frac{d_i^{\alpha} d_j^{\alpha}}{S^2}\Big) \delta(c_i,c_j).
\end{equation}
Here, $d_i = \sum_j W_{ij}$, $2m = \sum_{ij}
W_{ij}$, and $S = \sum_i (\sum_{j} W_{ij})^{\alpha}$ and the weights
$W_{ij} = \eta_{\epsilon_n}(X_i-X_j)$ if $i\neq j$ and equal $0$
otherwise.  The label $c_i = k$ is assigned to the point $X_i$ if $X_i
\in U_{n,k}$ for $1\leq k\leq K$.

Define $I_n(D)$ as the collection of indicator functions of subsets of $\mathcal{X}_n$.  Natural members of $I_n(D)$, in the above context, are $u_{n,k} = \mathds{1}_{U_{n,k}}$ for $1\leq k\leq K$.  Note that the collection $\{u_{n,k}\}_{k=1}^K$ satisfies $\sum_{k=1}^K u_{n,k} = \mathds{1}_{\mathcal{X}_n}$.

Observe now that $\delta(c_i,c_j)$, signifying that $X_i$ and $X_j$ have the same label, can be expressed in two ways:
\begin{eqnarray}
\delta(c_i,c_j) = 1 -\frac{1}{2}\sum_{k=1}^K |u_{n,k}(X_i)
- u_{n,k}(X_j)| = \sum_{k=1}^K u_{n,k}(X_i)u_{n,k}(X_j).
\label{delta_twoways}
\end{eqnarray}

Applying the first identity in \eqref{delta_twoways} to the first term in (\ref{eq:modagain}) gives
\begin{equation*} 
\begin{aligned}
\frac{1}{2m}\sum_{i,j} W_{ij} \delta(c_i,c_j) &=  \frac{1}{2m}\sum_{i,j} W_{ij} - \frac{1}{2}\frac{1}{2m} \sum_{k=1}^K \sum_{i,j}
W_{ij} |u_{n,k}(X_i) - u_{n,k}(X_j)| \\
&= 1 - \frac{1}{2}\frac{1}{2m} \sum_{k=1}^K \sum_{i,j}
W_{ij} |u_{n,k}(X_i) - u_{n,k}(X_j)|.
\end{aligned}
\end{equation*}

Define the \textit{graph total variation} $GTV_n(u)$, acting on $u:\mathcal{X}_n\rightarrow \mathbb{R}$, to be
\begin{equation}\label{eq:gtvepsdef}
GTV_{n}(u) \coloneqq \frac{1}{\epsilon_n} \frac{1}{n(n-1)} \sum_{1 \leq i\neq j \leq n}
\eta_{\epsilon_n}(X_i - X_j) |u(X_i) - u(X_j)|.
\end{equation}
Then, we may write
\begin{equation}\label{eq:multitv}
\frac{1}{2m}\sum_{i,j} W_{ij} \delta(c_i,c_j) =  1 - \epsilon_n \frac{n(n-1)}{4m}\sum_{k=1}^{K} GTV_{n}(u_{n,k}).
\end{equation}

Similarly, the second relation in \eqref{delta_twoways}
gives
\begin{equation*}
\begin{aligned}
\sum_{i,j} \frac{d_i^{\alpha}d_j^{\alpha}}{S^2} \delta(c_i,c_j) &=
\frac{1}{S^2} \sum_{k=1}^K \sum_{i,j} d_i^{\alpha}d_j^{\alpha} u_{n,k}(X_i)u_{n,k}(X_j) \\
&= \frac{1}{S^2} \sum_{k=1}^K \Big( \sum_{i} d_i^{\alpha}
u_{n,k}(X_i)\Big)^2 \\
&= \frac{1}{S^2} \sum_{k=1}^K \Big( \sum_{i} 
\Big(\sum_{\stackrel{1 \leq j \leq n}{j\neq i}} \eta_{\epsilon_n}(X_i -
X_j)\Big)^{\alpha} u_{n,k}(X_i)\Big)^2.
\end{aligned}
\end{equation*}
Define $G\Lambda_n(u)$, for $u:D\rightarrow \mathbb{R}$, by
\begin{equation}\label{eq:glambdadef}
G\Lambda_{n}(u) \coloneqq  \frac{1}{n} \sum_{i=1}^n \Big(
\frac{1}{n-1} \sum_{\stackrel{1 \leq  j \leq n}{j\neq i}}
  \eta_{\epsilon_n}(X_i - X_j)\Big )^{\alpha} u(X_i).
\end{equation}
Then,
\begin{equation*}
\sum_{i,j} \frac{d_i^{\alpha}d_j^{\alpha}}{S^2} \delta(c_i,c_j) =
\frac{n^2(n-1)^{2\alpha}}{S^2} \sum_{k=1}^K \big( G\Lambda_n (u_{n,k})\big)^2.
\end{equation*}
Note that $G\Lambda_n (1) = S / (n(n-1)^{\alpha})$. With a bit of algebra, we obtain
\begin{eqnarray*}
&&
\sum_{k=1}^K \Big( G\Lambda_n (u_{n,k})\Big)^2 \\
&& = \sum_{k=1}^K
\Big[ (G\Lambda_n (u_{n,k} - 1/K))^2 \\
&&\ \ \ \ \ \ \ \ \ \ + 2 G\Lambda_n(
u_{n,k} - 1/K)G\Lambda_n (1/K) + (G\Lambda_n(1/K))^2\Big],
\end{eqnarray*}
which further equals
\begin{eqnarray*}
&& \sum_{k=1}^K (G\Lambda_n (u_{n,k} - 1/K))^2 \\
&&\ \ \ \ \ \ \ \ \ \ + 2G\Lambda_n
\Big(\sum_{k=1}^K (u_{n,k} - 1/K)\Big) G\Lambda_n(1/K)+ \frac{1}{K}G\Lambda_n(1)^2 \\
&&  = \sum_{k=1}^K (G\Lambda_n (u_{n,k} - 1/K))^2 +
\frac{1}{K}G\Lambda_n(1)^2.
\end{eqnarray*}
We have used the relation $\sum_{k=1}^K u_{n,k}  = \mathds{1}_{\mathcal{X}_n}$ in the
last equality.

Hence,
\begin{equation}  \label{eq:multiquad}
\sum_{i,j} \frac{d_i^{\alpha} d_j^{\alpha}}{S^2}\delta(c_i,c_j) =  \frac{n^2(n-1)^{2\alpha}}{S^2}\Big[\sum_{k=1}^K (G\Lambda_n (u_{n,k} - 1/K))^2 \Big] +
1/K.
\end{equation}

Combining (\ref{eq:multitv}) and (\ref{eq:multiquad}) gives
\begin{eqnarray} \label{eq:oneminusmod}
1 - 1/K - Q_n(\mathcal{U}_n) &=& \frac{n^2(n-1)^{2\alpha}}{S^2}\Big[\sum_{k=1}^K (G\Lambda_n (u_{n,k} -
1/K))^2\Big] \\
&&\ \ \ \ + \epsilon_n \frac{n(n-1)}{4m} \sum_{k=1}^K
GTV_{n}(u_{n,k}). \nonumber
\end{eqnarray}

\section{Proof of Theorem \ref{thm:asymptotics}: Asymptotic Formula}
\label{proof_asymptotics}
We analyze the `graph total variation' and `quadratic' terms, identified in the decomposition of the modularity functional in Section \ref{reformulation_sect}, in the first two subsections.  Then, in Subsection \ref{asymptotic_formulas_sect}, we collect estimates and prove Theorem \ref{thm:asymptotics}.

In this section, in accordance with the assumptions of Theorem
\ref{thm:asymptotics}, we suppose that the partition
$\mathcal{U}_n=\{U_{n,k}\}_{k=1}^K$ of the data points $\mathcal{X}_n$
is induced by a `continuum' partition $\mathcal{U}= \{U_k\}_{k=1}^K$
of $D$ into $K\geq 1$ sets with finite perimeter ${\rm
  Per}(U_k;\rho^2)<\infty$, where $U_{n,k} =
\{X_i \in \mathcal{X}_n | X_i\in U_k\}$, for $1\leq k\leq K$.

Define $I(D)$ as the collection of measurable indicator functions of subsets $U\subset D$.  Let $u_k = \mathds{1}_{U_k}$, and note that $u_k\in I(D)$ is an extension of the indicator $u_{n,k} = \mathds{1}_{U_{n,k}}$, defined on $\mathcal{X}_n$, for $1\leq k\leq K$.    Of course, the family $\{u_k\}_{k=1}^K$ satisfies $\sum_{k=1}^K u_k = \mathds{1}_D$.

\subsection{Convergence of graph total variation}  To show a.s. convergence of the graph total variations, we first state that its expectations converge, and then use concentration ideas to elicit convergence of the random quantitites.

Let $u \in L^1(D)$. We define the \textit{nonlocal total
  variation} of $u$ to be
\begin{equation*}\label{eq:tvepsdef}
TV_{\epsilon}(u;\rho) \coloneqq \frac{1}{\epsilon}\int_D\int_D \eta_{\epsilon}(x-y)
|u(x)-u(y)|\rho(x)\rho(y) \,dx\,dy.
\end{equation*}
Note that, if $X$ and $Y$ are independent random variables with
density $\rho$,
we have
\begin{equation*}
\mathbb{E}\Big[ \frac{1}{\epsilon} \eta_{\epsilon}(X-Y)|u(X) -
u(Y)|\Big] = TV_{\epsilon}(u;\rho).
\end{equation*}
Recalling the definition (\ref{eq:gtvepsdef}) of the graph total
variation, we therefore have
\begin{equation*}
\mathbb{E}\Big[GTV_{n}(u)\Big] =  TV_{\epsilon_n}(u;\rho).
\end{equation*}

Let also $$\sigma_\eta := \int_D \eta(x)|x_1|dx.$$

\begin{lemma} \label{lemma:tvlim}
Let $u \in L^1(D)$ such that $TV(u; \rho^2)<\infty$. Then, we have
\begin{equation} \label{eq:tvlimnonlocal}
 \lim_{\epsilon \to 0} TV_{\epsilon}(u;\rho) = \sigma_{\eta}
TV(u;\rho^2).
\end{equation}
\end{lemma}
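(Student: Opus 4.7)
The plan is to handle smooth $u$ by direct computation via change of variables and Taylor expansion, then extend to general $u$ via approximation in $BV$.

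Changing variables $y = x + \epsilon z$ in the defining integral yields
\begin{equation*}
TV_\epsilon(u;\rho) = \int_D \int_{A_{x,\epsilon}} \eta(z)\, \frac{|u(x+\epsilon z) - u(x)|}{\epsilon}\, \rho(x)\rho(x+\epsilon z)\, dz\, dx,
\end{equation*}
where $A_{x,\epsilon} = \{z \in \mathbb{R}^d : x + \epsilon z \in D\}$. For $u \in C^1(\bar D)$, Taylor's theorem gives pointwise convergence of the difference quotient to $|\nabla u(x)\cdot z|$, while the compact support of $\eta$ from (K4) and boundedness of $\rho$ from (M) dominate the integrand by $C\eta(z)|z|$. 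Dominated convergence then applies, and the radial symmetry of $\eta$ from (K2) gives $\int_{\mathbb{R}^d} \eta(z)|v\cdot z|\, dz = \sigma_\eta |v|$ for any $v \in \mathbb{R}^d$, so that
\begin{equation*}
\lim_{\epsilon \to 0} TV_\epsilon(u;\rho) = \sigma_\eta \int_D |\nabla u(x)|\rho(x)^2\, dx = \sigma_\eta TV(u;\rho^2)
\end{equation*}
for smooth $u$, where the final equality uses that $TV(u;\rho^2) = \int_D |\nabla u|\rho^2\, dx$ whenever $u \in C^1(\bar D)$, by integration by parts against $C^1_c$ vector fields.

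For general $u \in L^1$ with $TV(u;\rho^2) < \infty$, standard $BV$ theory (cf. Chapter 3 of \cite{ambrosio2000functions}) supplies smooth approximants $u_\delta \in C^\infty(D) \cap L^1$ with $u_\delta \to u$ in $L^1$ and $\int_D |\nabla u_\delta|\rho^2\, dx \to TV(u;\rho^2)$. The coarea-type identity $TV_\epsilon(u;\rho) = \int_0^\infty TV_\epsilon(\mathbf{1}_{\{u > t\}};\rho)\, dt$ reduces the problem further to indicator functions $\mathbf{1}_E$ of sets of finite weighted perimeter, for which one approximates $E$ by sets with smooth boundary. The matching $\liminf$ inequality $\liminf_\epsilon TV_\epsilon(u;\rho) \geq \sigma_\eta TV(u;\rho^2)$ can be obtained by mollifying $u$ at a scale $\sigma \gg \epsilon$: Jensen's inequality applied to $|\cdot|$ gives $TV_\epsilon(u_\sigma;\rho) \leq TV_\epsilon(u;\rho) + o_\sigma(1)$ uniformly in $\epsilon$, and the smooth case applied to $u_\sigma$ followed by $\sigma \to 0$, together with $L^1$-lower semicontinuity of $TV(\cdot;\rho^2)$, closes the argument.

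The main obstacle is the matching upper bound $\limsup_\epsilon TV_\epsilon(u;\rho) \leq \sigma_\eta TV(u;\rho^2)$, because a naive diagonal argument fails: one only has $|TV_\epsilon(u;\rho) - TV_\epsilon(u_\delta;\rho)| \leq 2\|\rho\|_\infty^2\, \epsilon^{-1}\|u - u_\delta\|_{L^1}$, which diverges as $\epsilon \to 0$. The standard fix is a slicing argument along lines parallel to a fixed direction, which reduces the weighted nonlocal-to-local estimate to a one-dimensional difference quotient inequality in the spirit of Bourgain--Brezis--Mironescu, combined with the approximation by smooth-boundary sets for indicators. Boundary effects coming from the truncation set $A_{x,\epsilon}$ are handled via the Lipschitz regularity of $\partial D$ from (D) together with the two-sided bounds on $\rho$ from (M), which make the contribution from an $O(\epsilon)$-neighborhood of $\partial D$ vanish in the limit.
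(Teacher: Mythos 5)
Your outline is mathematically sound, but it is worth noting that the paper does not prove this lemma from scratch at all: it simply invokes Theorem 4.1 / Remark 4.3 of \cite{trillos2014continuum} for the weighted case, and, for the reader's benefit in the case $\rho=\mathds{1}_D$, reduces the statement to the known nonlocal-to-local convergence of \cite{davila2002open} and \cite{ponce2004new} by rewriting $\epsilon^{-1}\eta_\epsilon(x-y)=\tilde\eta_\epsilon(x-y)/|x-y|$ with $\tilde\eta(z)=\eta(z)|z|$ and then checking that the resulting constant $cK_{1,d}$ equals $\sigma_\eta$. Your direct computation $\int_{\mathbb{R}^d}\eta(z)|v\cdot z|\,dz=\sigma_\eta|v|$ from radial symmetry recovers the same constant more transparently, and your smooth case and mollification-based liminf are the standard ingredients of the cited proofs; so in substance you are re-deriving (a sketch of) the result the paper cites rather than citing it.

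The one place your write-up is genuinely thin is exactly the place where all the difficulty lives: the $\limsup$ inequality for general $u$ of finite weighted total variation. You correctly diagnose that the naive diagonal argument fails because the error $\epsilon^{-1}\|u-u_\delta\|_{L^1}$ blows up, and the fix you name (reduction to a one-dimensional translation/difference-quotient estimate, i.e. $\|u(\cdot+h)-u\|_{L^1}\le |h|\,|D_{h/|h|}u|$ integrated against the radial kernel, plus coarea and approximation of finite-perimeter sets) is indeed how D\'avila and Ponce prove the unweighted statement. But carrying this out with the spatially varying weight $\rho(x)\rho(y)$ — which destroys the translation invariance that the slicing estimate relies on and forces a localization argument — together with the boundary truncation on a merely Lipschitz domain, is precisely the content of Theorem 4.1 of \cite{trillos2014continuum}. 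As written, your final paragraph compresses that theorem into two sentences; for a complete proof you would either need to execute the localized slicing estimate or, as the paper does, cite it.
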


\begin{proof}

For general $\rho$, continuous on $D$ and bounded above and below by
positive constants, and $d\geq 2$, the result follows from part of the proof of
\cite{trillos2014continuum}, Theorem 4.1 (see Remark 4.3 in
\cite{trillos2014continuum}), which is a much more involved result.  This proof also holds in $d=1$. 

For the convenience of the reader, we make a few remarks (see also remarks in \cite{trillos2014continuum}),
addressing the case $\rho = \mathds{1}_D$.
Integrals of the form
\begin{equation}
\int_{D \times D} \eta_{\epsilon}(x-y)
\frac{|u(x) - u(y)|}{|x-y|}\, dx \, dy
\end{equation} are considered in \cite{bourgain2001another}, where the limiting behavior as $\epsilon
\to 0$ is established for $u \in W^{1,1}(D)$. In \cite{davila2002open}
and \cite{ponce2004new},
this is extended to general $u$ such that $TV(u;\mathds{1}_D)<\infty$, with the result being
\begin{equation} \label{eq:nonlocalconvergenceponce}
\lim_{\epsilon \to 0} \int_{D \times D} \eta_{\epsilon}(x-y)
\frac{|u(x) - u(y)|}{|x-y|}\, dx \, dy = K_{1,d} TV(u;1),
\end{equation}
where $K_{1,d} = \frac{1}{\mathcal{H}^{d-1}(S^{d-1})} \int_{S^{d-1}} |e_1 \cdot
  \sigma|\, d\mathcal{H}^{d-1}$ is the average of the $e_1$ component
  of the unit normal vector field on the unit sphere $S^{d-1}$ (see
  \cite{ponce2004new}, Corollary 1.3). 

The limit
(\ref{eq:nonlocalconvergenceponce})  implies
(\ref{eq:tvlimnonlocal}). Indeed, 
\begin{equation}
\begin{aligned}
TV_{\epsilon}(u;\mathds{1}_D) &= \frac{1}{\epsilon}\int_{D \times D}
\eta_{\epsilon}(x-y)|u(x)-u(y)|\, dx \, dy \\
&= \int_{D \times D}
\tilde{\eta}_{\epsilon}(x-y) \frac{|u(x)-u(y)|}{|x-y|}\, dx \, dy,
\end{aligned}
\end{equation}
with $\tilde{\eta}(z) = \eta(z)|z|$ and $\tilde{\eta}_{\epsilon}(z) =
\tilde{\eta}(z/\epsilon) / \epsilon^d$. Letting $c =
\int_{\mathbb{R}^d} \eta(z)|z|\, dz$, it follows from
(\ref{eq:nonlocalconvergenceponce}) that
\begin{equation}
\begin{aligned}
\lim_{\epsilon \to 0} TV_{\epsilon}(u;\mathds{1}_D) &= \lim_{\epsilon \to 0} \int_{D \times D}
\tilde{\eta}_{\epsilon}(x-y) \frac{|u(x)-u(y)|}{|x-y|}\, dx \, dy \\
&= c K_{1,d} TV(u;\mathds{1}_D).
\end{aligned}
\end{equation}
Finally, one may verify that $c K_{1,d} = \int_{\mathbb{R}^d}
\eta(z)|z_1|\, dz=\sigma_{\eta}$, as desired.
\end{proof}

We now proceed to the almost sure convergence of the graph total
variation to its continuum limit.  

\begin{lemma} \label{lemma:gtvasconverge}
Fix $u \in I(D)$ where $TV(u;\rho^2)<\infty$, and let $\{\epsilon_n\}_{n \in \mathbb{N}}$ be a
sequence converging to zero such that 
\begin{equation} \label{eq:gtvlemmaeps}
\sum_{n=1}^{\infty} \exp(-n\epsilon_n^{(d+1)/2}) < + \infty.
\end{equation}
Then, 
\begin{equation*} GTV_n(u) \xrightarrow{a.s.}
\sigma_{\eta}TV(u;\rho^2)
\end{equation*}
as $n \to \infty$.
\end{lemma}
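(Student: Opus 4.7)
The plan is to split the deviation as
\[ GTV_n(u) - \sigma_\eta TV(u;\rho^2) = \bigl(GTV_n(u) - \mathbb{E}GTV_n(u)\bigr) + \bigl(\mathbb{E}GTV_n(u) - \sigma_\eta TV(u;\rho^2)\bigr), \]
so that the deterministic bias piece vanishes immediately from the identity $\mathbb{E}GTV_n(u) = TV_{\epsilon_n}(u;\rho)$ combined with Lemma~\ref{lemma:tvlim}. This reduces the result to proving $GTV_n(u) - \mathbb{E}GTV_n(u) \xrightarrow{a.s.} 0$, to be obtained by a concentration bound that is summable in $n$ under the hypothesis \eqref{eq:gtvlemmaeps}, followed by Borel--Cantelli.

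To control the fluctuation I would view $GTV_n(u)$ as a U-statistic of order two in the i.i.d. variables $X_1,\ldots,X_n$, with symmetric bounded kernel $\tilde h_n(x,y) = \epsilon_n^{-1}\eta_{\epsilon_n}(x-y)|u(x)-u(y)|$, and apply the Hoeffding decomposition
\[ \tilde h_n(x,y) = \theta_n + g_1(x) + g_1(y) + g_2(x,y), \]
where $\theta_n = \mathbb{E}\tilde h_n$, $g_1(x) = \mathbb{E}[\tilde h_n(x,X_2)] - \theta_n$, and $g_2$ is the degenerate residual. This writes $GTV_n(u) - \mathbb{E}GTV_n(u) = L_n + Q_n$, with linear part $L_n = (2/n)\sum_{i=1}^n g_1(X_i)$ and degenerate quadratic part $Q_n = \binom{n}{2}^{-1}\sum_{i<j} g_2(X_i,X_j)$, and I would concentrate each via Bernstein-type inequalities.

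The size bounds I would use exploit the fact that $u$ is an indicator of a finite-perimeter set. The projection $\mathbb{E}[\tilde h_n(x,X_2)]$ is supported in an $O(\epsilon_n)$ tubular neighborhood of $\partial U$, which gives $\|g_1\|_\infty = O(\epsilon_n^{-1})$ and $\mathrm{Var}(g_1) = O(\epsilon_n^{-1})$, while the pointwise inequality $\eta_{\epsilon_n}^2 \le \|\eta\|_\infty\epsilon_n^{-d}\,\eta_{\epsilon_n}$ yields $\mathbb{E}\tilde h_n^{2} = O(\epsilon_n^{-(d+1)})$, so that $\mathbb{E}g_2^{2} = O(\epsilon_n^{-(d+1)})$ and $\|g_2\|_\infty = O(\epsilon_n^{-(d+1)})$. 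Bernstein for i.i.d. bounded mean-zero sums then gives, for any fixed $\delta>0$, $\mathbb{P}(|L_n|>\delta) \le 2\exp(-c\delta^2 n\epsilon_n)$, which is summable under \eqref{eq:gtvlemmaeps} since $\epsilon_n \ge \epsilon_n^{(d+1)/2}$ for $d \ge 1$. For the degenerate part $Q_n$ I would invoke a Bernstein-type inequality for degenerate U-statistics of order two (of Arcones--Gin\'e type), which under the above variance and sup-norm control on $g_2$ produces a tail of a form that is summable against \eqref{eq:gtvlemmaeps}; Borel--Cantelli then concludes.

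The main obstacle is $Q_n$: the kernel blows up at the rapid rate $\epsilon_n^{-(d+1)}$, and a naive Hoeffding or McDiarmid inequality applied to the full U-statistic only produces the far too weak tail $\exp(-c n\epsilon_n^{2(d+1)})$. Matching the actual hypothesis requires two ingredients to work in tandem: the sharper $L^2$ kernel bound coming from the finite-perimeter/indicator structure of $u$, and the substitution of the naive concentration bound by a Bernstein-type inequality tailored to degenerate U-statistics of order two.
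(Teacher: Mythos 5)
Your architecture is exactly the paper's: split off the bias via $\mathbb{E}\,GTV_n(u)=TV_{\epsilon_n}(u;\rho)\to\sigma_\eta TV(u;\rho^2)$ (Lemma~\ref{lemma:tvlim}), Hoeffding-decompose the centered U-statistic into a linear and a degenerate part, control the linear part by Bernstein using $\|l_n\|_{L^\infty},\,\mathbb{E}l_n^2=O(\epsilon_n^{-1})$, and control the degenerate part by an exponential inequality for canonical second-order U-statistics. The bias step and the linear part are complete and match the paper (note $n\epsilon_n\ge n\epsilon_n^{(d+1)/2}$, so summability is inherited from \eqref{eq:gtvlemmaeps}).

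The gap sits exactly where you flag ``the main obstacle'': you assert that an Arcones--Gin\'e-type inequality ``produces a tail of a form that is summable against \eqref{eq:gtvlemmaeps}'' without verifying it, and that verification is the whole content of the step, because \eqref{eq:gtvlemmaeps} is calibrated with no slack. The paper first decouples (replacing $h_n(X_i,X_j)$ by $h_n(X_i,Y_j)$ via de la Pe\~na--Montgomery-Smith) and then applies the Gin\'e--Lata{\l}a--Zinn inequality, whose exponent is a minimum of four terms; with the paper's estimates these evaluate to $n^2\epsilon_n^{d+1}s^2$, $n\epsilon_n s$, $n\epsilon_n^{(d+2)/3}s^{2/3}$, and $n\epsilon_n^{(d+1)/2}s^{1/2}$, the last being the binding one and matching \eqref{eq:gtvlemmaeps} exactly. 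Reaching this requires more than the two bounds you list: one also needs the conditional second-moment bounds $\|\mathbb{E}_Xh_n^2\|_{L^\infty},\|\mathbb{E}_Yh_n^2\|_{L^\infty}\le C\epsilon_n^{-(d+2)}$ and the operator-norm bound $\|h_n\|_{L^2\to L^2}\le C\epsilon_n^{-1}$ (Corollary~\ref{cor:gtvestimates}). If you feed in only $\mathbb{E}g_2^{2}$ and $\|g_2\|_\infty$ and bound $\|\mathbb{E}_Yh_n^2\|_{L^\infty}$ crudely by $\|h_n\|_\infty^2=O(\epsilon_n^{-2(d+1)})$, the third term degrades to $n\epsilon_n^{2(d+1)/3}s^{2/3}$, which (since $2(d+1)/3>(d+1)/2$) is strictly smaller than $n\epsilon_n^{(d+1)/2}s^{1/2}$ and is not summable under \eqref{eq:gtvlemmaeps}. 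So either invoke the full GLZ inequality (after decoupling) and supply the two missing estimates, or check explicitly that whichever ``Arcones--Gin\'e type'' bound you intend yields an exponent at least $c\,n\epsilon_n^{(d+1)/2}$; as written, the degenerate part does not close.
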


\begin{proof}
Let $\displaystyle{f_n(x,y) =\frac{1}{\epsilon_n} \eta_{\epsilon_n}(x-y)|u(x) - u(y)|}$, so that we have
\begin{equation*}
GTV_{n}(u) = \frac{1}{n(n-1)} \sum_{1 \leq i \neq j \leq n} f_n(X_i,X_j).
\end{equation*}

We let $\mathbb{E}f_n$ denote $\mathbb{E}[f_n(X_i,X_j)]= TV_{\epsilon_n}(u;\rho)$, so that
\begin{equation*}
GTV_{n}(u) = \frac{1}{n(n-1)} \sum_{1 \leq i \neq j \leq n} \Big(f_n(X_i,X_j) -
\mathbb{E}f_n\Big) +
\mathbb{E}f_n.
\end{equation*}
By Lemma \ref{lemma:tvlim},  we have $\lim_{n \to \infty} \mathbb{E}f_n = \sigma_{\eta}
TV(u;\rho^2)$. Therefore, it remains to argue that
$\displaystyle{ \lim_{n \to \infty} \frac{1}{n(n-1)} \sum_{1 \leq i \neq j \leq n} \Big(f_n(X_i,X_j) -
\mathbb{E}f_n\Big) = 0}$ almost surely.

We introduce a bit of notation.  Let 
\begin{equation*}
\begin{aligned}
l_n(X_i) &= \int_D f_n(X_i,y)\rho(y)\, dy \\
m_n(X_j) &=
\int_D f_n(x,X_j)\rho(x)\, dx \\
h_n(X_i,X_j) &= f_n(X_i,X_j) - l_n(X_i) - m_n(X_j) + \mathbb{E}f_n,
\end{aligned}
\end{equation*}
so that we have
\begin{equation*} 
f_n(X_i,X_j) - \mathbb{E}f_n= h_n(X_i,X_j) + (l_n(X_i) -
\mathbb{E}f_n) +
(m_n(X_j) - \mathbb{E}f_n).
\end{equation*}
Summing this gives
\begin{eqnarray} 
&&\frac{1}{n(n-1)} \sum_{1 \leq i \neq j \leq n} \Big(f_n(X_i,X_j) -
\mathbb{E}f_n\Big) = \frac{1}{n(n-1)} \sum_{1 \leq i \neq j \leq n}
h_n(X_i,X_j) \nonumber\\
&&\ \ \ \ \ + \frac{1}{n}\sum_{1 \leq i \leq n} (l_n(X_i) -
\mathbb{E}f_n)  + \frac{1}{n} \sum_{1 \leq j \leq n} (m_n(X_j) -
\mathbb{E}f_n).
\label{eq:gtvfdecomp}
\end{eqnarray}

We handle the three terms on the right hand side of the above equation separately.

First, note that $\mathbb{E}l_n = \mathbb{E}f_n$.  
An application of Bernstein's inequality 
(\cite{van2000asymptotic}, Lemma 19.32) in this setting yields, for $s
> 0$,
\begin{equation*}
\mathbb{P}\Big(\Big| \frac{1}{n} \sum_{i} l_n(X_i) - \mathbb{E}f_n
\Big| > s\Big) \leq 2 \mbox{exp}\Big( -\frac{1}{4} \frac{ns^2}{\mathbb{E}l_n^2 + s \|l_n\|_{L^\infty}}\Big).
\end{equation*}

In Lemma \ref{lemma:gtvestimates} of the Appendix, we prove the upper
bounds $\mathbb{E}l_n^2 \leq C / \epsilon_n$ and $\|l_n\|_{L^\infty}
\leq C / \epsilon_n$ where $C$ is a constant independent of $n$. Hence,
we have
\begin{equation*}
\begin{aligned}
\mathbb{P}\Big(\Big| \frac{1}{n} \sum_{i} l_n(X_i) - \mathbb{E}f_n
\Big| > s\Big)   &\leq 2 \mbox{exp}\Big(-\frac{\epsilon_n n}{C}\frac{ s^2}{s + 1}\Big).
\end{aligned}
\end{equation*}

By our assumption (\ref{eq:gtvlemmaeps}), this implies
\begin{equation*}
\sum_{n=1}^{\infty} \mathbb{P}\Big(\Big| \frac{1}{n} \sum_{i} l_n(X_i) - \mathbb{E}f_n
\Big| > s\Big) < \infty,
\end{equation*}
and so, as $n\rightarrow\infty$, \begin{equation} \label{eq:gtvlconvergeas}
\frac{1}{n} \sum_{i=1}^n l_n(X_i) -
    \mathbb{E}f_n \xrightarrow{a.s.} 0.
\end{equation}
Similarly, we have, as $n\rightarrow\infty$,
\begin{equation} \label{eq:gtvmconvergeas}
\frac{1}{n} \sum_{j=1}^n m_n(X_j) -
    \mathbb{E}f_n \xrightarrow{a.s.} 0.
\end{equation}

What remains is the double sum $\displaystyle{\frac{1}{n(n-1)}\sum_{1
    \leq i \neq j \leq n} h_n(X_i,X_j)}$. Let $Y_1,\ldots,Y_n$ be
independent copies of $X_1,\ldots,X_n$. By the decoupling inequality of de la Pe\~{n}a and Montgomery-Smith
\cite{de1995decoupling}, there is a constant $C$ independent of
$n$ and $h$ such that
 \begin{eqnarray} 
&&\mathbb{P}\Big( \Big|\frac{1}{n(n-1)} \sum_{1\leq i\neq j\leq n} h_n(X_i,X_j) \Big| > s \Big) \\
\label{eq:decouple}
&&\ \ \ \ \leq C\, \mathbb{P}\Big( C
\Big|\frac{1}{n(n-1)} \sum_{1\leq i\neq j\leq n} h_n(X_i,Y_j) \Big| > s \Big).\nonumber
\end{eqnarray}

The sum $\sum_{1 \leq i \neq j \leq n} h_n(X_i,Y_j)$ is canonical, that is, $\mathbb{E}_X[h_n(X_i,Y_j)] = 0\text{ a.s.}$ and
$\mathbb{E}_Y[h_n(X_i,Y_j)] = 0 \text{ a.s.}$, where $\mathbb{E}_X$
and $\mathbb{E}_Y$ denote expectation with respect to the first and
second variables respectively.  A general concentration inequality for U-statistics given by Gin\'e, Lata{\l}a and Zinn in Theorem 3.3 of \cite{gine2000exponential},
states, for canonical kernels $\{ h_{i,j} \}_{1 \leq i, j \leq n}$, that
\begin{align*}
\mathbb{P}\Big( \Big| \sum_{1\leq i,j \leq n} h_{i,j}(X_i, Y_j) \Big| > s\Big) \leq L \exp\Big[-\frac{1}{L} \min\big(\frac{s^2}{R^2}, \frac{s}{Z}, \frac{s^{2/3}}{B^{2/3}}, \frac{s^{1/2}}{A^{1/2}}\big)\Big],
\end{align*}
for all $s > 0$, where $L$ is a constant not depending on $\{ h_{i,j} \}_{1 \leq i, j \leq n}$ or $n$, and 
\begin{align*}
& A = \max_{i,j} \|h_{i,j}\|_{L^{\infty}}, \hspace{0.5cm} R^2 = \sum_{i,j} \mathbb{E} h_{i,j}^2, \\
&B^2 = \max_{i,j} \Big[ \| \sum_i \mathbb{E}_X h_{i,j}^2(X_i,y) \|_{L^{\infty}}, \| \sum_j \mathbb{E}_Y h_{i,j}^2 (x,Y_j)\|_{L^{\infty}}\Big], \\
& Z = \sup \{ \mathbb{E} \sum_{i,j} h_{i,j}(X_i,Y_j) f_i(X_i) g_j(Y_j) : \mathbb{E} \sum_i f_i^2(X_i) \leq 1, \mathbb{E} \sum_j g_j^2(Y_j) \leq 1 \}.
\end{align*}

In our context, we take $h_{i,j} = h_n$ for $i \neq j$, and $h_{i,j} = 0$ otherwise, which gives the constants $A = \| h_n \|_{L^{\infty}}$, $B^2 = (n-1) \max( \|\mathbb{E}_X h_n^2\|, \|\mathbb{E}_Y h_n^2\|)$, $R^2 = n(n-1) \mathbb{E} h_n^2$ and, after a manipulation, $Z \leq n \|h_n\|_{L^2 \to L^2}$, where $$\|h_n\|_{L^2 \to L^2} \coloneqq \sup \{ \mathbb{E} h(X,Y) f(X)g(Y) : \mathbb{E} f^2(X) \leq 1, \mathbb{E} g^2(Y) \leq 1 \}.$$
It follows that
\begin{align} \label{eq:ustatexpbound}
\mathbb{P}&\Big( \Big|\frac{1}{n(n-1)} \sum_{1\leq i\neq j\leq n} h_n(X_i,Y_j) \Big| >
s \Big) \\
 & \leq L \mbox{exp}\Big[ -\frac{1}{L'} \min\Big(
\begin{aligned}[t]
&\frac{n^2 s^2}{\mathbb{E}h_n^2}, \frac{ns}{\|h_n\|_{L^2 \to L^2}},\nonumber\\
&\frac{n^{2/3} s^{2/3}}{[\max (\|\mathbb{E}_X h_n^2\|_{L^\infty},
  \|\mathbb{E}_Yh_n^2\|_{L^\infty})]^{1/3}},
\frac{ns^{1/2}}{\|h_n\|_{L^\infty}^{1/2}}
\Big)\Big]\nonumber,
\end{aligned}
\end{align}
for some constant $L'$ independent of $n$ and $h$.

In Corollary \ref{cor:gtvestimates} of the Appendix, we prove the upper
bounds

\begin{center}

$\mathbb{E}h_n^2 \leq C / \epsilon_n^{d+1}$, \hspace{0.5cm}
$\|\mathbb{E}_Y h_n^2\|_{L^\infty} \leq C / \epsilon_n^{d+2}$,  \hspace{0.5cm}
$\|\mathbb{E}_X h_n^2\|_{L^\infty} \leq C  / \epsilon_n^{d+2}$, \\ \vspace{0.2cm}
$\|h_n\|_{L^\infty} \leq C / \epsilon_n^{d+1}$, \hspace{0.5cm}
$\|h_n\|_{L^2 \to L^2} \leq C/\epsilon_n$. 
\end{center}
Hence, the minimum in the right hand side of (\ref{eq:ustatexpbound})
simplifies to
\begin{equation*}
C\min(n^2 \epsilon_n^{d+1} s^2, n \epsilon_n s, n \epsilon_n^{(d+2)/3}
s^{2/3}, n \epsilon_n^{(d+1)/2} s^{1/2}).
\end{equation*}

We claim, for sufficiently large $n$, this minimum will be
attained by $n \epsilon_n^{(d+1)/2}s^{1/2}$:  Indeed,
by \eqref{eq:gtvlemmaeps}, $n\epsilon_n^{(d+1)/2}\rightarrow \infty$, and so $n\epsilon_n^{(d+1)/2}$ is smaller than $n^2\epsilon_n^{d+1}$.  Also, $n\epsilon_n$ is larger than $n\epsilon_n^{(d+1)/2}$ since $\epsilon_n\rightarrow 0$ and $d\geq 1$.  In addition, $n\epsilon_n^{(d+2)/3}$ is larger than $n\epsilon_n^{(d+1)/2}$ as $d\geq 1$.  

  Hence, we have 
\begin{equation} \label{eq:htailbound}
\mathbb{P}\Big( \Big|\frac{1}{n(n-1)} \sum_{1\leq i\neq j\leq n} h_n(X_i,Y_j) \Big| >
s \Big) \leq L \exp \Big( - C n\epsilon_n^{(d+1)/2} s^{1/2}\Big),
\end{equation}
where $C$ is a constant independent of $n$.
Combining (\ref{eq:decouple}) and (\ref{eq:htailbound}), gives
\begin{equation*}
\sum_{n=1}^{\infty} \mathbb{P}\Big( \Big| \frac{1}{n(n-1)} \sum_{1 \leq i
  \neq j \leq n} h_n(X_i,X_j) \Big| > s\Big) < \infty,
\end{equation*}
for all $s$. Therefore, 
\begin{equation} \label{eq:gtvhconvergeas}
\frac{1}{n(n-1)} \sum_{1 \leq i
  \neq j \leq n} h_n(X_i,X_j) \xrightarrow{a.s.} 0,
\end{equation}
as $n \to \infty$.
Applying (\ref{eq:gtvlconvergeas}), (\ref{eq:gtvmconvergeas}), and
(\ref{eq:gtvhconvergeas}) to (\ref{eq:gtvfdecomp}) completes the
proof.
\end{proof}

\subsection{Quadratic Term} We first consider convergence of certain `mean-values', and then treat the random expressions, for various values of $\alpha$, in the subsequent subsubsections. 

For $u \in L^1(D)$ and $\epsilon>0$, define $\Lambda_{\epsilon}(u)$ by  
\begin{equation*} 
\Lambda_{\epsilon}(u) \coloneqq
\int_D\Big(\int_D \eta_{\epsilon}(x-y)\rho(y)\,dy\Big)^{\alpha} u(x)\rho(x)\,dx.
\end{equation*}
Let 
\begin{equation}
\label{rho_defn}
\rho_{\epsilon}(x) := \int_D \eta_{\epsilon}(x-y) \rho(y)\,dy,
\end{equation}
and write, with this notation,
\begin{equation}
\label{new_lambda_def}
\Lambda_{\epsilon}(u)
= \int_D u(x)(\rho_{\epsilon}(x))^{\alpha}\rho(x) \, dx.
\end{equation}
Define also
\begin{equation}
\label{eq:lambdadef}
\Lambda(u) \coloneqq \int_D u(x) \rho^{1+\alpha}(x)dx.
\end{equation}

\begin{lemma}  \label{lemma:lambdaepslim}
Let $g$ be a bounded, measurable function on the domain $D$. Then,
there exists a constant $C$, independent of $g$, such that
\begin{equation} \label{eq:lambdaepsbound}
|\Lambda_{\epsilon}(g) - \Lambda(g)| \leq C\|g\|_{L^{\infty}(D)} \epsilon,
\end{equation}
for all sufficiently small $\epsilon$.
Further, suppose there is a sequence $\{g_{\epsilon}\}_{\epsilon >
  0}$ with $g_{\epsilon} \xrightarrow{L^1} g$ as $\epsilon \rightarrow
0$. Then, we have
\begin{equation} \label{eq:lambdaepslim}
\lim_{\epsilon \rightarrow 0} \Lambda_{\epsilon}(g_{\epsilon}) = \Lambda(g).
\end{equation}
\end{lemma}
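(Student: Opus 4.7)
The plan is to control the difference $(\rho_\epsilon)^\alpha - \rho^\alpha$ via three ingredients: (a) uniform two-sided positive bounds $c_1 \leq \rho_\epsilon \leq c_2$ on $D$, independent of $\epsilon$; (b) a quantitative mollification estimate $|\rho_\epsilon - \rho| = O(\epsilon)$ on the interior set $D_\epsilon := \{x \in D : \mathrm{dist}(x,\partial D) > R\epsilon\}$, where $R$ is the support radius of $\eta$; and (c) the volume bound $|D \setminus D_\epsilon| = O(\epsilon)$ coming from the Lipschitz boundary condition on $D$. The upper bound in (a) is immediate from $\int \eta_\epsilon = 1$, while the lower bound follows by choosing $\delta > 0$ with $\eta \geq \eta(0)/2$ on $B(0,\delta)$ and invoking the Lipschitz boundary of $D$ to ensure $|D \cap B(x,\delta\epsilon)| \geq c\epsilon^d$ uniformly in $x \in D$. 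For (b), on $D_\epsilon$ the convolution in the definition of $\rho_\epsilon$ is complete, so Lipschitzness of $\rho$ gives $|\rho_\epsilon(x) - \rho(x)| \leq L_\rho \epsilon \int \eta(w)|w|\,dw$. Since $t \mapsto t^\alpha$ is Lipschitz (with some constant $L_\alpha$) on $[c_1,c_2]$ for every $\alpha \in \mathbb{R}$, combining (a)--(c) yields $\|(\rho_\epsilon)^\alpha - \rho^\alpha\|_{L^1(D)} \leq C\epsilon$. This is essentially Lemma \ref{lemma:mollified} as alluded to in the discussion.

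For part~(1), I would write $\Lambda_\epsilon(g) - \Lambda(g) = \int_D g(x)[(\rho_\epsilon(x))^\alpha - \rho(x)^\alpha]\rho(x)\,dx$ and apply the $L^\infty$--$L^1$ bound to obtain $|\Lambda_\epsilon(g) - \Lambda(g)| \leq \|g\|_{L^\infty}\|\rho\|_{L^\infty}\,\|(\rho_\epsilon)^\alpha - \rho^\alpha\|_{L^1(D)} \leq C\|g\|_{L^\infty}\epsilon$. For part~(2), I would decompose
\begin{equation*}
\Lambda_\epsilon(g_\epsilon) - \Lambda(g) \;=\; \int_D g_\epsilon\bigl[(\rho_\epsilon)^\alpha - \rho^\alpha\bigr]\rho\,dx \;+\; \int_D (g_\epsilon - g)\,\rho^{1+\alpha}\,dx.
\end{equation*}
The second integral is bounded in absolute value by $\|\rho^{1+\alpha}\|_{L^\infty}\|g_\epsilon - g\|_{L^1} \to 0$, using $c_1 \leq \rho \leq c_2$. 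For the first, I split the domain into $D_\epsilon$ and $D \setminus D_\epsilon$: on $D_\epsilon$ the bound $|(\rho_\epsilon)^\alpha - \rho^\alpha| \leq C\epsilon$ gives an $O(\epsilon\|g_\epsilon\|_{L^1})$ contribution, and $\|g_\epsilon\|_{L^1}$ is uniformly bounded since $g_\epsilon \to g$ in $L^1$; on $D \setminus D_\epsilon$ I use the uniform bound $|(\rho_\epsilon)^\alpha - \rho^\alpha| \leq c_2^\alpha + \|\rho\|_{L^\infty}^\alpha$ together with
\begin{equation*}
\int_{D \setminus D_\epsilon} |g_\epsilon|\,dx \;\leq\; \int_{D \setminus D_\epsilon} |g|\,dx \;+\; \|g_\epsilon - g\|_{L^1},
\end{equation*}
where the first summand tends to zero by absolute continuity of the Lebesgue integral of $|g|$ (since $|D \setminus D_\epsilon| \to 0$), and the second tends to zero by hypothesis.

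The main obstacle is that $\rho_\epsilon$ does not converge to $\rho$ uniformly on all of $D$, because convolution against $\eta_\epsilon$ loses mass near $\partial D$; this is why one cannot simply factor $(\rho_\epsilon)^\alpha - \rho^\alpha$ out in $L^\infty$. The workaround, central to both parts, is to isolate the boundary strip $D \setminus D_\epsilon$, use its $O(\epsilon)$ Lebesgue measure from the Lipschitz boundary condition, and dispose of $g_\epsilon$ there by combining the $L^1$ convergence hypothesis with the absolute continuity of $|g|\,dx$.
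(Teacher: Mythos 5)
Your proposal is correct. Part (1) follows the paper's route exactly: both rest on the mollification estimates you list in (a)--(c), which are precisely the content of the paper's Lemma \ref{lemma:mollified} (uniform two-sided bounds on $\rho_\epsilon$, the $O(\epsilon)$ interior estimate from Lipschitzness of $\rho$, and the $O(\epsilon)$ volume of the boundary collar), combined with the Lipschitz continuity of $t\mapsto t^\alpha$ on $[c_1,c_2]$. For part (2) you telescope in the opposite order from the paper: you pair $g_\epsilon$ with $(\rho_\epsilon)^\alpha-\rho^\alpha$ and $g_\epsilon-g$ with $\rho^\alpha$, whereas the paper writes $g_\epsilon\rho_\epsilon^\alpha-g\rho^\alpha=(g_\epsilon-g)\rho_\epsilon^\alpha+g(\rho_\epsilon^\alpha-\rho^\alpha)$. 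The paper's choice puts the merely-$L^1$-controlled factor $g_\epsilon-g$ against the uniformly bounded $\rho_\epsilon^\alpha$, and the $L^\infty$-bounded $g$ against $\rho_\epsilon^\alpha-\rho^\alpha$, so the second term is killed by dominated convergence with no rate and no domain splitting. Your choice forces you to handle $\int_D g_\epsilon[(\rho_\epsilon)^\alpha-\rho^\alpha]\rho\,dx$ with only $L^1$ control on $g_\epsilon$, which is why you need the extra interior/boundary-strip decomposition and the absolute-continuity argument for $\int_{D\setminus D_\epsilon}|g|$. Both arguments are sound; the paper's is marginally shorter, while yours has the small advantage of never invoking dominated convergence, relying instead on the quantitative $O(\epsilon)$ bounds throughout.
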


\begin{proof}
We first prove inequality (\ref{eq:lambdaepsbound}).
By Lemma \ref{lemma:mollified} in the Appendix, there exist positive constants $A,B$ such that,
for sufficiently small $\epsilon$, both $\rho$ and $\rho_{\epsilon}$
take values in the interval $[A,B]$. Then we have
\begin{equation*}
\begin{aligned}
| \Lambda_{\epsilon}(g) - \Lambda(g) | &= \Big|\int_D g(x)
(\rho_{\epsilon}(x))^{\alpha} \rho(x)\, dx - \int_D g(x)
(\rho(x))^{\alpha} \rho(x)\, dx \Big| \\
&\leq B \|g\|_{L^\infty} \int_D |(\rho_{\epsilon}(x))^{\alpha} -
(\rho(x))^{\alpha}|\, dx \\
&\leq C \|g\|_{L^\infty} \int_D |\rho_{\epsilon}(x) - \rho(x)|\, dx,
\end{aligned}
\end{equation*} 
where the last inequality follows from the observation that $x \mapsto
x^{\alpha}$ is Lipschitz on the interval $[A,B]$.
By Lemma \ref{lemma:mollified} again, where 
$\int_D
|\rho_{\epsilon}(x) - \rho(x)|\, dx \leq C'\epsilon$ is proved, we obtain
\begin{equation*}
|\Lambda_{\epsilon}(g) - \Lambda(g)| \leq C'' \|g\|_{L^\infty} \epsilon.
\end{equation*}

We now prove (\ref{eq:lambdaepslim}). Suppose we have a family
$\{g_{\epsilon}\}_{\epsilon > 0}$ with $g_{\epsilon}
\xrightarrow{L^1} g$ as $\epsilon\rightarrow 0$. Then,  
\begin{align*}
\lim_{\epsilon \rightarrow 0} |\Lambda_{\epsilon}(g_{\epsilon}) -
\Lambda(g)| & \leq \lim_{\epsilon \rightarrow 0}  \int_D
|g_{\epsilon}(x)(\rho_{\epsilon}(x))^\alpha - g(x)(\rho(x))^\alpha|\rho(x)\,dx.
\end{align*}
Since $\rho$ is bounded, it is sufficient to prove that
\begin{align*}
\lim_{\epsilon \rightarrow 0} \int_D |g_{\epsilon}(x)(\rho_{\epsilon}(x))^\alpha -
g(x)(\rho(x))^\alpha|\, dx = 0.
\end{align*}
Writing \[ g_{\epsilon} \rho^\alpha_{\epsilon} - g \rho^\alpha =
g_{\epsilon}\rho^\alpha_{\epsilon} - g \rho^\alpha_{\epsilon} + g \rho^\alpha_{\epsilon} -
g \rho^\alpha,\] one may obtain
\begin{eqnarray*} \label{eq:gepsrhoepsineq}
&&\int_D |g_{\epsilon}(x)\rho^\alpha_{\epsilon}(x)  - g(x)\rho^\alpha(x) | \, dx  \\
&&\ \ \ \ \leq
\int_D |g_{\epsilon}(x) - g(x)| \rho^\alpha_{\epsilon}(x)\, dx
+ \|g\|_{L^\infty}\int_D |\rho^\alpha_{\epsilon}(x) - \rho^\alpha(x)|\,dx.
\end{eqnarray*}
Now, since $g_{\epsilon} \to g$ in $L^1$, and $\rho_{\epsilon}$ is bounded above and below by Lemma \ref{lemma:mollified}, we have that the first term on the right hand side vanishes in the
limit. Likewise, by Lemma \ref{lemma:mollified}, we have also $\rho_{\epsilon} \to \rho$ a.e. as $\epsilon
\rightarrow 0$. By dominated convergence, then, the second term on
the right hand side vanishes, completing the proof.
\end{proof}

In the following Subsubsection \ref{linear_sect}, the cases
$\alpha=0,1$ are considered.  Then, in Subsubsection
\ref{general_sect}, the general $\alpha\neq 0,1$ case is treated, where different techniques are used as the the functional is nonlinear.

\subsubsection{Quadratic Term: $\alpha = 0$ or $\alpha=1$}
\label{linear_sect}
The expression
(\ref{eq:glambdadef}) for $G\Lambda_n$ simplifies to give
\begin{equation*}
G\Lambda_n (u) = \begin{cases} \frac{1}{n}\sum_{i=1}^n u(X_i), & \text{when }
  \alpha = 0, \\
 \frac{1}{n(n-1)} \sum_{1\leq i\neq j\leq n} 
\eta_{\epsilon_n}(X_i - X_j) u(X_i), & \text{when }\alpha = 1.
\end{cases}
\end{equation*}

\begin{lemma} \label{lemma:quadasconvergea0}
Fix $\alpha = 0$. Let $u$ be a bounded, measurable function on the
domain $D$, and let $\{\epsilon_n\}_{n \in \mathbb{N}}$ be a
sequence converging to zero such that 
\begin{equation*} \label{eq:quadlemmaepsa0}
\lim_{n \to \infty} \frac{\log \log n}{n \epsilon_n} = 0.
\end{equation*}
Then,
\begin{equation*}
\frac{1}{\sqrt{\epsilon_n}} \Big(G\Lambda_n (u)  - \Lambda (u)\Big)
\xrightarrow{a.s.} 0.
\end{equation*}
as $n \to \infty$.
\end{lemma}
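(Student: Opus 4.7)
The key observation is that when $\alpha=0$, the expression \eqref{eq:glambdadef} collapses to $G\Lambda_n(u) = \frac{1}{n}\sum_{i=1}^n u(X_i)$ and $\Lambda(u) = \int_D u(x)\rho(x)\,dx = \mathbb{E}[u(X_1)]$, so the quantity inside the parentheses is simply a centered i.i.d. sample mean. This reduces the problem to a purely classical one, with no dependence on the kernel $\eta$ at all; the parameter $\epsilon_n$ only appears through the scaling factor out front.

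The plan is therefore to apply the classical Law of the Iterated Logarithm (Hartman--Wintner) to the bounded (hence mean-square integrable) i.i.d. random variables $\{u(X_i)\}_{i \in \mathbb{N}}$. This gives almost surely
\begin{equation*}
\limsup_{n \to \infty} \frac{\sqrt{n}}{\sqrt{2 \log \log n}} \Big| \frac{1}{n}\sum_{i=1}^n u(X_i) - \Lambda(u) \Big| = \sigma_u,
\end{equation*}
where $\sigma_u^2 = \mathrm{Var}(u(X_1)) \leq \|u\|_{L^\infty}^2 < \infty$. Dividing by $\sqrt{\epsilon_n}$, we obtain a.s.
\begin{equation*}
\frac{1}{\sqrt{\epsilon_n}} \Big| G\Lambda_n(u) - \Lambda(u) \Big| \leq (\sigma_u + o(1)) \sqrt{\frac{2 \log \log n}{n \epsilon_n}}.
\end{equation*}
The hypothesis $\log \log n / (n\epsilon_n) \to 0$ forces the right-hand side to vanish, completing the argument.

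There is essentially no obstacle: the lemma is calibrated precisely to the rate furnished by the LIL. The only thing to verify is that the variance of $u(X_1)$ is finite, which follows immediately from the boundedness of $u$ on the bounded domain $D$. No concentration machinery on kernels (as in Lemma \ref{lemma:gtvasconverge}) is needed here, since the case $\alpha=0$ removes the kernel from $G\Lambda_n$ altogether.
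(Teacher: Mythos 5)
Your proof is correct and is essentially the paper's own argument: both reduce the $\alpha=0$ case to the centered i.i.d.\ sample mean $\frac{1}{n}\sum_i u(X_i) - \mathbb{E}[u(X_1)]$, apply the Hartman--Wintner law of the iterated logarithm using the boundedness of $u$, and then observe that the hypothesis $\log\log n/(n\epsilon_n)\to 0$ kills the $\sqrt{\epsilon_n}^{-1}$ prefactor. No gaps.
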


\begin{proof}
By the law of the iterated logarithm, and the boundedness of $u$, we have
\begin{equation*}
\limsup_{n \to \infty} \frac{n}{\sqrt{2 n \log \log n}} \Big|
G\Lambda_n(u) - \Lambda (u)\Big| \leq C, \hspace{0.5cm} \text{a.s.}
\end{equation*}
We may write
\begin{equation*}
\begin{aligned}
\frac{1}{\sqrt{\epsilon_n}} \Big(G\Lambda_n (u)  - \Lambda (u)\Big) &=
\frac{\sqrt{2 n \log \log n}}{n \sqrt{\epsilon_n}} \frac{n}{\sqrt{2 n
    \log \log n}}\Big(G\Lambda_n (u)  - \Lambda (u)\Big),
\end{aligned}
\end{equation*}
where by assumption, $\lim_{n \to \infty} \frac{\sqrt{n \log \log n}}{n
  \sqrt{\epsilon_n}} = 0$. Hence, $\frac{1}{\sqrt{\epsilon_n}}
\Big(G\Lambda_n (u)  - \Lambda (u)\Big) \xrightarrow{a.s.} 0$ as $n\rightarrow\infty$.
\end{proof}

\begin{lemma} \label{lemma:quadasconvergea1} 
Fix $\alpha = 1$. Let $u$ be a bounded, measurable function on the
domain $D$, and let $\{\epsilon_n\}_{n \in \mathbb{N}}$ be a
sequence converging to zero such that 
\begin{equation} \label{eq:quadlemmaepsa1}
\sum_{n=1}^{\infty} \exp(-n\epsilon_n^{(d+1)/2}) < + \infty.
\end{equation}
Then,
\begin{equation*}
\frac{1}{\sqrt{\epsilon_n}} \Big(G\Lambda_n (u)  - \Lambda (u)\Big)
\xrightarrow{a.s.} 0,
\end{equation*}
as $n \to \infty$.
\end{lemma}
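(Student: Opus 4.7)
My plan is to parallel the proof of Lemma \ref{lemma:gtvasconverge}, now applied to the kernel $f_n(x,y) = \eta_{\epsilon_n}(x-y)u(x)$. Observe that $\mathbb{E}[f_n(X,Y)] = \int_D u(x)\rho_{\epsilon_n}(x)\rho(x)\,dx = \Lambda_{\epsilon_n}(u)$, and by Lemma \ref{lemma:lambdaepslim} we have $|\Lambda_{\epsilon_n}(u) - \Lambda(u)| \leq C\|u\|_{L^\infty}\epsilon_n$, so $(\Lambda_{\epsilon_n}(u) - \Lambda(u))/\sqrt{\epsilon_n} \to 0$. It therefore suffices to show $(G\Lambda_n(u) - \Lambda_{\epsilon_n}(u))/\sqrt{\epsilon_n} \xrightarrow{a.s.} 0$. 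I will use the Hoeffding-type decomposition $f_n(x,y) - \mathbb{E}f_n = (l_n(x) - \mathbb{E}f_n) + (m_n(y) - \mathbb{E}f_n) + h_n(x,y)$, with $l_n(x) = u(x)\rho_{\epsilon_n}(x)$, $m_n(y) = \int_D \eta_{\epsilon_n}(x-y)u(x)\rho(x)\,dx$, and $h_n$ doubly centered, yielding three terms to analyze.

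For the two single-index averages, both $l_n$ and $m_n$ are uniformly bounded in $n$ by $\|u\|_{L^\infty}\|\rho\|_{L^\infty}$ (using $\|\eta_{\epsilon_n}\|_{L^1}=1$), and their variances are $O(1)$. Bernstein's inequality will give, for each $\tau > 0$,
\[
\mathbb{P}\Big(\Big|\frac{1}{n}\sum_{i=1}^n (l_n(X_i) - \mathbb{E}f_n)\Big| > \tau\sqrt{\epsilon_n}\Big) \leq 2\exp(-cn\tau^2\epsilon_n),
\]
which is summable under condition \eqref{eq:quadlemmaepsa1} since $n\epsilon_n \geq n\epsilon_n^{(d+1)/2}$ for $d \geq 1$ and $\epsilon_n \leq 1$; analogous reasoning handles the $m_n$ sum, and Borel--Cantelli yields a.s.\ convergence of each single-index term (divided by $\sqrt{\epsilon_n}$) to zero.

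For the double sum $A_n := \frac{1}{n(n-1)}\sum_{i\neq j} h_n(X_i,X_j)$, I will follow the template of Lemma \ref{lemma:gtvasconverge}: apply the de la Pe\~{n}a--Montgomery-Smith decoupling inequality, then the Gin\'{e}--Lata\l{}a--Zinn exponential bound for canonical U-statistics. The required moment bounds, established by a change of variables $z = (y-x)/\epsilon_n$, are $\|h_n\|_{L^\infty} \leq C/\epsilon_n^d$, $\mathbb{E}h_n^2 \leq C/\epsilon_n^d$, $\|\mathbb{E}_X h_n^2\|_{L^\infty} \leq C/\epsilon_n^d$, $\|\mathbb{E}_Y h_n^2\|_{L^\infty} \leq C/\epsilon_n^d$, together with the new estimate $\|h_n\|_{L^2 \to L^2} \leq C$. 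With $s = \tau\sqrt{\epsilon_n}$, these turn the Gin\'{e}--Lata\l{}a--Zinn minimum into
\[
\min\Big(n^2\tau^2\epsilon_n^{d+1},\ n\tau\sqrt{\epsilon_n},\ n\tau^{2/3}\epsilon_n^{(d+1)/3},\ n\tau^{1/2}\epsilon_n^{(2d+1)/4}\Big),
\]
and for $d \geq 1$ and small $\epsilon_n$, each term exceeds $n\epsilon_n^{(d+1)/2}$ by a factor tending to infinity. The resulting tail bound is eventually $\leq C\exp(-n\epsilon_n^{(d+1)/2})$, summable under \eqref{eq:quadlemmaepsa1}, and Borel--Cantelli yields $A_n/\sqrt{\epsilon_n} \xrightarrow{a.s.} 0$.

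The main obstacle is the $L^2 \to L^2$ bound $\|h_n\|_{L^2\to L^2} \leq C$. In the GTV setting the extra $1/\epsilon_n$ prefactor in $f_n$ and the cancellation $|u(x)-u(y)|$ allowed an operator norm of order $1/\epsilon_n$ to suffice; here no such cancellation is present, and I will instead exploit the pure convolutional form of $\eta_{\epsilon_n}(x-y)$ via Young's inequality: for $f,g$ with $\|f\|_{L^2(\rho)}, \|g\|_{L^2(\rho)} \leq 1$,
\[
\Big|\int_D\int_D \eta_{\epsilon_n}(x-y)u(x)f(x)g(y)\rho(x)\rho(y)\,dx\,dy\Big| \leq \|u\|_{L^\infty}\|\eta\|_{L^1}\|\rho\|_{L^\infty}\|f\|_{L^2(\rho)}\|g\|_{L^2(\rho)} \leq C,
\]
so $\|f_n\|_{L^2\to L^2} \leq C$; the rank-one contributions from $l_n$, $m_n$, and $\mathbb{E}f_n$ add only $O(1)$ to the operator norm. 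This $O(1)$ bound is precisely what ensures that the second term $n\tau\sqrt{\epsilon_n}$ in the Gin\'{e}--Lata\l{}a--Zinn minimum is not the bottleneck.
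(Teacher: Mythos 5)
Your proposal is correct and follows essentially the same route as the paper's proof: the Hoeffding decomposition into two linear terms handled by Bernstein's inequality and a canonical U-statistic handled by decoupling plus the Gin\'e--Lata{\l}a--Zinn bound, together with Lemma \ref{lemma:lambdaepslim} for the bias $\Lambda_{\epsilon_n}(u)-\Lambda(u)$. The only difference is cosmetic -- you keep $f_n$ unnormalized and divide by $\sqrt{\epsilon_n}$ at the end, whereas the paper absorbs the $1/\sqrt{\epsilon_n}$ into $f_n$ -- and your moment bounds, including the crucial $O(1)$ operator-norm estimate, coincide with those of Lemma \ref{lemma:glambdaestimates} and Corollary \ref{cor:glambdaestimates} after this rescaling.
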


\begin{proof}

We first rewrite
\begin{equation*}
\frac{1}{\sqrt{\epsilon_n}} \Big(G\Lambda_n (u)  - \Lambda (u)\Big) =
\frac{1}{\sqrt{\epsilon_n}} \Big(G\Lambda_n (u) - \Lambda_{\epsilon_n}
(u)\Big) + \frac{1}{\sqrt{\epsilon_n}} \Big(\Lambda_{\epsilon_n} (u) - \Lambda
(u)\Big).
\end{equation*}
Here, as $\alpha =1$, $\Lambda_{\epsilon_n} (u) = \int_D \int_D \eta_{\epsilon_n}(x-y)u(x)\rho(x)\rho(y)\,dx\,dy$.

By an application of inequality (\ref{eq:lambdaepsbound}), the second term on the right vanishes as $n \to
\infty$. Hence, we must show that $\lim_{n \to \infty} \frac{1}{\sqrt{\epsilon_n}} \Big(G\Lambda_n (u) - \Lambda_{\epsilon_n}
(u)\Big) = 0$ a.s.

Let $\displaystyle{f_n(x,y) = \frac{1}{\sqrt{\epsilon_n}}
\eta_{\epsilon_n}(x-y)u(x)}$. Note, for $i \neq j$, that $\mathbb{E}f_n = \mathbb{E}f_n(X_i,X_j) =
\frac{1}{\sqrt{\epsilon_n}}\Lambda_{\epsilon_n}(u)$. Then,
\begin{equation*}
\frac{1}{\sqrt{\epsilon_n}} \Big( G\Lambda_n (u) - \Lambda_{\epsilon_n} (u)\Big) =
\frac{1}{n(n-1)} \sum_{1\leq i\neq j\leq n} f_n(X_i,X_j) - \mathbb{E}f_n.
\end{equation*}

Let 
\begin{equation*}
\begin{aligned}
l_n(X_i) &= \int_D f_n(X_i,y)\rho(y)\, dy \\
m_n(X_j) &=
\int_D f_n(x,X_j)\rho(x)\, dx \\
h_n(X_i,X_j) &= f_n(X_i,X_j) - l_n(X_i) - m_n(X_j) + \mathbb{E}f_n,
\end{aligned}
\end{equation*}
so that we have
\begin{equation*} 
f_n(X_i,X_j) - \mathbb{E}f_n= h_n(X_i,X_j) + (l_n(X_i) -
\mathbb{E}f_n) +
(m_n(X_j) - \mathbb{E}f_n).
\end{equation*}
Summing this gives
\begin{eqnarray} 
&&\frac{1}{n(n-1)} \sum_{1\leq i\neq j\leq n} \Big(f_n(X_i,X_j) -
\mathbb{E}f_n\Big) \\
&&\ \ = \ \frac{1}{n(n-1)} \sum_{1\leq i\neq j\leq n}
h_n(X_i,X_j)\nonumber \\
&&\ \ \ \ \ \ \ \ \
 + \frac{1}{n}\sum_{i} (l_n(X_i) -
\mathbb{E}f_n) + \frac{1}{n} \sum_{j} (m_n(X_j) - \mathbb{E}f_n).\nonumber
\label{eq:fdecomp2}\end{eqnarray}
We handle the three terms on the right hand side of the above equation separately.

Note that $\mathbb{E}l_n = \mathbb{E}f_n$.  
An application of Bernstein's inequality (\cite{van2000asymptotic},
Lemma 19.32) yields, for $s
> 0$, that
\begin{equation*}
\mathbb{P}\Big(\Big| \frac{1}{n} \sum_{i} l_n(X_i) - \mathbb{E}f_n
\Big| > s\Big) \leq 2 \mbox{exp}\Big( -\frac{1}{4} \frac{ns^2}{\mathbb{E}l_n^2 + s \|l_n\|_{L^\infty}}\Big).
\end{equation*}
In Lemma \ref{lemma:glambdaestimates} of the Appendix, we prove the upper
bounds $\mathbb{E}l_n^2 \leq C / \epsilon_n$ and $\|l_n\|_{L^\infty}
\leq C / \epsilon_n^{1/2}$,
where $C$ is a constant independent of $n$. Hence,
we have
\begin{equation*}
\begin{aligned}
\mathbb{P}\Big(\Big| \frac{1}{n} \sum_{i} l_n(X_i) - \mathbb{E}f_n
\Big| > s\Big)   &\leq 2 \exp\Big(-\frac{C \epsilon_n n
  s^2}{1 + \epsilon_n^{1/2}s}\Big),
\end{aligned}
\end{equation*}
where $C$ is another constant independent of $n$. By the assumption (\ref{eq:quadlemmaepsa1}), this implies
\begin{equation*}
\sum_{n=1}^{\infty} \mathbb{P}\Big(\Big| \frac{1}{n} \sum_{i} l_n(X_i) - \mathbb{E}f_n
\Big| > s\Big) < \infty,
\end{equation*}
and so 
\begin{equation} \label{eq:glambdalconvergeas}
\frac{1}{n} \sum_{i=1}^n l_n(X_i) -
    \mathbb{E}f_n \xrightarrow{a.s.} 0
\end{equation}
as $n \to \infty$.
The same argument also implies that
\begin{equation} \label{eq:glambdamconvergeas}
\frac{1}{n} \sum_{j=1}^n m_n(X_j) -
    \mathbb{E}f_n \xrightarrow{a.s.} 0,
\end{equation}
as $n \to \infty$.

What remains is the double sum $\displaystyle{\frac{1}{n(n-1)}\sum_{1
    \leq i \neq j \leq n} h_n(X_i,X_j)}$. Let $Y_1,\ldots,Y_n$ be
independent copies of $X_1,\ldots,X_n$. By the decoupling inequality of de la Pe\~{n}a and Montgomery-Smith
(\cite{de1995decoupling}), there exists a constant $C$ independent of
$n$ and $h$ such that
 \begin{eqnarray} 
&&\mathbb{P}\Big( \Big|\frac{1}{n(n-1)} \sum_{1\leq i\neq j\leq n} h_n(X_i,X_j) \Big| > s \Big)\\
 \label{eq:decouple2}
 &&\ \ \ \ \ \ \ \ \ \ \leq C\, \mathbb{P}\Big( C
\Big|\frac{1}{n(n-1)} \sum_{1\leq i\neq j\leq n} h_n(X_i,Y_j) \Big| > s \Big).\nonumber
\end{eqnarray}

As in the proof of the GTV case (Lemma \ref{lemma:gtvasconverge}), the
$h_n$ here are canonical.  Recalling the U-statistics inequality of Gin\'e, Lata{\l}a and Zinn, Theorem 3.3 of \cite{gine2000exponential}, with the same application as in the proof of Lemma \ref{lemma:gtvasconverge}, we have
\begin{align} \label{eq:ustatexpbound2}
\mathbb{P}&\Big( \Big|\frac{1}{n(n-1)} \sum_{1\leq i\neq j\leq n} h_n(X_i,Y_j) \Big| >
s \Big) \\
 & \leq L \mbox{exp}\Big[ -\frac{1}{L'} \min\Big(
\begin{aligned}[t]
&\frac{n^2 s^2}{\mathbb{E}h_n^2}, \frac{ns}{\|h_n\|_{L^2 \to L^2}},\nonumber\\
&\frac{n^{2/3} s^{2/3}}{[\max (\|\mathbb{E}_X h_n^2\|_{L^\infty},
  \|\mathbb{E}_Yh_n^2\|_{L^\infty})]^{1/3}},
\frac{ns^{1/2}}{\|h_n\|_{L^\infty}^{1/2}}
\Big)\Big]\nonumber.
\end{aligned}
\end{align}

In Corollary \ref{cor:glambdaestimates} of the Appendix, we prove the upper
bounds

\begin{center}

$\mathbb{E}h_n^2 \leq C / \epsilon_n^{d+1}$,  \hspace{0.5cm}
$\|\mathbb{E}_Y h^2\|_{L^\infty} \leq C / \epsilon_n^{d+1}$, \hspace{0.5cm}
$\|\mathbb{E}_X h^2\|_{L^\infty} \leq C  / \epsilon_n^{d+1}$, \\ \vspace{0.2cm}
$\|h_n\|_{L^\infty} \leq C / \epsilon_n^{d+1/2}$, \hspace{0.5cm}
$\|h_n\|_{L^2 \to L^2} \leq C/\epsilon_n^{1/2}$. 
\end{center}
Hence, the minimum in the right hand side of (\ref{eq:ustatexpbound2})
is bounded below by
\begin{equation*}
C\min(n^2 \epsilon_n^{d+1} s^2, n \epsilon_n^{1/2} s, n \epsilon_n^{(d+1)/3}
s^{2/3}, n \epsilon_n^{(d+1/2)/2} s^{1/2}).
\end{equation*}
Note that $\epsilon_n$ vanishes, and our assumption (\ref{eq:quadlemmaepsa1}) implies that $n
\epsilon_n^{(d+1)/2} \to \infty$ as $n\rightarrow\infty$.  Therefore, we conclude, for sufficiently large $n$, that 
\begin{equation*}
C\min(n^2 \epsilon_n^{d+1} s^2, n \epsilon_n^{1/2} s, n \epsilon_n^{(d+1)/3}
s^{2/3}, n \epsilon_n^{(d+1/2)/2} s^{1/2}) \geq C n \epsilon_n^{(d+1)/2} \min(s^2,s^{1/2}).
\end{equation*}

Hence. we have
\begin{equation} \label{eq:hlambdatailbound}
\mathbb{P}\Big( \Big|\frac{1}{n(n-1)} \sum_{1 \leq i \neq j \leq n} h_n(X_i,Y_j) \Big| >
s \Big) \leq L \exp \Big( - C n \epsilon_n^{(d+1)/2} \min(s^{1/2},s^2)
\Big),
\end{equation}
where $C$ is a constant independent of $n$.
Combining (\ref{eq:decouple2}) and (\ref{eq:hlambdatailbound}), and summing over $n$, gives
\begin{equation*}
\sum_{n=1}^{\infty} \mathbb{P}\Big( \Big| \frac{1}{n(n-1)} \sum_{1 \leq i
  \neq j \leq n} h_n(X_i,X_j) \Big| > s\Big) < \infty,
\end{equation*}
for all $s>0$. Therefore, 
\begin{equation} \label{eq:glambdahconvergeas}
\frac{1}{n(n-1)} \sum_{1 \leq i
  \neq j \leq n} h_n(X_i,X_j) \xrightarrow{a.s.} 0,
\end{equation}
as $n \to \infty$.

Applying (\ref{eq:glambdalconvergeas}), (\ref{eq:glambdamconvergeas}), and
(\ref{eq:glambdahconvergeas}) to (\ref{eq:fdecomp2}) completes the
proof.
\end{proof}

\subsubsection{Quadratic Term: General $\alpha$}
\label{general_sect}

Recall, from \eqref{eq:glambdadef} and \eqref{eq:lambdadef}, the forms of $G\Lambda_n (u)$ and $\Lambda(u)$.

\begin{lemma} \label{lemma:quadasconvergea} 
Fix a bounded, measurable function $u$ on the domain $D$, and let $\{\epsilon_n\}_{n \in \mathbb{N}}$ be a
sequence converging to zero such that 
\begin{equation} \label{eq:quadlemmaepsagen}
\sum_{n=1}^{\infty} n \exp(-n\epsilon_n^{d+1}) < + \infty.
\end{equation}
Then,
\begin{equation*}
\frac{1}{\sqrt{\epsilon_n}} \Big(G\Lambda_n (u)  - \Lambda (u)\Big)
\xrightarrow{a.s.} 0,
\end{equation*}
as $n \to \infty$.
\end{lemma}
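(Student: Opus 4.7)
My approach is to linearize the nonlinear functional $G\Lambda_n(u)$ via a Taylor expansion of $x \mapsto x^{\alpha}$, reducing matters to an $\alpha=1$-type U-statistic (handled in Lemma~\ref{lemma:quadasconvergea1}) plus a quadratic remainder. Writing
\begin{equation*}
\hat\rho_i := \frac{1}{n-1}\sum_{j \neq i} \eta_{\epsilon_n}(X_i - X_j),
\end{equation*}
we have $G\Lambda_n(u) = \frac{1}{n}\sum_i \hat\rho_i^{\alpha} u(X_i)$ and $\mathbb{E}[\hat\rho_i \mid X_i] = \rho_{\epsilon_n}(X_i)$, which by Lemma~\ref{lemma:mollified} lies in a fixed interval $[A,B] \subset (0,\infty)$ for all large $n$. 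The first step is the uniform concentration
\begin{equation*}
\max_{1 \leq i \leq n} |\hat\rho_i - \rho_{\epsilon_n}(X_i)| \leq C\sqrt{(\log n)/(n\epsilon_n^d)} \quad \text{a.s.\ for all large } n.
\end{equation*}
Conditional on $X_i$, $\hat\rho_i$ is an average of $n-1$ i.i.d.\ summands, each bounded by $C/\epsilon_n^d$ and of variance $\leq C/\epsilon_n^d$, so Bernstein's inequality gives $\mathbb{P}(|\hat\rho_i - \rho_{\epsilon_n}(X_i)| > s \mid X_i) \leq 2\exp(-cn\epsilon_n^d s^2/(1+s))$; a union bound over $i$, the assumption~\eqref{eq:quadlemmaepsagen}, and Borel--Cantelli deliver the claim. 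In particular, a.s.\ for large $n$ every $\hat\rho_i$ lies in $[A/2, 2B]$, a compact subset of $(0,\infty)$ on which $x \mapsto x^{\alpha}$ is $C^2$.

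Setting $\Delta_i := \hat\rho_i - \rho_{\epsilon_n}(X_i)$, Taylor expansion gives $\hat\rho_i^{\alpha} = \rho_{\epsilon_n}(X_i)^{\alpha} + \alpha \rho_{\epsilon_n}(X_i)^{\alpha-1} \Delta_i + R_i$ with $|R_i| \leq C \Delta_i^2$ uniformly, leading to $G\Lambda_n(u) - \Lambda(u) = (A) + (B) + \alpha(C) + (D)$, where
\begin{equation*}
(A) := \frac{1}{n}\sum_i \rho_{\epsilon_n}(X_i)^{\alpha} u(X_i) - \Lambda_{\epsilon_n}(u), \qquad (B) := \Lambda_{\epsilon_n}(u) - \Lambda(u),
\end{equation*}
\begin{equation*}
(C) := \frac{1}{n(n-1)} \sum_{i \neq j} \rho_{\epsilon_n}(X_i)^{\alpha-1} u(X_i) \bigl[ \eta_{\epsilon_n}(X_i - X_j) - \rho_{\epsilon_n}(X_i) \bigr], \quad (D) := \frac{1}{n} \sum_i R_i u(X_i).
\end{equation*}
Each piece divided by $\sqrt{\epsilon_n}$ vanishes a.s.: $(A)$ is an average of bounded centred i.i.d.\ random variables, so Bernstein yields $|(A)| = O(\sqrt{(\log n)/n}) = o(\sqrt{\epsilon_n})$, since~\eqref{eq:quadlemmaepsagen} forces $n\epsilon_n/\log n \to \infty$; $(B) = O(\epsilon_n)$ by the deterministic estimate~\eqref{eq:lambdaepsbound}; $(C)$ is a centred U-statistic (a direct conditioning on $X_i$ shows $\mathbb{E}[(C)] = 0$) whose structure is identical to the one analyzed in Lemma~\ref{lemma:quadasconvergea1} apart from the uniformly bounded weight $\rho_{\epsilon_n}^{\alpha-1}u$, so the Hoeffding decomposition, the de la Pe\~na--Montgomery-Smith decoupling inequality, and the Gin\'e--Lata\l a--Zinn tail bound used there apply verbatim to give $(C)/\sqrt{\epsilon_n} \to 0$ a.s.\ (note that~\eqref{eq:quadlemmaepsagen} is strictly stronger than~\eqref{eq:quadlemmaepsa1}); finally, using the first-step bound, $|(D)| \leq C\|u\|_{\infty} \max_i \Delta_i^2 \leq C'(\log n)/(n\epsilon_n^d)$, so $(D)/\sqrt{\epsilon_n} = O((\log n)/(n\epsilon_n^{d+1/2}))$, which tends to $0$ since~\eqref{eq:quadlemmaepsagen} gives $n\epsilon_n^{d+1}/\log n \to \infty$ and $\sqrt{\epsilon_n} \to 0$.

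The main obstacle is the first step: obtaining a \emph{uniform-in-$i$} concentration of $\hat\rho_i$ around $\rho_{\epsilon_n}(X_i)$ precise enough that (a) the quadratic remainder $(D)$ is $o(\sqrt{\epsilon_n})$, and (b) every $\hat\rho_i$ is trapped in a compact subset of $(0,\infty)$ on which $x \mapsto x^{\alpha}$ is Lipschitz. The extra factor of $n$ from this union bound is exactly the source of the strengthened hypothesis $\sum_n n\exp(-n\epsilon_n^{d+1}) < \infty$ in~\eqref{eq:quadlemmaepsagen}, in contrast to the milder $\sum_n \exp(-n\epsilon_n^{(d+1)/2}) < \infty$ that sufficed for the linear $\alpha \in \{0,1\}$ cases.
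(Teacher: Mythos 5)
Your argument is correct, but it takes a genuinely different — and more elaborate — route than the paper's for the key nonlinear step. The paper never Taylor-expands to second order: it introduces the same intermediate quantity $\overline{G\Lambda_n}(u)=\frac1n\sum_i\rho_{\epsilon_n}(X_i)^\alpha u(X_i)$ (so your terms $(A)$ and $(B)$ coincide with its Steps 2 and 3), but it controls $G\Lambda_n(u)-\overline{G\Lambda_n}(u)$, i.e.\ your $\alpha(C)+(D)$ combined, in one stroke via the Lipschitz bound $|\hat\rho_i^{\alpha}-\rho_{\epsilon_n}(X_i)^{\alpha}|\le C|\hat\rho_i-\rho_{\epsilon_n}(X_i)|$ on the trapping interval $[A/2,B+A/2]$, together with the observation that under \eqref{eq:quadlemmaepsagen} the Bernstein-plus-union-bound estimate already gives $\sup_i|\hat\rho_i-\rho_{\epsilon_n}(X_i)|=o(\sqrt{\epsilon_n})$ a.s.\ (set $s=t\sqrt{\epsilon_n}$ in the tail bound to produce exactly $n\exp(-cn\epsilon_n^{d+1})$). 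Thus the paper needs no U-statistic analysis at all in the general-$\alpha$ case; the hypothesis \eqref{eq:quadlemmaepsagen} is calibrated precisely so that the crude first-order bound suffices. Your second-order expansion is more work — in particular, the kernel of $(C)$ is not literally the one of Lemma \ref{lemma:quadasconvergea1} (it is $\rho_{\epsilon_n}(x)^{\alpha-1}u(x)[\eta_{\epsilon_n}(x-y)-\rho_{\epsilon_n}(x)]$, already degenerate in $y$), so "verbatim" is slightly optimistic, though the Hoeffding/decoupling/Gin\'e--Lata{\l}a--Zinn estimates do go through with only constant changes because the extra weight is uniformly bounded above and below. What your route buys is sharper information: you only need $\sup_i\Delta_i^2=o(\sqrt{\epsilon_n})$ rather than $\sup_i|\Delta_i|=o(\sqrt{\epsilon_n})$, so in principle your decomposition would tolerate a weaker condition on $\epsilon_n$ than the paper's. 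One small imprecision: \eqref{eq:quadlemmaepsagen} does not quite force $n\epsilon_n^{d+1}/\log n\to\infty$ (only that $n\epsilon_n^{d+1}-\log n\to\infty$, hence that the ratio is eventually bounded below by $1$), but your conclusion for $(D)$ survives since $(\log n)/(n\epsilon_n^{d+1/2})=\sqrt{\epsilon_n}\cdot(\log n)/(n\epsilon_n^{d+1})$ is a product of a vanishing factor and a bounded one; similarly $n\epsilon_n/\log n\ge \epsilon_n^{-d}\cdot n\epsilon_n^{d+1}/\log n\to\infty$, which is what $(A)$ actually requires.
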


\begin{proof}
Recall the forms of $\rho_\epsilon$ and $\Lambda_\epsilon$ in \eqref{rho_defn} and \eqref{new_lambda_def} respectively.  
We now introduce the intermediate term
\begin{equation} \label{def:overlineglambda}
\overline{G\Lambda_n} (u) \coloneqq \frac{1}{n}\sum_{1 \leq i \leq n}
\rho_{\epsilon_n}(X_i)^{\alpha} u(X_i).
\end{equation}
Then, 
\begin{equation*}
G\Lambda_n (u) - \Lambda (u)  = G\Lambda_n (u) - \overline{G\Lambda_n} (u) +
\overline{G\Lambda_n} (u) - \Lambda_{\epsilon_n} (u) + \Lambda_{\epsilon_n}(u) -
\Lambda (u).
\end{equation*}
The proof proceeds in three steps: 

\vskip .1cm
{\it Step 1.}  We first attend to $G\Lambda_n (u) -
\overline{G\Lambda_n}(u)$. We claim that
\begin{equation} \label{eq:firstconvergence}
\frac{1}{\sqrt{\epsilon_n}} (G\Lambda_n(u) - \overline{G\Lambda_n}(u))
\xrightarrow{a.s.} 0,
\end{equation}
as $n \to \infty$.
Define 
\begin{equation*} 
Z_i = \frac{1}{n-1}\sum_{\stackrel{1\leq j\leq n}{j\neq i}} \eta_{\epsilon_n}(X_i - X_j),
\end{equation*}
so that $G\Lambda_n (u) = \frac{1}{n}\sum_{i=1}^n Z_i^{\alpha} u(X_i)$.
Then we have
\begin{equation*}
\mathbb{E}[Z_i|X_i] =\rho_{\epsilon_n}(X_i),
\end{equation*}
and further, an application of Bernstein's inequality (Lemma 19.32 of \cite{van2000asymptotic}) gives
\begin{equation} \label{eq:firstbernstein}
\mathbb{P}\Big( |Z_i - \rho_{\epsilon_n}(X_i) | > t \Big| X_i\Big) \leq 2
\exp\Big(-\frac{1}{4} \frac{t^2(n-1)}{a + t b}\Big),
\end{equation}
where $a = \mathbb{E}[\eta_{\epsilon_n}(X_i - X_j)^2 | X_i]$ and $b =
\sup_{x \in D} |\eta_{\epsilon_n}(X_i - x)|$. Recalling the definition
$\eta_{\epsilon}(z) = \eta(z/\epsilon)/ \epsilon^d$ and the assumptions (K1), (K4),
we have  $a \leq
C / \epsilon_n^d$ and $b \leq C / \epsilon_n^d$. Therefore, inequality
(\ref{eq:firstbernstein}) implies 
\begin{equation*} \label{eq:secondbernstein}
\mathbb{P}\Big(| Z_i- \rho_{\epsilon_n}(X_i)| > t \Big| X_i\Big) \leq 2
\exp\Big(-C\frac{t^2(n-1)\epsilon_n^d}{(t+1)}\Big)
\end{equation*}
in terms of a constant $C$ not depending on $n$.
Applying a union bound gives
\begin{equation} \label{eq:supdegreedeviation}
\begin{aligned}
\mathbb{P}\Big(\sup_{1\leq i \leq n} | Z_i -
 \rho_{\epsilon_n}(X_i) | > t\Big) &\leq n \mathbb{P}\Big(| Z_i -
 \rho_{\epsilon_n}(X_i)| > t\Big) \\
&\leq C n \exp\Big(-C \frac{t^2(n-1)\epsilon_n^d}{(t+1)}\Big).
\end{aligned}
\end{equation}

By Lemma \ref{lemma:mollified}, there exist positive constants $A,B$ such that,
for sufficiently small $\epsilon$, both $\rho$ and $\rho_{\epsilon}$
take values in the interval $[A,B]$. Let $J_n$ denote the event that
$\sup_{1 \leq i \leq n} | Z_i - \rho_{\epsilon_n}(X_i) | < A/2$.  Then,
if $J_n$ holds, the inequality $A/2 < Z_i < B + A/2$ is satisfied for
all $i$. 
Since the function $x \mapsto x^{\alpha}$ is Lipschitz on
the interval $[A/2,B+A/2]$, we obtain
\begin{equation} \label{eq:mvtdifference}
|Z_i^{\alpha} - \rho_{\epsilon_n}(X_i)^{\alpha}| \leq
C |Z_i - \rho_{\epsilon_n}(X_i)|.
\end{equation}

Hence, when $J_n$ occurs, inequality (\ref{eq:mvtdifference}) and $\|u\|_{L^\infty}<\infty$ imply, with respect to another constant $C$ independent of $n$, that
\begin{eqnarray*}
&&|G\Lambda_n(u) - \overline{G\Lambda_n}(u)|  \leq \frac{1}{n} \sum_{1 \leq
  i \leq n} |Z_i^{\alpha} - \rho_{\epsilon_n}(X_i)^{\alpha}| |u(X_i)| \\
&&\ \  \leq C  \frac{1}{n} \sum_{1 \leq i \leq n} |Z_i - \rho_{\epsilon_n}(X_i)|  \leq C \sup_{1 \leq i \leq n} |Z_i - \rho_{\epsilon_n}(X_i)|,
\end{eqnarray*}
and moreover,
\begin{equation*}
\begin{aligned}
\frac{1}{\sqrt{\epsilon_n}}|G\Lambda_n(u) - \overline{G\Lambda_n}(u)| \leq
C \frac{1}{\sqrt{\epsilon_n}} \sup_{1 \leq i \leq n} |Z_i -
\rho_{\epsilon_n}(X_i) |. \end{aligned}
\end{equation*}

It follows, by \eqref{eq:supdegreedeviation}, that
\begin{eqnarray}
&&\mathbb{P}\Big(\Big\{\frac{1}{\sqrt{\epsilon_n}} |G\Lambda_n (u) -
\overline{G\Lambda_n}(u)| > t\Big\} \cap J_n \Big) \\
&&\ \ \ \ \ \ \ \leq
 \mathbb{P}\Big( C \frac{1}{\sqrt{\epsilon_n}} \sup_{1 \leq i \leq n}
|Z_i 
-  \rho_{\epsilon_n}(X_i)| > t \Big)\nonumber\\
\label{eq:devepsbound}
&&\ \ \ \ \ \ \ \ \  \leq \  C n \exp\Big(-C \frac{t^2 (n-1)\epsilon_n^{d+1}}{t+1}\Big). \nonumber
\end{eqnarray}
On the other hand, by (\ref{eq:supdegreedeviation}) again, we have
\begin{equation} \label{eq:suplowerbound}
\mathbb{P}(J_n^c) \leq   C n \exp\Big(-C \frac{A^2(n-1)\epsilon_n^{d}}{A+1}\Big).
\end{equation}

Combining (\ref{eq:suplowerbound}) and (\ref{eq:devepsbound}) gives
\begin{eqnarray*}
&&\mathbb{P}\Big( \frac{1}{\sqrt{\epsilon_n}} |G\Lambda_n (u) -
\overline{G\Lambda_n}(u)| > t\Big) \\
&&\ \ \ \ \  \leq \mathbb{P}\Big( \Big\{\frac{1}{\sqrt{\epsilon_n}} |G\Lambda_n (u) -
\overline{G\Lambda_n}(u)| > t\Big\}\cap  J_n \Big) + \mathbb{P}(J_n^c) \\
&&\ \ \ \ \  \leq C n \exp\Big(-C (n-1)\epsilon_n^{d+1}\Big),
\end{eqnarray*}
for some constant $C$ not depending on $n$.

From our assumption (\ref{eq:quadlemmaepsagen}) on $\epsilon_n$,
it follows that
\begin{equation*}
\sum_{n=1}^{\infty} \mathbb{P}\Big( \frac{1}{\sqrt{\epsilon_n}} |G\Lambda_n (u) -
\overline{G\Lambda_n}(u)| > t\Big) < \infty,
\end{equation*}
and so 
\begin{equation*}
\frac{1}{\sqrt{\epsilon_n}} (G\Lambda_n (u) -
\overline{G\Lambda_n}(u) ) \xrightarrow{a.s.} 0.
\end{equation*}

\vskip .1cm
{\it Step 2.} Now, we argue that

\begin{equation} \label{eq:secondconvergence}
\frac{1}{\sqrt{\epsilon_n}}\Big(\overline{G\Lambda_n}(u) -
\Lambda_{\epsilon_n}(u)\Big) \xrightarrow{a.s.} 0,
\end{equation}
as $n \to \infty$.

Noting \eqref{new_lambda_def}, since
\begin{equation*} 
\mathbb{E}[\rho_{\epsilon_n}(X_i)^{\alpha} u(X_i)] = \Lambda_{\epsilon_n}(u),
\end{equation*}
by Bernstein's inequality (Lemma 19.32 of \cite{van2000asymptotic}), we have
\begin{equation*}
\mathbb{P}\Big(\frac{1}{\sqrt{\epsilon_n}} \Big|\frac{1}{n}\sum_{1 \leq i
  \leq n} \rho_{\epsilon_n}(X_i)^{\alpha} u(X_i) - \Lambda_{\epsilon_n}(u)\Big|
> t \Big) \leq 2 \exp\Big( - \frac{1}{4} \frac{t^2 \epsilon_n n}{a +
  t \sqrt{\epsilon_n} b }\Big),
\end{equation*}
where $a = \mathbb{E}[(\rho(X_i)^{\alpha} u(X_i))^2]$ and $b =
\sup_{x \in D} |\rho_{\epsilon_n}(x)^{\alpha} u(x)|$.  Both of these are
bounded by a constant $C$, and so by the assumption
(\ref{eq:quadlemmaepsagen}) on $\epsilon_n$, we obtain
\begin{equation*}
\sum_{n=1}^\infty \mathbb{P}\Big(\frac{1}{\sqrt{\epsilon_n}} |\overline{G\Lambda_n}(u) - \Lambda_{\epsilon_n}(u)|
> t \Big) <  \infty,
\end{equation*}
and therefore \eqref{eq:secondconvergence}.

\vskip .1cm

{\it Step 3.} By Lemma \ref{lemma:lambdaepslim}, we have
\begin{equation*}
|\Lambda_{\epsilon_n}(u) - \Lambda (u) | \leq C \|u\|_{L^\infty} \epsilon_n.
\end{equation*}
It follows that
\begin{equation} \label{eq:thirdconvergence}
\frac{1}{\sqrt{\epsilon_n}} \Big(\Lambda_{\epsilon_n}(u) -
    \Lambda (u)\Big) \to 0,
\end{equation}
as $n \to \infty$.
Combining (\ref{eq:firstconvergence}),
(\ref{eq:secondconvergence}), and (\ref{eq:thirdconvergence}) gives
\begin{equation*}
\frac{1}{\sqrt{\epsilon_n}}\Big(G\Lambda_n(u) - \Lambda
(u)\Big) \xrightarrow{a.s.} 0,
\end{equation*}
as $n\rightarrow \infty$, finishing the proof of the lemma.
\end{proof}

\subsection{Proof of Theorem \ref{thm:asymptotics}}
\label{asymptotic_formulas_sect}
Recall equation (\ref{eq:oneminusmod}) which decomposes the modularity $Q_n(\mathcal{U}_n)$ with respect to partitions $\mathcal{U}_n$ of $\mathcal{X}_n$ induced from a partition $\mathcal{U}= \{U_k\}_{k=1}^K$ of $D$, where each of the sets $U_k$ have finite perimeter, ${\rm Per}(U_k;\rho^2)<\infty$.

Since $S / (n(n-1)^{\alpha}) = G\Lambda_n(1)$ and
$\int_D \rho^{1+\alpha}(x)\, dx = \Lambda (1)$, by Lemmas \ref{lemma:quadasconvergea0}, \ref{lemma:quadasconvergea1},
and \ref{lemma:quadasconvergea}, which cover the cases $\alpha =0$, $\alpha=1$, and $\alpha \neq 0,1$, we have 
\begin{equation}
\label{eq:asymp0}
\frac{S}{n(n-1)^{\alpha}} \xrightarrow{a.s.} \int_D \rho^{1+\alpha}(x)\, dx
\end{equation}
as $n\rightarrow\infty$. In particular, when $\alpha = 1$, we have, as $n \to \infty$,
\begin{equation}
\label{eq:asympeq2}
\frac{2m}{n(n-1)} = \frac{1}{n(n-1)} \sum_{\substack{1 \leq i, j
    \leq n \\ i \neq j }} \eta_{\epsilon_n}(X_i- X_j) \xrightarrow{a.s}\int_D \rho^2(x)\, dx.
\end{equation}
Further, these same lemmas, applied to the indicators $\{u_k = \mathds{1}_{U_k}\}_{k=1}^K$, imply that
\begin{equation*} 
\sum_{k=1}^K (G\Lambda_n (u_k -
1/K))^2 \xrightarrow{a.s.} \sum_{k=1}^K (\Lambda (u_k -
1/K))^2
\end{equation*}
as $n\rightarrow \infty$.
Hence, combining these limits,
\begin{equation} \label{eq:asympeq1}
\frac{n^2(n-1)^{2\alpha}}{S^2}\Big[\sum_{k=1}^K (G\Lambda_n (u_k -
1/K))^2\Big] \xrightarrow{a.s.} \sum_{k=1}^K \Big( \mu(U_k) - 1/K\Big)^2,
\end{equation}
as $n\rightarrow\infty$, with $d\mu(x) = \frac{\rho^{1+\alpha}(x)}{\int_D \rho^{1+\alpha}(x)\, dx}.$

By Lemma \ref{lemma:gtvasconverge}, we have, as $n \to 
\infty$,
\begin{equation*} \label{eq:asympeq3}
\sum_{k=1}^K GTV_{n}(u_k) \xrightarrow{a.s.} \sigma_{\eta}
\sum_{k=1}^K TV(u_k; \rho^2),
\end{equation*}
where $\sigma_{\eta} = \int_{\mathbb{R}^d} \eta(x)|x_1|\, dx$.  Therefore,
\begin{equation}
\label{eq:asympeq4}
\frac{n(n-1)}{4m}\sum_{k=1}^K GTV_n(u_k) \xrightarrow{a.s.} \frac{\sigma_\eta}{2\int_D\rho^2(x)dx}\sum_{k=1}^K TV(u_k;\rho^2),
\end{equation}
as $n\rightarrow\infty$.

Since $\lim_{n\rightarrow\infty}\epsilon_n= 0$, and $TV(u_k;\rho^2)<\infty$ for $1\leq k\leq K$ as the sets in $\mathcal{U}$ have finite perimeter, noting \eqref{eq:asympeq2} and \eqref{eq:asympeq4}, we have that
$\frac{\epsilon_n n(n-1)}{4m}\sum_{k=1}^K GTV_n(u_k)$ vanishes a.s., as $n\rightarrow\infty$.  Hence, from the limit \eqref{eq:asympeq1}, we obtain the first statement (\ref{eq:asympresult1}) in Theorem \ref{thm:asymptotics}.

To prove the second statement (\ref{eq:asympresult2}), we write, dividing \eqref{eq:oneminusmod} by $\epsilon_n$, that
\begin{eqnarray*}
&&\frac{1 - 1/K - Q_n(\mathcal{U})}{\epsilon_n} \\
&&\ \ =\  \frac{n^2(n-1)^{2\alpha}}{S^2}\Big[\sum_{k=1}^K (\frac{1}{\sqrt{\epsilon_n}}G\Lambda_n (u_k -
1/K))^2\Big] + \frac{n(n-1)}{4m} \sum_{k=1}^K
GTV_{n}(u_k).
\end{eqnarray*}
By assumption, the partition $\mathcal{U}$ is balanced with respect to
$d\mu$,  and so
$\sum_{k=1}^K (\mu(U_k) - 1/K)^2 = 0.$ Equivalently, recalling the definition \eqref{eq:lambdadef} of
$\Lambda$, we have $\Lambda(u_k - 1/K) = 0$ for $1 \leq k \leq K$.

Hence, writing $g_k = u_k - 1/K$, it follows that
\begin{equation} \label{eq:balancedgllimit}
\sum_{k=1}^K \Big(\frac{1}{\sqrt{\epsilon_n}}G\Lambda_n (u_k -
1/K)\Big)^2 = \sum_{k=1}^K \Big(\frac{1}{\sqrt{\epsilon_n}}\Big(G\Lambda_n
g_k - \Lambda g_k\Big)\Big)^2 \xrightarrow{a.s.} 0,
\end{equation}
as $n \to \infty$, by Lemmas \ref{lemma:quadasconvergea0}, \ref{lemma:quadasconvergea1},
and \ref{lemma:quadasconvergea} for the various cases of $\alpha$.
Combining \eqref{eq:asympeq4} and \eqref{eq:balancedgllimit} gives
\begin{equation*}
\begin{aligned}
\frac{1 - 1/K - Q_n(\mathcal{U})}{\epsilon_n} \xrightarrow{a.s.}
\frac{\sigma_{\eta}}{2 \int_D \rho^2(x)\, dx}  \sum_{k=1}^K
TV(u_k; \rho^2),
\end{aligned}
\end{equation*}
as $n \to \infty$. This finishes the proof of Theorem \ref{thm:asymptotics}.\qed

\section{Proof of Theorem \ref{thm:mainthm}: Optimal clusterings}
\label{proof_mainthm}
The general approach to proving Theorem \ref{thm:mainthm} is to formulate
both the modularity clustering problem
(\ref{prob:kmodoriginal}) and the continuum partitioning problem
(\ref{prob:continuumk}) as optimization problems on the common metric
space $(TL^1(D))^K$, say equipped with the product metric.  Although we wish to maximize the modularity, it will be
convenient later in Subsection \ref{reformulation_min_sect} to pose an equivalent problem of minimizing a related
energy $E_n$. 

Thus, the maximum modularity clusterings of the graph $\mathcal{G}_n =
(\mathcal{X}_n, W_n)$ will be related to the solution of
\begin{equation*} 
\begin{aligned}
& \underset{\mathcal{V} \in (TL^1(D))^K}{\text{minimize}}
& & E_{n}(\mathcal{V}), \\
\end{aligned}
\end{equation*}
and likewise, the optimal partitions of Problem  (\ref{prob:continuumk}) will be related to the solution of
\begin{equation*}
\begin{aligned}
& \underset{\mathcal{V} \in (TL^1(D))^K}{\text{minimize}}
& & E(\mathcal{V}).
\end{aligned}
\end{equation*}

We then argue in Subsection \ref{proof_gamma_conv_sect} that the random functionals $E_{n}$ $\Gamma\big( (TL^1)^K\big)$-converge to
$E$, in the sense of Definition \ref{def:nondeterministicgamma}, which when taken with an additional compactness property given in Subsection \ref{compactness_sect} will be
sufficient to imply that the minimizers of $E_n$ converge subsequentially in $(TL^1)^K$ to a minimizer of $E$, and thereby prove Theorem \ref{thm:mainthm} at the end of the section.

\subsection{Reformulation as a mimimization problem}
\label{reformulation_min_sect}

Recall the identity (\ref{eq:oneminusmod}),
\begin{eqnarray*} 
&&1 - 1/K - Q_n(\mathcal{U}_n) \\
&&\ \ \ = \ \frac{n^2(n-1)^{2\alpha}}{S^2}\Big[\sum_{k=1}^K \big(G\Lambda_n (u_{n,k} -
1/K)\big)^2\Big] + \epsilon_n \frac{n(n-1)}{4m} \sum_{k=1}^K
GTV_{n}(u_{n,k}),
\end{eqnarray*}
where $\mathcal{U}_n = \{ U_{n,k}\}_{k=1}^K$ is a partition of the data points $\mathcal{X}_n$ and $u_{n,k} = \mathds{1}_{U_{n,k}}\in I_n(D)$ for $1\leq k\leq K$.
As is our convention, we note that some of the $\{U_{n,k}\}_{k=1}^K$
may be empty sets, and so generally we have $|\mathcal{U}| \leq K$.

We define
\begin{equation} \label{eq:gfdefcluster}
F_n(\mathcal{U}_n) \coloneqq
\frac{1}{\epsilon_n}\frac{n^2(n-1)^{2\alpha}}{S^2}\Big[\sum_{k=1}^K \big(G\Lambda_n (u_{n,k}-
1/K)\big)^2\Big]
\end{equation}
and
\begin{equation} \label{eq:gtvdefcluster}
TV_n(\mathcal{U}_n) \coloneqq  \frac{n(n-1)}{4m} \sum_{k=1}^K GTV_{n}(u_{n,k}),
\end{equation}
so that the problem of \textit{maximizing}
$Q_n(\mathcal{U}_n)$ over clusterings $\mathcal{U}_n$ of $\mathcal{X}_n$ with
$|\mathcal{U}_n| \leq K$ is
equivalent to that of \textit{minimizing} $F_n(\mathcal{U}_n) + TV_n(\mathcal{U}_n)$.

We now place the
modularity optimization problem on the space
$(TL^1(D))^K$.  Recalling that $\nu_n$ denotes the empirical
measure, we define $M_n(D) \subset (TL^1(D))^K$ by
\begin{equation*}
M_n(D) \coloneqq \Big\{ ((\nu_n, u_{n,k}))_{k=1}^K : u_{n,k} \in I_n(D), \sum_{k=1}^K u_{n,k} = \mathds{1}_{\mathcal{X}_n} \Big\}.
\end{equation*}
As a notational convenience, we often denote elements of $M_n(D)$ by
\[ (\nu_n, \mathcal{U}_n) \coloneqq \{(\nu_n, u_{n,k})\}_{k=1}^K, \]
where $u_{n,k} = \mathds{1}_{U_{n,k}}$ and $\mathcal{U}_n = \{U_{n,k}\}_{k=1}^K$.

Define
 $E_{n} : (TL^1(D))^K \times \Omega \to
[0,\infty]$ by
\begin{equation} \label{eq:enenergydef}
E_{n}(\mathcal{V}_n) = \begin{cases}
F_n(\mathcal{U}_n) + TV_n(\mathcal{U}_n), & \text{if } \mathcal{V}_n = (\nu_n, \mathcal{U}_n)\in M_n(D), \\
\infty, & \text{otherwise.}
\end{cases}
\end{equation}
The energy minimization problem 
\begin{equation} \label{prob:binclusternew}
\begin{aligned}
& \underset{\mathcal{V}_n \in (TL^1(D))^K} {\text{minimize}}
& & E_{n}(\mathcal{V}_n), \\
\end{aligned}
\end{equation}
is equivalent to the $K$-class modularity clustering problem
(\ref{prob:kmodoriginal}), in the sense that $\mathcal{U}_n$ is a
solution to \eqref{prob:kmodoriginal} iff $\mathcal{V}_n =
(\nu_n,\mathcal{U}_n)$ is a solution to \eqref{prob:binclusternew}.

Similarly, we define continuum functionals on partitions $\mathcal{U} = \{U_k\}_{k=1}^K$ of $D$, via their indicators $\{u_k\}_{k=1}^K\subset I(D)$, by
\begin{equation} \label{eq:fdef}
F(\mathcal{U}) = \begin{cases} 0, & \text{if }\sum_{k=1}^K
(\mu(U^{(k)}) - 1/K)^2 = 0, \\
\infty, & \text{otherwise,}
\end{cases}
\end{equation}
and
\begin{equation} \label{eq:tvdef}
TV(\mathcal{U}) = C_{\eta,\rho} \sum_{k=1}^K TV(u_k ; \rho^2),
\end{equation}
where $d\mu = \rho^{1+\alpha} dx / \int_D \rho^{1+\alpha}(x)\, dx$ and
$$C_{\eta,\rho} = \frac{\sigma_{\eta}}{ 2\int_D \rho^2(x)\, dx} = \frac{ \int_D \eta(x)|x_1|dx }{2\int_D \rho^2(x)\, dx}.$$

Define $M(D) \subset (TL^1(D))^K$ by
\begin{equation*}
M(D) \coloneqq \Big\{ ((\nu, u_k))_{k=1}^K : u_k \in I(D), \sum_{k=1}^K u_k = \mathds{1}_D, \sum_{k=1}^K TV(u_k, \rho^2)<\infty\Big\}.
\end{equation*}
As before, we denote elements of $M(D)$ by
$$(\nu,\mathcal{U}) \coloneqq \{(\nu, u_k)\}_{k=1}^K,$$
where $\mathcal{U} = \{U_k\}_{k=1}^K$ and $u_k = \mathds{1}_{U_k}$ for $1\leq k\leq K$.

Define the energy $E : (TL^1(D))^K \to [0,\infty]$ as
\begin{equation} \label{eq:energydef}
E(\mathcal{V}) \coloneqq  
\begin{cases} F(\mathcal{U}) + 
TV(\mathcal{U}), & \text{if } \mathcal{V}= (\nu, \mathcal{U})\in M(D), \\
\infty, & \text{otherwise.}
\end{cases}
\end{equation}

Then, with $\mu$ as above and $\phi = \rho^2$, the continuum partitioning problem (\ref{prob:continuumk}), which does not include the prefactor $C_{\eta,\rho}$, is equivalent to
\begin{equation} \label{prob:continuumE}
\begin{aligned}
& \underset{\mathcal{V}\in (TL^1(D))^K}{\text{minimize}}
& & E(\mathcal{V}), 
\end{aligned}
\end{equation}
in the sense that $\mathcal{U}$ is a solution to
\eqref{prob:continuumk} iff  $\mathcal{V} = (\nu,\mathcal{U})$ is a solution to
\eqref{prob:continuumE}.

As noted in Subsection \ref{continuum problem}, since there is a solution to \eqref{prob:continuumk}, the problem \eqref{prob:continuumE} also possesses a solution.  In particular, the energy $E$ is not identically infinite.

\subsection{Gamma convergence}
\label{proof_gamma_conv_sect}

We now state the Gamma convergence used later in the proof of Theorem \ref{thm:mainthm}.

\begin{theorem} \label{thm:maingamma}
Under the assumptions of Theorem \ref{thm:mainthm}, 
the random functionals $E_n : (TL^1(D))^K \times \Omega \to [0,\infty]$, given in
  (\ref{eq:enenergydef}), $\Gamma$-converge in $(TL^1)^K$ to $E : (TL^1(D))^K \to [0,\infty]$, given in (\ref{eq:energydef}):
\begin{equation*}
E_{n} \xrightarrow{\Gamma((TL^1))^K} E,
\end{equation*}
as $n \to \infty$, in the sense of Definition \ref{def:nondeterministicgamma}.
\end{theorem}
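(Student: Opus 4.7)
The plan is to verify the two conditions of Definition \ref{def:nondeterministicgamma} for the decomposition $E_n = F_n + TV_n$ from Subsection \ref{reformulation_min_sect}, in which the soft penalty $F_n$ carries a $1/\epsilon_n$ prefactor and $TV_n$ is essentially a weighted sum of graph total variations. The $1/\epsilon_n$ scaling in $F_n$ will promote the soft balance into the hard constraint $F$ in the limit, while the $\Gamma$-limit of $TV_n$ will be the weighted perimeter $TV$.

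For the recovery sequence I exploit the observation, noted by the authors in the introduction, that the probabilistic Theorem \ref{thm:asymptotics} already furnishes the construction for balanced targets, bypassing the classical difficulty of having to approximate with piecewise smooth partitions while maintaining the balance constraint. Given $\mathcal{V} = (\nu, \mathcal{U}) \in M(D)$ with $E(\mathcal{V}) < \infty$, so that $\mathcal{U}$ is balanced and each $U_k$ has finite $\rho^2$-perimeter, I take the induced sequence $\mathcal{V}_n = (\nu_n, \mathcal{U}_n)$ with $\mathcal{U}_n = \{U_k \cap \mathcal{X}_n\}_{k=1}^K$. The identity $E_n(\mathcal{V}_n) = (1 - 1/K - Q_n(\mathcal{U}_n))/\epsilon_n$ combined with the second assertion \eqref{eq:asympresult2} of Theorem \ref{thm:asymptotics} yields $E_n(\mathcal{V}_n) \xrightarrow{a.s.} C_{\eta,\rho} \sum_{k=1}^K \Per(U_k;\rho^2) = E(\mathcal{V})$. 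The convergence $\mathcal{V}_n \to \mathcal{V}$ in $(TL^1)^K$ follows from Lemma \ref{lemma:tl1convergence} applied with the transportation maps $T_n$ of Proposition \ref{thm:tmaps}: since $\|\mathrm{Id} - T_n\|_{L^\infty}\to 0$ a.s., the standard translation-continuity estimate for indicators of finite-perimeter sets gives $\int_D |\mathds{1}_{U_k}(T_n x) - \mathds{1}_{U_k}(x)|\, d\nu(x) \to 0$ for each $k$. When $E(\mathcal{V}) = \infty$, the constant sequence $\mathcal{V}_n \equiv \mathcal{V}$ suffices, because $\mathcal{V} \notin M_n(D)$ (its first-component measure is either $\nu \neq \nu_n$, or not even $\nu$), forcing $E_n(\mathcal{V}_n) = \infty = E(\mathcal{V})$.

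For the liminf inequality, fix $\mathcal{V}_n \to \mathcal{V}$ in $(TL^1)^K$; after passing to a subsequence realizing the liminf I may assume $\sup_n E_n(\mathcal{V}_n) < \infty$, which forces $\mathcal{V}_n = (\nu_n, \mathcal{U}_n) \in M_n(D)$, and weak convergence $\nu_n \xrightarrow{w} \nu$ from Corollary \ref{empconverge} identifies the limit as $\mathcal{V} = (\nu, \mathcal{U})$. For the total variation term the graph-TV liminf inequality of \cite{trillos2014continuum} applied coordinatewise gives $\liminf_n GTV_n(u_{n,k}) \geq \sigma_\eta TV(u_k;\rho^2)$, which combined with the a.s.\ limit $n(n-1)/4m \to 1/(2\int_D \rho^2\, dx)$ from \eqref{eq:asympeq2} yields $\liminf_n TV_n(\mathcal{U}_n) \geq TV(\mathcal{U})$. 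For the balance term I need the continuity statement $G\Lambda_n(u_{n,k}) \to \Lambda(u_k)$ a.s.\ along the $TL^1$-convergent sequence of indicators; given this, the $1/\epsilon_n$ scaling in $F_n$ forces $\mathcal{U}$ to be balanced under $\sup_n E_n < \infty$, so $F(\mathcal{U}) = 0$ and the inequality $\liminf_n F_n(\mathcal{U}_n) \geq F(\mathcal{U})$ holds trivially from $F_n \geq 0$.

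The delicate step will be this $TL^1$-continuity of $G\Lambda_n$, as Lemmas \ref{lemma:quadasconvergea0}, \ref{lemma:quadasconvergea1}, and \ref{lemma:quadasconvergea} are stated only for a fixed deterministic $u$. Inspecting their proofs, however, the concentration bounds are uniform in $u \in L^\infty$ with $\|u\|_\infty \leq 1$: in particular, Step 1 of Lemma \ref{lemma:quadasconvergea}, which controls $\sup_i |Z_i - \rho_{\epsilon_n}(X_i)|$ via Bernstein and a union bound, is purely a degree-concentration statement, and under (I2) yields $\sup_{\|u\|_\infty \leq 1}|G\Lambda_n(u) - \overline{G\Lambda_n}(u)| \xrightarrow{a.s.} 0$ with $\overline{G\Lambda_n}(u) = \int_D \rho_{\epsilon_n}^\alpha u\, d\nu_n$. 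The remaining passage $\overline{G\Lambda_n}(u_n) \to \Lambda(u)$ whenever $(\nu_n, u_n) \xrightarrow{TL^1} (\nu, u)$ follows from the definition of $TL^1$-convergence combined with uniform convergence $\rho_{\epsilon_n} \to \rho$ from Lemma \ref{lemma:mollified}. The cases $\alpha = 0, 1$ reduce to even simpler empirical-average estimates via Lemmas \ref{lemma:quadasconvergea0} and \ref{lemma:quadasconvergea1}. Stringing these together and applying them to each $u_{n,k} - 1/K \to u_k - 1/K$ closes the liminf inequality for $F_n$ and completes the proof of $\Gamma$-convergence.
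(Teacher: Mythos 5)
Your overall architecture coincides with the paper's: the recovery sequence is the induced partition combined with the limits already established in the proof of Theorem \ref{thm:asymptotics}, and the liminf inequality splits into the coordinatewise graph-TV liminf of \cite{trillos2014continuum} plus a continuity statement for the balance term $G\Lambda_n$ along $TL^1$-convergent sequences. One ingredient is genuinely different: for that continuity statement the paper (Lemma \ref{lemma:glambdalimit}) argues deterministically on $\Omega_0$, sandwiching $\eta((T_nx-T_ny)/\epsilon_n)$ between $\eta((x-y)/\overline{\epsilon}_n)$ and $\eta((x-y)/\tilde{\epsilon}_n)$ using $\|Id-T_n\|_{L^\infty}/\epsilon_n\to 0$ and a three-step approximation of $\eta$ by simple kernels, whereas you propose a probabilistic degree-concentration bound. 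Your route is viable, but the citation is imprecise: Step 1 of Lemma \ref{lemma:quadasconvergea} as stated controls $\epsilon_n^{-1/2}\sup_i|Z_i-\rho_{\epsilon_n}(X_i)|$ and for that reason requires $\sum_n n\exp(-n\epsilon_n^{d+1})<\infty$, which (I2) does \emph{not} supply when $\alpha=0,1$. What you actually need is only the unscaled statement $\sup_i|Z_i-\rho_{\epsilon_n}(X_i)|\xrightarrow{a.s.}0$, whose Borel--Cantelli condition is $\sum_n n\exp(-c\,n\epsilon_n^{d})<\infty$; this does follow from (I2), since in every case (I2) forces $n\epsilon_n^d/\log n\to\infty$, but that verification has to be supplied rather than imported from Step 1.

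The one genuine gap is in the recovery sequence. You assert $\int_D|\mathds{1}_{U_k}(T_nx)-\mathds{1}_{U_k}(x)|\,d\nu(x)\to 0$ via ``the standard translation-continuity estimate for indicators of finite-perimeter sets.'' That estimate, $\int|u(x+h)-u(x)|\,dx\le |h|\,TV(u)$, applies to translations; $T_n$ is not a translation, and from $\|Id-T_n\|_{L^\infty}\to 0$ alone one can conclude only that $\{x: \mathds{1}_{U_k}(T_nx)\ne\mathds{1}_{U_k}(x)\}$ lies in a shrinking neighborhood of the topological boundary $\partial U_k$, which for a general finite-perimeter set may have positive Lebesgue measure. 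The paper's Lemma \ref{lemma:restrictconverge} closes exactly this point: approximate $\mathds{1}_{U_k}$ in $L^1$ by a Lipschitz $u_\epsilon$, bound $\int|u_\epsilon(T_nx)-u_\epsilon(x)|\rho\,dx$ by $C\|Id-T_n\|_{L^\infty}$, and handle $\int|u(T_nx)-u_\epsilon(T_nx)|\rho\,dx=\frac1n\sum_i|u(X_i)-u_\epsilon(X_i)|$ by the strong law of large numbers. Some such argument is needed; the rest of your recovery construction (including taking the constant sequence whenever $E(\mathcal{V})=\infty$) is fine.
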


\begin{proof}
The proof of Theorem \ref{thm:maingamma} is in two steps.  In Subsection \ref{liminf_subsect}, via Lemma \ref{lemma:eliminf}, we give the `liminf' estimate.  In Subsection \ref{recovery_subsect}, through Lemma \ref{lemma:erecovery}, we prove the `recovery sequence' property.
\end{proof}

\subsubsection{Liminf Inequality}
\label{liminf_subsect}

We now argue the liminf inequality for the
$\Gamma$-convergence in Theorem \ref{thm:maingamma}, according to Definition \ref{def:nondeterministicgamma}.  Recall that $\Omega_0$ denotes the probability $1$ set of realizations $\{X_i\}_{i\in \mathbb{N}}$, under which Proposition \ref{thm:tmaps} holds.
  
We first show a closure property of $M_n(D)$ and $M(D)$.

\begin{lemma} \label{lemma:mlemma}
On the probability $1$ set $\Omega_0$, the following statement holds:  If $\{ \mathcal{V}_n\}_{n \in \mathbb{N}}$ is a sequence in $M_n(D)$ and $\mathcal{V}_n
\xrightarrow{(TL^1)^K} \mathcal{V}$, then $\mathcal{V} \in M(D)$.
\end{lemma}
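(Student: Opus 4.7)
The plan is to verify the three defining conditions of $M(D)$ in turn --- identification of the measure component of each coordinate of the limit as $\nu$, that each function component is an indicator $\mathds{1}_{U_k}$, and the partition identity $\sum_{k=1}^K \mathds{1}_{U_k} = \mathds{1}_D$ --- and then to address separately the finite-perimeter condition built into $M(D)$, which I expect to be the main obstacle.

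First, since $TL^1$-convergence entails weak convergence of the measure components, and every $\mathcal{V}_n$ has $\nu_n$ as its measure component in each of its $K$ coordinates, Corollary \ref{empconverge} gives $\nu_n \xrightarrow{w} \nu$ on $\Omega_0$; thus the limit takes the form $\mathcal{V} = ((\nu, f_k))_{k=1}^K$ with $f_k \in L^1(D, \nu)$.

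Second, I would invoke Proposition \ref{thm:tmaps} to obtain on $\Omega_0$ a sequence of transport maps $T_n$ with $T_{n\sharp}\nu = \nu_n$ and $\|Id - T_n\|_{L^\infty} \to 0$, which is in particular a stagnating sequence. By Lemma \ref{lemma:tl1convergence}(iii), $u_{n,k} \circ T_n \to f_k$ in $L^1(D, \nu)$ for each $k$. Since $u_{n,k} \circ T_n$ takes values only in $\{0,1\}$, passing to an a.e.-convergent subsequence forces $f_k \in \{0,1\}$ $\nu$-a.e., and hence Lebesgue-a.e.\ by the lower bound on $\rho$ in assumption (M); so $f_k = \mathds{1}_{U_k}$ for some measurable $U_k \subset D$. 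The partition identity then follows from the observation that for any $x \in D$, $T_n(x) \in \mathcal{X}_n$ and hence $\sum_{k=1}^K u_{n,k}(T_n(x)) = 1$; summing the $L^1$-limits yields $\sum_{k=1}^K \mathds{1}_{U_k} = 1$ a.e.\ on $D$.

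The main obstacle is the perimeter bound $\sum_{k=1}^K TV(u_k; \rho^2) < \infty$, which is genuinely nontrivial since arbitrary measurable subsets of a bounded domain may have infinite perimeter. In each later use of this closure lemma --- namely the compactness principle for $\{E_n\}$ and the liminf half of Theorem \ref{thm:maingamma} --- one has access to a uniform energy bound $E_n(\mathcal{V}_n) \leq C$, which supplies a uniform bound on $\sum_k GTV_n(u_{n,k})$. Under such a bound, $\sum_k TV(u_k; \rho^2) < \infty$ would follow from a discrete-to-continuum lower-semicontinuity statement for the graph total variation --- an analogue of the liminf half of Theorem 4.1 of \cite{trillos2014continuum} --- applied to the pulled-back indicators $u_{n,k} \circ T_n$. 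Carrying out this lower-semicontinuity estimate, by adapting the local volume-averaging argument of \cite{trillos2014continuum} to our $GTV_n$, is the technically involved piece of the argument.
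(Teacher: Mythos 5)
Your verification of the three structural conditions --- that each measure component of the limit is $\nu$, that each $f_k$ is an indicator (via stagnating transport maps, $L^1$ convergence of $u_{n,k}\circ T_n$, and subsequential a.e.\ convergence), and that the partition identity passes to the limit --- is exactly the paper's argument, so that part is fine.

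Your worry about the clause $\sum_{k=1}^K TV(u_k;\rho^2)<\infty$ in the definition of $M(D)$ is well placed, and here you have actually caught something the paper's own proof does not address: the published proof verifies only $u_k\in I(D)$ and $\sum_k u_k=\mathds{1}_D$ and then asserts membership in $M(D)$, saying nothing about finiteness of the total variations. You are right that this condition cannot be extracted from $(TL^1)^K$-convergence alone: membership in $M_n(D)$ imposes no discrete perimeter bound, so one can take $U_1$ with $TV(\mathds{1}_{U_1};\rho^2)=\infty$, induce the discrete partitions $U_{n,k}=U_k\cap\mathcal{X}_n$, and by Lemma \ref{lemma:restrictconverge} obtain a convergent sequence whose limit violates the perimeter clause. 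The resolution is the one you describe: in every downstream use of the closure lemma (the liminf inequality of Lemma \ref{lemma:eliminf} and the compactness of Theorem \ref{thm:compactness}) one either has $\liminf_n E_n(\mathcal{V}_n)=\infty$, in which case nothing needs proving, or a subsequence with bounded $GTV_n$, in which case the lower-semicontinuity statement you propose to prove is precisely Lemma \ref{lemma:gtvliminf} (imported from Theorem 1.1 of \cite{trillos2014continuum}), which forces $TV(u_k;\rho^2)<\infty$. So you need not redo the volume-averaging argument yourself; you need only cite that lemma, but the closure statement should then either carry the energy bound as a hypothesis or have the perimeter clause verified at the point of use rather than inside this lemma.
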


\begin{proof}  Fix a realization in the probability $1$ set $\Omega_0$.
Let $\mathcal{V}_n = ((\nu_n, u_{n,k}))_{k=1}^K$ and $\mathcal{V} =
((\mu_k, u_k))_{k=1}^K$.  By the characterization of $TL^1$
convergence, Lemma \ref{lemma:tl1convergence}, for each $1\leq k\leq K$  we have
$\nu_n \xrightarrow{w} \mu_k$, and so by Corollary \ref{empconverge} it follows that $\mu_k = \nu$.
Further, we have
\begin{equation*}
\lim_{n \to \infty} \int_D | u_k(x) - u_{n,k}(T_n x) | \rho(x) dx = 0,
\end{equation*}
where $\{T_n\}_{n\in \mathbb{N}}$ is the sequence of transportation maps given in Proposition \ref{thm:tmaps}.

  Hence,
as $\rho$ is bounded above and below on $D$, 
it follows that $u_k$ is the $L^1$ limit of a sequence of
indicator functions $\tilde{u}_{n,k}(x) \coloneqq u_{n,k}(T_n x) \in I(D)$. It follows, by subsequential Lebesgue a.e. convergence,
that $u_k \in I(D)$.  Similarly, the relation $\sum_{k=1}^K u_k = \mathds{1}_D$
follows from the corresponding relations for
$\{u_{n,k}\}_{k=1}^K$. Thus, $\mathcal{V} = ((\nu,u_k))_{k=1}^K \in M(D)$.
\end{proof}

We now establish the following technical lemma, which adapts a technique
from the proof of Theorem 1.1 in \cite{trillos2014continuum} to
relate graph functionals with their continuum nonlocal analogues.

\begin{lemma} \label{lemma:glambdalimit}
On the probability $1$ set $\Omega_0$, the following statement holds: Given any sequence of uniformly bounded, nonnegative functions
$\{g_n\}_{n \in \mathbb{N}}$, and a function $g$, if 
$(\nu_n, g_n) \xrightarrow{TL^1} (\nu, g)$, then 
\begin{equation*} \label{eq:glambdalimit}
\lim_{n \to \infty} G\Lambda_n (g_n) = \Lambda (g).
\end{equation*}

\end{lemma}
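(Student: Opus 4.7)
My plan is to mirror the three-step decomposition used in Lemma \ref{lemma:quadasconvergea}, but adapted to a varying sequence $g_n$. Specifically, introduce the intermediate quantity
\[
\overline{G\Lambda_n}(g_n) \;=\; \frac{1}{n}\sum_{i=1}^n \rho_{\epsilon_n}(X_i)^{\alpha}\,g_n(X_i)
\]
from \eqref{def:overlineglambda}, and split
\[
G\Lambda_n(g_n) - \Lambda(g) \;=\; \underbrace{[G\Lambda_n(g_n)-\overline{G\Lambda_n}(g_n)]}_{(\mathrm{I}_n)} \;+\; \underbrace{[\overline{G\Lambda_n}(g_n)-\Lambda(g)]}_{(\mathrm{II}_n)}.
\]
I will show $(\mathrm{I}_n)\to 0$ using the uniform concentration of the empirical degrees $Z_i=\tfrac{1}{n-1}\sum_{j\neq i}\eta_{\epsilon_n}(X_i-X_j)$ near $\rho_{\epsilon_n}(X_i)$, and $(\mathrm{II}_n)\to \Lambda(g)$ by a change-of-variables through the stagnating transport maps of Proposition \ref{thm:tmaps}.

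For $(\mathrm{I}_n)$ (trivial when $\alpha=0$), apply the conditional Bernstein bound leading to \eqref{eq:supdegreedeviation} in the proof of Lemma \ref{lemma:quadasconvergea}. By Borel--Cantelli, the a.s.\ convergence $\sup_{i}|Z_i-\rho_{\epsilon_n}(X_i)|\to 0$ follows once $n\epsilon_n^d/\log n\to\infty$. A case check confirms this under (I2): for $\alpha\neq 0,1$ one has $n\epsilon_n^{d+1}\gg\log n$, hence $n\epsilon_n^d\gg \log n/\epsilon_n$; for $\alpha\in\{0,1\}$ the explicit rates built into (I2) (dimension by dimension) give the same. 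On the probability $1$ event $J$ where this holds, combined with the uniform two-sided bound $A\le \rho_{\epsilon_n}\le B$ from Lemma \ref{lemma:mollified}, eventually $Z_i$ lies in a compact subinterval of $(0,\infty)$ on which $x\mapsto x^\alpha$ is Lipschitz, yielding
\[
|(\mathrm{I}_n)| \;\leq\; \frac{1}{n}\sum_{i=1}^n |Z_i^\alpha-\rho_{\epsilon_n}(X_i)^\alpha|\,|g_n(X_i)| \;\leq\; C\,\|g_n\|_\infty\,\sup_i |Z_i-\rho_{\epsilon_n}(X_i)| \;\to\; 0.
\]

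For $(\mathrm{II}_n)$, use a transport map $T_n$ with $T_{n\sharp}\nu=\nu_n$ from Proposition \ref{thm:tmaps}, so that
\[
\overline{G\Lambda_n}(g_n) \;=\; \int_D \rho_{\epsilon_n}(T_n x)^\alpha\, g_n(T_n x)\,d\nu(x).
\]
Writing $\Lambda(g)=\int_D \rho(x)^\alpha g(x)\,d\nu(x)$ and subtracting, split into
\[
\int_D\bigl(\rho_{\epsilon_n}(T_n x)^\alpha-\rho(x)^\alpha\bigr)g_n(T_n x)\,d\nu(x) \;+\; \int_D \rho(x)^\alpha\bigl(g_n(T_n x)-g(x)\bigr)d\nu(x).
\]
On $\Omega_0$, $\|T_n-\mathrm{Id}\|_\infty\to 0$ and, since $\rho$ is Lipschitz and bounded away from zero, Lemma \ref{lemma:mollified} gives $\|\rho_{\epsilon_n}-\rho\|_{L^\infty(D)}\to 0$; therefore $\rho_{\epsilon_n}(T_n\cdot)\to\rho(\cdot)$ uniformly on $D$, and the Lipschitz property of $x\mapsto x^\alpha$ on $[A,B]$ promotes this to uniform convergence of $\rho_{\epsilon_n}(T_n\cdot)^\alpha$ to $\rho^\alpha$. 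Combined with $\sup_n\|g_n\|_\infty<\infty$, the first integral vanishes. For the second, the hypothesis $(\nu_n,g_n)\xrightarrow{TL^1}(\nu,g)$ together with Lemma \ref{lemma:tl1convergence}(iii) applied to the stagnating sequence $\{T_n\}$ yields $\int_D |g_n(T_n x)-g(x)|\,d\nu(x)\to 0$, and boundedness of $\rho^\alpha$ finishes the job.

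The main technical obstacle is the uniform control of $Z_i^\alpha-\rho_{\epsilon_n}(X_i)^\alpha$, which is precisely what forces the stronger rate (I2) compared to (I1); once one verifies a.s.\ $n\epsilon_n^d/\log n\to\infty$ under (I2) across all sub-cases, the rest reduces to standard mollifier and $TL^1$ bookkeeping using Lemma \ref{lemma:tl1convergence}, Lemma \ref{lemma:mollified}, and Proposition \ref{thm:tmaps}.
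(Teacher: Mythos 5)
Your overall strategy is sound and is genuinely different from the paper's. The paper changes variables through $T_n$ \emph{first} and works with $R_n(x)=\int_D\eta_{\epsilon_n}(T_nx-T_ny)\rho(y)\,dy$, sandwiching it deterministically between $(\tilde\epsilon_n/\epsilon_n)^d\rho_{\tilde\epsilon_n}$ and $(\overline\epsilon_n/\epsilon_n)^d\rho_{\overline\epsilon_n}$ using only $\|Id-T_n\|_{L^\infty}\ll\epsilon_n$; because that sandwich needs monotonicity of $\eta$ in the radius, the paper must then run a three-step approximation (indicator, simple, general kernel), and this is where nonnegativity of $g_n$ is used. Your route replaces that deterministic sandwich by the Bernstein/union-bound concentration of the empirical degrees $Z_i$ around $\rho_{\epsilon_n}(X_i)$, which works for a general kernel in one shot and never uses nonnegativity of $g_n$; your verification that (I2) forces $n\epsilon_n^d/\log n\to\infty$ in every sub-case is correct, so the Borel--Cantelli step goes through. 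The price is that your term $(\mathrm{I}_n)$ estimate holds only on the intersection of $\Omega_0$ with an additional Borel--Cantelli event; since that event does not depend on $\{g_n\}$ the ``for all sequences'' quantifier survives, but strictly speaking you prove the statement on a smaller probability-$1$ set than the $\Omega_0$ named in the lemma, which is used verbatim in several downstream lemmas. This is cosmetic (redefine $\Omega_0$ by intersecting), but you should say so.

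There is one genuine error in your treatment of $(\mathrm{II}_n)$: the claim that Lemma \ref{lemma:mollified} gives $\|\rho_{\epsilon_n}-\rho\|_{L^\infty(D)}\to 0$. It does not, and the claim is false. Near $\partial D$ the mollification loses mass: for example with $D=(0,1)$, $\eta=\tfrac12\mathds{1}_{[-1,1]}$ and $x$ at distance $o(\epsilon)$ from the boundary, $\rho_\epsilon(x)\approx\rho(x)/2$, so the sup-norm error is bounded away from zero no matter how small $\epsilon$ is. Lemma \ref{lemma:mollified} only provides pointwise convergence on the open set $D$, an $L^1$ rate $C\epsilon$, and uniform two-sided bounds $A\le\rho_\epsilon\le B$; its proof gives the uniform bound $|\rho_\epsilon(x)-\rho(x)|\le LR\epsilon$ only for $x$ with $\operatorname{dist}(x,\partial D)>R\epsilon$. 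Consequently $\rho_{\epsilon_n}(T_n\cdot)^\alpha$ does not converge to $\rho^\alpha$ uniformly on $D$. The conclusion you want for the first integral is still true, but it must be argued differently: using the Lipschitz bound for $x\mapsto x^\alpha$ on $[A,B]$, reduce to $\int_D|\rho_{\epsilon_n}(T_nx)-\rho(x)|\,dx\to 0$, and obtain this either by dominated convergence (for each fixed $x\in D$ one eventually has $\operatorname{dist}(T_nx,\partial D)\ge\operatorname{dist}(x,\partial D)-\|Id-T_n\|_{L^\infty}>R\epsilon_n$, so $\rho_{\epsilon_n}(T_nx)\to\rho(x)$ pointwise, and the integrand is bounded by $2B$), or by an explicit split into the boundary layer $\{\operatorname{dist}(x,\partial D)<R\epsilon_n+\|Id-T_n\|_{L^\infty}\}$, whose volume is $O(\epsilon_n)$, and its complement, where the $LR\epsilon_n$ estimate applies. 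With that repair, and with the second integral handled exactly as you do via Lemma \ref{lemma:tl1convergence}(iii), your proof is complete.
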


\begin{proof}
Fix a realization in the probability $1$ set $\Omega_0$.
Recall, from \eqref{eq:glambdadef} and \eqref{eq:lambdadef}, that 
\begin{eqnarray*}
G\Lambda_n (g_n) &=& \frac{1}{n} \sum_{i=1}^n \Big(
\frac{1}{n-1}\sum_{\stackrel{1\leq j\leq n}{j\neq i}} \eta_{\epsilon_n}(X_i - X_j)\Big)^{\alpha}
g_n(X_i)\\
&=& \frac{n^\alpha}{(n-1)^\alpha} \frac{1}{n} \sum_{i=1}^n \Big(
\frac{1}{n}\sum_{j=1}^n \eta_{\epsilon_n}(X_i - X_j) - \frac{\eta_{\epsilon_n}(0)}{n}\Big)^{\alpha}
g_n(X_i),
\end{eqnarray*}
and $\Lambda (g) = \int_D\rho^{1+\alpha}(x)g(x)dx$. 
Let $\{T_n\}_{n\in \mathbb{N}}$ be the transport maps in Proposition \ref{thm:tmaps}.  Since $T_{n\sharp} \nu = \nu_n$,
  by a change of variables, we have
\begin{equation*}
G\Lambda_n (g_n) = \frac{n^\alpha}{(n-1)^\alpha} \int_D \Big( \int_D \eta_{\epsilon_n}(T_nx - T_n
y)\rho(y)\, dy - \frac{\eta_{\epsilon_n}(0)}{n}\Big)^{\alpha} g_n(T_n x)\rho(x)\, dx.
\end{equation*}
For what follows, let $R_n(x) := \int_D \eta_{\epsilon_n}(T_n x -
T_n y) \rho(y)\, dy$. Recall also, from \eqref{rho_defn}, that $\rho_\epsilon(x) = \int_D \eta_\epsilon(x-y)\rho(y)dy$.

\vskip .1cm
{\it Step 1.}
 First, suppose that $\eta$ is of the form $\eta(x) = a$ for $|x| < b$ and
$\eta(x) = 0$ for $|x| > b$. 
Define
\begin{equation}\label{eq:overlineepsdef}
\overline{\epsilon}_n \coloneqq  \epsilon_n +
2 \frac{\|Id-T_n\|_{L^\infty}}{b},
\end{equation}
and note that, for Lebesgue a.e. $(x,y) \in D \times D$, 
\begin{equation*}
|x - y|\, > b \overline{\epsilon}_n {\rm \ \ implies \ \ } |T_n x-T_ny|\, > b\epsilon_n.
\end{equation*}

By the form of $\eta$, we have the bound
\begin{align*}
\eta\Big(\frac{T_nx-T_ny}{\epsilon_n}\Big) \leq \eta\Big(\frac{x-y}{\overline{\epsilon}_n}\Big).
\end{align*}
Integrating with respect to $\rho(y)\,dy$, and scaling appropriately, we obtain
\begin{equation*}
R_n(x) \leq (\overline{\epsilon}_n / \epsilon_n)^d \rho_{\overline{\epsilon}_n}(x),
\end{equation*}
 for Lebesgue a.e. $x \in D$.

By the assumption (I2)
on $\epsilon_n$, together with the estimates in Proposition \ref{thm:tmaps} on $\{T_n\}_{n\in \mathbb{N}}$,
it follows that $\epsilon_n$ vanishes slower than $\|Id-
T_n\|_{L^\infty}$, and so for large $n$ we have
\begin{equation*} 
\tilde{\epsilon}_n \coloneqq  \epsilon_n -
2\frac{\|Id-T_n\|_{L^\infty}}{b} > 0.
\end{equation*}
In particular, for Lebesgue a.e. $(x,y)\in D\times D$, 
\begin{equation*}
|T_n x- T_n y|\, > b\epsilon_n \ \ {\rm implies} \ \ |x-y|\, > b\tilde\epsilon_n
\end{equation*}
and so
\begin{equation} \label{eq:etatransportineq}
\eta\Big(\frac{x-y}{\tilde{\epsilon}_n}\Big) \leq \eta\Big(\frac{T_nx-T_ny}{\epsilon_n}\Big).
\end{equation}
Again, integrating with respect to $\rho(y)\, dy$ and scaling
appropriately, we obtain a lower bound of $R_n(x)$, and can write, for Lebesgue a.e. x,
\begin{equation} \label{eq:squeeze}
(\tilde{\epsilon}_n / \epsilon_n)^d \rho_{\tilde{\epsilon}_n}(x) \leq
R_n(x) \leq (\overline{\epsilon}_n / \epsilon_n)^d \rho_{\overline{\epsilon}_n}(x).
\end{equation}
By the assumption (I2) on the rate $\epsilon_n$, we observe that 
\begin{equation*}
\lim_{n \to \infty} \frac{\tilde{\epsilon}_n}{\epsilon_n} = \lim_{n
  \to \infty} 1 + 2\frac{\|Id - T_n\|_{L^\infty}}{\epsilon_n} = 1.
  \hspace{0.5cm} 
\end{equation*}
Similarly, we have $\lim_{n \to \infty}
\frac{\overline{\epsilon}_n}{\epsilon_n} = 1$.

In light of (\ref{eq:squeeze}), and Lemma \ref{lemma:mollified} in the Appendix, which shows $\lim_{\epsilon\rightarrow 0}\rho_\epsilon = \rho$ and bounds $\rho_\epsilon$ from above and below, we make two observations:
\begin{enumerate}
\item[i)] For Lebesgue a.e. $x$, we have $R_n(x) \to \rho(x)$ as $n\rightarrow\infty$. 
\item[ii)] There exist constants $A',B' > 0$ such that, for all large $n$, $A' \leq R_n(x) \leq B'$ for Lebesgue a.e. $x$.
\end{enumerate}

\vskip .1cm
{\it Step 2.} Now let $\eta$ be a simple function satisfying
assumptions $(K1)-(K4)$, which implies that we may write $\eta$ as a convex combination
$\eta = \sum_{l=1}^L \lambda_l \eta^l$ for functions $\eta^{(l)}$ satisfying the
assumptions of Step 1. We let $R_n^{(l)}(x) \coloneqq
\int_D  \eta_{\epsilon_n}^{(l)}(T_nx - T_ny) \rho(y)\,dy$
so that $R_n(x) = \sum_{l=1}^{(l)} \lambda_l R_n^{(l)}(x)$.

Hence,
\begin{enumerate}
\item[i)] For Lebesgue a.e. $x$, each $R_n^{(l)}(x)\rightarrow \rho(x)$ as $n\rightarrow\infty$. The same holds for the convex combination $R_n$.
\item[ii)] There exist constants $A',B' > 0$ such that, for all large $n$, $A' \leq R_n^{(l)}(x) \leq B'$ for Lebesgue a.e. $x$.
 Therefore, the
  same holds for $R_n$.
\end{enumerate}

Note also, since $\lim_{n\rightarrow\infty} n\epsilon_n^d =\infty$ by the assumption (I2), we have $\eta_{\epsilon_n}(0)/n \leq \|\eta\|_{L^\infty} (n\epsilon_n^d)^{-1}$ vanishes as $n\rightarrow\infty$.  We have then, by bounded convergence, that
\begin{equation*}
\begin{aligned}
\lim_{n\rightarrow\infty}
\Big| G\Lambda_n (g_n) - \Lambda (g) \Big| &= \lim_{n\rightarrow\infty} \Big| \int_D
R_n(x)^{\alpha} g_n(T_nx) \rho(x)\, dx - \int_D
g(x)\rho(x)^{1+{\alpha}}\, dx\Big| \\
& \leq \lim_{n\rightarrow\infty}
\int_D \Big| R_n(x)^{\alpha} g_n(T_nx) -
\rho(x)^{\alpha} g(x)\Big| \rho(x)\, dx.
\end{aligned} 
\end{equation*}

Since $\rho$ is bounded, we now argue that
\begin{equation*}
\lim_{n \to \infty} \int_D \Big|  R_n(x)^{\alpha} g_n(T_nx) -
\rho(x)^{\alpha} g(x)\Big|\, dx = 0. \hspace{0.5cm} 
\end{equation*}
By adding and subtracting  $\rho(x)^{\alpha}g_n(T_nx)$, we obtain
\begin{eqnarray*}
&& \int_D \Big|  R_n(x)^{\alpha} g_n(T_nx) -
\rho(x)^{\alpha} g(x)\Big|\, dx \\
&&\ \ \leq
 \int_D \Big| R_n (x)^{\alpha} -
\rho(x)^{\alpha} \Big| g_n(T_nx)\, dx + \int_D \Big| g_n(T_n x) - g(x) \Big|\rho^{\alpha}(x) \, dx.
\end{eqnarray*}

With respect to the first term on the right side, by assumption, $g_n(T_nx)$ is uniformly
bounded. Also, the sequence $R_n$ is bounded above and below, and converges Lebesgue
a.e. to $\rho$, so by dominated convergence the integral
vanishes in the limit.  With respect to the second term on the right side, suppose $(\nu_n, g_n )\xrightarrow{TL^1} (\nu, g)$ as $n\rightarrow\infty$.  Since $\rho$ is bounded above and below, we have $\rho^{\alpha}$ is
bounded, and the corresponding integral, by the characterization of $TL^1$ convergence in Lemma \ref{lemma:tl1convergence}, also vanishes in the limit. 

Hence, when the kernel $\eta$ is a simple function, we have that
\begin{equation}
\label{eq:simplelimit}
\lim_{n \to \infty} G\Lambda_n (g_n) = \Lambda (g). 
\end{equation}
\vskip .1cm

{\it Step 3.}
Now, we consider general $\eta$ satisfying properties $(K1)-(K4)$.  

We first approximate $\eta$ by simple functions $\eta^{(k)}$, satisfying
$(K2)-(K4)$, with $\eta^{(k)} \leq \eta$ and $\eta^{(k)} \rightarrow \eta$ pointwise.  Let $G\Lambda^{(k)}_n (g_n) = \frac{1}{n}\sum_{i=1}^n
\Big(\frac{1}{n-1}\sum_{j=1}^n
\eta^{(k)}_{\epsilon_n}(X_i-X_j)\Big)^{\alpha}g_n(X_i)$, and $\lambda_k = \int_D \eta^{(k)}(x)dx$.
Then, by
(\ref{eq:simplelimit}), we have
\begin{equation*}
\lim_{n \to \infty} \frac{1}{\lambda_k^{\alpha} } G\Lambda_n^{(k)} (g_n) =
  \Lambda (g).
\end{equation*}
Because $\eta^{(k)} \leq \eta$, and the sequence
$\{g_n\}_{n\in\mathbb{N}}$ is assumed nonnegative, we have that
\begin{equation*}
\liminf_{n \to \infty} \frac{1}{\lambda_k^{\alpha} } G\Lambda_n (g_n) \geq
  \Lambda (g),
\end{equation*}
and taking the limit $\lambda_k \rightarrow 1$ as $k\rightarrow\infty$ gives
\begin{equation} \label{eq:glineq1}
\liminf_{n \to \infty} G\Lambda_n (g_n) \geq
  \Lambda (g).
\end{equation}

Likewise, consider approximating $\eta$ by simple functions $\eta^{(k)}$
satisfying $(K2)-(K4)$, with $\eta^{(k)} \geq \eta$ and $\eta^{(k)} \rightarrow \eta$
pointwise. Then, similarly, we obtain that
\begin{equation} \label{eq:glineq2}
\limsup_{n \to \infty} G\Lambda_n (g_n) \leq
  \Lambda (g).
\end{equation}

Combining inequalities (\ref{eq:glineq1}) and (\ref{eq:glineq2}) gives that
\begin{equation*} \label{eq:limnonneg}
\lim_{n \to \infty} G\Lambda_n (g_n) = \Lambda (g),
\end{equation*}
completing the proof.
\end{proof}

\begin{lemma} \label{lemma:flim}
On the probability $1$ set $\Omega_0$, the following statement holds: Given any sequence
$\{\mathcal{V}_n \}_{n \in \mathbb{N}}$ such that
$\mathcal{V}_n \xrightarrow{(TL^1)^K} \mathcal{V}$, where
$\mathcal{V}_n = (\nu_n,\mathcal{U}_n) \in M_n(D)$ and $\mathcal{V}
= (\nu,\mathcal{U}) \in M(D)$, then
\begin{equation} \label{eq:gfliminf}
F(\mathcal{U}) \leq \liminf_{n \to \infty} F_n (\mathcal{U}_n).
\end{equation}
\end{lemma}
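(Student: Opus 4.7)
The plan is to reduce the liminf inequality to a direct application of Lemma \ref{lemma:glambdalimit} via the linearity of the functional $G\Lambda_n$ in its argument. Since $F_n \geq 0$ and $F$ takes only the values $0$ and $\infty$, the inequality is automatic in the balanced case $F(\mathcal{U}) = 0$, so the real task is to show that if $\mathcal{U}$ is not $\mu$-balanced then $F_n(\mathcal{U}_n) \to \infty$ on $\Omega_0$.

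First I would observe that the map $u \mapsto G\Lambda_n(u)$ is linear, so one may write
\[
G\Lambda_n(u_{n,k} - 1/K) = G\Lambda_n(u_{n,k}) - (1/K)\, G\Lambda_n(\mathds{1}_{\mathcal{X}_n}),
\]
a difference of $G\Lambda_n$ applied to uniformly bounded, nonnegative functions. By hypothesis, on $\Omega_0$ we have $(\nu_n, u_{n,k}) \xrightarrow{TL^1} (\nu, u_k)$ for each $k$; trivially $(\nu_n, 1) \xrightarrow{TL^1} (\nu, 1)$, since the $TL^1$-distance there reduces to $d_{1,D}(\nu_n,\nu)$, which vanishes on $\Omega_0$ by Corollary \ref{empconverge} and the boundedness of $D$. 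Applying Lemma \ref{lemma:glambdalimit} on $\Omega_0$ then yields
\[
G\Lambda_n(u_{n,k}) \to \Lambda(u_k), \qquad G\Lambda_n(\mathds{1}_{\mathcal{X}_n}) \to \Lambda(1) = \int_D \rho^{1+\alpha}(x)\,dx,
\]
so that $G\Lambda_n(u_{n,k} - 1/K) \to \Lambda(1)\,(\mu(U_k) - 1/K)$ for each $1 \leq k \leq K$.

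Next, since $S/(n(n-1)^\alpha) = G\Lambda_n(\mathds{1}_{\mathcal{X}_n})$, the same limit gives $n^2(n-1)^{2\alpha}/S^2 \to 1/\Lambda(1)^2$. Combining,
\[
\frac{n^2(n-1)^{2\alpha}}{S^2} \sum_{k=1}^K \big(G\Lambda_n(u_{n,k} - 1/K)\big)^2 \ \longrightarrow\ \sum_{k=1}^K \big(\mu(U_k) - 1/K\big)^2
\]
on $\Omega_0$. If $F(\mathcal{U}) = \infty$, the right side is strictly positive, and since $\epsilon_n \to 0$ the factor $1/\epsilon_n$ in the definition of $F_n$ forces $F_n(\mathcal{U}_n) \to \infty$, proving the desired inequality.

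I do not foresee any substantive obstacle: the argument is essentially bookkeeping around Lemma \ref{lemma:glambdalimit}. The only minor point requiring care is to rewrite $u_{n,k} - 1/K$ as a difference of nonnegative bounded functions so that the nonnegativity hypothesis of Lemma \ref{lemma:glambdalimit} is directly satisfied; the linearity of $G\Lambda_n$ makes this immediate.
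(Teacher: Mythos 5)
Your proposal is correct and follows essentially the same route as the paper's proof: the case $F(\mathcal{U})=0$ is trivial, and in the unbalanced case you decompose $G\Lambda_n(u_{n,k}-1/K)$ by linearity, apply Lemma \ref{lemma:glambdalimit} to each piece (together with the convergence of $n^2(n-1)^{2\alpha}/S^2$), and let the $1/\epsilon_n$ prefactor force divergence. The only cosmetic difference is that you express the limit directly as $\sum_k(\mu(U_k)-1/K)^2$ rather than via $\Lambda(u_k-1/K)^2$, which is the same quantity up to the positive constant $\Lambda(1)^2$.
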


\begin{proof}Fix a realization in the probability $1$ set $\Omega_0$.
If $F(\mathcal{U}) = 0$, the above inequality holds trivially.  
We now consider the other case when
$F(\mathcal{U}) = \infty$.  Recalling the definitions (\ref{eq:fdef}) and \eqref{eq:lambdadef}
of $F$ and $\Lambda$, this means that there is a $1\leq k\leq K$ such that $u_k =
\mathds{1}_{U_k}$ satisfies $\Lambda(u_k-1/K) \neq 0$. Let
$\delta = \Lambda(u_k-1/K)^2 > 0$. 

Now, by Corollary \ref{empconverge}, $\nu_n \xrightarrow{w}\nu$, and so $(\nu_n, 1/K) \xrightarrow{TL^1}(\nu, 1/K)$ by Lemma \ref{lemma:tl1convergence}, as $n\rightarrow\infty$.  Therefore, by Lemma \ref{lemma:glambdalimit}, 
 as $n\rightarrow\infty$ we have
$$G\Lambda_n(1/K) =\frac{1}{K}\frac{S}{n(n-1)^\alpha}\rightarrow \frac{1}{K}\Lambda(1) = \frac{1}{K}\int_D\rho^{1+\alpha}(x)dx.$$
By decomposing $G\Lambda_n(u_{n,k} - 1/K) = G\Lambda_n(u_{n,k}) - G\Lambda_n(1/K)$ and noting Lemma \ref{lemma:glambdalimit} again, it follows that, if $\mathcal{V}_n \xrightarrow{(TL^1)^K}\mathcal{V}$, then
\begin{align*}
\lim_{n \to \infty} \sum_{k=1}^K \big(G\Lambda_{n}(u_{n,k}-1/K) \big)^2=
\sum_{k=1}^K\big(\Lambda(u_k-1/K) \big)^2.
\end{align*} 
In particular, there is an $N > 0$, depending on the realization, such that, for $n > N$, we have
\begin{align*}
\sum_{k=1}^K \big (G\Lambda_n(u_k - 1/K)\big )^2 \geq \delta/2.
\end{align*} 
Since 
$$F_n(\mathcal{U}_n) = \frac{1}{\epsilon_n}\frac{n^2(n-1)^{2\alpha}}{S^2}\sum_{k=1}^K \big (G\Lambda_n (u_{n,k} -
1/K)\big )^2,$$ 
and $\frac{n^2(n-1)^{2\alpha}}{S^2}
\xrightarrow{} (\int_D \rho^{1+\alpha}(x)\,dx)^{-2}$, it follows
that
\begin{equation*}
\begin{aligned}
\liminf_{n \to \infty} F_n(\mathcal{U}_n) &\geq \liminf_{n \to \infty}
\frac{C}{\epsilon_n}\sum_{k=1}^K \big (G\Lambda_n(u_{k} - 1/K)\big )^2 \\
&\geq \liminf_{n \to \infty} \frac{C\delta}{2\epsilon_n} = \infty.
\end{aligned}
\end{equation*}
Hence, in this case also, inequality (\ref{eq:gfliminf}) holds.
\end{proof}

\begin{lemma} \label{lemma:gtvliminf}
On the probability $1$ set $\Omega_0$, the following statement holds: Given any
sequence $\{u_n\}_{n \in \mathbb{N}}$ such that $(\nu_n,u_n) \xrightarrow{TL^1} (\nu,u)$, then 
\begin{equation*} \label{eq:gtvliminf}
\sigma_{\eta} TV(u;\rho^2) \leq \liminf_{n \to \infty} GTV_{n}(u_n),
\end{equation*}
where $\sigma_{\eta} = \int_{\mathbb{R}^d} \eta(x)|x_1|\, dx$.
\end{lemma}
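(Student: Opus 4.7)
The plan is to rewrite $GTV_n(u_n)$ as an integral over $D \times D$ using the transportation maps $T_n$ from Proposition \ref{thm:tmaps}, and then compare it to a continuum nonlocal total variation of the pulled-back function $\tilde u_n := u_n \circ T_n$, to which a standard lower semicontinuity result may be applied. Fix a realization in $\Omega_0$. Since $T_{n\sharp}\nu = \nu_n$ and the diagonal terms in the defining sum for $GTV_n$ vanish (as $|u_n(X_i)-u_n(X_i)|=0$), the change of variables formula gives
\begin{equation*}
GTV_n(u_n) \;=\; \frac{n}{n-1}\cdot\frac{1}{\epsilon_n}\int_{D\times D}\eta_{\epsilon_n}(T_n x - T_n y)\,|\tilde u_n(x)-\tilde u_n(y)|\,\rho(x)\rho(y)\,dx\,dy.
\end{equation*}

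Next, I would mirror the kernel-comparison technique used in Step 1 of Lemma \ref{lemma:glambdalimit}. For kernels of the simple form $\eta = a\mathds{1}_{|z|<b}$, set $\tilde\epsilon_n := \epsilon_n - 2\|Id - T_n\|_{L^\infty}/b$, which is positive for all large $n$ by assumption (I2) combined with the rates in Proposition \ref{thm:tmaps}. The radial monotonicity (K2) then yields, for Lebesgue a.e. $(x,y)\in D\times D$, the pointwise inequality $\eta((x-y)/\tilde\epsilon_n)\le \eta((T_nx-T_ny)/\epsilon_n)$, hence $\eta_{\epsilon_n}(T_nx-T_ny)\ge (\tilde\epsilon_n/\epsilon_n)^d\,\eta_{\tilde\epsilon_n}(x-y)$. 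Substituting into the integral above produces
\begin{equation*}
GTV_n(u_n) \;\geq\; \frac{n}{n-1}\Bigl(\frac{\tilde\epsilon_n}{\epsilon_n}\Bigr)^{d+1}\, TV_{\tilde\epsilon_n}(\tilde u_n;\rho).
\end{equation*}
For a general $\eta$ satisfying (K1)--(K4), I would approximate $\eta$ from below by simple functions $\eta^{(k)} \leq \eta$ built as convex combinations of indicators of balls (as in Step 3 of Lemma \ref{lemma:glambdalimit}), apply the inequality above for each $\eta^{(k)}$, take $\liminf_n$, and then let $k\to\infty$; dominated convergence recovers the correct limit constant $\sigma_\eta = \int_{\mathbb{R}^d}\eta(x)|x_1|\,dx$ from the truncated versions $\sigma_{\eta^{(k)}}$.

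It remains to pass to the limit. By Lemma \ref{lemma:tl1convergence} and the boundedness of $\rho$ above and below, $(\nu_n,u_n)\xrightarrow{TL^1}(\nu,u)$ implies $\tilde u_n \to u$ in $L^1(D)$. Combined with $\tilde\epsilon_n/\epsilon_n \to 1$ and $n/(n-1)\to 1$, the proof reduces to the lower semicontinuity statement
\begin{equation*}
\sigma_\eta\, TV(u;\rho^2) \;\leq\; \liminf_{n\to\infty} TV_{\tilde\epsilon_n}(\tilde u_n;\rho),
\end{equation*}
which is the sequential analogue of Lemma \ref{lemma:tvlim}, available in the Bourgain--Brezis--Mironescu / D\'avila / Ponce framework (\cite{bourgain2001another}, \cite{davila2002open}, \cite{ponce2004new}) and proved for weighted densities in \cite{trillos2014continuum}. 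The main obstacle I anticipate is coordinating the simple-kernel approximation with the correct \emph{direction} of the inequality: since we seek a lower bound, the approximants must satisfy $\eta^{(k)}\leq \eta$ with unit-integral loss tracked carefully through $\sigma_{\eta^{(k)}}\to \sigma_\eta$, and the perturbed scales $\tilde\epsilon_n$ must stay positive uniformly in the approximation parameter, both of which are handled by assumption (I2) and the transport-map estimates of Proposition \ref{thm:tmaps}.
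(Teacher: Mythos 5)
Your proposal is correct and follows essentially the same route as the paper, which defers to Step 3 of Section 5.1 of \cite{trillos2014continuum} and sketches exactly the strategy you describe: change of variables via the transport maps of Proposition \ref{thm:tmaps}, the kernel comparison at the perturbed scale $\tilde{\epsilon}_n$ for indicator kernels followed by approximation from below for general $\eta$, and the nonlocal lower semicontinuity inequality (\ref{eq:nonlocalliminf}). You have simply written out in more detail the steps the paper leaves to the cited reference.
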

\begin{proof}
The desired statement follows the same argument given for the liminf inequality for the Gamma convergence stated in Theorem 1.1 in
\cite{trillos2014continuum}--see Step 3 of Section 5.1
of \cite{trillos2014continuum}.  There, the probability $1$ set is $\Omega_0$.  We note this proof, although stated for $d\geq 2$, also holds in $d=1$ with the same notation. 

For a
sense of what is involved, note that in \cite{trillos2014continuum},
Section 4.1, it is proved,
for $\{u_{\epsilon}\}_{\epsilon > 0}$ with $u_{\epsilon} \to u$ in
$L^1(D)$, that the following liminf inequality holds:
\begin{equation} \label{eq:nonlocalliminf}
\sigma_{\eta} TV(u;\rho^2) \leq \liminf_{\epsilon \to 0} TV_{\epsilon}(u_\epsilon; \rho).
\end{equation}

To apply (\ref{eq:nonlocalliminf}) to the graph total variation, one
may proceed in three steps, analogous to the proof of Lemma
\ref{lemma:glambdalimit}, by first considering $\eta$ that are
normalized indicator
functions, then simple functions, and finally the general kernels
satisfying assumptions $(K1)-(K4)$, to show
\begin{equation} \label{eq:gtvnonlocalliminf}
\liminf_{n \to \infty} GTV_{n}(u_n) \geq \liminf_{n \to
  \infty} TV_{\epsilon_n}(u_{n}; \rho).
\end{equation}
Combining (\ref{eq:nonlocalliminf}) and (\ref{eq:gtvnonlocalliminf})
would then give the proof.
\end{proof}

\begin{lemma} \label{lemma:gtvlim} 
On the probability $1$ set $\Omega_0$, the following statement holds:  Given any sequence
$\{\mathcal{V}_n \}_{n \in \mathbb{N}}$ such that
$\mathcal{V}_n \xrightarrow{(TL^1)^K} \mathcal{V}$, where
$\mathcal{V}_n = (\nu_n,\mathcal{U}_n) \in M_n(D)$ and $\mathcal{V}
= (\nu,\mathcal{U}) \in M(D)$, then
\begin{equation*}
TV(\mathcal{U}) \leq \liminf_{n \to \infty} TV_{n}(\mathcal{U}_n).
\end{equation*}
\end{lemma}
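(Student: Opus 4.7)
The plan is to decompose $TV_n(\mathcal{U}_n)$ into a random prefactor $n(n-1)/(4m)$ times a sum of graph total variations, control the prefactor using the a.s.\ convergence $2m/(n(n-1)) \to \int_D \rho^2(x)\,dx$ already established in the proof of Theorem \ref{thm:asymptotics} (see equation (5.4)/\eqref{eq:asympeq2}), and then apply the per-component liminf bound from Lemma \ref{lemma:gtvliminf} componentwise.

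More concretely, I would first observe that the hypothesis $\mathcal{V}_n \xrightarrow{(TL^1)^K} \mathcal{V}$ means that $(\nu_n, u_{n,k}) \xrightarrow{TL^1} (\nu, u_k)$ for each $1 \leq k \leq K$, where $u_{n,k} = \mathds{1}_{U_{n,k}}$ and $u_k = \mathds{1}_{U_k}$. Since $TV(\mathcal{U}) < \infty$ (because $\mathcal{V} \in M(D)$ forces $\sum_k TV(u_k;\rho^2)<\infty$), we work on the probability one set $\Omega_0$ where Corollary \ref{empconverge} and the limit $2m/(n(n-1)) \to \int_D \rho^2(x)\,dx$ both hold (the latter follows from \eqref{eq:asympeq2}, which itself holds on $\Omega_0$), and also where the conclusion of Lemma \ref{lemma:gtvliminf} applies.

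The estimate then follows in three lines. Using the definition \eqref{eq:gtvdefcluster},
\begin{equation*}
TV_n(\mathcal{U}_n) \;=\; \frac{n(n-1)}{4m}\sum_{k=1}^K GTV_n(u_{n,k}),
\end{equation*}
where the prefactor converges a.s.\ to $1/(2\int_D \rho^2(x)\,dx)$, a positive finite constant. Since each $GTV_n(u_{n,k}) \geq 0$, the standard fact that $\liminf_{n}(a_n b_n) = a \liminf_n b_n$ when $a_n \to a > 0$ and $b_n \geq 0$, combined with the superadditivity of $\liminf$ for sums of nonnegative terms, gives
\begin{equation*}
\liminf_{n\to\infty} TV_n(\mathcal{U}_n) \;\geq\; \frac{1}{2\int_D \rho^2(x)\,dx}\sum_{k=1}^K \liminf_{n\to\infty} GTV_n(u_{n,k}).
\end{equation*}
Applying Lemma \ref{lemma:gtvliminf} to each component yields $\liminf_n GTV_n(u_{n,k}) \geq \sigma_\eta TV(u_k;\rho^2)$, and substituting in gives
\begin{equation*}
\liminf_{n\to\infty} TV_n(\mathcal{U}_n) \;\geq\; \frac{\sigma_\eta}{2\int_D \rho^2(x)\,dx}\sum_{k=1}^K TV(u_k;\rho^2) \;=\; C_{\eta,\rho}\sum_{k=1}^K TV(u_k;\rho^2) \;=\; TV(\mathcal{U}),
\end{equation*}
as desired.

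There is really no main obstacle here, since the difficult analytic input, namely the single-component liminf comparison between $GTV_n$ and $\sigma_\eta TV(\cdot;\rho^2)$, has already been packaged into Lemma \ref{lemma:gtvliminf}, and the convergence of the normalization $n(n-1)/(4m)$ is a byproduct of the proof of Theorem \ref{thm:asymptotics}. The only mild subtlety is handling the product of a convergent random prefactor with a liminf of nonnegative quantities, but this is a standard manipulation and does not require an independent probability $1$ set beyond $\Omega_0$.
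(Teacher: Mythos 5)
Your proof follows essentially the same route as the paper: the same decomposition of $TV_n(\mathcal{U}_n)$ into the prefactor $n(n-1)/(4m)$ times $\sum_k GTV_n(u_{n,k})$, the componentwise application of Lemma \ref{lemma:gtvliminf}, and the elementary liminf manipulations. The one point to flag is your justification of the prefactor convergence. You invoke \eqref{eq:asympeq2} and assert that it ``holds on $\Omega_0$,'' but that is not established: \eqref{eq:asympeq2} is proved via Lemma \ref{lemma:quadasconvergea1} (Bernstein/U-statistics bounds and Borel--Cantelli), so its exceptional null set is a priori unrelated to $\Omega_0$, which is defined by the transport-map estimates of Proposition \ref{thm:tmaps}. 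The paper avoids this by obtaining $2m/(n(n-1)) = G\Lambda_n(1) \to \int_D \rho^2(x)\,dx$ from Lemma \ref{lemma:glambdalimit} (applied with $\alpha=1$ and $g_n\equiv 1$), which is a deterministic consequence of the transport maps and therefore holds on $\Omega_0$ itself. This matters because the lemma is quantified ``for every convergent sequence'' \emph{inside} the fixed probability-one set $\Omega_0$, which is what Definition \ref{def:nondeterministicgamma} requires for the liminf inequality. Your argument still yields the conclusion on some probability-one set (the intersection of $\Omega_0$ with the a.s.\ set of \eqref{eq:asympeq2}, noting that (I2) implies (I1) so \eqref{eq:asympeq2} is available), and since the prefactor does not depend on the partition sequence this would not derail the downstream Gamma-convergence argument; but as written it does not prove the statement on $\Omega_0$, and the claim that \eqref{eq:asympeq2} holds there should either be removed or replaced by the appeal to Lemma \ref{lemma:glambdalimit}.
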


\begin{proof}
Fix a realization in the probability $1$ set $\Omega_0$.  Note that $\nu_n \xrightarrow{w}\nu$ by Corollary \ref{empconverge},
and so $(\nu,1)\xrightarrow{TL^1} (\nu,1)$ by Lemma \ref{lemma:tl1convergence}, as $n\rightarrow\infty$.   Hence, by Lemma \ref{lemma:glambdalimit}, applied with $\alpha = 1$ and $g_n\equiv 1$, we have that 
$G\Lambda_n(1) = 2m/(n(n-1)) \xrightarrow{} \int_D \rho^2(x)\, dx$, as $n\rightarrow\infty$.

Recall now that
$TV_{n}(\mathcal{U}_n) =  ({n(n-1)}/{4m})\sum_{k=1}^K GTV_{n}(u_{n,k})$. 
If $\mathcal{V}_n \xrightarrow{(TL^1)^K} \mathcal{V}$, by Lemma \ref{lemma:gtvliminf}, we have
\begin{equation*}
\sigma_{\eta} TV(u_k; \rho) \leq \liminf_{n \to \infty} GTV_n(u_{n,k})
\end{equation*}
for $1\leq k\leq K$.  It follows that
\begin{equation*}
TV(\mathcal{U}) = C_{\eta,\rho} \sum_{k=1}^K TV(u_k;\rho^2) \leq \liminf_{n \to \infty}
\frac{n(n-1)}{4m} \sum_{k=1}^K  GTV_{n}(u_{n,k}),
\end{equation*}
where $C_{\eta,\rho} = \sigma_\eta/\big(2\int_D\rho^2(x)dx\big)$.
\end{proof}

\begin{lemma} \label{lemma:eliminf}
On the probability $1$ set $\Omega_0$, the following statement holds:  Given
any sequence $\{\mathcal{V}_n\}_{n\in \mathbb{N}}$ in $(TL^1(D))^K$ and $\mathcal{V}\in (TL^1)^K$ such that
$\mathcal{V}_n \xrightarrow{(TL^1)^K} \mathcal{V}$ as
$n\rightarrow\infty$, then
\begin{equation*} 
E(\mathcal{V}) \leq \liminf_{n \to \infty} E_n(\mathcal{V}_n).
\end{equation*}
\end{lemma}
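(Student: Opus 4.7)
The plan is to reduce the assertion to the regime in which $E_n(\mathcal{V}_n)$ is finite along a subsequence, so that the membership condition $\mathcal{V}_n \in M_n(D)$ is in force, and then to combine the two liminf lemmas already proved for $F_n$ and $TV_n$. I will work throughout on the probability $1$ set $\Omega_0$ on which Lemmas \ref{lemma:mlemma}, \ref{lemma:flim} and \ref{lemma:gtvlim} hold.

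First, assume $\mathcal{V}_n \xrightarrow{(TL^1)^K} \mathcal{V}$. If $\liminf_{n \to \infty} E_n(\mathcal{V}_n) = \infty$, the inequality holds trivially, so I may assume $\liminf_{n\to\infty} E_n(\mathcal{V}_n) < \infty$. Extract a subsequence $\{n_k\}_{k\in\mathbb{N}}$ such that
\begin{equation*}
\lim_{k\to\infty} E_{n_k}(\mathcal{V}_{n_k}) = \liminf_{n\to\infty} E_n(\mathcal{V}_n) < \infty.
\end{equation*}
For all sufficiently large $k$ we then have $E_{n_k}(\mathcal{V}_{n_k}) < \infty$, which by the definition (\ref{eq:enenergydef}) means $\mathcal{V}_{n_k} = (\nu_{n_k}, \mathcal{U}_{n_k}) \in M_{n_k}(D)$. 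Of course, $\mathcal{V}_{n_k} \xrightarrow{(TL^1)^K} \mathcal{V}$ still holds.

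Next, apply Lemma \ref{lemma:mlemma} to the subsequence to conclude that the limit $\mathcal{V} = (\nu,\mathcal{U})$ lies in $M(D)$. In particular, $E(\mathcal{V}) = F(\mathcal{U}) + TV(\mathcal{U})$ by the definition (\ref{eq:energydef}). With both $\mathcal{V}_{n_k} \in M_{n_k}(D)$ and $\mathcal{V} \in M(D)$ now available, apply Lemma \ref{lemma:flim} and Lemma \ref{lemma:gtvlim} along the subsequence to obtain
\begin{equation*}
F(\mathcal{U}) \leq \liminf_{k\to\infty} F_{n_k}(\mathcal{U}_{n_k}) \qquad \text{and} \qquad TV(\mathcal{U}) \leq \liminf_{k\to\infty} TV_{n_k}(\mathcal{U}_{n_k}).
\end{equation*}
Since $F_{n_k}(\mathcal{U}_{n_k})$ and $TV_{n_k}(\mathcal{U}_{n_k})$ are nonnegative, the standard inequality $\liminf a_k + \liminf b_k \leq \liminf(a_k + b_k)$ yields
\begin{equation*}
E(\mathcal{V}) = F(\mathcal{U}) + TV(\mathcal{U}) \leq \liminf_{k\to\infty} \bigl(F_{n_k}(\mathcal{U}_{n_k}) + TV_{n_k}(\mathcal{U}_{n_k})\bigr) = \lim_{k\to\infty} E_{n_k}(\mathcal{V}_{n_k}) = \liminf_{n\to\infty} E_n(\mathcal{V}_n),
\end{equation*}
which is the desired inequality.

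There is no substantive obstacle here; the work has all been done in the previous three lemmas. The only delicate point is the bookkeeping that ensures one is entitled to invoke Lemma \ref{lemma:mlemma} (which requires the pre-limit objects to lie in $M_n(D)$) and then, in turn, Lemmas \ref{lemma:flim} and \ref{lemma:gtvlim} (which require the limit to lie in $M(D)$). Extracting a subsequence realizing the liminf and observing that along it $E_{n_k}(\mathcal{V}_{n_k})$ must be finite resolves this issue cleanly.
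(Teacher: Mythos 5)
Your proposal is correct and follows essentially the same route as the paper: reduce to the case where the pre-limit objects lie in $M_n(D)$, invoke the closure Lemma \ref{lemma:mlemma} to place $\mathcal{V}$ in $M(D)$, and then add the liminf inequalities from Lemmas \ref{lemma:flim} and \ref{lemma:gtvlim}. Your subsequence extraction is merely a more careful version of the paper's ``without loss of generality'' step, and is harmless since all three cited lemmas apply equally well along subsequences.
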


\begin{proof}Fix a realization in the probability $1$ set $\Omega_0$.
Without loss of generality, we may assume that $\mathcal{V}_n =
(\nu_n,\mathcal{U}_n) \in M_n(D)$, as $E_n(\mathcal{V}_n)$ diverges otherwise.   If $\mathcal{V}_n \xrightarrow{(TL^1)^K}\mathcal{V}$, by Lemma \ref{lemma:mlemma}, it
follows that $\mathcal{V} = (\nu, \mathcal{U}) \in M(D)$. 
Also, by Lemmas \ref{lemma:flim} and \ref{lemma:gtvlim}, we have 
$$TV(\mathcal{U}) \leq \liminf_{n
  \to \infty} TV_n(\mathcal{U}_n) \ \ {\rm  and \ \ } F(\mathcal{U}) \leq \liminf_{n \to
  \infty} F_n(\mathcal{U}_n).$$
Adding these two liminf inequalities gives $\displaystyle{E(\mathcal{V}) \leq \liminf_{n \to \infty} E_n(\mathcal{V}_n)}$.
\end{proof}

\subsubsection{Existence of Recovery Sequence}
\label{recovery_subsect}
 The a.s. recovery sequence for $(\nu,\mathcal{U})$ in $M(D)$ will be $\{(\nu_n, \mathcal{U}_n)\}_{n\in \mathbb{N}}\subset M_n(D)$,
 where $\mathcal{U}_n$ is the partition of $\mathcal{X}_n$ induced
by $\mathcal{U}$. However, before proving this in Lemma \ref{lemma:erecovery}, we first establish
 a preliminary result.

\begin{lemma} \label{lemma:restrictconverge}
Fix $u \in L^1(D)$, and let $\{T_n\}_{n \in \mathbb{N}}$ be the
transport maps given in Proposition \ref{thm:tmaps}.  Then, a.s.,
\begin{equation*}
u \circ T_n \xrightarrow{L^1} u \, \text{ as } n\rightarrow\infty.
\end{equation*}
\end{lemma}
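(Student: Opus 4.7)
The plan is to reduce the statement to the case of a dense and well-behaved subclass (continuous or Lipschitz functions on $\overline{D}$), for which the $L^\infty$ bound on $\|Id - T_n\|_{L^\infty}$ from Proposition \ref{thm:tmaps} directly gives uniform, hence $L^1$, convergence. Since $\rho$ is bounded above and below by positive constants, $L^1(D,\mathrm{Leb})$ and $L^1(D,\nu)$ coincide as sets with equivalent norms, so I may argue in the $L^1(D,\nu)$ norm throughout. The only subtlety is that $T_n$ does not push Lebesgue forward to Lebesgue (it pushes $\nu$ to $\nu_n$), so we cannot simply reduce $\|u\circ T_n - u_k\circ T_n\|_{L^1}$ to $\|u - u_k\|_{L^1}$ by change of variables. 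Instead, the natural change of variables lands us in $\nu_n$, where a strong law of large numbers argument is available.

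Concretely, fix $u\in L^1(D)$ and choose a sequence $\{u_k\}_{k\in\mathbb{N}}$ of Lipschitz functions on $\overline{D}$ with $u_k\to u$ in $L^1(D,\nu)$. For each $k$, write the triangle inequality
\begin{equation*}
\|u\circ T_n - u\|_{L^1(D,\nu)} \le \|(u-u_k)\circ T_n\|_{L^1(D,\nu)} + \|u_k\circ T_n - u_k\|_{L^1(D,\nu)} + \|u_k - u\|_{L^1(D,\nu)}.
\end{equation*}
For the middle term, Lipschitz continuity of $u_k$ combined with $\|Id - T_n\|_{L^\infty}\to 0$ (from Proposition \ref{thm:tmaps}, on the probability one set $\Omega_0$) gives uniform, hence $L^1$, convergence to $0$. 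For the first term, using $T_{n\sharp}\nu = \nu_n$,
\begin{equation*}
\|(u-u_k)\circ T_n\|_{L^1(D,\nu)} = \int_D |u - u_k|\, d\nu_n = \frac{1}{n}\sum_{i=1}^n |u(X_i)-u_k(X_i)|,
\end{equation*}
and by the strong law of large numbers this converges a.s. to $\int_D |u-u_k|\, d\nu = \|u-u_k\|_{L^1(D,\nu)}$.

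Taking the $\limsup$ as $n\to\infty$ along a probability one set depending on $k$, one obtains
\begin{equation*}
\limsup_{n\to\infty} \|u\circ T_n - u\|_{L^1(D,\nu)} \le 2\|u-u_k\|_{L^1(D,\nu)}\qquad \text{a.s.}
\end{equation*}
Intersecting the countably many null sets (one per $k$) with $\Omega_0$ and letting $k\to\infty$ yields $\|u\circ T_n - u\|_{L^1(D,\nu)} \to 0$ a.s., and the equivalence of the $\nu$- and Lebesgue-norms gives $u\circ T_n \xrightarrow{L^1} u$ a.s. The main (and essentially only) delicate point is that the approximating inequality for $\|(u-u_k)\circ T_n\|_{L^1}$ cannot be handled by a Lebesgue-preserving change of variables; instead one must pass through the empirical measure $\nu_n$ and invoke the SLLN.
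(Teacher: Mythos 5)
Your proof is correct and follows essentially the same route as the paper's: approximate $u$ by Lipschitz functions, split via the triangle inequality into three terms, handle the composed error term by the change of variables $T_{n\sharp}\nu=\nu_n$ followed by the strong law of large numbers, handle the Lipschitz term by the $L^\infty$ transport bound of Proposition \ref{thm:tmaps}, and intersect countably many null sets before letting the approximation parameter tend to zero. The only (cosmetic) difference is that you phrase everything in the $L^1(D,\nu)$ norm and convert at the end, whereas the paper works with Lebesgue measure and uses the lower bound on $\rho$ directly.
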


\begin{proof}
Let $u_{\epsilon}$ be a Lipschitz function such that $\int_D |u(x) -
u_{\epsilon}(x)| \, dx < \epsilon$.   Let $A>0$ be a lower bound for $\rho$ on $D$.  It follows that
\begin{eqnarray} 
&&A\int_D |u(T_n x) - u(x)|\, dx \leq \int_D |u(T_nx) -
u_{\epsilon}(T_nx)|\rho(x)\, dx \nonumber\\
&&\ \ \ \  + \int_D |u_{\epsilon}(T_nx) -
u_{\epsilon}(x)|\rho(x)\, dx + \int_D |u_{\epsilon}(x) - u(x)|\rho(x)\, dx. 
\label{eq:recovu}
\end{eqnarray}

We rewrite the first term in the right side of (\ref{eq:recovu}) in terms of the data set $\mathcal{X}_n$:
\begin{align*}
\int_D |u(T_nx) -
u_{\epsilon}(T_nx)|\rho(x)\, dx = \frac{1}{n}\sum_{i=1}^n |u(X_i) - u_{\epsilon}(X_i)|.
\end{align*}
By the strong law of large numbers, we have a.s. that $\lim_{n \to \infty}
\frac{1}{n}\sum_{i=1}^n |u(X_i) - u_{\epsilon}(X_i)| = \int_D
|u(x)-u_{\epsilon}(x)|\rho(x)\, dx < \epsilon$.

For the second term in (\ref{eq:recovu}), let $C$ be the Lipschitz constant for
$u_{\epsilon}$. Then a.s., by Proposition \ref{thm:tmaps},
\begin{align*}
\limsup_{n \to \infty} \int_D |u_{\epsilon}(T_nx) -
u_{\epsilon}(x)|\rho(x)\, dx \leq \limsup_{n\to\infty} C \|\rho\|_{L^{\infty}}\|T_n - Id\|_{L^\infty} = 0.
\end{align*}

Taking limits in (\ref{eq:recovu}) therefore gives a.s. that
\begin{align*}
\limsup_{n \to \infty} \int_D |u(T_n x) - u(x)|\, dx \leq
2A^{-1}\epsilon.
\end{align*}
Letting $\epsilon$ go to zero along a countable sequence
establishes the lemma.
\end{proof}

\begin{lemma} \label{lemma:erecovery}
Let $\mathcal{V} \in (TL^1(D))^K$. If $\mathcal{V} = (\nu, \mathcal{U}) \in M(D)$, let $\mathcal{V}_n =(\nu_n, \mathcal{U}_n) \in M_n(D)$, where
$\mathcal{U}_{n,k} = \mathcal{U}_k\cap \mathcal{X}_n$ for $1\leq
k\leq K$ and $n\geq 1$.  On the other hand, if $\mathcal{V} \notin M(D)$, let $\mathcal{V}_n = \mathcal{V}$ for $n\geq 1$.

Then, a.s., as $n\rightarrow\infty$,
$$\mathcal{V}_n \xrightarrow{(TL^1)^K}  \mathcal{V}  \ \ {\rm and \ \ }
E_n(\mathcal{V}_n) \rightarrow E(\mathcal{V}).$$
\end{lemma}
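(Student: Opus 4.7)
The plan is to handle the trivial case $\mathcal{V} \notin M(D)$ separately (where $\mathcal{V}_n = \mathcal{V}$ identically and $E_n(\mathcal{V}_n) = E(\mathcal{V}) = \infty$, so both conclusions hold by inspection) and then focus on $\mathcal{V} = (\nu,\mathcal{U}) \in M(D)$, for which $\mathcal{V}_n = (\nu_n,\mathcal{U}_n) \in M_n(D)$ by construction. The two conclusions then split into a topological part ($TL^1$ convergence) and an energy part, each of which I expect to dispatch quickly using the machinery already developed.

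For the $(TL^1)^K$-convergence, I will work on the probability $1$ set $\Omega_0$ from Proposition \ref{thm:tmaps} and appeal to characterization (ii) of Lemma \ref{lemma:tl1convergence} with the stagnating transport maps $T_n$. The weak convergence $\nu_n \xrightarrow{w}\nu$ is Corollary \ref{empconverge}. For the second condition, the critical observation is that since $T_{n\sharp}\nu = \nu_n$ is supported on $\mathcal{X}_n$, we have $T_n x \in \mathcal{X}_n$ for $\nu$-a.e.\ $x$, whence
\[ \mathds{1}_{U_{n,k}}(T_n x) = \mathds{1}_{U_k\cap \mathcal{X}_n}(T_n x) = \mathds{1}_{U_k}(T_n x). \]
The transport integral therefore collapses to $\int_D |\mathds{1}_{U_k}(x) - \mathds{1}_{U_k}(T_n x)|\,d\nu(x)$, which vanishes by Lemma \ref{lemma:restrictconverge} applied to $u = \mathds{1}_{U_k}$, combined with the essential boundedness of $\rho$.

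For the energy convergence, I notice that rearranging identity \eqref{eq:oneminusmod} with the definitions \eqref{eq:gfdefcluster}, \eqref{eq:gtvdefcluster} of $F_n$ and $TV_n$ gives the exact relation $\epsilon_n E_n(\mathcal{V}_n) = 1 - 1/K - Q_n(\mathcal{U}_n)$. Since each $U_k$ has finite $\rho^2$-perimeter (as $\mathcal{V} \in M(D)$), Theorem \ref{thm:asymptotics} applies, noting in passing that hypothesis (I2) implies hypothesis (I1) in each $(\alpha, d)$ regime. If $\mathcal{U}$ is not $\mu$-balanced, then $F(\mathcal{U}) = E(\mathcal{V}) = \infty$ and the first statement of Theorem \ref{thm:asymptotics} gives $\epsilon_n E_n(\mathcal{V}_n) \to \sum_k (\mu(U_k) - 1/K)^2 > 0$ a.s., forcing $E_n(\mathcal{V}_n) \to \infty = E(\mathcal{V})$. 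If $\mathcal{U}$ is balanced, $F(\mathcal{U}) = 0$ so $E(\mathcal{V}) = TV(\mathcal{U})$, and the second statement of Theorem \ref{thm:asymptotics} delivers $E_n(\mathcal{V}_n) \to C_{\eta,\rho} \sum_k \per(U_k;\rho^2) = TV(\mathcal{U}) = E(\mathcal{V})$.

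The only real point of subtlety I anticipate is the bookkeeping identity $\mathds{1}_{U_{n,k}}(T_n x) = \mathds{1}_{U_k}(T_n x)$ that unlocks the $TL^1$ part; once this is in hand, both a.s. convergences reduce to results already in the paper. A secondary housekeeping issue is verifying that (I2) is strong enough to imply the summability/rate conditions underlying Theorem \ref{thm:asymptotics}: for $\alpha \neq 0,1$ the two conditions coincide, and for $\alpha \in \{0,1\}$ a short computation using the lower bounds $\epsilon_n \gg \sqrt{\log\log n / n}$ (in $d=1$), $\epsilon_n \gg (\log n)^{3/4}/\sqrt{n}$ (in $d=2$), and $\epsilon_n \gg ((\log n)/n)^{1/d}$ (in $d \geq 3$) makes $n\epsilon_n^{(d+1)/2}$ grow quickly enough that $\sum_n \exp(-n\epsilon_n^{(d+1)/2}) < \infty$.
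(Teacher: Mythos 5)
Your proposal is correct and follows essentially the same route as the paper: the $TL^1$ part rests on the same key identity $u_{n,k}(T_nx)=u_k(T_nx)$ (valid since $T_nx\in\mathcal{X}_n$ $\nu$-a.e.) together with Lemmas \ref{lemma:tl1convergence} and \ref{lemma:restrictconverge}, and the energy part reduces, exactly as in the paper, to the limits established for Theorem \ref{thm:asymptotics} (the paper cites the intermediate displays \eqref{eq:asympeq1}, \eqref{eq:asympeq4}, \eqref{eq:balancedgllimit}, while you invoke the theorem statement through the identity $\epsilon_nE_n(\mathcal{V}_n)=1-1/K-Q_n(\mathcal{U}_n)$, which amounts to the same thing). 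No gaps.
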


\begin{proof}
In the case that $\mathcal{V}\notin M(D)$, since the sequence $\{\nu_n\}_{n\in\mathbb{N}}$ is composed of distinct elements, for all large $n$, $\mathcal{V}\not\in M_n(D)$ and hence $E_n(\mathcal{V})=E(\mathcal{V})=\infty$.

Suppose now that $\mathcal{V}=(\nu,\mathcal{U})\in M(D)$.  In the following, we will use the fact that $u_{n,k}(x) = \mathds{1}_{U_{n,k}}(x) = \mathds{1}_{U_k}(x) = u_k(x)$ when $x\in \mathcal{X}_n$.  To show a.s. that $\mathcal{V}_n \xrightarrow{(TL^1)^K} \mathcal{V}$ as $n\rightarrow\infty$, by Lemma \ref{lemma:tl1convergence}, it is enough to show a.s. that $\nu_n \xrightarrow{w}\nu$ and, for $1\leq k\leq K$, that 
\begin{equation} \label{eq:utlimit}
\int_D |u_{n,k}(T_nx) - u_k(x)|d\nu(x) = \int_D 
  |u_k(T_nx) - u_k(x)|\rho(x)dx \rightarrow 0,
\end{equation}
as $n\rightarrow\infty$, since $T_nx\in \mathcal{X}_n$ implies
$u_{n,k}(T_n x) =u_k(T_n x)$.
  
The a.s. convergence $\nu_n \xrightarrow{w} \nu$ follows, for instance, by Corollary \ref{empconverge}.
On the other hand, the limit
(\ref{eq:utlimit}) follows by Lemma \ref{lemma:restrictconverge}. 

To show that a.s. $E_n(\mathcal{V}_n) \rightarrow E(\mathcal{V})$, we need to show that 
 \begin{equation}
\label{recovery_part}
TV_n(\mathcal{U}_n) \xrightarrow{a.s.} TV(\mathcal{U}) \ \ \ {\rm and \ \ \ }F_n(\mathcal{U}_n) \xrightarrow{a.s.} F(\mathcal{U}),
\end{equation}
as $n\rightarrow\infty$. Since condition (I2) on
$\{\epsilon_n\}_{n\in \mathbb{N}}$ implies condition (I1), we shall
see that these
limits in fact follow from the proof of
Theorem \ref{thm:asymptotics}. 

In particular, recall the
definitions \eqref{eq:gtvdefcluster} and \eqref{eq:tvdef} of $TV_{n}$
and $TV$ respectively. Then, since $u_{n,k} = u_k$ on
$\mathcal{X}_n$, the limit $TV_n(\mathcal{U}_n) \xrightarrow{a.s.} TV(\mathcal{U})$, as $n\rightarrow\infty$,
follows from \eqref{eq:asympeq4}.

With regards to the $F_n$ convergence, we consider two
possibilities. First, suppose that $\mathcal{U}$ is balanced. Then,
recalling the definition \eqref{eq:gfdefcluster}, and again noting
that $u_{n,k} = u_k$ on $\mathcal{X}_n$, it follows from
\eqref{eq:balancedgllimit} that $F_n(\mathcal{U}_n) \xrightarrow{a.s.}
F(\mathcal{U}) = 0$ as $n \to \infty$. 

Suppose now that $\mathcal{U}$ is not balanced, so that $\sum_{k=1}^K
\Big(\mu(U_k) - 1/K\Big)^2 \neq 0$. Then \eqref{eq:asympeq1} implies, as $n\rightarrow\infty$, 
that $F_n(\mathcal{U}_n) \xrightarrow{a.s.} F(\mathcal{U}) =
\infty$. Having considered all cases, \eqref{recovery_part} is established.
\end{proof}

\subsection{Compactness and Proof of Theorem \ref{thm:mainthm}}
\label{compactness_sect}
After a few preliminary estimates, we  supply the needed compactness property for the graph energies $\{E_n\}_{n\in \mathbb{N}}$ in Theorem \ref{thm:compactness}.  Then, we prove Theorem \ref{thm:mainthm} at the end of the section.

\begin{lemma}
\label{lemma:D_compactness}
Let $\{u_n\}_{n \in \mathbb{N}}$ be a sequence of indicator functions on $D$,
$u_n \in I(D)$, and $\{\epsilon_n\}_{n \in \mathbb{N}}$ be a sequence
of positive numbers with $\lim_{n\rightarrow\infty}\epsilon_n = 0$. If
\begin{equation*}
\sup_{n \in \mathbb{N}} TV_{\epsilon_n}(u_n;\mathds{1}_D) < \infty,
\end{equation*}
then $\{u_n\}_{n \in \mathbb{N}}$ is relatively compact with respect to the $L^1$ topology.
\end{lemma}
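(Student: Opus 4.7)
The plan is to invoke the Fréchet--Kolmogorov--Riesz compactness theorem in $L^1$: extending each $u_n$ by zero to $\mathbb{R}^d$, the fact that $u_n$ is an indicator supported in the bounded set $D$ makes $L^1$-boundedness and tightness automatic, and the Lipschitz hypothesis $(D)$ on $\partial D$ ensures the boundary strip contribution $|D \triangle (D-h)| = O(|h|)$ vanishes uniformly in $n$ as $|h| \to 0$. The compactness therefore reduces to establishing equicontinuity of translations in the interior,
\begin{equation*}
\sup_{n\in\mathbb{N}} \| u_n(\cdot+h) - u_n \|_{L^1(D\cap(D-h))} \longrightarrow 0 \qquad \text{as } |h| \to 0,
\end{equation*}
and this is where the TV hypothesis must do work.

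The first step is to exploit hypotheses $(K2)$--$(K3)$ to extract constants $c_0, r_0 > 0$ with $\eta \geq c_0$ on the ball of radius $r_0$. Plugging this lower bound into $TV_{\epsilon_n}(u_n;\mathds{1}_D)$, changing variables $z = x-y$, and applying Fubini yields the averaged translation estimate
\begin{equation*}
\int_{|z|\leq r_0 \epsilon_n} \int_{D\cap(D-z)} |u_n(x+z) - u_n(x)|\, dx\, dz \;\leq\; \frac{\epsilon_n^{d+1}}{c_0}\, TV_{\epsilon_n}(u_n;\mathds{1}_D) \;\leq\; C\epsilon_n^{d+1},
\end{equation*}
uniformly in $n$. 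This only controls translations at the vanishing scale $\epsilon_n$ and only in an average sense, so a separate device is needed to promote it to a uniform bound up to a fixed scale $\delta > 0$.

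The device is mollification combined with a telescoping trick. Fix a nonnegative radial $\psi \in C_c^\infty(\mathbb{R}^d)$ with $\int \psi = 1$ and $\operatorname{supp}\psi \subset B_1$, and set $u_n^\delta := u_n * \psi_\delta$. I aim to prove the uniform approximation
\begin{equation*}
\sup_n \| u_n^\delta - u_n \|_{L^1(\mathbb{R}^d)} \;\leq\; C\,\delta,
\end{equation*}
valid whenever $\epsilon_n < \delta$. The mechanism: translation invariance of Lebesgue measure gives, for any integer $k$,
\begin{equation*}
\| u_n(\cdot + y) - u_n \|_{L^1} \;\leq\; k\, \| u_n(\cdot + y/k) - u_n \|_{L^1}.
\end{equation*}
Choosing $k = k_n \asymp \delta/\epsilon_n$ so that $|y/k| \leq r_0 \epsilon_n$ whenever $|y|\leq \delta$, integrating against $\psi_\delta(y)\,dy$, and changing variables $z = y/k$ converts the right-hand side into an integral over $\{|z|\leq \delta/k\} \subset \{|z|\leq r_0\epsilon_n\}$, on which the averaged bound applies; the factors $k^{d+1}$, $\|\psi_\delta\|_\infty = O(\delta^{-d})$, and $\epsilon_n^{d+1}$ then collapse to yield the $O(\delta)$ bound.

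The argument closes by a diagonal extraction. For each fixed $\delta > 0$ the family $\{u_n^\delta\}_n$ is uniformly Lipschitz (with constant $O(\delta^{-1})$), uniformly bounded by $1$, and supported in the fixed compact set $\overline{D} + \overline{B_\delta}$, so Arzelà--Ascoli furnishes a subsequence converging uniformly, hence in $L^1$. Applying this successively for $\delta = 1/m$ and passing to the diagonal subsequence, the uniform approximation above turns it into a Cauchy sequence in $L^1$ by a standard $3\varepsilon$-argument. The principal obstacle I foresee is the careful execution of the telescoping step: one must track the three scales $\epsilon_n$, $\delta$, $|y|$ through the change of variables and verify that the $\delta^{-d}$ blow-up of $\|\psi_\delta\|_\infty$ is exactly absorbed by the $\epsilon_n^{d+1}$ gain in the averaged estimate, leaving the clean linear-in-$\delta$ bound.
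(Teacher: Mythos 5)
Your proposal is correct, and it follows a genuinely different route from the paper's. The paper mollifies at the \emph{vanishing} scale $\epsilon_n$, choosing a smooth $\phi$ with $\phi\leq\eta$ and $|\nabla\phi|\leq\eta$ so that $w_n=\phi_{\epsilon_n}*u_n$ satisfies $\|w_n-u_n\|_{L^1}\leq C\epsilon_n$ and $\|\nabla w_n\|_{L^1}\leq c^{-1}TV_{\epsilon_n}(u_n;\mathds{1}_D)$, and then invokes BV compactness (Theorem 3.23 of \cite{ambrosio2000functions}). You instead mollify at a \emph{fixed} scale $\delta$, which forces you to bridge the gap between the scale $\epsilon_n$ (where the hypothesis gives control) and the scale $\delta$; the translation-subadditivity $\|u(\cdot+y)-u\|_{L^1}\leq k\|u(\cdot+y/k)-u\|_{L^1}$ is exactly the right device for this, and your bookkeeping is sound: with $k=\lceil \delta/(r_0\epsilon_n)\rceil$ the factors $k^{d+1}\cdot\delta^{-d}\cdot\epsilon_n^{d+1}$ indeed collapse to $O(\delta)$. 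The payoff is that your conclusion is entirely elementary -- Arzel\`a--Ascoli on each $\{u_n^{1/m}\}_n$ plus a diagonal extraction and a $3\varepsilon$-argument -- with no appeal to BV theory; this is essentially the Fr\'echet--Kolmogorov/Bourgain--Brezis--Mironescu style compactness argument. Both proofs use hypothesis (K3) in the same way (a positive lower bound for $\eta$ near the origin) and both need the Lipschitz boundary to control the extension by zero. One small point to make explicit when writing this up: your averaged estimate is stated on $D\cap(D-z)$, but the telescoping identity lives on all of $\mathbb{R}^d$, so you must first upgrade it to the zero-extensions by adding the boundary term $|D\triangle(D-z)|\leq C|z|$, whose integral over $|z|\leq r_0\epsilon_n$ is again $O(\epsilon_n^{d+1})$ -- you flag this in your opening paragraph, and it should be folded into the displayed averaged bound rather than treated separately.
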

\begin{proof}

This result is a special case of Proposition 4.6 of
\cite{trillos2014continuum} and Theorem 3.1 of
\cite{alberti1998non}, which treat more involved settings.  However, for the convenience of the reader, we present
a streamlined argument in our situation, which makes use of the assumptions that
the functions $\{u_n\}_{n \in \mathbb{N}}$ are $\{0,1\}$-valued, and
that the kernel $\eta$ is compactly supported.

For what follows, we extend $u_n$ to all of $\mathbb{R}^d$ by setting
$u_n(x) = 0$ for $x \in \mathbb{R}^n \backslash D$. 
Note that
\begin{eqnarray*}
&&\int_{\mathbb{R}^d \times \mathbb{R}^d} \eta_{\epsilon_n}(x' -
x)|u_n(x') - u_n(x)|\, dx\, dx' \\
&&\ \ \ \ \ \  \ \ \ \ \ \ \ \ = \ \int_{D \times D} \eta_{\epsilon_n}(x' -
x)|u_n(x') - u_n(x)|\, dx\, dx'  \\
&&\ \ \ \ \ \ \ \ \ \ \ \ \ \ \ \ \ \ \  + 2 \int_{D \times (\mathbb{R}^d
\backslash D)} \eta_{\epsilon_n}(x' -
x)|u_n(x') - u_n(x)|\, dx\, dx',
\end{eqnarray*}
since $|u_n(x') - u_n(x)|$ vanishes on $(\mathbb{R}^d \backslash D) \times (\mathbb{R}^d
\backslash D)$. Now, as $\sup_{n \in \mathbb{N}}
TV_{\epsilon_n}(u_n;\mathds{1}_D) < \infty$, it follows that there is a constant
$C$ such that
\begin{equation} \label{eq:upperboundeps1}
\int_{D \times D} \eta_{\epsilon_n}(x' -
x)|u_n(x') - u_n(x)|\, dx\, dx'  \leq C \epsilon_n.
\end{equation}

Let $R$ be such that $\eta(z) = 0$ when $|z| \geq R$, and let $\partial_n
D = \{ x \in \mathbb{R}^d | d(x,\partial D) < R\epsilon_n
\}$ be the $R\epsilon_n$-neighborhood of $\partial D$.  Note that the volume of $\partial_n D$ is bounded by $C\epsilon_n$ for some constant $C$.  Then, by boundedness of $\{u_n\}_{n\in \mathbb{N}}$ and property (K1) of the kernel $\eta$, we have
\begin{eqnarray} \label{eq:upperboundeps2}
&&
\int_{D \times (\mathbb{R}^d
\backslash D)} \eta_{\epsilon_n}(x' -
x)|u_n(x') - u_n(x)|\, dx\, dx' \leq  \int_{D \times (\mathbb{R}^d
\backslash D)}  \eta_{\epsilon_n}(x' - x)\, dx\, dx' \nonumber\\
&&\ \ \leq \int_{D \times \partial_n D} \eta_{\epsilon_n}(x'-x)\,
dx\, dx' 
\leq \int_{\partial_n D} dx'  \leq C
\epsilon_n.
\end{eqnarray}
Combining (\ref{eq:upperboundeps1}) and
(\ref{eq:upperboundeps2}) thus gives, in terms of another constant $C$, that
\begin{eqnarray*}
C\epsilon_n &\geq& \int_{\mathbb{R}^d \times \mathbb{R}^d} \eta_{\epsilon_n}(x' -
x)|u_n(x') - u_n(x)|\, dx\, dx'\\
&=& \int_{\mathbb{R}^d \times \mathbb{R}^d} \eta_{\epsilon_n}(y)|u_n(x+y) - u_n(x)|\, dx\, dy.
\end{eqnarray*}

Now, let $\phi : \mathbb{R}^d \to \mathbb{R}$ be a non-negative, smooth, compactly supported
radial function, with $\phi(0) > 0$, $\phi \leq \eta$, and $|\nabla
\phi| \leq \eta$. Indeed, by property (K3), since $\eta(0) > 0$ and $\eta$ is continuous
at zero, such a $\phi$ exists.  Let $c = \int_{\mathbb{R}^d}
\phi(x)\, dx\leq 1$ and $\phi_n(x) = (c \epsilon_n^d)^{-1}\phi(x /
\epsilon_n)$. Define the sequence $\{w_n\}_{n \in \mathbb{N}}$ by
\begin{equation*}
w_n(x) = \int_{\mathbb{R}^d} \phi_n(y)u_n(x+y)\, dy.
\end{equation*}
We make the following observations.
\begin{enumerate}
\item The sequence $\{w_n\}_{n \in \mathbb{R}^d}$ is bounded in
  $L^1(D)$, 
\begin{equation*}
\int_D |w_n(x)|\, dx \leq \int_{\mathbb{R}^n} |w_n(x)|\, dx \leq
\int_{\mathbb{R}^d} |u_n(x)|\, dx \leq vol(D),
\end{equation*}
as $u_n$ is an
indicator function on $D$, and extended by zero outside of $D$.

\item We have the limit $\int_{\mathbb{R}^d}
  |w_n(x) - u_n(x)|\, dx \to 0$ as $n\rightarrow\infty$. Indeed, one may verify the inequality
\begin{equation*}
\begin{aligned}
\int_{\mathbb{R}^d}
  |w_n(x) - u_n(x)|\, dx &= \int_{\mathbb{R}^d} \Big|\int_{\mathbb{R}^d}
  \phi_n(y) u_n(x+y)\,dy - u_n(x)\Big|\, dx \\
& \leq \int_{\mathbb{R}^d \times \mathbb{R}^d} \phi_n(y) |u_n(x+y) -
u_n(x)|\, dx \, dy \\
&\leq \frac{1}{c}\int_{\mathbb{R}^d \times \mathbb{R}^d} \eta_{\epsilon_n}(y) |u_n(x+y) -
u_n(x)|\, dx \, dy  \leq C \epsilon_n,
\end{aligned}
\end{equation*}
since $\phi_n \leq c^{-1}\eta_{\epsilon_n}$ and \eqref{eq:upperboundeps1} holds.
\item The gradient $|\nabla w_n(x)|$ is uniformly bounded in $L^1(D)$. To see
  this, note that
\begin{equation*}
\begin{aligned}
\int_{\mathbb{R}^d} |\nabla w_n(x)|\, dx &= \int_{\mathbb{R}^d} \Big|
\int_{\mathbb{R}^d} \nabla \phi_n(y)u_n(x+y)\,dy\Big|\, dx \\
&= \int_{\mathbb{R}^d} \Big|
\int_{\mathbb{R}^d} \nabla \phi_n(y)(u_n(x+y) - u_n(x))\,dy\Big|\,
dx\\
&\leq \int_{\mathbb{R}^d \times \mathbb{R}^d} |\nabla
\phi_n(y)\|u_n(x+y) - u_n(x)|\, dy\,dx \\
& \leq \frac{1}{c \epsilon_n} \int_{\mathbb{R}^d \times \mathbb{R}^d}
\eta_{\epsilon_n}(y) |u_n(x+y) - u_n(x)|\, dy\,dx \\
& \leq \frac{1}{c} TV_{\epsilon_n}(u_n;\mathds{1}_D).
\end{aligned}
\end{equation*}
Here, the second equality follows as
$\int_{\mathbb{R}^d} \nabla \phi_n(y)\, dy = 0$ because $\phi$ is
compactly supported,.  The third inequality follows as $|\nabla \phi_n|\leq (c\epsilon_n)^{-1}\eta_{\epsilon_n}$.
\end{enumerate}

Now, because $\sup_n
\int_D |w_n(x)|\, dx < \infty$ and $\sup_n \int_D |\nabla w_n(x)|\,
dx < \infty$, it follows  by Theorem 3.23 of \cite{ambrosio2000functions} that $\{w_n \}_{n \in
  \mathbb{N}}$ is relatively compact in $L^1(D)$. Since $\int_{D}
  |w_n(x) - u_n(x)|\, dx \to 0$, as $n\rightarrow\infty$, it follows that the sequence
  $\{u_n\}_{n \in \mathbb{N}}$ is also relatively compact in $L^1(D)$,
  with the same cluster points as $\{ w_n \}_{n \in
  \mathbb{N}}$.
\end{proof}

Recall that $\Omega_0$ denotes the probability $1$ set of realizations of $\{X_i\}_{i\in mathbb{N}}$ under which Proposition \ref{thm:tmaps} holds.

\begin{lemma} \label{lemma:gtvcompact}
 Suppose $\{\epsilon_n\}_{n\in \mathbb{N}}$ satifies condition (I2).  On the probability $1$ set $\Omega_0$, the following holds: Given any sequence $\{u_n\}_{n \in \mathbb{N}}$ of indicator functions on the data points,
$u_n \in I_n(D)$, if
\begin{equation*}
\sup_{n \in \mathbb{N}} GTV_n\big(u_n\big) < \infty,
\end{equation*}
then $\{(\nu_n,u_n)\}_{n \in \mathbb{N}}$ is relatively compact
with respect to the $TL^1$ topology. 
\end{lemma}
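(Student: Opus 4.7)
My plan is to prove the relative compactness by transporting everything back to $D$ and using the $L^1$-compactness result from Lemma \ref{lemma:D_compactness}. Fix a realization in $\Omega_0$ and let $\{T_n\}_{n\in\mathbb{N}}$ be the transport maps from Proposition \ref{thm:tmaps}, so $T_{n\sharp}\nu = \nu_n$. Define $\tilde u_n(x) := u_n(T_n x)$, so that $\tilde u_n \in I(D)$. Since $\rho$ is bounded below by a positive constant and since convergence in the $TL^1$ topology may be characterized via stagnating transport maps (Lemma \ref{lemma:tl1convergence}), if we can extract a subsequence $\tilde u_{n_k}$ converging in $L^1(D)$ to some $u$, then combined with $\nu_{n_k}\xrightarrow{w}\nu$ (Corollary \ref{empconverge}) this yields $(\nu_{n_k},u_{n_k}) \xrightarrow{TL^1} (\nu,u)$. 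Thus it suffices to show $\{\tilde u_n\}_{n\in\mathbb{N}}$ is relatively compact in $L^1(D)$.

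The main work, then, is to control $TV_{\bar\epsilon_n}(\tilde u_n;\mathds{1}_D)$ for a suitable scale $\bar\epsilon_n$ slightly larger than $\epsilon_n$. Using the change of variables $T_{n\sharp}\nu = \nu_n$ on the empirical double sum, one obtains
\begin{equation*}
\epsilon_n GTV_n(u_n) = \frac{n}{n-1}\int_{D\times D} \eta_{\epsilon_n}(T_n x - T_n y) |\tilde u_n(x) - \tilde u_n(y)|\,\rho(x)\rho(y)\,dx\,dy.
\end{equation*}
Then, exactly as in Step 1 of the proof of Lemma \ref{lemma:glambdalimit}, exploiting that $\eta$ is radial and non-increasing (K2) and that $\|Id - T_n\|_{L^\infty}$ decays faster than $\epsilon_n$ under assumption (I2), one has the pointwise bound
\begin{equation*}
\eta\!\left(\tfrac{x-y}{\tilde\epsilon_n}\right) \leq \eta\!\left(\tfrac{T_n x - T_n y}{\epsilon_n}\right) \text{ for Leb.-a.e. } (x,y),
\end{equation*}
with $\tilde\epsilon_n = \epsilon_n - 2\|Id-T_n\|_{L^\infty}/b$ when $\eta$ is supported in $|z|\leq b$. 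Using that $\rho$ is bounded below by a positive constant $A$, this reverses into a lower bound
\begin{equation*}
A^2 \tilde\epsilon_n\, TV_{\tilde\epsilon_n}(\tilde u_n;\mathds{1}_D) \;\leq\; (\epsilon_n/\tilde\epsilon_n)^d \cdot \epsilon_n GTV_n(u_n) \cdot \tfrac{n-1}{n},
\end{equation*}
and since $\tilde\epsilon_n/\epsilon_n \to 1$ and $\sup_n GTV_n(u_n) < \infty$, we conclude $\sup_n TV_{\tilde\epsilon_n}(\tilde u_n;\mathds{1}_D) < \infty$.

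To handle general $\eta$ satisfying (K1)--(K4), I would proceed in the three-step fashion used in Lemma \ref{lemma:glambdalimit}: first treat the case $\eta = a\mathds{1}_{B(0,b)}$ as above, then extend to convex combinations of such (i.e.\ simple radial non-increasing functions), and finally approximate a general $\eta$ from below by simple functions $\eta^{(\ell)} \leq \eta$ with $\int \eta^{(\ell)} \nearrow 1$. Since $GTV_n^{(\ell)}(u_n) \leq GTV_n(u_n) + o(1)$ under this monotone approximation, the uniform bound on $TV_{\tilde\epsilon_n}(\tilde u_n;\mathds{1}_D)$ persists. With this bound in hand, Lemma \ref{lemma:D_compactness} immediately yields $L^1(D)$-relative compactness of $\{\tilde u_n\}_{n\in\mathbb{N}}$, and the plan described in the first paragraph then produces a $TL^1$-cluster point of $\{(\nu_n,u_n)\}_{n\in\mathbb{N}}$.

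The principal obstacle is the comparison between the random graph total variation and its deterministic nonlocal counterpart: the transport maps $T_n$ distort distances by $O(\|Id-T_n\|_{L^\infty})$, which must be absorbed into the kernel scale without losing the uniform bound. This is exactly where condition (I2) enters, ensuring $\|Id-T_n\|_{L^\infty}/\epsilon_n \to 0$ so that $\tilde\epsilon_n$ is positive for large $n$ and $\tilde\epsilon_n/\epsilon_n \to 1$; the radial non-increasing assumption on $\eta$ then converts this distortion into a one-sided inequality at a shifted scale. All subsequent steps are routine given the machinery of Section \ref{preliminaries}.
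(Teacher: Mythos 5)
Your argument is correct and is essentially the paper's own proof: the same change of variables via the transport maps $T_n$, the same shifted scale $\tilde\epsilon_n=\epsilon_n-2\|Id-T_n\|_{L^\infty}/b$ with the one-sided kernel comparison, the same lower bound via $\rho\geq A$, and the same appeal to Lemma \ref{lemma:D_compactness} followed by Lemma \ref{lemma:tl1convergence}. The only (harmless) difference is in the reduction to general $\eta$: since compactness needs only a lower bound on $GTV_n$, a single indicator minorant $\tilde\eta=\tfrac{\eta(0)}{2}\mathds{1}_{|x|<R}\leq\eta$ (available by (K3)) suffices, so your full three-step monotone approximation is more than is needed — and note that your introductory phrase ``$\bar\epsilon_n$ slightly larger than $\epsilon_n$'' should read \emph{smaller}, as your subsequent displayed inequalities correctly use.
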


\begin{proof}
We begin as in the proof of Lemma \ref{lemma:glambdalimit}. Fix a realization in the probability $1$ set $\Omega_0$.  Suppose that $\eta$ is of the form $\eta(x) = a$ for $|x| < b$ and
$\eta(x) = 0$ for $|x| > b$.  Let $\tilde{\epsilon}_n \coloneqq  \epsilon_n -
2 \frac{\|Id-T_n\|_{L^\infty}}{b}$, with respect to the transport maps $\{T_n\}_{n\in \mathbb{N}}$.  Then, for all large $n$, $\tilde{\epsilon}_n>0$, and we have the inequality (\ref{eq:etatransportineq}),
\begin{align*}
\eta\Big(\frac{x-y}{\tilde{\epsilon}_n}\Big) \leq
  \eta\Big(\frac{T_nx-T_ny}{\epsilon_n}\Big) \ \ \ {\rm  Lebesgue \ a.e. }\,\,
  (x,y) \in D \times D
\end{align*}
Let $A>0$ be a lower bound for $\rho$ on $D$.  Then,
\begin{eqnarray*}
&&A^2\int_D \eta\Big(\frac{x-y}{\tilde{\epsilon}_n}\Big)|u_n(T_nx) -
u_n(T_ny)|\,dx\,dy\\
&&\ \ \ \ \ \ \ \ \ \ \ \ \  \leq \int_D \eta\Big(\frac{x-y}{\tilde{\epsilon}_n}\Big)|u_n(T_nx) -
u_n(T_ny)|\rho(x)\rho(y)\,dx\,dy\\
&&\ \ \ \ \ \ \ \ \ \ \ \ \  \leq \int_D
\eta\Big(\frac{T_nx-T_ny}{\epsilon_n}\Big)|u_n(T_nx) - u_n(T_n
y)|\rho(x)\rho(y)\, dx\, dy\\
&&\ \ \ \ \ \ \ \ \ \ \ \ \ = \epsilon_n^{d+1} GTV_n(u_n).
\end{eqnarray*}
The above inequality is equivalent to
\begin{equation*}
(\tilde{\epsilon}_n / \epsilon_n)^{d+1} TV_{\tilde{\epsilon}_n}(u_n
\circ T_n;\mathds{1}_D) \leq GTV_n(u_n).
\end{equation*}

Since $\lim_{n\rightarrow\infty}\tilde{\epsilon}_n / \epsilon_n =1$, the bound $\sup_n
GTV_n(u_n) < \infty$ implies that $$\sup_n TV_{\tilde{\epsilon}_n}(u_n
\circ T_n; \mathds{1}_D ) < \infty.$$
It follows, by Lemma \ref{lemma:D_compactness}, that the family $\{
u_n \circ T_n \}_{n \in \mathbb{N}}$ is relatively compact with
respect to the
$L^1$ topology.
Since, by Corollary \ref{empconverge}, $\nu_n \xrightarrow{w} \nu$, we conclude by Lemma \ref{lemma:tl1convergence} that $\{ (\nu_n,u_n) \}_{n \in \mathbb{N}}$ is relatively
compact in $TL^1$.

Suppose now $\eta$ is an arbitrary kernel satisfying assumptions
$(K1)$-$(K4)$. Since $\eta$ is continuous at zero, and $\eta(0) > 0$,
there is some radius $R$ such that $\tilde{\eta} = \frac{\eta(0)}{2} \mathds{1}_{|x|
  < R}$ satisfies $\tilde{\eta} \leq \eta$. Let $c =
\int_{\mathbb{R}^d} \tilde{\eta}(x)\, dx$.  Then, if $\widetilde{GTV}_n$
denotes the graph total variation associated to the kernel
$\tilde{\eta}/c$ (instead of $\eta$), we have
\begin{equation*}
GTV_n(u_n) \geq c \,\widetilde{GTV}_n(u_n). 
\end{equation*}
Since $\sup_n GTV_n(u_n) < \infty$ implies $\sup_n \widetilde{GTV}_n(u_n)
< \infty$, it follows from our previous discussion that the sequence
$\{ (\nu_n, u_n) \}_{n \in \mathbb{N}}$ is relatively compact in $TL^1(D)$.
\end{proof}

\begin{theorem} \label{thm:compactness}
Suppose $\{\epsilon_n\}_{n\in\mathbb{N}}$ satisfies condition (I2).  On the probability $1$ set $\Omega_0$, the following holds:
Given any sequence
$\{\mathcal{V}_n\}_{n \in \mathbb{N}} \subset (TL^1)^K$ ,
if 
\begin{equation*}
 \sup_{n \in \mathbb{N}} E_n(\mathcal{V}_n) < \infty, 
\end{equation*}
then $\{\mathcal{V}_n\}_{n \in \mathbb{N}}$ is relatively compact with
respect to the $(TL^1)^K$ topology.
\end{theorem}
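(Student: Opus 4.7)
The plan is to reduce Theorem \ref{thm:compactness} to Lemma \ref{lemma:gtvcompact} by exploiting the nonnegative decomposition $E_n = F_n + TV_n$ on $M_n(D)$. Since $E_n \equiv \infty$ off $M_n(D)$, the hypothesis $\sup_n E_n(\mathcal{V}_n) < \infty$ forces $\mathcal{V}_n = (\nu_n,\mathcal{U}_n) \in M_n(D)$ for all $n$, and because both $F_n \geq 0$ and $TV_n \geq 0$, I immediately obtain $\sup_n TV_n(\mathcal{U}_n) < \infty$.

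Next I would convert this into a uniform bound on the coordinate graph total variations. Recall
$$TV_n(\mathcal{U}_n) = \frac{n(n-1)}{4m}\sum_{k=1}^K GTV_n(u_{n,k}).$$
As established inside the proof of Lemma \ref{lemma:gtvlim}, on the probability $1$ set $\Omega_0$ one has
$$\frac{2m}{n(n-1)} \;=\; \frac{1}{n(n-1)}\sum_{i\neq j}\eta_{\epsilon_n}(X_i-X_j) \;\xrightarrow{a.s.}\; \int_D \rho^2(x)\,dx \;>\;0,$$
so the prefactor $\tfrac{n(n-1)}{4m}$ converges a.s.\ to a positive finite limit and is therefore bounded below by a positive constant for all sufficiently large $n$. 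Combining this with the nonnegativity of each $GTV_n(u_{n,k})$ and with $\sup_n TV_n(\mathcal{U}_n) < \infty$, I conclude that on $\Omega_0$,
$$\sup_n GTV_n(u_{n,k}) < \infty, \qquad k=1,\ldots,K.$$

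Finally, I would apply Lemma \ref{lemma:gtvcompact} once for each index $k$, yielding that on $\Omega_0$ each sequence $\{(\nu_n, u_{n,k})\}_{n\in\mathbb{N}}$ is relatively compact in $TL^1$. Since relative compactness in the finite product $(TL^1)^K$ is equivalent to coordinatewise relative compactness (by a standard $K$-fold diagonal extraction), $\{\mathcal{V}_n\}_{n\in\mathbb{N}}$ is relatively compact in $(TL^1)^K$, as desired. There is no serious obstacle here; the only mild technicality is that $\tfrac{n(n-1)}{4m}$ may be ill-defined for the first few $n$ when $m=0$, but this affects only finitely many terms and is irrelevant for tail compactness. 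The substantive work has already been done in Lemma \ref{lemma:gtvcompact} (whose proof uses assumption (I2) together with the transport maps of Proposition \ref{thm:tmaps}) and in the argument establishing the a.s.\ limit of $2m/(n(n-1))$.
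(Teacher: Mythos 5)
Your proposal is correct and follows essentially the same route as the paper's proof: reduce to membership in $M_n(D)$, use the a.s.\ limit $2m/(n(n-1)) \to \int_D \rho^2(x)\,dx > 0$ (obtained in the paper via Lemma \ref{lemma:glambdalimit}) to bound the prefactor, deduce $\sup_n GTV_n(u_{n,k}) < \infty$ for each $k$, and apply Lemma \ref{lemma:gtvcompact} coordinatewise. No gaps.
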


\begin{proof}  Fix a realization in the probability $1$ set $\Omega_0$.  Suppose $\sup_n E_n(\mathcal{V}_n) < \infty$.  By definition of $E_n$, it follows that $\mathcal{V}_n = (\nu_n,
\mathcal{U}_n) \in M_n(D)$ where $\mathcal{U}_n = \{U_{n,k}\}_{k=1}^K$ for $n\in \mathbb{N}$.
By Corollary \ref{empconverge}, we have $\nu_n \xrightarrow{w}\nu$.  Since $(\nu_n,\mathds{1}_D) \xrightarrow{TL^1} (\nu,\mathds{1}_D)$ by Lemma \ref{lemma:tl1convergence},
we have, by Lemma \ref{lemma:glambdalimit}, that $G\Lambda_n(1) = 2m/(n(n-1)) \rightarrow \int_D \rho^2(x)dx$.
Recall now that $E_n(\mathcal{V}_n) = TV_n(\mathcal{U}_n) + F_n(\mathcal{U}_n)$ and
\begin{equation*}
TV_n(\mathcal{U}_n) =  \frac{n(n-1)}{4m} \sum_{k=1}^K
GTV_{n}(u_{n,k}),
\end{equation*}
where $u_{n,k} = \mathds{1}_{U_{n,k}}$ for $1\leq k\leq K$.
Hence, given that
$\sup_{n \in \mathbb{N}} E_n(\mathcal{V}_n) < \infty$,
we have
\begin{equation*}
\sup_{n \in \mathbb{N}} GTV_n(u_{n,k}) < \infty,
\end{equation*}
 for $1 \leq k \leq K$. Thus, by Lemma \ref{lemma:gtvcompact}, the collection $\{ (\nu_n, u_{n,k}) \}_{n \in \mathbb{N}}$ is
relatively compact in $TL^1$ for $1\leq k \leq K$. Thus,
$\{\mathcal{V}_n = ((\nu_n, u_{n,k}))_{k=1}^K=(\nu_n,\mathcal{U}_n)\}_{n \in \mathbb{N}}$ is relatively compact in $(TL^1)^K$.
\end{proof}

\noindent{\bf Proof of Theorem \ref{thm:mainthm}.}
We have seen in Theorem \ref{thm:maingamma} that
\begin{equation*} E_{n} \xrightarrow{\Gamma((TL^1)^K)} E,
\end{equation*}
in the sense of Definition \ref{def:nondeterministicgamma}.
By Theorem \ref{thm:compactness}, 
the graph
energies $E_{n}$ have the compactness property according to Definition \ref{def:random_compactness}.  Also, as noted in Subsection \ref{reformulation_min_sect}, the energy $E$ is not identically infinite.

For each realization $\{X_i\}_{i\in \mathbb{N}}$, consider a partition $\mathcal{U}_n^* \in \argmax_{|U_n| \leq K} Q_n(\mathcal{U}_n)$.  Then, by the discussion in Subsection \ref{reformulation_min_sect}, $\mathcal{U}_n^*$ is a minimizer of $F_n+TV_n$, and so $\mathcal{V}_n = (\nu_n, \mathcal{U}_n^*) \in M_n(D)$ is a minimizer of $E_n$.  The sequence $\{\mathcal{V}_n\}_{n \in \mathbb{N}}$ is also bounded in $(TL^1)^K$:  Indeed, we have $\int_D |x|d\nu_n(x) \leq \sup_{x\in D}|x|$ and, for $u_{n,k}^* = \mathds{1}_{U_{n,k}^*}$, $\|u_{n,k}^*\|_{L^1} \leq \mbox{vol}(D) \|u_{n,k}^*\|_{L^\infty} \leq \mbox{vol}(D)$.

Hence, on the full set of realizations $\{X_i\}_{i\in\mathbb{N}}$, denoted as $\Omega$, the sequence $x_n = \mathcal{V}_n$, on the metric space $(TL^1)^K$, satisfies the hypotheses of Theorem \ref{thm:nondeterministicgamma}.  Therefore, with respect to realizations $\{X_i\}_{i\in\mathbb{N}}$ on a probability $1$ set $\Omega^*$, $\mathcal{V}_n$ converges in $(TL^1)^K$, perhaps along a subsequence, to a limit $\mathcal{V}$, which is a minimizer of $E$, and is therefore of the form $\mathcal{V} =(\nu, \mathcal{U}^*)$. Since the `liminf' inequality, Lemma  \ref{lemma:eliminf}, holds on $\Omega_0$, we note that $\Omega^* \subset \Omega_0$. Moreover, by Corollary \ref{empconverge}, $\nu_n\xrightarrow{w} \nu$ on $\Omega^*$.  Therefore, by \eqref{equivalent_convergence}, on $\Omega^*$, $\mathcal{U}_n^*$ converges weakly, perhaps along a subsequence, to the limit $\mathcal{U}^*$, which is an optimal partition of the continuum problem \eqref{prob:continuumk} with $\phi=\rho^2$ and $d\mu = \rho^{1+\alpha}/\int_D \rho^{1+\alpha}(x)dx$.  

In fact, we may conclude that, on $\Omega^*$, the distances $\beta_n \coloneqq \inf \{ d_{(TL^1)^K}(\mathcal{V}_n,\mathcal{V}) : \mathcal{V} \in \argmin E\}$, where $d_{(TL^1)^K}$ is the product metric for $(TL^1)^K$ convergence, satisfy $\beta_n \to 0$ as $n \to \infty$. For if not, there is a subsequence $\{n_m\}_{m \in \mathbb{N}}$ with $\beta_{n_m} \to \beta > 0$. However, by the above discussion one may find a further subsequence $\{n_m' \}_{m \in \mathbb{N}}$ with $\beta_{n_m'} \to 0$, a contradiction.

Moreover, if problem \eqref{prob:continuumk} has a unique solution $\mathcal{U}^* = \{U_k^*\}_{k=1}^K$, modulo permutations, then  $\argmin E = \{ ((\nu, u^*_{\pi(k)}))_{k=1}^K : \pi \in \mbox{Sym}(K) \}$, where $\mbox{Sym}(K)$ denotes the permutations of $\{1,\ldots,K\}$ and $u^*_k = \mathds{1}_{U^*_k}$ for $1 \leq k \leq K$. Thus, on the probability 1 set $\Omega^*$, since $\beta_n \to 0$, one may construct a sequence $\{ \pi_n \}_{n \in \mathbb{N}}$ of permutations such that, as $n \to \infty$, $((\nu_n,u_{n,\pi_n(k)}^*))_{k=1}^K$ converges in $(TL^1)^K$ to $(\nu,\mathcal{U}^*) = ((\nu,u_k^*))_{k=1}^K$. Hence, by \eqref{equivalent_convergence}, $\mathcal{U}_n^*$ converges to $\mathcal{U}^*$ weakly, in the sense of \eqref{eq:convergeeq}. \qed

\section{Appendix}
\label{appendix}

\subsection{Approximation Lemma}
Recall that we have defined $\rho_\epsilon(x) = \int_D \eta_\epsilon(x-y)\rho(y)dy$.

\begin{lemma} \label{lemma:mollified}  Under the standing assumptions on $\rho$, $D$ and $\eta$ in Subsection \ref{results},
we have the following:
\begin{enumerate}
\item[(i)] $\rho_{\epsilon}$ converges pointwise to $\rho$ as $\epsilon\downarrow 0$.
\item[(ii)] There exists a constant $C$ such that, for sufficiently small
  $\epsilon$, 
\begin{equation} \label{eq:mollifiedrate}
\int_D |\rho_{\epsilon}(x) - \rho(x)|\, dx \leq C
  \epsilon. 
\end{equation}
\item[(iii)] There exist constants $a,b$ such that, for sufficiently small
  $\epsilon$,
\begin{equation*}
0 < a \leq \rho_{\epsilon}(x) \leq b \hspace{0.5cm} \text{for all $x \in D$}.
\end{equation*}
\end{enumerate}
\end{lemma}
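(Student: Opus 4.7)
The plan is to exploit two features: the Lipschitz regularity of $\rho$ (which gives pointwise and $L^1$ control away from $\partial D$) together with the Lipschitz/cone structure of $\partial D$ (which controls what happens near the boundary). Let $R>0$ be such that $\mathrm{supp}(\eta) \subset B(0,R)$, let $L$ be a Lipschitz constant for $\rho$, and let $\rho_{\min},\rho_{\max}$ be positive lower and upper bounds for $\rho$ on $D$. Define the inner layer $D_\epsilon^{\mathrm{in}} = \{ x \in D : \mathrm{dist}(x,\partial D) > R\epsilon \}$ and the boundary layer $D_\epsilon^{\mathrm{bdy}} = D \setminus D_\epsilon^{\mathrm{in}}$.

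For (i) and the interior piece of (ii), fix $x \in D_\epsilon^{\mathrm{in}}$. Since $\eta_\epsilon(x-\cdot)$ is supported in $B(x,R\epsilon) \subset D$, we have $\int_D \eta_\epsilon(x-y)\,dy = 1$ by (K1), so
\begin{equation*}
|\rho_\epsilon(x) - \rho(x)| = \Bigl|\int_D \eta_\epsilon(x-y)(\rho(y)-\rho(x))\,dy\Bigr| \le L R \epsilon,
\end{equation*}
by Lipschitz continuity of $\rho$ and the fact that $|x-y| \le R\epsilon$ on the support. This gives pointwise convergence on $D$ (since $D$ is open, each $x\in D$ lies in $D_\epsilon^{\mathrm{in}}$ for all sufficiently small $\epsilon$) and yields the interior contribution $\int_{D_\epsilon^{\mathrm{in}}} |\rho_\epsilon - \rho|\,dx \le L R \epsilon\, \mathrm{vol}(D)$.

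For the boundary contribution to (ii), I would use the standing assumption that $D$ has Lipschitz boundary to conclude $\mathrm{vol}(D_\epsilon^{\mathrm{bdy}}) \le C_0 \epsilon$ for small $\epsilon$, with $C_0$ depending on $D$ and $R$. Combined with the crude bound $|\rho_\epsilon(x) - \rho(x)| \le 2\rho_{\max}$ (using $\rho_\epsilon(x) \le \rho_{\max}\int_D \eta_\epsilon(x-y)\,dy \le \rho_{\max}$), this gives $\int_{D_\epsilon^{\mathrm{bdy}}} |\rho_\epsilon - \rho|\,dx \le 2\rho_{\max} C_0 \epsilon$, and summing yields (\ref{eq:mollifiedrate}).

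Part (iii): the upper bound $\rho_\epsilon(x) \le \rho_{\max}$ is immediate from (K1). The lower bound is the main obstacle, as it requires a nontrivial amount of $\eta_\epsilon(x-\cdot)$-mass to stay inside $D$ even when $x$ is near $\partial D$. Here I would use the Lipschitz boundary assumption on $D$, which yields a uniform interior cone condition: there exist $r_0>0$, $\theta>0$, and $c_D>0$ such that for every $x\in \overline{D}$ one can find a cone $C_x \subset D$ with vertex $x$, aperture $\theta$, and height $r_0$, so that $|B(x,r)\cap D| \ge c_D r^d$ for all $0<r\le r_0$. By (K3), $\eta(0)>0$ and $\eta$ is continuous at $0$, so there is $\delta>0$ with $\eta(z) \ge \eta(0)/2$ on $B(0,\delta)$; hence $\eta_\epsilon(z) \ge (\eta(0)/2)\epsilon^{-d}$ on $B(0,\delta\epsilon)$. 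Then for $\epsilon \le r_0/\delta$,
\begin{equation*}
\rho_\epsilon(x) \ge \rho_{\min} \int_{B(x,\delta\epsilon)\cap D} \eta_\epsilon(x-y)\,dy \ge \rho_{\min}\,\frac{\eta(0)}{2}\,\epsilon^{-d}\,c_D (\delta\epsilon)^d \;=\; a,
\end{equation*}
a strictly positive constant independent of $x$ and $\epsilon$, completing the proof.
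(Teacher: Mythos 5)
Your proposal is correct and follows essentially the same route as the paper's proof: the interior/boundary-layer splitting with the Lipschitz bound $|\rho_\epsilon(x)-\rho(x)|\le LR\epsilon$ away from $\partial D$ and a volume bound $O(\epsilon)$ on the boundary layer for (ii), and the cone (volume-density) condition from the Lipschitz boundary together with (K3) for the lower bound in (iii). No gaps.
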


\begin{proof}
The pointwise convergence in item (i) follows
from continuity of $\rho$. 

We now
focus attention on item (ii), inequality (\ref{eq:mollifiedrate}).
For the moment, fix $x \in D$. Since $\eta$ is compactly
supported, we take $R$ such that $\eta(z) = 0$ for $|z| > R$. Then
for $0 < \epsilon < \mbox{dist}(x,\partial D)/R$, we
have, since $\int_{\mathbb{R}} \eta(x) dx = 1$, that
\begin{align*}
\rho(x) = \int_{x+z \in D} \eta_{\epsilon}(z) \rho(x) \, dz,
\end{align*}
and so,
\begin{align*}
\rho_{\epsilon}(x) - \rho(x) &= \int_{x+z \in D} \eta_{\epsilon}(z)
\big(\rho(x+z) - \rho(x) \big) \, dz.
\end{align*}

Let $L$ be a Lipschitz constant for $\rho$. Then,
\begin{equation*} \label{eq:mollifierinequalityint}
\begin{aligned}
|\rho_{\epsilon}(x) - \rho(x)| & \leq L \int_{x+z \in D}
\eta_{\epsilon}(z) |z| \, dz \\
& \leq L  \int_{|z| \leq \mbox{diam}(D)} \eta_{\epsilon}(z) |z| \, dz
\\
& = L \epsilon \int_{|z| \leq \mbox{diam}(D) / \epsilon} \eta(z) |z|
\, dz \\
& \leq L \epsilon \int_{\mathbb{R}^d} \eta(z) |z| \, dz.
\end{aligned}
\end{equation*}

Because $\int_{\mathbb{R}} \eta(x) dx = 1$ and
$\eta(z) = 0$ for $|z| > R$, the above implies
\begin{equation} \label{eq:rhoepsestimate}
|\rho_{\epsilon}(x) - \rho(x)| \leq LR \epsilon.
\end{equation}

Let $D_{R\epsilon} = \{ x \in D\ |\
\mbox{dist}(x, \partial D) < R\epsilon \}$ and $\partial_{R\epsilon}D
= D \setminus D_{R\epsilon}$. 
 We split the integral,
\begin{align*}
\int_D |\rho_{\epsilon}(x) - \rho(x)| \,dx = \int_{D_{R\epsilon}}
  |\rho_{\epsilon}(x) - \rho(x)| \, dx + \int_{\partial_{R\epsilon} D}
  |\rho_{\epsilon}(x) - \rho(x)| \,dx,
\end{align*}
and consider the two terms separately.

On $D_{R\epsilon}$, applying inequality (\ref{eq:rhoepsestimate}) yields
\begin{equation} \label{eq:rhointeriorint}
\int_{D_{R\epsilon}} |\rho_{\epsilon}(x) - \rho(x)| \,dx \leq \mbox{vol}(D)LR\epsilon.
\end{equation}

For the second integral, note that because the boundary is Lipschitz, there is a
$\epsilon_0 > 0$ and constant $C$ such that
$\mbox{vol}(\partial D_{R\epsilon}) \leq C R \epsilon$ for $0 <
\epsilon < \epsilon_0$. It follows that
\begin{equation} \label{eq:rhoboundaryint}
\int_{\partial_{R\epsilon} D} |\rho_{\epsilon}(x) - \rho(x)| \, dx
\leq  2C\|\rho\|_{L^\infty}R\epsilon.
 \end{equation}

Combining (\ref{eq:rhointeriorint}) and (\ref{eq:rhoboundaryint}) gives, for sufficiently small $\epsilon > 0$,
\begin{equation*} \label{eq:rhol1bound}
\int_D |\rho_{\epsilon}(x) - \rho(x)| \,dx \leq C R\epsilon,
\end{equation*}
where $C$ is a constant independent of $\epsilon$.

For item (iii) of the lemma, note that because the boundary of $D$ is
 Lipschitz, there exists constants $r_0,r_1$ such that, for any $x
\in D$ and $0 < r < r_1$, 
\begin{equation*} \label{eq:appendixineq}
0 < r_0 < \frac{\mbox{vol}(D \cap B(x,r))}{\mbox{vol}(B(x,r))},
\end{equation*}
where $B(x,r)$ denotes the ball of radius $r$ centered at $x$ (cf. the discussion about cone conditions in
Section 4.11 of \cite{adams2003sobolev}). 

By assumption, $\rho$ is bounded above and below:  $0<A\leq \rho(\cdot) \leq B$.  Also, by assumption (K3), $\eta$ is continuous at zero and $\eta(0) > 0$, so we
may take $r$ so that $0 < \eta(0) / 2 \leq \eta(z)$ for $|z| < r$.
Letting $\tilde{\eta}(x) = \eta(0)\mathds{1}_{B(x,r)}/{2\mbox{vol}(B(x,r))}$, we
thus have
\begin{equation*}
0 < A r_0 \eta(0) / 2 \leq A \int_D \tilde{\eta}(x-y)\,dy  \leq \int_D \eta_{\epsilon}(x-y)\rho(y)\, dy = \rho_\epsilon(x).
\end{equation*}
Since $\int_{\mathbb{R}}\eta_\epsilon(x) dx = 1$, the upper bound $\rho_\epsilon(x) \leq B$ holds.  This completes the lemma.
\end{proof}

\subsection{Estimates for GTV Limsup}

Recall, from the proof of Lemma \ref{lemma:gtvasconverge}, that $f_n(X_i,X_j) = \frac{1}{\epsilon_n}
\eta_{\epsilon_n}(X_i-X_j)|u(X_i) - u(X_j)|$, $l_n(X_i) = \frac{1}{\epsilon_n} \int_D
\eta_{\epsilon_n}(X_i - y)|u(X_i) - u(y)|\rho(y)\, dy$, and $m_n(X_j) =
\frac{1}{\epsilon_n} \int_D \eta_{\epsilon_n}(x-X_j)|u(x)-u(X_j)|\rho(x)\,
dx$.  Here, $u\in I(D)$ is such that $TV(u;\rho^2)<\infty$.

\begin{lemma} \label{lemma:gtvestimates}
There exists a constant $C$ such that, for sufficiently large $n$,
we have the following bounds:

\setlength{\tabcolsep}{10pt}
\renewcommand{\arraystretch}{1.5}
\begin{center}
\begin{tabular}{l  l }

$|\mathbb{E}f_n| \leq C$, & $\|f_n\|_{L^2 \to L^2} \leq
                           C/\epsilon_n $, \\

$\mathbb{E}f_n^2 \leq C / \epsilon_n^{d+1}$, & $\mathbb{E}l_n^2
                                                    \leq
                                                    C / \epsilon_n$, \\
$\|\mathbb{E}_Y f_n^2\|_{L^\infty} \leq C / \epsilon_n^{d+2}$, & $ \|l_n\|_{L^\infty} \leq C /
                                                  \epsilon_n$, \\
$\|\mathbb{E}_X f_n^2\|_{L^\infty} \leq C / \epsilon_n^{d+2}$, & $ \mathbb{E}m_n^2
                                                  \leq C / \epsilon_n$,
  \\
$\|f_n\|_{L^\infty} \leq C/\epsilon_n^{d+1}$, & $\|m_n\|_{L^\infty} \leq C / \epsilon_n.$
\end{tabular}
\end{center}

\end{lemma}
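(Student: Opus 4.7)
The plan is to observe that all of the listed bounds follow from four basic ingredients: (a) the pointwise bound $\eta_{\epsilon_n}(z) \leq \|\eta\|_{L^\infty} \epsilon_n^{-d}$, which comes from (K1)--(K4); (b) the inequality $|u(x)-u(y)|^2 \leq |u(x)-u(y)| \leq 1$, since $u$ is $\{0,1\}$-valued; (c) the boundedness of both $\rho$ and $\rho_{\epsilon_n}$ by positive constants on $D$, from Lemma \ref{lemma:mollified}(iii); and (d) the uniform bound $\sup_n TV_{\epsilon_n}(u;\rho)<\infty$, which follows from Lemma \ref{lemma:tvlim} together with the standing assumption $TV(u;\rho^2)<\infty$.

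First I would dispatch $|\mathbb{E}f_n| = |TV_{\epsilon_n}(u;\rho)| \leq C$ from (d), and $\|f_n\|_{L^\infty} \leq \|\eta\|_{L^\infty} \epsilon_n^{-(d+1)}$ directly from (a) and (b). Next, for the second moments I would estimate
\[
\mathbb{E}f_n^2 = \frac{1}{\epsilon_n^2}\int_D\int_D \eta_{\epsilon_n}(x-y)^2 |u(x)-u(y)|^2 \rho(x)\rho(y)\,dx\,dy \leq \frac{\|\eta\|_{L^\infty}}{\epsilon_n^{d+1}}\cdot TV_{\epsilon_n}(u;\rho),
\]
by applying (a) to one copy of $\eta_{\epsilon_n}$ and (b) to collapse $|u(x)-u(y)|^2$ to $|u(x)-u(y)|$; this gives the bound $C/\epsilon_n^{d+1}$ via (d). The quantities $\|\mathbb{E}_Y f_n^2\|_{L^\infty}$ and $\|\mathbb{E}_X f_n^2\|_{L^\infty}$ are handled similarly, except that after estimating $|u(x)-u(y)|^2 \leq 1$, the remaining inner integral evaluates to $\rho_{\epsilon_n}(x)$ or $\rho_{\epsilon_n}(y)$, which is bounded by (c); the extra factor $\epsilon_n^{-1}$ relative to $\mathbb{E}f_n^2$ reflects the loss of one full integration.

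For the one-variable functions $l_n$ and $m_n$, I would use (b) to write
\[
|l_n(x)| \leq \frac{1}{\epsilon_n}\int_D \eta_{\epsilon_n}(x-y)\rho(y)\,dy = \frac{\rho_{\epsilon_n}(x)}{\epsilon_n} \leq \frac{C}{\epsilon_n},
\]
giving $\|l_n\|_{L^\infty} \leq C/\epsilon_n$, and then $\mathbb{E}l_n^2 \leq \|l_n\|_{L^\infty} \mathbb{E}l_n = (C/\epsilon_n)\cdot TV_{\epsilon_n}(u;\rho) \leq C/\epsilon_n$ by (d). The bounds for $m_n$ follow by the $x\leftrightarrow y$ symmetry of $f_n$.

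The only mildly delicate estimate is the $L^2\to L^2$ operator norm. For test functions $\phi,\psi$ with $\mathbb{E}\phi^2(X),\mathbb{E}\psi^2(Y)\leq 1$, I would write
\[
|\mathbb{E}[f_n(X,Y)\phi(X)\psi(Y)]| \leq \frac{1}{\epsilon_n}\int_D\int_D \eta_{\epsilon_n}(x-y)|\phi(x)||\psi(y)|\rho(x)\rho(y)\,dx\,dy,
\]
then split $\eta_{\epsilon_n}(x-y) = \eta_{\epsilon_n}(x-y)^{1/2}\cdot \eta_{\epsilon_n}(x-y)^{1/2}$ and apply the Cauchy--Schwarz inequality. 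Each resulting factor reduces to $\int_D \phi^2(x)\rho_{\epsilon_n}(x)\rho(x)\,dx$ or its $\psi$-analogue, which by (c) is bounded by a constant multiple of $\mathbb{E}\phi^2(X)$ or $\mathbb{E}\psi^2(Y)$, yielding $\|f_n\|_{L^2\to L^2}\leq C/\epsilon_n$. The Cauchy--Schwarz step is the one I view as most prone to error: a na\"ive bound $\eta_{\epsilon_n}\leq \|\eta\|_{L^\infty}\epsilon_n^{-d}$ would give a much worse exponent $\epsilon_n^{-(d+1)}$ in place of $\epsilon_n^{-1}$. Beyond this, no real obstacles are expected.
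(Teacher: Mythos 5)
Your proposal is correct and follows essentially the same route as the paper: each bound comes from $\eta_{\epsilon_n}\leq \|\eta\|_{L^\infty}\epsilon_n^{-d}$, $|u(x)-u(y)|^2=|u(x)-u(y)|\leq 1$, the two-sided bounds on $\rho$ and $\rho_{\epsilon_n}$, and the convergence $TV_{\epsilon_n}(u;\rho)\to\sigma_\eta TV(u;\rho^2)$. The only cosmetic differences are that the paper invokes the Schur test (Folland, Theorem 6.18) for $\|f_n\|_{L^2\to L^2}$ where you reprove it via the $\eta^{1/2}\cdot\eta^{1/2}$ Cauchy--Schwarz split, and it uses Jensen's inequality for $\mathbb{E}l_n^2$ where you use $\mathbb{E}l_n^2\leq \|l_n\|_{L^\infty}\mathbb{E}l_n$; both variants are valid.
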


\begin{proof}

Note, by
Lemma \ref{lemma:tvlim}, that $\mathbb{E}f_n = TV_{\epsilon_n}(u;\rho)$ converges to $TV(u;\rho^2)<\infty$ as $n \to \infty$. 
Hence
$|\mathbb{E}f_n| \leq C$. 

For $\mathbb{E}f_n^2$, we have
\begin{equation*}
\begin{aligned}
\mathbb{E}f_n^2 = \int_D \int_D \frac{1}{\epsilon_n^2} (\eta_{\epsilon_n}(x-y))^2|u(x)-u(y)|^2\rho(x)\rho(y)\,dx\,dy.
\end{aligned}
\end{equation*}
Since $\eta$ is bounded above, we have $(\eta_{\epsilon_n}(x-y))^2 \leq C
\eta_{\epsilon_n}(x-y) / \epsilon_n^d$. Because $u \in I(D)$, we have
$|u(x) - u(y)|^2 = |u(x)-u(y)|$. Hence, 
\begin{equation*}
\begin{aligned}
\mathbb{E}f_n^2 &\leq C \int_D \int_D
\frac{1}{\epsilon_n^2}\frac{1}{\epsilon_n^d}
\eta_{\epsilon_n}(x-y)|u(x)-u(y)|\rho(x)\rho(y)\,dx\,dy \\
&\leq C\,TV_{\epsilon_n}(u;\rho)  / \epsilon_n^{d+1} \leq C' / \epsilon_n^{d+1}
\end{aligned}
\end{equation*}
Likewise, one may get the bound
\begin{equation*}
\begin{aligned}
\mathbb{E}_Y f_n^2 &= \int_D \frac{1}{\epsilon_n^2}
(\eta_{\epsilon_n}(x-y))^2 |u(x)-u(y)|^2 \rho(y)\,dy \\
& \leq C \int_D \eta_{\epsilon_n}(x-y) \rho(y)\, dy / \epsilon_n^{d+2}  \leq
C' / \epsilon_n^{d+2},
\end{aligned}
\end{equation*}
with the last inequality following from our assumption that $\rho$ is bounded.
By the symmetry of $f_n$, this also gives
\begin{equation*}
\mathbb{E}_X f_n^2 \leq C' / \epsilon_n^{d+2}.
\end{equation*}

Recall that $\|f_n\|_{L^2 \to L^2}$ is given by
\begin{eqnarray*}
&&\|f_n\|_{L^2 \to L^2}\\
&&\ \ \   = \sup \{ \int_{D \times D} f_n(x,y) h(x)g(y)d\nu(x) d\nu(y) : \|h\|_{L^2(D,\nu)} \leq 1, \|g\|_{L^2(D,\nu)} \leq 1 \}.\end{eqnarray*}
It is straightforward to show $\int_D |f_n(x,y)|\rho(x)\, dx \leq C/\epsilon_n$ and  
$\int_D |f_n(x,y)|\rho(y)\, dy \leq C/\epsilon_n$, for some constant $C$ independent of $n$.
Thus, with respect to the map $Jg(x) = \int_D f_n(x,y)g(y)\rho(y)dy$, by Theorem 6.18 of \cite{folland2013real}, we have
\begin{equation*}
\|Jg\|_{L^2(D,\nu)} \leq C \|g\|_{L^2(D,\nu)} / \epsilon_n,
\end{equation*}
which implies $\|f_n\|_{L^2 \to L^2} \leq C / \epsilon_n$.

Now considering $l_n$, we have
\begin{equation*}
\mathbb{E}l_n^2 = \int_D \Big( \frac{1}{\epsilon_n} \int_D
\eta_{\epsilon_n}(x - y) |u(x) - u(y)|\rho(y)\,dy\Big)^2\rho(x)\,dx.
\end{equation*}
By Jensen's inequality, it follows that
\begin{equation*}
\begin{aligned}
\mathbb{E}l_n^2 &\leq \frac{1}{\epsilon_n^2} \int_{D \times D}
\eta_{\epsilon_n}(x-y)|u(x) - u(y)|^2 \rho(y)\rho(x)\,
dy\, dx \\ 
& = \frac{1}{\epsilon_n^2} \int_{D \times D} \eta_{\epsilon_n}(x-y)
|u(x)-u(y)|\rho(y)\rho(x)\,dy \,dx \\
& \leq \frac{1}{\epsilon_n} \mathbb{E}f_n
 \leq C / \epsilon_n.
\end{aligned}
\end{equation*}

Similarly, we have
\begin{equation*}
\begin{aligned}
|l_n(x)| &= \frac{1}{\epsilon_n}\int_D
\eta_{\epsilon_n}(x-y)|u(x)-u(y)|\rho(y)\, dy \\
& \leq \frac{1}{\epsilon_n}\int_D
\eta_{\epsilon_n}(x-y)\rho(y)\, dy \leq C/\epsilon_n,
\end{aligned}
\end{equation*}
since $\rho$ is bounded.

The same argument applied to $m_n$ gives the required inequalities.
\end{proof}

Recall, from the proof of Lemma \ref{lemma:gtvasconverge}, that  $h_n(X_i,X_j) =
\frac{1}{\epsilon_n}\eta_{\epsilon_n}(X_i-X_j)|u(X_i)-u(X_j)| - \int_D
f_n(X_i,y)\rho(y)\,dy - \int_D f_n(x,X_j)\rho(x)\,dx +
TV_{\epsilon_n}(u;\rho)$.

\begin{corollary}
\label{cor:gtvestimates}
There exists a constant $C$, such that,
for sufficiently large $n$, we have the following bounds:

\setlength{\tabcolsep}{10pt}
\renewcommand{\arraystretch}{1.5}
\begin{center}
\begin{tabular}{l }

$\mathbb{E}h_n^2 \leq C / \epsilon_n^{d+1}$, \\
$\|\mathbb{E}_Y h_n^2\|_{L^\infty} \leq C / \epsilon_n^{d+2}$, \\
$\|\mathbb{E}_X h_n^2\|_{L^\infty} \leq C  / \epsilon_n^{d+2}$, \\
$\|h_n\|_{L^\infty} \leq C / \epsilon_n^{d+1}$, \\
$\|h_n\|_{L^2 \to L^2} \leq C/\epsilon_n$. 
\end{tabular}
\end{center}
\end{corollary}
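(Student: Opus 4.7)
\textbf{Proof proposal for Corollary \ref{cor:gtvestimates}.}  The plan is to exploit the decomposition
\[
h_n(x,y) = f_n(x,y) - l_n(x) - m_n(y) + \mathbb{E}f_n
\]
and leverage the bounds already established in Lemma \ref{lemma:gtvestimates} for each of the four constituents. For the pointwise and $L^2(\nu\otimes\nu)$ bounds, the crude triangle/Cauchy--Schwarz inequalities will suffice; the only bound that requires a little care is the operator norm $\|h_n\|_{L^2\to L^2}$, which I will address by using the canonical structure of $h_n$.

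First, for $\|h_n\|_{L^\infty}$, I would simply apply the triangle inequality:
\[
\|h_n\|_{L^\infty} \leq \|f_n\|_{L^\infty} + \|l_n\|_{L^\infty} + \|m_n\|_{L^\infty} + |\mathbb{E}f_n|.
\]
The bounds in Lemma \ref{lemma:gtvestimates} give $C/\epsilon_n^{d+1}$, $C/\epsilon_n$, $C/\epsilon_n$ and $C$ respectively, and since $d\geq 1$, the first term dominates to yield $\|h_n\|_{L^\infty}\leq C/\epsilon_n^{d+1}$. The same strategy applied after squaring and taking expectations (using $(a+b+c+d)^2\leq 4(a^2+b^2+c^2+d^2)$) will give
\[
\mathbb{E}h_n^2 \leq 4\bigl(\mathbb{E}f_n^2+\mathbb{E}l_n^2+\mathbb{E}m_n^2+(\mathbb{E}f_n)^2\bigr)\leq C/\epsilon_n^{d+1},
\]
and analogously for the conditional expectations $\mathbb{E}_Y h_n^2(x,Y)$ and $\mathbb{E}_X h_n^2(X,y)$, where the dominant contribution $C/\epsilon_n^{d+2}$ comes from $\mathbb{E}_Y f_n^2$ (resp.\ $\mathbb{E}_X f_n^2$), since the remaining terms are of lower order in $1/\epsilon_n$ for $d\geq 1$.

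The step I expect to require the most thought is the estimate $\|h_n\|_{L^2\to L^2}\leq C/\epsilon_n$. A naive triangle inequality on the operator norm is problematic because $\|l_n(\cdot)\otimes 1\|_{L^2\to L^2}$ and $\|1\otimes m_n(\cdot)\|_{L^2\to L^2}$ blow up like $\|l_n\|_{L^2}$ and $\|m_n\|_{L^2}$, which are of order $1/\sqrt{\epsilon_n}$ and, while acceptable here, the constant term $\mathbb{E}f_n\cdot 1\otimes 1$ contributes $O(1)$, which is fine, but it's cleaner to exploit that $h_n$ is canonical. The approach is to notice that, since $\mathbb{E}_X h_n(X,y)=0$ and $\mathbb{E}_Y h_n(x,Y)=0$ a.s., for any test functions $\phi,\psi\in L^2(\nu)$, decomposing $\phi=\phi_0+\bar\phi$ and $\psi=\psi_0+\bar\psi$ into mean-zero parts plus constants gives
\[
\mathbb{E}\bigl[h_n(X,Y)\phi(X)\psi(Y)\bigr] = \mathbb{E}\bigl[h_n(X,Y)\phi_0(X)\psi_0(Y)\bigr],
\]
because the cross terms vanish by the canonical property. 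Then the analogous expansion of $h_n$ shows that
\[
\mathbb{E}\bigl[h_n\phi_0\psi_0\bigr] = \mathbb{E}\bigl[f_n\phi_0\psi_0\bigr],
\]
since the $l_n(X)\psi_0(Y)$, $m_n(Y)\phi_0(X)$ and constant contributions integrate to zero against zero-mean functions. Since $\|\phi_0\|_{L^2}\leq \|\phi\|_{L^2}$ and similarly for $\psi_0$, taking the supremum yields $\|h_n\|_{L^2\to L^2}\leq \|f_n\|_{L^2\to L^2}\leq C/\epsilon_n$, completing the corollary.
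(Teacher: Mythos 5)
Your proposal is correct. For the first four bounds it is essentially the paper's argument: the paper applies the triangle inequality to $h_n = f_n - l_n - m_n + \mathbb{E}f_n$ in the relevant norm (written there as $\sqrt{\mathbb{E}h_n^2} \leq \sqrt{\mathbb{E}f_n^2} + \sqrt{\mathbb{E}l_n^2} + \sqrt{\mathbb{E}m_n^2} + |\mathbb{E}f_n|$, equivalent to your $(a+b+c+d)^2 \leq 4(a^2+\cdots)$), and the $f_n$-term from Lemma \ref{lemma:gtvestimates} dominates in each case. The one place you diverge is $\|h_n\|_{L^2\to L^2}$: you exploit canonicity to reduce to mean-zero test functions and conclude the exact inequality $\|h_n\|_{L^2\to L^2} \leq \|f_n\|_{L^2\to L^2}$, which is clean and correct. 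The paper instead just uses the operator-norm triangle inequality, bounding the rank-one pieces by $\|l_n\|_{L^2\to L^2} \leq \|l_n\|_{L^\infty} \leq C/\epsilon_n$ and $\|m_n\|_{L^2\to L^2} \leq \|m_n\|_{L^\infty} \leq C/\epsilon_n$; contrary to your worry, this naive route is not problematic here, since every term is already $O(1/\epsilon_n)$, which is all the corollary demands (you in fact concede this parenthetically). Your argument buys a sharper constant and would matter if the rank-one pieces were larger than $\|f_n\|_{L^2\to L^2}$; the paper's buys brevity. Either is acceptable.
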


\begin{proof}
Since $h_n(X_i,X_j) = f_n(X_i,X_j) - l_n(X_i) - m_n(X_j) +
\mathbb{E}f_n$, we have
\begin{equation*}
\begin{aligned}
\sqrt{\mathbb{E}h_n^2} \leq \sqrt{\mathbb{E}f_n^2} + \sqrt{\mathbb{E}l_n^2}
+ \sqrt{\mathbb{E}_nm^2} + \sqrt{(\mathbb{E}f_n)^2}.
\end{aligned}
\end{equation*}
All terms in the right hand side may be bounded by
$\sqrt{C/\epsilon_n^{d+1}}$, and hence
\begin{equation*}
\mathbb{E}h_n^2 \leq  C / \epsilon_n^{d+1}.
\end{equation*}

Similarly, in the bound
\begin{equation*}
\sqrt{\|\mathbb{E}_X h_n^2\|_{L^\infty}} \leq \sqrt{\|\mathbb{E}_X
  f_n^2\|_{L^\infty}} + \sqrt{\|\mathbb{E}_X l_n^2\|_{L^\infty}} +
\sqrt{\|\mathbb{E}_X m_n^2\|_{L^\infty}} + \sqrt{(\mathbb{E}f_n)^2},
\end{equation*}
all terms in the right hand side are dominated by
$\sqrt{C/\epsilon_n^{d+2}}$, and hence
\begin{equation*}
\|\mathbb{E}_X h_n^2\|_{L^\infty} \leq C / \epsilon_n^{d+2}.
\end{equation*}

By symmetry of $h_n$, this gives
\begin{equation*}
\|\mathbb{E}_Y h_n^2\|_{L^\infty} \leq C / \epsilon_n^{d+2}.
\end{equation*}

Likewise, the same triangle inequality gives
\begin{equation*}
\|h_n\|_{L^\infty} \leq C / \epsilon_n^{d+1}.
\end{equation*}

For the last bound, we consider
\begin{equation*}
\begin{aligned}
\|h_n\|_{L^2\to L^2} &\leq \|f_n\|_{L^2 \to L^2} + \|l_n\|_{L^2 \to L^2} +
\|m_n\|_{L^2 \to L^2} + |\mathbb{E}f_n| \\
&\leq \|f_n\|_{L^2 \to L^2} + \|l_n\|_{L^\infty} + \|m_n\|_{L^\infty}
+|\mathbb{E}f_n|,
\end{aligned}
\end{equation*}
and each term in the right hand side is bounded by $C / \epsilon_n^d$.

\end{proof}

\subsection{Estimates for GF Limsup}

Recall, from the proof of Lemma \ref{lemma:quadasconvergea1}, the notation
 $f_n(X_i,X_j) =
\frac{1}{\sqrt{\epsilon_n}}\eta_{\epsilon_n}(X_i-X_j)u(X_j)$, $l_n(X_i) =
  \frac{1}{\sqrt{\epsilon_n}}\int_D \eta_{\epsilon_n}(X_i - y)
  u(X_i)\rho(y)\, dy$, and $m_n(X_j) = \frac{1}{\sqrt{\epsilon_n}}\int_D
  \eta_{\epsilon_n}(x-X_j)u(x)\rho(x)\,dx$.  Here, $u\in I(D)$.

\begin{lemma} \label{lemma:glambdaestimates}
 There exists a constant
  $C$, such that, for sufficiently large $n$, we have the following
  bounds:
\setlength{\tabcolsep}{10pt}
\renewcommand{\arraystretch}{1.5}
\begin{center}
\begin{tabular}{l  l }

$|\mathbb{E}f_n| \leq C /  \epsilon_n^{1/2}$, & $\|f_n\|_{L^2 \to L^2} \leq
                           C / \epsilon_n^{1/2}$, \\

$\mathbb{E}f_n^2 \leq C / \epsilon_n^{d+1}$, & $\mathbb{E}l_n^2 \leq C / \epsilon_n$, \\
$\|\mathbb{E}_Y f_n^2\|_{L^\infty} \leq C / \epsilon_n^{d+1}$, & $ \|l_n\|_{L^\infty} \leq C /
                                                  \epsilon_n^{1/2}$, \\
$\|\mathbb{E}_X f_n^2\|_{L^\infty} \leq C / \epsilon_n^{d+1}$, & $ \mathbb{E}m_n^2
                                                  \leq C / \epsilon_n$,
  \\
$\|f_n\|_{L^\infty} \leq C/\epsilon_n^{d+1/2}$, & $\|m_n\|_{L^\infty} \leq C / \epsilon_n^{1/2}.$
\end{tabular}
\end{center}
\end{lemma}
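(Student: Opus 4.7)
The plan is to mirror the template of Lemma \ref{lemma:gtvestimates}, with each estimate reducing to a direct calculation. Three ingredients drive the entire argument: (i) $u \in I(D)$ gives $|u|, u^2 \leq 1$; (ii) the standing assumptions on $\eta$ yield $\int_{\mathbb{R}^d} \eta_{\epsilon_n}(z)\,dz = 1$ and $\|\eta_{\epsilon_n}\|_{L^\infty} \leq C\epsilon_n^{-d}$; and (iii) Lemma \ref{lemma:mollified} supplies uniform upper and lower bounds on $\rho$ and on $\rho_{\epsilon_n}(x) = \int_D \eta_{\epsilon_n}(x-y)\rho(y)\,dy$ for sufficiently small $\epsilon_n$.

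For the bounds involving $f_n$, I would proceed as follows. The first-moment bound on $|\mathbb{E}f_n|$ is obtained by writing the expectation as a double integral and integrating out $\int \eta_{\epsilon_n}(x-y)\rho(x)\,dx \leq C$, leaving behind only the prefactor $\epsilon_n^{-1/2}$. The three second-moment bounds $\mathbb{E}f_n^2$, $\|\mathbb{E}_X f_n^2\|_{L^\infty}$, $\|\mathbb{E}_Y f_n^2\|_{L^\infty}$ all follow from the elementary inequality $\eta_{\epsilon_n}^2 \leq C\epsilon_n^{-d}\eta_{\epsilon_n}$, combined with the prefactor $\epsilon_n^{-1}$; after integrating against $\rho$ (and using $u^2 \leq 1$), each produces the uniform rate $\epsilon_n^{-(d+1)}$. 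This is a factor of $\epsilon_n$ better than the analogous bounds in Lemma \ref{lemma:gtvestimates}, made possible because the $|u(x)-u(y)|$ factor present there is absent here. The $L^\infty$ bound on $f_n$ is immediate from the prefactor and $\|\eta_{\epsilon_n}\|_{L^\infty} \leq C\epsilon_n^{-d}$, while the operator norm $\|f_n\|_{L^2 \to L^2} \leq C\epsilon_n^{-1/2}$ follows by the Schur test (cf.\ Theorem 6.18 of \cite{folland2013real}), as in Lemma \ref{lemma:gtvestimates}, since both $\int |f_n(x,y)|\rho(x)\,dx$ and $\int |f_n(x,y)|\rho(y)\,dy$ are bounded by $C\epsilon_n^{-1/2}$ using the normalization of $\eta_{\epsilon_n}$ and boundedness of $\rho$ and $u$.

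For the single-variable quantities $l_n$ and $m_n$, each $L^\infty$ bound reduces to the inequality $\int_D \eta_{\epsilon_n}(x-y)\rho(y)\,dy = \rho_{\epsilon_n}(x) \leq C$ provided by Lemma \ref{lemma:mollified}, giving $\|l_n\|_{L^\infty}, \|m_n\|_{L^\infty} \leq C\epsilon_n^{-1/2}$. The $L^2$-type bounds $\mathbb{E}l_n^2$ and $\mathbb{E}m_n^2$ then follow by squaring these $L^\infty$ estimates and integrating against the bounded density $\rho$, each yielding $C\epsilon_n^{-1}$. The only real obstacle is careful bookkeeping of the $\epsilon_n$ exponents and of which integration variable carries which factor in each case; no delicate probabilistic or analytic input is required at this stage, as the heavy lifting has already been done in Lemma \ref{lemma:mollified} and in the concentration arguments of the preceding lemmas.
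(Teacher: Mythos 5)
Your proposal is correct and matches the paper's (very terse) proof, which simply observes that these estimates follow from the boundedness of $u$ and of $\rho_{\epsilon_n}$ by the same elementary manipulations as in Lemma \ref{lemma:gtvestimates}; your filled-in computations all check out. One immaterial aside is slightly off: the improvement from $\epsilon_n^{-(d+2)}$ to $\epsilon_n^{-(d+1)}$ in the conditional second-moment bounds comes from the prefactor being $\epsilon_n^{-1/2}$ rather than $\epsilon_n^{-1}$ (and $\mathbb{E}f_n^2$ is in fact $\epsilon_n^{-(d+1)}$ in both lemmas, since in the GTV case the $|u(x)-u(y)|$ factor is what rescues that particular bound via the finite total variation), but this does not affect any of your derivations.
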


\begin{proof}
These inequalities are easier than the ones in Lemma \ref{lemma:gtvestimates}, and follow from the boundedness of $u$ and $\rho_{\epsilon_n}(x) = \int_D \eta_{\epsilon_n}(x-y) \rho(y)\, dy$.
\end{proof}

Recall that  $h_n(X_i,X_j) =
f_n(X_i,X_j) - l_n(X_i) - m_n(X_j) + \mathbb{E}f_n$. The proof of the following is similar to that of Corollary \ref{cor:gtvestimates}.

\begin{corollary} \label{cor:glambdaestimates}
There exists a constant $C$, such that,
for sufficiently large $n$, we have the following bounds:
\setlength{\tabcolsep}{10pt}
\renewcommand{\arraystretch}{1.5}
\begin{center}
\begin{tabular}{l }

$\mathbb{E}h_n^2 \leq C / \epsilon_n^{d+1}$, \\
$\|\mathbb{E}_Y h_n^2\|_{L^\infty} \leq C / \epsilon_n^{d+1}$, \\
$\|\mathbb{E}_X h_n^2\|_{L^\infty} \leq C  / \epsilon_n^{d+1}$, \\
$\|h_n\|_{L^\infty} \leq C / \epsilon_n^{d+1/2}$, \\
$\|h_n\|_{L^2 \to L^2} \leq C/\epsilon_n^{1/2}$. 
\end{tabular}
\end{center}
\end{corollary}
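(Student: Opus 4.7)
The approach is essentially the same as for Corollary \ref{cor:gtvestimates}: apply the triangle inequality to the decomposition
\[
h_n(X_i,X_j) = f_n(X_i,X_j) - l_n(X_i) - m_n(X_j) + \mathbb{E}f_n,
\]
and then plug in the individual bounds provided by Lemma \ref{lemma:glambdaestimates}. Since each summand in this decomposition has already been controlled, no new probabilistic input is needed; the main task is simply to check that in each of the five bounds claimed, the worst term on the right-hand side matches the stated rate.

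For the $L^2(\mathbb{P})$ bound, I would use $\sqrt{\mathbb{E} h_n^2} \leq \sqrt{\mathbb{E} f_n^2} + \sqrt{\mathbb{E} l_n^2} + \sqrt{\mathbb{E} m_n^2} + |\mathbb{E} f_n|$. Lemma \ref{lemma:glambdaestimates} gives $\mathbb{E}f_n^2 \leq C/\epsilon_n^{d+1}$, $\mathbb{E}l_n^2, \mathbb{E}m_n^2 \leq C/\epsilon_n$, and $|\mathbb{E}f_n| \leq C/\epsilon_n^{1/2}$; the dominant contribution is $\sqrt{C/\epsilon_n^{d+1}}$ (recalling $d \geq 1$), which yields $\mathbb{E} h_n^2 \leq C/\epsilon_n^{d+1}$. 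The conditional $L^\infty$ bounds $\|\mathbb{E}_X h_n^2\|_{L^\infty}$ and $\|\mathbb{E}_Y h_n^2\|_{L^\infty}$ follow in exactly the same way, noting that $\|\mathbb{E}_X l_n^2\|_{L^\infty} \leq \|l_n\|_{L^\infty}^2 \leq C/\epsilon_n$ and similarly for $m_n$, both of which are dominated by $\sqrt{C/\epsilon_n^{d+1}}$ inside the square root.

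For $\|h_n\|_{L^\infty}$, the triangle inequality and the pointwise bounds $\|f_n\|_{L^\infty} \leq C/\epsilon_n^{d+1/2}$, $\|l_n\|_{L^\infty}, \|m_n\|_{L^\infty} \leq C/\epsilon_n^{1/2}$, and $|\mathbb{E}f_n| \leq C/\epsilon_n^{1/2}$ make $\|f_n\|_{L^\infty}$ the dominant term, giving the claimed $C/\epsilon_n^{d+1/2}$. Finally, for the $L^2 \to L^2$ operator norm, I would write
\[
\|h_n\|_{L^2 \to L^2} \leq \|f_n\|_{L^2 \to L^2} + \|l_n\|_{L^2 \to L^2} + \|m_n\|_{L^2 \to L^2} + |\mathbb{E}f_n|,
\]
bounding $\|l_n\|_{L^2 \to L^2}, \|m_n\|_{L^2 \to L^2} \leq \|l_n\|_{L^\infty}, \|m_n\|_{L^\infty} \leq C/\epsilon_n^{1/2}$, and using $\|f_n\|_{L^2 \to L^2} \leq C/\epsilon_n^{1/2}$ from Lemma \ref{lemma:glambdaestimates}, so that the total is $C/\epsilon_n^{1/2}$. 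There is no essential obstacle here; the only place where one must be mildly careful is confirming that the $\sqrt{\epsilon_n}$ normalization built into $f_n$ (which makes $f_n$ less singular than in the GTV setting by a factor $\epsilon_n^{1/2}$) does not disturb the dominance ordering among the four triangle-inequality summands, and in each of the five bounds it does not.
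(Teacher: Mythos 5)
Your proposal is correct and is exactly the argument the paper intends: the paper omits the proof of Corollary \ref{cor:glambdaestimates}, stating only that it is similar to Corollary \ref{cor:gtvestimates}, and your write-up carries out that same triangle-inequality decomposition of $h_n$ with the bounds from Lemma \ref{lemma:glambdaestimates}, correctly identifying the dominant term in each of the five estimates.
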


\subsection{Transport Distance in $d=1$}

In this section, we provide the transport maps $\{T_n\}_{n \in
  \mathbb{N}}$ in $d=1$ and establish a bound on the rate at which $\|Id -
T_n\|_{L^\infty} \to 0$ as $n\rightarrow\infty$. 

Recall that by assumption $(M)$ of Subsection \ref{results},
$\nu$ is a probability measure on $D = (c,d)$ with distribution function $F_\nu$ and density $\rho$ that is differentiable, Lipschitz, and bounded above
and below by positive constants. Further,  $\rho$ is increasing in some interval with left endpoint $c$ and decreasing in some interval with right endpoint $d$.

 Given a sample $\mathcal{X}_n =
\{X_1,\ldots,X_n\}$, we let $F_n$ denote the distribution
function of the empirical measure $\nu_n = \frac{1}{n} \sum_{i=1}^n
\upsilon_{X_i}$. We define $T_n$ by
\begin{equation} \label{eq:1dtransportdef}
T_nx = F_n^{-1}(F x),
\end{equation} where $F_n^{-1}(t) = \inf \{x \in
\mathbb{R} : t \leq F_n(x) \}$. The map $T_n$ is a valid transport
map, i.e.
$ T_{n\sharp} \nu = \nu_n.$

By the assumptions on $\rho$, it follows that
\begin{equation} \label{eq:shorackassumption}
\sup_{c<x<d} F_\nu (x)(1- F_\nu
(x))|\rho'(x)|/\rho^2(x)<\infty
\end{equation}

It is known -- see Theorem 3
on p. 650 of \cite{shorack2009empirical} -- that when i) $\rho > 0$ on $(c,d)$, ii) inequality
\eqref{eq:shorackassumption} is satisfied, and iii) $\rho$ is
increasing in some interval with left endpoint $c$ and decreasing in
some interval with right endpoint $d$, the standardized quantile process
\begin{equation*}
\mathcal{Q}_n(t) \coloneqq g(t)\sqrt{n}[F_n^{-1}(t) - F^{-1}(t)],
\end{equation*}
with $g(t) = \rho(F^{-1}(t))$, satisfies, almost surely,
\begin{equation} \label{eq:quantile_limit}
\limsup_{n \to \infty} \sup_{0< t< 1}|\mathcal{Q}_n(t)/\sqrt{2\log\log n}| \leq 1.
\end{equation}

Since $\rho$ is bounded above and below by non-negative constants, so
is $g$, and so we have constants $C,C' > 0$ such that
\begin{equation*}
C |\mathcal{Q}_n(t)| \leq \sqrt{n}|F_n^{-1}(t) -
F^{-1}(t)| \leq C' |\mathcal{Q}_n(t)|.
\end{equation*}
Since $\rho$ is positive, $F$ is strictly increasing, and hence we
have
\begin{equation*}
C \sup_{0 < t < 1} |\mathcal{Q}_n(t)| \leq \sup_{c < x < d} \sqrt{n}|F_n^{-1}(Fx) -
F^{-1}(Fx)| \leq C' \sup_{0 < t < 1} |\mathcal{Q}_n(t)|.
\end{equation*}
Recalling our definition of $T_n$, this may be rewritten as
\begin{equation*}
C \sup_{0 < t < 1} |\mathcal{Q}_n(t)| \leq \sqrt{n} \|Id - T_n\|_{L^\infty} \leq C' \sup_{0 < t < 1} |\mathcal{Q}_n(t)|.
\end{equation*}

In light of \eqref{eq:quantile_limit} and the above inequality, we obtain the following estimate.

\begin{proposition} \label{lemma:transportd1}

There is a constant $C$ such that, almost surely, the transport maps $T_n$, defined by (\ref{eq:1dtransportdef}), satisfy
\begin{equation*}
\lim_{n \to \infty} \frac{\sqrt{n}\|Id - T_n\|_{L^\infty}}{\sqrt{2\log\log n}} \leq C.
\end{equation*}
\end{proposition}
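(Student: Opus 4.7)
The essential structure of the proof is already laid out in the discussion preceding the proposition, so my plan is to organize that discussion into a clean argument, with careful verification of the hypotheses of the Shorack--Wellner theorem.

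\textbf{Step 1: Verify the hypotheses of \cite{shorack2009empirical}, Theorem 3, p.\ 650.} The theorem requires three conditions on $\rho$: (i) $\rho > 0$ on $(c,d)$, (ii) the technical bound \eqref{eq:shorackassumption}, and (iii) the monotonicity of $\rho$ near $c$ and near $d$. Conditions (i) and (iii) are immediate from assumption (M) of Subsection \ref{results} in the $d=1$ setting. For (ii), I would note that Lipschitz continuity of $\rho$ implies $\rho'$ exists a.e.\ and is uniformly bounded on $D$, while the lower bound $\rho \geq A > 0$ gives $\rho^2 \geq A^2$; since $F_\nu(1-F_\nu) \leq 1$, the supremum in \eqref{eq:shorackassumption} is finite.

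\textbf{Step 2: Apply the quantile process limit.} With these hypotheses verified, Theorem 3 of \cite{shorack2009empirical} yields almost surely
\begin{equation*}
\limsup_{n \to \infty} \frac{1}{\sqrt{2\log\log n}}\sup_{0<t<1}|\mathcal{Q}_n(t)| \leq 1,
\end{equation*}
where $\mathcal{Q}_n(t) = g(t)\sqrt{n}[F_n^{-1}(t) - F^{-1}(t)]$ and $g(t) = \rho(F^{-1}(t))$.

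\textbf{Step 3: Transfer the bound to $\|Id - T_n\|_{L^\infty}$.} Since $\rho$ is bounded above and below by positive constants, so is $g$; hence there is a constant $C' > 0$ such that
\begin{equation*}
\sqrt{n}|F_n^{-1}(t) - F^{-1}(t)| \leq C'|\mathcal{Q}_n(t)|
\end{equation*}
for every $t \in (0,1)$. Because $\rho > 0$, $F$ is a strictly increasing continuous bijection from $(c,d)$ to $(0,1)$, so substituting $t = F(x)$ and taking the supremum gives
\begin{equation*}
\sqrt{n}\|Id - T_n\|_{L^\infty} = \sup_{c<x<d}\sqrt{n}|F_n^{-1}(Fx) - x| \leq C' \sup_{0<t<1}|\mathcal{Q}_n(t)|.
\end{equation*}

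\textbf{Step 4: Conclude.} Dividing by $\sqrt{2\log\log n}$ and combining with Step 2 yields the claimed bound with $C = C'$.

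I do not foresee a genuine obstacle: the entire argument is a careful bookkeeping exercise applying a classical quantile-process result. The only subtlety is verifying condition \eqref{eq:shorackassumption}, which is precisely why assumption (M) includes both the Lipschitz condition and the monotonicity of $\rho$ near the endpoints of $D$; these are the hypotheses tailored to make the Shorack--Wellner theorem applicable.
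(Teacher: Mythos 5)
Your proposal is correct and follows essentially the same route as the paper: verify the hypotheses of the Shorack--Wellner quantile-process theorem from assumption (M), apply the law-of-the-iterated-logarithm bound for $\mathcal{Q}_n$, and transfer it to $\|Id-T_n\|_{L^\infty}$ via the boundedness of $g$ and the substitution $t=F(x)$. Your Step 1 is in fact slightly more explicit than the paper, which merely asserts that \eqref{eq:shorackassumption} follows from the assumptions on $\rho$.
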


  \vskip .2cm
\noindent {\bf Acknowledgement.}  This work was partially supported by ARO W911NF-14-1-0179.
\vskip .1cm

\end{document}